\newtheorem{theorem}{Theorem}[section]
\newtheorem{lemma}[theorem]{Lemma}
\newtheorem{corollary}[theorem]{Corollary}
\newtheorem{definition}[theorem]{Definition}
\newtheorem{proposition}[theorem]{Proposition}
\def\bnu{\boldsymbol{\nu}}
\begin{document}

\title[Canonical bases of tensor products arising from framed constructions]{Canonical bases of tensor products of integrable highest weight modules arising from framed constructions}

\author[Jiepeng Fang, Yixin Lan]{Jiepeng Fang, Yixin Lan}

\address{Department of Mathematics and New Cornerstone Science Laboratory, The University of Hong Kong, Pokfulam, Hong Kong, Hong Kong SAR, P. R. China}
\email{fangjp@hku.hk (J.Fang)}

\address{Academy of Mathematics and Systems Science, Chinese Academy of Sciences, Beijing 100190, P.R.China}
\email{lanyixin@amss.ac.cn (Y.Lan)}

\subjclass[2020]{17B37}

\keywords{Canonical basis, Tensor product, Quantum group, Positivity}

\bibliographystyle{abbrv}

\begin{abstract}
Given a quantum group, we prove that the canonical bases of the tensor products of its integrable highest weight modules can be obtained from the canonical bases of the integrable highest weight modules of a bigger quantum group. As a result, based on the positivity of the canonical bases of the integrable highest weight modules due to Lusztig, we prove that the canonical bases of the tensor products have the positivity.
\end{abstract}

\maketitle

\setcounter{tocdepth}{1}\tableofcontents

\section{Introduction}
For any symmetric Cartan datum $(I,\cdot)$ and $Y$-regular root datum $(Y,X,\langle,\rangle,...)$ of type $(I,\cdot)$, let $\mathbf{f}$ be the corresponding algebra and $\mathbf{U}$ be the corresponding quantized enveloping algebra defined in \cite[Chapter 1, 3]{Lusztig-1993}. For any dominant weight $\lambda$, let $\Lambda_{\lambda}$ be the integrable highest weight module of $\mathbf{U}$ with the highest weight $\lambda$, see section \ref{Preliminaries} for details.

In \cite{Lusztig-1990,Lusztig-1991}, Lusztig categorified the algebra $\mathbf{f}$ by perverse sheaves on the varieties of quiver representations, and constructed the canonical basis $\mathbf{B}$ of $\mathbf{f}$ with some very remarkable properties, such as the integrality, the positivity for the structure constants with respect to the multiplication and the comultiplication of $\mathbf{f}$, and the compatibility with various natural filtrations. In particular, the canonical basis $\mathbf{B}$ naturally gives rise to the canonical basis $\mathbf{B}(\Lambda_{\lambda})$ of $\Lambda_{\lambda}$, see section \ref{Preliminaries} for details. Lusztig proved that the canonical basis $\mathbf{B}(\Lambda_{\lambda})$ has the positivity for the structure constants with respect to the actions of Chevalley generators $F_i,E_i\in \mathbf{U}$, see \cite[Theorem 22.1.7]{Lusztig-1993}. We remark that this proof relies on the positivity of $\mathbf{B}$. In \cite{Kashiwara-1990,Kashiwara-1991}, Kashiwara constructed the canonical bases $\mathbf{B}$ of $\mathbf{f}$ and $\mathbf{B}(\Lambda_{\lambda})$ of $\Lambda_{\lambda}$ by his crystal basis theory. However, this elegant construction does not provide the positivity. The proof for the positivity of the canonical basis $\mathbf{B}(\Lambda_{\lambda})$ given by the categorification of the module $\Lambda_{\lambda}$ is expected. In \cite{Kang-Kashiwara-2012}, Kang-Kashiwara categorified $\Lambda_{\lambda}$ by finitely generated graded projective modules of cyclotomic Khovanov-Lauda-Rouquier algebra. In \cite{Zheng-2014}, Zheng categorified $\Lambda_{\lambda}$ by perverse sheaves on the stacks of representations of the framed quiver, and proved the positivity of $\mathbf{B}(\Lambda_{\lambda})$. In \cite{Fang-Lan-Xiao-2023}, based on Zheng's work, Fang-Lan-Xiao gave a categorification of $\Lambda_{\lambda}$ by Lusztig sheaves for the framed quiver, and proved the positivity of $\mathbf{B}(\Lambda_{\lambda})$. We remark that the framed quiver construction is inspired by Nakajima's work \cite{Nakajima-1998}.

In \cite{Lusztig-1992}, by using the quasi-$\mathcal{R}$-matrix, Lusztig constructed the canonical basis of the tensor product of a simple integrable lowest weight module followed by an integrable highest weight module. In \cite[Chapter 27]{Lusztig-1993}, he generalized this method to construct the canonical basis of the tensor product of based modules when $(I,\cdot)$ is of finite type. In \cite{Bao-Wang-2016}, Bao-Wang generalized Lusztig's method to construct the canonical basis of the tensor product of several integrable lowest weight modules followed by integrable highest weight modules for any $(I,\cdot)$.

It is natural to consider the question whether the canonical basis of the tensor product has the positivity for the structure constants with respect to the actions of Chevalley generators $F_i,E_i\in \mathbf{U}$. There are positive answers for the tensor product of several highest weight modules when $(I,\cdot)$ is symmetric. In \cite{Webster-2015}, Webster categorified the tensor product building on works of \cite{Khovanov-Lauda-2009,Rouquier-2012,Varagnolo-Vasserot-2011}, and proved the positivity of the canonical basis. In \cite{Zheng-2014}, Zheng categorified the tensor product by perverse sheaves on the stacks of representations of the framed quiver. In \cite{Fang-Lan-2023}, Fang-Lan categorified the tensor product by Lusztig sheaves for the $N$-framed quiver, and proved the positivity of the canonical basis.

When Fang discussed with Professor Huanchen Bao in Singapore, Bao pointed out that the construction in \cite{Fang-Lan-2023} could be revised to give an algebraic proof for the positivity of canonical basis of the tensor product of the integrable highest weight modules. The present paper achieves this goal, and our strategy is as follows.\\ 
\textbf{Step 1}. For the Cartan datum $(I,\cdot)$ and the root datum $(Y,X,\langle,\rangle,...)$, we enlarge them to be the framed Cartan datum $(\tilde{I},\cdot)$ and the framed root datum $(\tilde{Y},\tilde{X},\langle,\rangle,...)$, see section \ref{Framed Cartan datum and root datum} for details. Then we have the corresponding algebra $\tilde{\mathbf{f}}$ and the corresponding quantized enveloping algebra $\tilde{\mathbf{U}}$ such that $\mathbf{f}$ is a subalgebra of $\tilde{\mathbf{f}}$, and $\mathbf{U}$ is a subalgebra of $\tilde{\mathbf{U}}$.\\
\textbf{Step 2}. For any dominant weights $\xi,\lambda$ with respect to $(Y,X,\langle,\rangle,...)$, we define a dominant weight $\xi\odot\lambda$ with respect to $(\tilde{Y},\tilde{X},\langle,\rangle,...)$, see section \ref{framed construction of tensor product} for details. Then we have the integrable highest weight module $\tilde{\Lambda}_{\xi\odot\lambda}$ of $\tilde{\mathbf{U}}$ with the canonical basis $\tilde{\mathbf{B}}(\tilde{\Lambda}_{\xi\odot\lambda})$.\\
\textbf{Step 3}. We restrict the $\tilde{\mathbf{U}}$-module $\tilde{\Lambda}_{\xi\odot\lambda}$ to be a $\mathbf{U}$-module, and construct a $\mathbf{U}$-submodule $\Lambda_{\xi,\lambda}$ of it, see Definition \ref{Lambdaxilambda}. Moreover, we construct a subset $\tilde{\mathbf{B}}(\Lambda_{\xi,\lambda})\subset \tilde{\mathbf{B}}(\tilde{\Lambda}_{\xi\odot\lambda})$ such that $\tilde{\mathbf{B}}(\Lambda_{\xi,\lambda})$ is a basis of $\Lambda_{\xi,\lambda}$, see Definition \ref{basis definition} and Corollary \ref{framed basis}.\\
\textbf{Step 4}. By using the comultiplication of $\tilde{\mathbf{f}}$, we construct a $\mathbf{U}$-module isomorphism $\varphi:\Lambda_{\xi,\lambda}\rightarrow \Lambda_{\xi}\otimes \Lambda_{\lambda}$, see Theorem \ref{framed construction of tensor}. Moreover, we prove that the image of the basis $\tilde{\mathbf{B}}(\Lambda_{\xi,\lambda})$ under $\varphi$ coincides with the canonical basis of the tensor product $\Lambda_{\xi}\otimes \Lambda_{\lambda}$, see Theorem \ref{framed construction of canonical basis}.\\
\textbf{Step 5}. Based on the positivity of the canonical basis $\tilde{\mathbf{B}}(\tilde{\Lambda}_{\xi\odot\lambda})$, we complete the proof of the positivity of the canonical basis of the tensor product $\Lambda_{\xi}\otimes \Lambda_{\lambda}$, see Theorem \ref{positive}.

The main results of this paper have been introduced in \textbf{Step 1}-\textbf{Step 5}. We add some comments as follows. Our notations follow that of \cite{Lusztig-1993}, and most definitions we recall in section \ref{Preliminaries} can be found in it. The key observation of the present paper is that the canonical basis of the tensor product $\Lambda_{\xi}\otimes \Lambda_{\lambda}$ can be obtained from the canonical basis of a bigger algebra $\tilde{\mathbf{f}}$ via the comultiplication in \textbf{Step 4}. We give an explicit example for Theorem \ref{framed construction of canonical basis}, see section \ref{Example}. Our construction can be generalized to solve the case for the tensor product of several integrable highest weight modules by further enlarging the Cartan datum, see section \ref{generalized case}.

We remark that the $\mathbf{U}$-module $\Lambda_{\xi,\lambda}$ has been constructed by Li in \cite[Section 3.2, 3.3]{Li-2014}. Moreover, he constructed a $\mathbf{U}$-module isomorphism $\Lambda_{\xi}\otimes \Lambda_{\lambda}\rightarrow \Lambda_{\xi,\lambda}$. This isomorphism is rather different from the isomorphism $\varphi:\Lambda_{\xi,\lambda}\rightarrow \Lambda_{\xi}\otimes \Lambda_{\lambda}$ in \textbf{Step 4}. By using the construction of $\varphi$, we can prove the positivity of the transition matrix between the bases $\mathbf{B}(\Lambda_{\xi}\otimes \Lambda_{\lambda})$ and $\mathbf{B}(\Lambda_{\xi})\otimes \mathbf{B}(\Lambda_{\lambda})$, see Corollary \ref{positivity of transition matrix}. This positivity result and the positivity of the canonical basis $\mathbf{B}(\Lambda_{\xi}\otimes \Lambda_{\lambda})$ are not contained in \cite{Li-2014}.

We remark that Bao-He gave the thickening construction to study the twisted products of flag varieties in \cite{Bao-He-2022}. The thickening construction provided another method to enlarge the Cartan datum, and also inspired us to enlarge the Cartan datum to study the canonical bases of the tensor products.

\subsection*{Acknowledgments}
{This work was supported by the New Cornerstone Science Foundation through the New Cornerstone Investigator Program awarded to Professor Xuhua He; and the National Natural Science Foundation of China [Grant numbers 1288201,12471030]. The authors would like to thank Professor Jie Xiao for his helpful discussions and suggestions. The authors would like to thank Professor Huanchen Bao for pointing out that our idea in \cite{Fang-Lan-2023} could be combined with \cite[Theorem 22.1.7]{Lusztig-1993} due to Lusztig to give an algebraic proof for the positivity which is independent of the geometric methods in \cite{Fang-Lan-2023}. The authors are grateful to the referees for helpful suggestions and corrections.}

\section{Preliminaries}\label{Preliminaries}

Throughout this paper, we fix a symmetric Cartan datum $(I,\cdot)$ and a $Y$-regular root datum $(Y,X,\langle,\rangle,...)$ of type $(I,\cdot)$. More precisely, we fix a finite set $I$; a symmetric bilinear form $\mathbb{Z}[I]\times \mathbb{Z}[I]\rightarrow \mathbb{Z}, (\nu,\nu')\mapsto \nu\cdot \nu'$; two finitely generated free abelian groups $Y$ and $X$; a perfect bilinear pairing $\langle,\rangle:Y\times X\rightarrow \mathbb{Z}$; two embeddings $I\hookrightarrow Y, i\mapsto i$ and $I\hookrightarrow X, i\mapsto i^*$, such that\\
(a) $i\cdot i=2$ for any $i\in I$;\\
(b) $i\cdot j\leqslant 0$ for any $i\not=j$ in $I$;\\
(c) $\langle i,j^*\rangle=i\cdot j$ for any $i,j\in I$;\\
(d) the image of the imbedding $I\subset Y$ is linearly independent in $Y$.\\
The embeddings $I\hookrightarrow Y$ and $I\hookrightarrow X$ induce homomorphisms $\mathbb{Z}[I]\rightarrow Y$ and $\mathbb{Z}[I]\rightarrow X$, and we shall often denote the images of $\nu\in \mathbb{Z}[I]$ under either of these homomorphisms by the same notation $\nu$. 

For any $\nu=\sum_{i\in I}\nu_ii\in \mathbb{N}[I]$, we set $\mathrm{tr}\,\nu=\sum_{i\in I}\nu_i\in \mathbb{N}$.

Let $\mathbb{Q}(v)$ be the field of rational functions in $v$ with coefficients in $\mathbb{Q}$, $\mathcal{A}=\mathbb{Z}[v,v^{-1}]$ and $\mathbf{A}=\mathbb{Q}[[v^{-1}]]\cap \mathbb{Q}(v)$. For any $n\in \mathbb{Z}, m\in \mathbb{N}$, we denote  
$$[n]=\frac{v^n-v^{-n}}{v-v^{-1}}, [m]!=\prod^m_{k=1}[k], \begin{bmatrix}n\\m\end{bmatrix}=\frac{\prod_{k=0}^{m-1}(v^{n-k}-v^{-n+k})}{\prod_{k=1}^m(v^k-v^{-k})}.$$

\subsection{The algebra $\mathbf{f}$}\label{The algebra f}
For the Cartan datum $(I,\cdot)$, the algebra $\mathbf{f}$ is the $\mathbb{Q}(v)$-algebra with $1$ generated by $\{\theta_i\mid i\in I\}$ subject to the quantum Serre relations
$$\sum_{n=0}^{1-i\cdot j}(-1)^n\theta_i^{(n)}\theta_j\theta_i^{(1-i\cdot j-n)}=0\ \textrm{for any $i\not=j$ in $I$},$$
where $\theta_i^{(n)}=\theta^n_i/[n]!$ for any $i\in I$ and $n\in \mathbb{N}$. The integral form ${_{\mathcal{A}}\mathbf{f}}$ is the $\mathcal{A}$-subalgebra of $\mathbf{f}$ generated by $\{\theta_i^{(n)}\mid i\in I,n\in \mathbb{N}\}$.

For any $\nu\in \mathbb{N}[I]$, we define $\mathcal{V}_\nu$ to be the set of sequences $(a_1i_1,...a_ni_n)$, where $a_k\in \mathbb{N}$, $i_k\in I,k=1,...,n$ and $n\in \mathbb{N}$ such that $\sum_{k=1}^n a_ki_k=\nu$. For any $\bnu=(a_1i_1,...a_ni_n)\in \mathcal{V}_\nu$, we denote $\theta_{\bnu}=\theta^{(a_1)}_{i_1}...\theta^{(a_n)}_{i_n}$. Then the algebra $\mathbf{f}=\bigoplus_{\nu\in \mathbb{N}[I]}\mathbf{f}_\nu$
is $\mathbb{N}[I]$-graded, where the $\nu$-homogeneous component $\mathbf{f}_\nu$ is the subspace of $\mathbf{f}$ spanned by $\{\theta_{\bnu}\mid\bnu\in \mathcal{V}_\nu\}$. For any homogeneous $x\in \mathbf{f}_\nu$, we set $|x|=\nu$. 

The tensor products $\mathbf{f}\otimes \mathbf{f}$ is an algebra with the multiplication 
$$(x_1\otimes x_2)(y_1\otimes y_2)=v^{|x_2|\cdot |y_1|}x_1y_1\otimes x_2y_2,$$
where $x_1,x_2,y_1,y_2\in \mathbf{f}$ are homogeneous. The comultiplication of $\mathbf{f}$ is the unique algebra homomorphism $r:\mathbf{f}\rightarrow \mathbf{f}\otimes \mathbf{f}$ such that 
$$r(\theta_i)=\theta_i\otimes 1+1\otimes \theta_i\ \textrm{for any $i\in I$}.$$

The bar-involution of $\mathbf{f}$ is the unique $\mathbb{Q}$-algebra involution $\bar{}:\mathbf{f}\rightarrow \mathbf{f}$ such that 
$$\overline{\theta_i}=\theta_i, \overline{v^n}=v^{-n}\ \textrm{for any $i\in I$ and $n\in \mathbb{Z}$}.$$

\begin{proposition}[{\cite[Proposition 1.2.3]{Lusztig-1993}}]
There is a unique non-degenerate symmetric bilinear form $(,):\mathbf{f}\times \mathbf{f}\rightarrow \mathbb{Q}(v)$ such that\\
{\rm{(a)}} $(\theta_i,\theta_j)=\delta_{i,j}(1-v^{-2})^{-1}$ for any $i,j\in I$;\\
{\rm{(b)}} $(x_1x_2,x)=(x_1\otimes x_2,r(x))$ for any $x_1,x_2,x\in \mathbf{f}$, where the bilinear form on $\mathbf{f}\otimes \mathbf{f}$ is given by $(x_1\otimes x_2,x'_1\otimes x'_2)=(x_1,x'_1)(x_2,x'_2)$ for any $x_1,x_2,x'_1,x'_2\in \mathbf{f}$.
\end{proposition}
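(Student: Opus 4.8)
The plan is to construct both the bilinear form and its properties by an inductive argument on the grading. First I would establish uniqueness: assuming conditions (a) and (b) hold, I claim the value $(\theta_{\bnu},\theta_{\bnu'})$ for $\bnu,\bnu'\in\mathcal{V}_\nu$ is completely determined. The key point is that (b) lets us reduce the degree: writing a monomial $\theta_{\bnu}=\theta_i^{(a)}y$ with $|y|=\nu-ai$, the pairing $(\theta_i^{(a)}y,z)$ is controlled by $(\theta_i^{(a)}\otimes y,r(z))$, and since $r(z)\in\bigoplus_{\mu}\mathbf{f}_\mu\otimes\mathbf{f}_{\nu-\mu}$ only the component in $\mathbf{f}_{ai}\otimes\mathbf{f}_{\nu-ai}$ contributes; then $(\theta_i^{(a)},-)$ on the one-dimensional space $\mathbf{f}_{ai}$ is fixed by (a) together with the standard identity $(\theta_i^{(a)},\theta_i^{(a)})=\prod_{k=1}^a(1-v^{-2k})^{-1}$, which itself follows from (a) and (b) by an easy induction. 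Iterating peels off all factors, so the form is unique.

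Next I would address existence. The cleanest route is to build the form on the free associative algebra (or rather its graded dual) and then check it descends. Concretely, one defines for each $\nu$ a pairing on the span of all words by declaring $(\theta_{i_1}\cdots\theta_{i_n},\theta_{j_1}\cdots\theta_{j_n})$ via the recursion forced above — equivalently, via the well-known sum over matchings / shuffle formula coming from iterating $r$ — and must verify three things: that this pairing is symmetric, that it satisfies the adjunction (b) by construction, and crucially that its radical contains the quantum Serre relations, so that it is well-defined on $\mathbf{f}$ itself. Symmetry follows because the defining recursion is visibly symmetric once one checks the cocommutativity-up-to-twist compatibility of $r$; the adjunction is essentially tautological from the recursive definition; and non-degeneracy on the quotient is equivalent to showing the radical is exactly the Serre ideal.

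The main obstacle, and the technical heart of the proof, is precisely this last point: showing that the quantum Serre elements lie in the radical of the pairing defined on words, i.e. that the form respects the defining relations of $\mathbf{f}$. This is where the specific normalization in (a) and the precise $v$-power $v^{|x_2|\cdot|y_1|}$ in the multiplication on $\mathbf{f}\otimes\mathbf{f}$ are used in an essential way — the Serre relation for the pair $i\neq j$ pairs nontrivially only against monomials supported in degree $(1-i\cdot j)i + j$, and one computes that sum of signed $q$-binomial coefficients which vanishes exactly because $i\cdot i=2$ and $\langle\ \rangle$ is set up so the relevant Gaussian-binomial identity $\sum_n(-1)^n q^{?}\begin{bmatrix}1-i\cdot j\\n\end{bmatrix}=0$ holds. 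Alternatively — and this is the route I would actually take to keep the argument short — I would not reprove this from scratch but instead note that $r$ is a well-defined algebra homomorphism on $\mathbf{f}$ (already part of the setup) and a coassociative one, so one may define $(x,y)$ by fully expanding via iterated $r$ down to degree zero and reading off the coefficient, the normalization (a) fixing the degree-$(i)$ pieces; coassociativity of $r$ guarantees this is independent of the order of peeling, symmetry comes from the symmetry of the pairing on each $\mathbf{f}_i$ plus a compatibility of $r$ with the twisted flip, and (b) is then immediate. Non-degeneracy I would obtain at the end as a separate consequence, deducing it from the fact that the induced Gram matrices on each $\mathbf{f}_\nu$ are, by the recursion, block-triangular with invertible diagonal blocks (products of the nonzero scalars $(1-v^{-2k})^{-1}$), hence of full rank over $\mathbb{Q}(v)$.

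\begin{remark}
In short: uniqueness is a clean downward induction on $\mathrm{tr}\,\nu$ using (b) to strip generators and (a) to evaluate the base pieces; existence is the same recursion run in reverse, with the one genuinely nontrivial verification being that the form so defined kills the quantum Serre relations — equivalently, that it is well-defined on $\mathbf{f}$ rather than merely on the tensor algebra — and this is exactly the input that forces the coefficient conventions of the paper. Non-degeneracy then drops out from invertibility of the diagonal Gram blocks.
\end{remark}
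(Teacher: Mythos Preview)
The paper does not supply a proof of this proposition; it simply cites \cite[Proposition 1.2.3]{Lusztig-1993} and moves on. So there is no ``paper's own proof'' to compare against, and your proposal is really a sketch of Lusztig's construction itself.

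Your treatment of uniqueness is correct, and your identification of the key step for \emph{existence}---that the form defined on the free algebra kills the quantum Serre relations and hence descends to $\mathbf{f}$---is accurate. But there is a genuine gap in your argument for \emph{non-degeneracy}. Showing that the Serre elements lie in the radical gives only the inclusion $(\text{Serre ideal}) \subset (\text{radical})$, which is what you need for the form to be well-defined on $\mathbf{f}$. Non-degeneracy on $\mathbf{f}$ (as defined here, by generators and Serre relations) is the \emph{reverse} inclusion $(\text{radical}) \subset (\text{Serre ideal})$, and this is a hard theorem, not a consequence of any block-triangular Gram matrix structure. Indeed, on the free algebra the Gram matrices are \emph{not} invertible once $\nu$ is large enough for Serre relations to appear, so your ``invertible diagonal blocks'' heuristic cannot work as stated. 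In Lusztig's book this difficulty is finessed by \emph{defining} $\mathbf{f}$ as the quotient of the free algebra by the radical of the form, so that non-degeneracy is tautological; the identification of the radical with the Serre ideal is then a separate and substantially deeper result (see \cite[Corollary 33.1.5]{Lusztig-1993} for the symmetric case, proved via perverse sheaves). Since the present paper takes the Serre presentation as the definition of $\mathbf{f}$, the citation is implicitly invoking both 1.2.3 and that later identification.
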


\subsection{The algebra $\mathbf{U}$}\label{The algebra U}
For the root datum $(Y,X,\langle,\rangle,...)$, the quantized enveloping algebra $\mathbf{U}$ is the $\mathbb{Q}(v)$-algebra with $1$ generated by $\{E_i,F_i,K_\mu\mid i\in I,\mu\in Y\}$ subject to the following relations
\begin{align*}
&K_0=1,\ K_{\mu}K_{\mu'}=K_{\mu+\mu'}\ \textrm{for any $\mu,\mu'\in Y$};\\
&K_{\mu}E_i=v^{\langle \mu,i^*\rangle}E_iK_{\mu}\ \textrm{for any $i\in I$ and $\mu\in Y$};\\
&K_{\mu}F_i=v^{-\langle \mu,i^*\rangle}F_iK_{\mu}\ \textrm{for any $i\in I$ and $\mu\in Y$};\\
&E_iF_j-F_jE_i=\delta_{i,j}\frac{K_i-K_{-i}}{v-v^{-1}}\ \textrm{for any $i,j\in I$};\\
&\sum_{n=0}^{1-i\cdot j}(-1)^nE_i^{(n)}E_jE_i^{(1-i\cdot j-n)}=0\ \textrm{for any $i\not=j$ in $I$};\\
&\sum_{n=0}^{1-i\cdot j}(-1)^nF_i^{(n)}F_jF_i^{(1-i\cdot j-n)}=0\ \textrm{for any $i\not=j$ in $I$},
\end{align*}
where $E_i^{(n)}=E^n_i/[n]!, F_i^{(n)}=F^n_i/[n]!$ for any $i\in I$ and $n\in \mathbb{N}$.

Let $\mathbf{U}^+$ and $\mathbf{U}^-$ be the subalgebras of $\mathbf{U}$ generated by $\{E_i\mid i\in I\}$ and $\{F_i\mid i\in I\}$ respectively. There are algebra isomorphisms $\mathbf{f}\rightarrow \mathbf{U}^+, x\mapsto x^+$ and $\mathbf{f}\rightarrow \mathbf{U}^-, x\mapsto x^-$
such that $E_i=\theta_i^+$ and $F_i=\theta_i^-$ for any $i\in I$.

The tensor product $\mathbf{U}\otimes \mathbf{U}$ is an algebra in the standard way. The comultiplication of $\mathbf{U}$ is the unique algebra homomorphism $\Delta:\mathbf{U}\rightarrow \mathbf{U}\otimes \mathbf{U}$ such that 
\begin{align*}
&\Delta(E_i)=E_i\otimes 1+K_i\otimes E_i\ \textrm{for any $i\in I$};\\
&\Delta(F_i)=1\otimes F_i+F_i\otimes K_{-i}\ \textrm{for any $i\in I$};\\
&\Delta(K_\mu)=K_\mu\otimes K_\mu \ \textrm{for any $\mu\in Y$}.
\end{align*}

The bar-involution of $\mathbf{U}$ is the unique $\mathbb{Q}$-algebra involution $\bar{}:\mathbf{U}\rightarrow \mathbf{U}$ such that 
$$\overline{E_i}=E_i, \overline{F_i}=F_i, \overline{K_{\mu}}=K_{-\mu}, \overline{v^n}=v^{-n}\ \textrm{for any $i\in I, \mu\in Y$ and $n\in \mathbb{Z}$}.$$

\subsection{The module $\Lambda_\lambda$}\label{The module Lambda}
A $\mathbf{U}$-module $M$ is a weight module, if it has a decomposition $M=\bigoplus_{\lambda\in X}M^{\lambda}$ such that $K_{\mu}m=v^{\langle\mu,\lambda\rangle}m$ for any $\mu\in Y,\lambda\in X$ and $m\in M^\lambda$. If $M^\lambda\not=0$, then $M^\lambda$ is called a weight subspace of $M$. Throughout this paper, we only consider weight modules.

\begin{lemma}[{\cite[Section 3.4.2]{Lusztig-1993}}]\label{3.4.2}
For any $i,j\in I,a,b\in \mathbb{N},\lambda\in X$ and $m\in M^\lambda$, we have
\begin{align*}
E_i^{(a)}F_j^{(b)}m=F_j^{(b)}E_i^{(a)}m+\delta_{i,j}\sum_{k=1}^{\mathrm{min}(a,b)}\begin{bmatrix}
a-b+\langle i,\lambda\rangle\\k
\end{bmatrix}F_i^{(b-k)}E_i^{(a-k)}m.
\end{align*}
\end{lemma}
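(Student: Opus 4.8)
The plan is to keep everything at the level of operators on an arbitrary weight module and to specialise to the weight vector $m\in M^\lambda$ only at the very end.

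\emph{Reduction to $j=i$.} The relation $E_iF_j-F_jE_i=\delta_{i,j}(K_i-K_{-i})/(v-v^{-1})$ shows that $E_i$ and $F_j$ commute whenever $i\neq j$; since $E_i^{(a)}$ is a polynomial in $E_i$ and $F_j^{(b)}$ a polynomial in $F_j$, also $E_i^{(a)}$ and $F_j^{(b)}$ commute in that case, while the right-hand side of the claim collapses to $F_j^{(b)}E_i^{(a)}m$ because of the factor $\delta_{i,j}$. Thus the statement is trivial for $i\neq j$, and I may assume $j=i$ and work inside the subalgebra of $\mathbf{U}$ generated by $E_i,F_i,K_{\pm i}$; since $\langle i,i^*\rangle=i\cdot i=2$, the operative relations are $K_iE_i=v^2E_iK_i$, $K_iF_i=v^{-2}F_iK_i$ and $E_iF_i-F_iE_i=(K_i-K_{-i})/(v-v^{-1})$.

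\emph{An operator identity.} For $c\in\mathbb{Z}$ and $t\in\mathbb{N}$ introduce $\begin{bmatrix}K_i;c\\ t\end{bmatrix}:=\prod_{s=1}^{t}\frac{v^{c-s+1}K_i-v^{-(c-s+1)}K_{-i}}{v^s-v^{-s}}$; this element acts on a weight vector of weight $\mu$ by the scalar $\begin{bmatrix}\langle i,\mu\rangle+c\\ t\end{bmatrix}$, and one checks $E_i\begin{bmatrix}K_i;c\\ t\end{bmatrix}=\begin{bmatrix}K_i;c-2\\ t\end{bmatrix}E_i$ from $E_iK_{\pm i}=v^{\mp2}K_{\pm i}E_i$. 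First I would prove, by induction on $b$, the single-$E_i$ identity $E_iF_i^{(b)}=F_i^{(b)}E_i+F_i^{(b-1)}\begin{bmatrix}K_i;1-b\\ 1\end{bmatrix}$: the base $b=1$ is the defining relation, and the step uses $[b]F_i^{(b)}=F_iF_i^{(b-1)}$, the inductive hypothesis, $K_iF_i=v^{-2}F_iK_i$, and the numerical identity $[b]=v[b-1]+v^{1-b}$. Then I would prove, by induction on $a$, the full identity
$$E_i^{(a)}F_i^{(b)}=\sum_{t=0}^{\min(a,b)}F_i^{(b-t)}\begin{bmatrix}K_i;2t-a-b\\ t\end{bmatrix}E_i^{(a-t)},$$
with base $a=1$ the single-$E_i$ identity; in the step one writes $[a]E_i^{(a)}=E_iE_i^{(a-1)}$, inserts the inductive hypothesis for $E_i^{(a-1)}F_i^{(b)}$, and pushes the extra $E_i$ rightward past each $F_i^{(b-t)}$ (single-$E_i$ identity), past each $\begin{bmatrix}K_i;c\\ t\end{bmatrix}$ (shift $c\mapsto c-2$), and past each $E_i^{(a-1-t)}$ (using $E_iE_i^{(m)}=[m+1]E_i^{(m+1)}$). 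The two contributions landing in a common slot $F_i^{(b-t)}(\cdots)E_i^{(a-t)}$ must then be merged into the single coefficient $\begin{bmatrix}K_i;2t-a-b\\ t\end{bmatrix}$, which is the one genuinely non-formal point: it rests on a $q$-Pascal / $q$-Vandermonde recursion for the symbols $\begin{bmatrix}K_i;c\\ t\end{bmatrix}$, and locating and verifying the correct such identity is where I would expect to spend the effort.

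\emph{Conclusion.} Apply the displayed operator identity to $m\in M^\lambda$. Since $E_i^{(a-t)}m\in M^{\lambda+(a-t)i^*}$ and $\langle i,(a-t)i^*\rangle=2(a-t)$, the factor $\begin{bmatrix}K_i;2t-a-b\\ t\end{bmatrix}$ acts on $E_i^{(a-t)}m$ by the scalar $\begin{bmatrix}\langle i,\lambda\rangle+2(a-t)+2t-a-b\\ t\end{bmatrix}=\begin{bmatrix}a-b+\langle i,\lambda\rangle\\ t\end{bmatrix}$. The summand $t=0$ is $F_i^{(b)}E_i^{(a)}m$ (empty product), and renaming $t=k$ in the remaining summands gives precisely the asserted formula.
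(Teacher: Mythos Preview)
Your proof is correct and follows the standard route: reduce to $i=j$, establish the operator identity $E_i^{(a)}F_i^{(b)}=\sum_{t}F_i^{(b-t)}\begin{bmatrix}K_i;2t-a-b\\t\end{bmatrix}E_i^{(a-t)}$ in the rank-one subalgebra, then specialise to a weight vector. The paper itself supplies no proof of this lemma---it is quoted directly from \cite[Section~3.4.2]{Lusztig-1993}, where the argument is precisely the one you outline (the operator identity being Lusztig's Corollary~3.1.9).
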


Let $\rho:\mathbf{U}\rightarrow \mathbf{U}^{\mathrm{opp}}$ be the algebra isomorphism such that $$\rho(E_i)=vK_iF_i, \rho(F_i)=vK_{-i}E_i, \rho(K_{\mu})=K_{\mu}\ \textrm{for any $i\in I$ and $\mu\in Y$.}$$
A symmetric bilinear form $(,):M\times M\rightarrow \mathbb{Q}(v)$ is called admissible, if 
\begin{align*}
&(M^{\lambda},M^{\lambda'})=0\ \textrm{for any $\lambda\not=\lambda'$ in $X$};\\
&(um,m')=(m,\rho(u)m')\ \textrm{for any $u\in \mathbf{U}$ and $m,m'\in M$}.
\end{align*}

For any $\lambda\in X$, the Verma module $M_\lambda$ is $\mathbf{f}$ as a $\mathbb{Q}(v)$-vector space with the $\mathbf{U}$-module structure such that $E_i1=0, F_ix=\theta_ix, K_{\mu}x=v^{\langle\mu,\lambda-|x|\rangle}x$ for any $i\in I,\mu\in Y$ and homogeneous $x\in \mathbf{f}$. Note that $M_{\lambda}^{\lambda'}=\bigoplus_{\nu\in \mathbb{N}[I], \lambda'=\lambda-\nu}\mathbf{f}_{\nu}$ for any $\lambda'\in X$.

For any dominant weight $\lambda\in X$, that is, $\langle i,\lambda\rangle \geqslant 0$ for any $i\in I$, let $\sum_{i\in I}\mathbf{f}\theta_i^{\langle i,\lambda\rangle+1}$ be the left ideal of $\mathbf{f}$ generated by $\{\theta_i^{\langle i,\lambda\rangle+1}\mid i\in I\}$. This left ideal is a submodule of $M_{\lambda}$, see \cite[Proposition 3.5.6(a)]{Lusztig-1993}. The integrable highest weight module $\Lambda_\lambda$ is the quotient 
$$\Lambda_\lambda=M_{\lambda}/\sum_{i\in I}\mathbf{f}\theta_i^{\langle i,\lambda\rangle+1},$$
where the highest weight vector $\eta_\lambda\in \Lambda_\lambda$ is the image of $1\in \mathbf{f}$. The integral form ${_{\mathcal{A}}\Lambda_{\lambda}}$ is the image of ${_{\mathcal{A}}\mathbf{f}}$ under the natural projection $\mathbf{f}\rightarrow \Lambda_{\lambda}$.

The bar-involution of $\Lambda_{\lambda}$ is the unique $\mathbb{Q}$-linear involution $\bar{}:\Lambda_{\lambda}\rightarrow \Lambda_{\lambda}$ such that 
$$\overline{u\eta_{\lambda}}=\overline{u}\eta_{\lambda}\ \textrm{for any $u\in \mathbf{U}$}.$$
By definitions, we have $\overline{um}=\bar{u}\bar{m}$ for any $u\in \mathbf{U}$ and $m\in\Lambda_{\lambda}$.

\begin{proposition}[{\cite[Proposition 19.1.2]{Lusztig-1993}}]\label{19.1.2}
There is a unique admissible symmetric bilinear form $(,)_{\lambda}:\Lambda_{\lambda}\times \Lambda_{\lambda}\rightarrow \mathbb{Q}(v)$ such that $(\eta_{\lambda},\eta_{\lambda})_{\lambda}=1$.
\end{proposition}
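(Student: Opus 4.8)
\emph{Uniqueness.} This part is formal. Since $\Lambda_\lambda=\mathbf{U}\eta_\lambda$, every $m,m'\in\Lambda_\lambda$ can be written $m=u\eta_\lambda$, $m'=u'\eta_\lambda$ with $u,u'\in\mathbf{U}$, and admissibility gives $(m,m')_\lambda=(\eta_\lambda,\rho(u)u'\eta_\lambda)_\lambda$. Writing $\rho(u)u'\eta_\lambda=c\,\eta_\lambda+m''$ with $c\in\mathbb{Q}(v)$ and $m''$ in the sum of the weight spaces $\Lambda_\lambda^{\chi}$ for $\chi\neq\lambda$ — possible because $\Lambda_\lambda^{\lambda}=\mathbb{Q}(v)\eta_\lambda$ — weight orthogonality together with $(\eta_\lambda,\eta_\lambda)_\lambda=1$ forces $(m,m')_\lambda=c$, a number depending only on the vector $\rho(u)u'\eta_\lambda$. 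Hence any admissible form normalized at $\eta_\lambda$ is uniquely determined (no symmetry needed), so the content of the statement is existence.

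\emph{Existence.} The plan is to build a $\rho$-contravariant symmetric form $\{\,,\}$ on the Verma module $M_\lambda=\mathbf{f}$ with $\{1,1\}=1$ and $\{M_\lambda^{\chi},M_\lambda^{\chi'}\}=0$ for $\chi\neq\chi'$, and then descend it. For the descent, its radical $R=\{m\mid\{m,M_\lambda\}=0\}$ is a $\mathbf{U}$-submodule of $M_\lambda$ by contravariance, and one checks $N:=\sum_i\mathbf{f}\theta_i^{\langle i,\lambda\rangle+1}\subseteq R$: the submodule $N$ is generated by the vectors $v_i:=F_i^{(\langle i,\lambda\rangle+1)}\eta_\lambda$; Lemma \ref{3.4.2} applied at $m=\eta_\lambda$ gives $E_jv_i=0$ for all $j$ (the relevant Gaussian binomial coefficient vanishes), so $\mathbf{U}^{+}v_i=\mathbb{Q}(v)v_i\subseteq M_\lambda^{\mathrm{wt}(v_i)}$, and then for $u\in\mathbf{U}^{-}$, whose image $\rho(u)$ lies in the subalgebra generated by the $E_j$ and $K_\mu$, one gets
\[
\{v_i,\,u\eta_\lambda\}=\{u\eta_\lambda,\,v_i\}=\{\eta_\lambda,\,\rho(u)v_i\}=0
\]
by symmetry, contravariance, $\mathbf{U}^{+}v_i=\mathbb{Q}(v)v_i$, and weight orthogonality. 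Thus $\{\,,\}$ descends to a symmetric admissible form $(\,,)_\lambda$ on $\Lambda_\lambda=M_\lambda/N$ with $(\eta_\lambda,\eta_\lambda)_\lambda=1$; and since $R$ is proper while $N$ is the unique maximal proper submodule of $M_\lambda$ (as $\Lambda_\lambda$ is simple), in fact $R=N$, so $(\,,)_\lambda$ is moreover non-degenerate.

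\emph{The core step, and the main obstacle.} What remains, and where all the work lies, is the construction of $\{\,,\}$ on $M_\lambda$. It is pinned down by $\{1,1\}=1$, weight orthogonality, and the contravariance recursion $\{\theta_ix,y\}=\{x,\rho(F_i)y\}=\{x,vK_{-i}E_iy\}$, where $E_i$ acts on $M_\lambda=\mathbf{f}$ through the operator determined recursively by $E_i\cdot1=0$ and $E_i(\theta_jy)=\theta_j(E_iy)+\delta_{ij}[\langle i,\lambda-|y|\rangle]y$. The two points to verify — that this recursion is consistent (independent of how a leading $\theta_i$ is factored off) and that the form it produces is symmetric — I would handle using the comultiplication $r$ and the symmetric non-degenerate inner product $(\,,)$ of $\mathbf{f}$ recalled above: the compatibility $(x_1x_2,x)=(x_1\otimes x_2,r(x))$ identifies the $(\,,)$-adjoint of left multiplication by $\theta_i$ in terms of $r$, which makes it possible to write $\{\,,\}$ in closed form out of $(\,,)$, $r$ and the weight grading, so that consistency and symmetry are inherited from those of $(\,,)$, while the $E_i$- and $K_\mu$-contravariance reduce to the quantum Serre relations and the defining property of $r$. (Alternatively one can bypass $M_\lambda$ altogether: the restricted dual $\Lambda_\lambda^{\star}=\bigoplus_\chi(\Lambda_\lambda^{\chi})^{*}$ with the action twisted by $\rho$ is again a simple highest weight module of highest weight $\lambda$, hence isomorphic to $\Lambda_\lambda$ by a unique map $\varphi$ sending $\eta_\lambda$ to the highest-weight functional $f_0$; then $(m,m')_\lambda:=\varphi(m)(m')$ is admissible and non-degenerate with $(\eta_\lambda,\eta_\lambda)_\lambda=1$, and it is symmetric because $(m,m')\mapsto\varphi(m')(m)$ is, using $\rho^2=\mathrm{id}$, again a $\mathbf{U}$-module map sending $\eta_\lambda\mapsto f_0$, hence equal to $\varphi$ by cyclicity.) The main obstacle is precisely this construction of a consistent symmetric contravariant form on $M_\lambda$; once it is in hand, uniqueness, the descent to $\Lambda_\lambda$, and non-degeneracy are all formal.
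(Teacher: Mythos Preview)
The paper does not prove this proposition; it is quoted verbatim from \cite[Proposition 19.1.2]{Lusztig-1993} without argument, so there is no in-paper proof to compare against. Your outline is correct and is essentially Lusztig's own route: in \cite[19.1.1--19.1.2]{Lusztig-1993} he builds the contravariant form on $M_\lambda$ out of the inner product $(\,,\,)$ on $\mathbf{f}$ (using that the $(\,,\,)$-adjoint of left multiplication by $\theta_i$ is, up to a power of $v$, the operator $_ir$ coming from $r$), checks admissibility, and descends to $\Lambda_\lambda$ exactly as you describe; your restricted-dual alternative is the other standard proof and works equally well once $\rho^2=\mathrm{id}$ is verified. Two minor comments: in the descent step you only need $N\subseteq R$ to get a well-defined form on $\Lambda_\lambda$, and non-degeneracy is not part of the statement here (Lusztig records it separately), so your appeal to maximality of $N$ is a bonus rather than a requirement; and in what you call the ``core step'' you have correctly identified but not carried out the computation --- the precise closed formula Lusztig uses is $\{x,y\}=(x,y)\prod_i\prod_{s=1}^{\nu_i}(1-v^{-2(\langle i,\lambda\rangle-s+1)})$ for $x,y\in\mathbf{f}_\nu$ (up to normalization conventions), from which symmetry and consistency are immediate.
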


\subsection{The tensor product $\Lambda_{\xi}\otimes \Lambda_{\lambda}$}
For any two dominant weights $\xi,\lambda\in X$, the tensor product $\Lambda_{\xi}\otimes \Lambda_{\lambda}$ is naturally a $(\mathbf{U}\otimes \mathbf{U})$-module such that
$$(u_1\otimes u_2)(m_1\otimes m_2)=u_1m_1\otimes u_2m_2\ \textrm{for any $u_1,u_2\in \mathbf{U}$ and $m_1\in \Lambda_{\xi}, m_2\in \Lambda_{\lambda}$}.$$
Then $\Lambda_{\xi}\otimes \Lambda_{\lambda}$ is a $\mathbf{U}$-module via the comultiplication $\Delta:\mathbf{U}\rightarrow \mathbf{U}\otimes \mathbf{U}$.

The bar-involution of $\Lambda_{\xi}\otimes \Lambda_{\lambda}$ is the $\mathbb{Q}$-linear involution $\bar{}:\Lambda_{\xi}\otimes \Lambda_{\lambda}\rightarrow \Lambda_{\xi}\otimes \Lambda_{\lambda}$ such that 
$$\overline{m_1\otimes m_2}=\overline{m_1}\otimes \overline{m_2}\ \textrm{for any $m_1\in \Lambda_{\xi}$ and $m_2\in \Lambda_{\lambda}$}.$$
Note that $\overline{um}=\bar{u}\bar{m}$ does not hold for any $u\in \mathbf{U}$ and $m\in\Lambda_{\xi}\otimes \Lambda_{\lambda}$ in general, since the comultiplication and the bar-involution of $\mathbf{U}$ do not commute with the each other. We will define a new involution $\Psi:\Lambda_{\xi}\otimes \Lambda_{\lambda}\rightarrow \Lambda_{\xi}\otimes \Lambda_{\lambda}$ in \ref{Canonical basis of tensor product}.

We define a symmetric bilinear form $(,)_{\xi,\lambda}:(\Lambda_{\xi}\otimes \Lambda_{\lambda})\times (\Lambda_{\xi}\otimes \Lambda_{\lambda})\rightarrow \mathbb{Q}(v)$ by
$$(m_1\otimes m_2,m'_1\otimes m'_2)_{\xi,\lambda}=(m_1,m'_1)_{\xi}(m_2,m'_2)_{\lambda}\ \textrm{for any $m_1,m'_1\in \Lambda_{\xi}$ and $m_2,m,'_2\in \Lambda_{\lambda}$}.$$
By \cite[lemma 17.1.3(b)]{Lusztig-1993}, the bilinear form $(,)_{\xi,\lambda}$ is admissible.

\subsection{Canonical basis}\label{Canonical basis}

A basis $B$ of a $\mathbb{Q}(v)$-vector space $V$ is called almost orthonormal with respect to a symmetric bilinear form $(,):V\times V\rightarrow \mathbb{Q}(v)$, if 
$$(b,b')\in\delta_{b,b'}+v^{-1}\mathbb{Z}[[v^{-1}]]\cap \mathbb{Q}(v)\ \textrm{for any $b,b'\in B$}.$$

\begin{lemma}[{\cite[Lemma 14.2.2]{Lusztig-1993}}]\label{14.2.2}
Let $B$ be an almost orthonormal basis of a $\mathbb{Q}(v)$-vector space $V$ with respect to a symmetric bilinear form $(,):V\times V\rightarrow \mathbb{Q}(v)$, $_{\mathcal{A}}V$ be the $\mathcal{A}$-submodule of $V$ generated by $B$ and $L(V)=\{x\in V\mid (x,x)\in \mathbf{A}\}$.\\
{\rm{(a)}} The set $L(V)$ is a $\mathbf{A}$-submodule of $V$ with a basis $B$.\\
{\rm{(b)}} For any $x\in {_{\mathcal{A}}V}$ satisfying $(x,x)\in 1+v^{-1}\mathbf{A}$, there exists $b\in B$ such that $$x\in \pm b\ \mathrm{mod}\, v^{-1}L(V).$$
\end{lemma}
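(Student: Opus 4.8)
The plan is to identify $L(V)$ with the $\mathbf{A}$-span $\widehat{L}:=\bigoplus_{b\in B}\mathbf{A}\,b$ and then, in part (b), to pin down $b$ by reducing modulo $v^{-1}$. One inclusion is immediate: since $(b,b)-1$ and $(b,b')$ for $b\neq b'$ all lie in $v^{-1}\mathbb{Z}[[v^{-1}]]\cap\mathbb{Q}(v)\subset v^{-1}\mathbf{A}$, for any $x=\sum_b c_bb$ with $c_b\in\mathbf{A}$ (finitely many nonzero) the finite sum $(x,x)=\sum_{b,b'}c_bc_{b'}(b,b')$ lies in $\mathbf{A}$, so $\widehat{L}\subseteq L(V)$.

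The real content of (a) is the reverse inclusion, where the positivity of $(,)$ over $\mathbb{Q}$ is used. For $0\neq f\in\mathbb{Q}(v)$ let $\deg f\in\mathbb{Z}$ be its degree as a rational function (numerator degree minus denominator degree) and $\mathrm{lc}(f)\in\mathbb{Q}^\times$ its leading coefficient, so that $f\in\mathbf{A}$ iff $f=0$ or $\deg f\leq0$. Given $x=\sum_b c_bb$ with some $c_b\notin\mathbf{A}$, set $d=\max_b\deg c_b>0$. Splitting $(x,x)=\sum_b c_b^2(b,b)+\sum_{b\neq b'}c_bc_{b'}(b,b')$, every off-diagonal term has degree $\leq\deg c_b+\deg c_{b'}-1\leq2d-1$ because $\deg(b,b')\leq-1$ there, whereas each diagonal term satisfies $\deg\bigl(c_b^2(b,b)\bigr)=2\deg c_b$ with leading coefficient $\mathrm{lc}(c_b)^2>0$ (using $(b,b)=1+v^{-1}(\cdots)$). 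Hence the degree-$2d$ coefficient of $(x,x)$ equals $\sum_{b:\deg c_b=d}\mathrm{lc}(c_b)^2>0$, so $\deg(x,x)=2d>0$ and $x\notin L(V)$. Contrapositively $L(V)\subseteq\widehat{L}$, so $L(V)=\widehat{L}=\bigoplus_{b\in B}\mathbf{A}\,b$ is free over $\mathbf{A}$ with basis $B$, which is (a).

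For (b), take $x\in{_{\mathcal{A}}V}$ with $(x,x)\in1+v^{-1}\mathbf{A}$. Then $(x,x)\in\mathbf{A}$ gives $x\in L(V)=\bigoplus_b\mathbf{A}\,b$ by (a), and combined with $x\in\bigoplus_b\mathcal{A}\,b$ we obtain $x=\sum_b c_bb$ with $c_b\in\mathcal{A}\cap\mathbf{A}=\mathbb{Z}[v^{-1}]$; let $\bar c_b\in\mathbb{Z}$ be the constant term of $c_b$. Reducing the finite sum $(x,x)=\sum_{b,b'}c_bc_{b'}(b,b')$ modulo $v^{-1}\mathbf{A}$ and using $(b,b')\equiv\delta_{b,b'}$ gives $\sum_b\bar c_b^2=1$ in $\mathbb{Z}$, so exactly one index $b_0$ has $\bar c_{b_0}=\pm1$ and $\bar c_b=0$ for $b\neq b_0$. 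Then in $x\mp b_0=(c_{b_0}\mp1)\,b_0+\sum_{b\neq b_0}c_bb$ every coefficient lies in $v^{-1}\mathbb{Z}[v^{-1}]\subseteq v^{-1}\mathbf{A}$, whence $x\mp b_0\in v^{-1}L(V)$, as required. The main obstacle is the reverse inclusion in (a): the algebraic manipulations are routine, but the argument genuinely needs the formal reality of the coefficient field, namely that the top-degree part of $(x,x)$ is a sum of nonzero squares and therefore cannot cancel. Part (b) then rests only on the elementary fact that $1$ admits no nontrivial representation as a sum of integer squares, which is precisely why the integral lattice ${_{\mathcal{A}}V}$ (not merely $L(V)$) must appear in the hypothesis.
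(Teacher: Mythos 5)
The paper does not prove this lemma; it is quoted verbatim from Lusztig with a citation to \cite[Lemma 14.2.2]{Lusztig-1993}. Your argument is correct and is essentially Lusztig's own: the key inclusion $L(V)\subseteq\bigoplus_b\mathbf{A}\,b$ rests on the top-degree part of $(x,x)$ being a nonzero sum of rational squares (Lusztig phrases this via constant terms of the rescaled coefficients $v^{-n}c_b$ rather than leading coefficients, which is the same computation), and part (b) via $\sum_b\bar c_b^2=1$ over $\mathbb{Z}$ is exactly the standard deduction.
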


\subsubsection{Canonical basis of $\mathbf{f}$}\label{Canonical basis of f}

We refer to \cite[Chapter 14]{Lusztig-1993} and \cite[Sections 10]{Lusztig-1991} for the definition of the canonical basis $\mathbf{B}$ of $\mathbf{f}$. We summarize some properties of $\mathbf{B}$ as follows.

\begin{theorem}[{\cite[Theorem 14.2.3, 14.4.3, 14.4.13]{Lusztig-1993}}]\label{properties of B}
The canonical basis $\mathbf{B}$ of $\mathbf{f}$ has the following properties:\\
{\rm{(integral)}} $\mathbf{B}$ is a $\mathcal{A}$-basis of the integral form ${_{\mathcal{A}}\mathbf{f}}$;\\
{\rm{(bar-invariant)}} for any $b\in \mathbf{B}$, we have $\overline{b}=b$;\\
{\rm{(almost orthonormal)}} for any $b,b'\in \mathbf{B}$, we have $(b,b')\in \delta_{b,b'}+v^{-1}\mathbb{Z}[[v^{-1}]]$;\\
{\rm{(positive)}} for any $b_1,b_2,b\in \mathbf{B}$, we have 
$$b_1b_2\in \sum_{b_3\in \mathbf{B}}\mathbb{N}[v,v^{-1}]b_3,\ r(b)\in \sum_{b',b''\in \mathbf{B}}\mathbb{N}[v,v^{-1}]b'\otimes b''.$$
\end{theorem}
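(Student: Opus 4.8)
The plan is to recover each of the four properties from one of the two classical constructions of $\mathbf{B}$ and from their comparison. The items \emph{integral}, \emph{bar-invariant} and \emph{almost orthogonal} are extracted from the elementary algebraic construction of the canonical basis in \cite[Chapter 14]{Lusztig-1993}, while \emph{positive} is read off from the geometric realization of $\mathbf{f}$ by perverse sheaves on quiver representation varieties of \cite{Lusztig-1990,Lusztig-1991}.

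For the algebraic part, the three key ingredients are the non-degenerate symmetric inner product $(,)$ of \cite[Proposition 1.2.3]{Lusztig-1993}, the bar-involution of $\mathbf{f}$, and, for each $\nu\in\mathbb{N}[I]$, the $\mathbf{A}$-lattice $\mathcal{L}\subset\mathbf{f}_\nu$ spanned by the monomials $\theta_{\bnu}$, $\bnu\in\mathcal{V}_\nu$. First I would verify, by induction on $\mathrm{tr}\,\nu$ using the defining properties (a) and (b) of $(,)$, that $(,)$ carries each pair of monomials into $\mathbf{A}$, so that $\mathcal{L}=\{x\in\mathbf{f}_\nu\mid(x,x)\in\mathbf{A}\}$ and the form descends to a symmetric form on the $\mathbb{Z}$-module $\mathcal{L}/v^{-1}\mathcal{L}$; one then checks that a suitable family of monomials maps to a basis of $\mathcal{L}/v^{-1}\mathcal{L}$ which is orthonormal for this descended form. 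Next, the bar-involution preserves ${}_{\mathcal{A}}\mathbf{f}$ and $\mathcal{L}$ and acts as the identity on $\mathcal{L}/v^{-1}\mathcal{L}$. A Kazhdan--Lusztig type existence-and-uniqueness argument (in the spirit of Lemma~\ref{14.2.2}) then produces, for each vector of the chosen basis of $\mathcal{L}/v^{-1}\mathcal{L}$, a unique bar-invariant element of ${}_{\mathcal{A}}\mathbf{f}\cap\mathcal{L}$ congruent to it modulo $v^{-1}\mathcal{L}$, well defined up to sign; fixing the signs yields $\mathbf{B}$. Integrality and bar-invariance are then built in by construction, and almost orthogonality follows since each $b\in\mathbf{B}$ equals a basis vector of $\mathcal{L}/v^{-1}\mathcal{L}$ modulo $v^{-1}\mathcal{L}$ while the descended form is orthonormal there, giving $(b,b')\in\delta_{b,b'}+v^{-1}\mathbf{A}$; the sharper membership in $\delta_{b,b'}+v^{-1}\mathbb{Z}[[v^{-1}]]$ is the integrality refinement of the inner product and alternatively follows from positivity.

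For positivity I would pass to the geometric side. Fix an orientation $\Omega$ of the graph underlying $(I,\cdot)$; for an $I$-graded space $V$ of dimension vector $\nu$ let $E_V$ be the affine space of representations of the quiver $(I,\Omega)$ on $V$, with its $G_V=\prod_{i\in I}\mathrm{GL}(V_i)$-action. Lusztig's category $\mathcal{Q}_V$ — the semisimple $G_V$-equivariant complexes that are direct sums of shifts of summands of pushforwards of shifted constant sheaves along the proper maps from varieties of flags of subrepresentations — categorifies ${}_{\mathcal{A}}\mathbf{f}_\nu$: the shift $[1]$ corresponds to multiplication by $v$ in the Grothendieck group, the induction functor (pull--push along the diagram of subrepresentations and quotient representations) categorifies the multiplication of $\mathbf{f}$, the restriction functor categorifies the comultiplication $r$, and the classes of the simple perverse sheaves occurring in objects of $\mathcal{Q}$ are exactly the elements of $\mathbf{B}$. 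Since both functors carry objects of $\mathcal{Q}$ back into $\mathcal{Q}$ (resp.\ into $\mathcal{Q}\boxtimes\mathcal{Q}$, up to shift), the Decomposition Theorem forces $\mathrm{Ind}(b_1\boxtimes b_2)$ and $\mathrm{Res}(b)$ to be \emph{semisimple}, hence isomorphic to finite direct sums of shifted simple perverse sheaves $b_3[d]^{\oplus m_{b_3,d}}$ with all $m_{b_3,d}\in\mathbb{N}$; collecting the shift data turns the multiplicities into structure constants lying in $\mathbb{N}[v,v^{-1}]$, which is exactly the two inclusions of the \emph{positive} item. Finally, the algebraically and geometrically defined bases coincide by Lusztig's comparison theorem \cite[Theorem 14.4.13]{Lusztig-1993}, so the positivity transfers to the basis constructed in the previous paragraph.

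The part I expect to be the real obstacle is positivity: there is no purely algebraic or combinatorial proof known, and one must invoke either the perverse-sheaf realization together with the Decomposition Theorem or an equivalent categorification (the KLR algebras of Khovanov--Lauda and Rouquier, or Webster's diagrammatic categories). Two subsidiary points are where the remaining work concentrates: the comparison between the two constructions of $\mathbf{B}$, which rests on a careful identification of the geometric bilinear pairing and bar-involution with the algebraic ones, and the sign normalization in the algebraic step, which is pinned down most cleanly via the geometric picture (equivalently, via positivity). The bilinear-form estimates, the unitriangularity feeding the Kazhdan--Lusztig lemma, and the bookkeeping of cohomological shifts are otherwise routine.
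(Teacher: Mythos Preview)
Your sketch is a reasonable outline of how the cited results are established in \cite{Lusztig-1993}, but note that the paper itself does not prove this theorem: it is stated purely as a citation of \cite[Theorems 14.2.3, 14.4.3, 14.4.13]{Lusztig-1993}, with no accompanying argument. There is therefore no ``paper's own proof'' to compare against; your proposal is essentially an exposition of the literature that the paper takes as black boxes.

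One small correction to your account of the algebraic construction: the $\mathbf{A}$-lattice $\mathcal{L}(\mathbf{f}_\nu)$ that Lusztig uses in Chapter~14 is not the span of the monomials $\theta_{\bnu}$ themselves but rather the lattice generated by applying strings of Kashiwara operators $\tilde{\phi}_i$ to $1$; equivalently it is characterized by $\mathcal{L}(\mathbf{f}_\nu)=\{x\in\mathbf{f}_\nu\mid (x,x)\in\mathbf{A}\}$, as you note, but the proof that this is a finitely generated $\mathbf{A}$-module with an orthonormal basis modulo $v^{-1}$ goes through the Kashiwara-operator description rather than directly through monomials. This is the substance of \cite[Theorem 14.2.3]{Lusztig-1993} and the preparatory lemmas in \S16--17. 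Your description of the geometric side and the role of the comparison theorem \cite[Theorem 14.4.13]{Lusztig-1993} is accurate.
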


Let $\sigma:\mathbf{f}\rightarrow \mathbf{f}^{\mathrm{opp}}$ be the unique algebra homomorphism such that $\sigma(\theta_i)=\theta_i$ for any $i\in I$, see \cite[Section 1.2.9]{Lusztig-1993}. For any $i\in I$ and $n\in \mathbb{N}$, we define the subsets
\begin{align*}
&\mathbf{B}_{i,\geqslant n}=\mathbf{B}\cap \theta_i^n\mathbf{f},\ \mathbf{B}_{i,n}=\mathbf{B}_{i,\geqslant n}\setminus \mathbf{B}_{i,\geqslant n+1},\\
&\mathbf{B}_{i,\geqslant n}^{\sigma}=\mathbf{B}\cap \mathbf{f}\theta_i^n,\ \mathbf{B}_{i,n}^{\sigma}=\mathbf{B}_{i,\geqslant n}^{\sigma}\setminus \mathbf{B}_{i,\geqslant n+1}^{\sigma}.
\end{align*}
Then we have $\mathbf{B}_{i,\geqslant n}^{\sigma}=\sigma(\mathbf{B}_{i,\geqslant n}),\mathbf{B}_{i,n}^{\sigma}=\sigma(\mathbf{B}_{i,n})$ and 
$$\mathbf{B}=\bigcup_{n\in \mathbb{N}}\mathbf{B}_{i,\geqslant n}=\bigsqcup_{n\in \mathbb{N}}\mathbf{B}_{i,n}=\bigcup_{n\in \mathbb{N}}\mathbf{B}_{i,\geqslant n}^{\sigma}=\bigsqcup_{n\in \mathbb{N}}\mathbf{B}_{i,n}^{\sigma},$$
see \cite[Section 14.3.1]{Lusztig-1993}.

\begin{theorem}[{\cite[Theorem 14.3.2]{Lusztig-1993} or \cite[Lemma 6.4, Theorem 11.7]{Lusztig-1991}}]\label{14.3.2}
For any $i\in I$ and $n\in \mathbb{N}$, let $p_{ni}:\mathbf{f}\rightarrow \mathbf{f}_{ni}$ be the natural projection from $\mathbf{f}$ to its homogeneous component $\mathbf{f}_{ni}$. Then we have\\
{\rm{(a)}} the subset $\mathbf{B}_{i,\geqslant n}\subset \mathbf{B}$ is a basis of the subspace $\theta_i^n\mathbf{f}\subset \mathbf{f}$; \\
{\rm{(b)}} the subset $\mathbf{B}_{i,\geqslant n}^{\sigma}\subset \mathbf{B}$ is a basis of the subspace $\mathbf{f}\theta_i^n\subset \mathbf{f}$;\\
{\rm{(c)}} there is a bijection $\pi_{i,n}:\mathbf{B}_{i,0}\rightarrow \mathbf{B}_{i,n}$ such that 
\begin{align*}
\theta_i^{(n)}b=\pi_{i,n}(b)+\sum_{b'\in \mathbf{B}_{i,\geqslant n+1}}c_{b'}b',\ (p_{ni}\otimes \mathrm{Id})r\pi_{i,n}(b)=\theta_i^{(n)}\otimes b+\sum_{b''\in \mathbf{B}_{i,\geqslant 1}}d_{b''}\theta_i^{(n)}\otimes b''
\end{align*}
for any $b\in \mathbf{B}_{i,0}$, where $c_{b'},d_{b''}\in \mathbb{N}[v,v^{-1}]$;\\
{\rm{(d)}} there is a bijection $\pi_{i,n}^{\sigma}:\mathbf{B}_{i,0}^{\sigma}\rightarrow \mathbf{B}_{i,n}^{\sigma}$ such that 
\begin{align*}
b\theta_i^{(n)}=\pi_{i,n}^{\sigma}(b)+\sum_{b'\in \mathbf{B}_{i,\geqslant n+1}^{\sigma}}c'_{b'}b',\ (\mathrm{Id}\otimes p_{ni})r\pi_{i,n}^{\sigma}(b)=b\otimes \theta_i^{(n)}+\sum_{b''\in \mathbf{B}_{i,\geqslant 1}^{\sigma}}d'_{b''}b''\otimes \theta_i^{(n)}
\end{align*}
for any $b\in \mathbf{B}_{i,0}^{\sigma}$, where $c'_{b'},d'_{b''}\in \mathbb{N}[v,v^{-1}]$.
\end{theorem}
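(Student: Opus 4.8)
The plan is to isolate the ``rank-one behaviour at the vertex $i$'' and to derive the statement from the integrality, bar-invariance, almost-orthonormality and positivity of $\mathbf{B}$ recorded in Theorem~\ref{properties of B}, together with Lemma~\ref{14.2.2} and the inner product $(\,,\,)$ on $\mathbf{f}$ of \cite[Proposition 1.2.3]{Lusztig-1993}. First note that (b) and (d) follow from (a) and (c) by transport through the anti-automorphism $\sigma$: it fixes every $\theta_i$, hence carries $\theta_i^{n}\mathbf{f}$ onto $\mathbf{f}\theta_i^{n}$; it preserves $\mathbf{B}$, so that $\sigma(\mathbf{B}_{i,\geqslant n})=\mathbf{B}_{i,\geqslant n}^{\sigma}$ as recorded above; it commutes with the bar-involution; and it interchanges the two one-sided components of $r$. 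So it suffices to treat (a) and (c). For the rank-one toolkit, let $r_i\colon\mathbf{f}_{\nu}\to\mathbf{f}_{\nu-i}$ be the linear map extracting the $\theta_i\otimes(-)$ component of $r$. Since $(\,,\,)$ is $\mathbb{N}[I]$-graded, the identity $(x_1x_2,x)=(x_1\otimes x_2,r(x))$ with $x_1=\theta_i$ gives $(\theta_ix,y)=(\theta_i,\theta_i)(x,r_iy)$, hence by iteration $(\theta_i^{(n)}x,y)=(\theta_i,\theta_i)^{n}(x,r_i^{(n)}y)$ with $r_i^{(n)}=r_i^{n}/[n]!$ and $(\theta_i,\theta_i)^{n}=(1-v^{-2})^{-n}\in 1+v^{-1}\mathbf{A}$; since $(\,,\,)$ is non-degenerate on each $\mathbf{f}_{\nu}$, this gives $(\theta_i^{n}\mathbf{f})^{\perp}=\ker r_i^{n}$. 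Since $\mathbf{f}$ has no zero-divisors, left multiplication by $\theta_i^{(n)}$ is injective, so $\theta_i^{(n)}\mathbf{f}=\theta_i^{n}\mathbf{f}$ is identified with $\mathbf{f}$ and there is an induced isomorphism $\mathbf{f}/\theta_i\mathbf{f}\xrightarrow{\sim}\theta_i^{n}\mathbf{f}/\theta_i^{n+1}\mathbf{f}$.

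To prove (a) — that $\mathbf{B}_{i,\geqslant n}$ is a basis of $\theta_i^{n}\mathbf{f}$ for every $n$ — it is equivalent to show that $\mathbf{B}$ is compatible with the decreasing filtration $\mathbf{f}\supset\theta_i\mathbf{f}\supset\theta_i^{2}\mathbf{f}\supset\cdots$, i.e. that the $\mathbf{B}$-expansion of any element of $\theta_i^{n}\mathbf{f}$ involves only elements of $\theta_i^{n}\mathbf{f}$. I would establish this by equipping $\mathbf{B}$ with its ``$i$-crystal structure'': attach to each $b\in\mathbf{B}$ the integer $\epsilon_i(b)=\max\{n:b\in\theta_i^{n}\mathbf{f}\}$ and, using the ``$i$-th coderivation'' $(p_{\epsilon_i(b)i}\otimes\mathrm{Id})r$, a ``lowered'' element $b^{\flat}\in\mathbf{B}$ with $\epsilon_i(b^{\flat})=0$, and then show by induction on $\mathrm{tr}\,\nu$ that $b\mapsto(\epsilon_i(b),b^{\flat})$ is a bijection from $\mathbf{B}\cap\mathbf{f}_{\nu}$ onto the expected index set. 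The positivity of the comultiplication in Theorem~\ref{properties of B} is exactly what keeps this bookkeeping consistent, and filtration-compatibility drops out of the construction.

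Granting (a), part (c) becomes essentially formal. For $b\in\mathbf{B}_{i,0}$, positivity of multiplication gives $\theta_i^{(n)}b=\sum_{b'}c_{b',b}b'$ with $c_{b',b}\in\mathbb{N}[v,v^{-1}]$, and since $\theta_i^{(n)}b\in\theta_i^{n}\mathbf{f}$ only $b'\in\mathbf{B}_{i,\geqslant n}$ can occur by (a). Reducing modulo $\theta_i^{n+1}\mathbf{f}$ and using the isomorphism $\mathbf{f}/\theta_i\mathbf{f}\cong\theta_i^{n}\mathbf{f}/\theta_i^{n+1}\mathbf{f}$ shows that the square matrix $C=(c_{b',b})_{b'\in\mathbf{B}_{i,n},\,b\in\mathbf{B}_{i,0}}$ (formed degree by degree) is invertible; on the other hand, since $\mathbf{f}_{ni}=\mathbb{Q}(v)\theta_i^{(n)}$ is one-dimensional, positivity of the comultiplication gives $(p_{ni}\otimes\mathrm{Id})r(b')=\theta_i^{(n)}\otimes\big(\sum_{b''}d_{b'',b'}b''\big)$ with $d_{b'',b'}\in\mathbb{N}[v,v^{-1}]$ and, by (a), $b''\in\mathbf{B}_{i,0}$, and the adjunction $(\theta_i^{(n)}x,y)=(\theta_i,\theta_i)^{n}(x,r_i^{(n)}y)$ together with the almost-orthonormality of $\mathbf{B}$ shows that $D=(d_{b'',b'})$ is, up to an explicit unit, the inverse of $C$. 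Since $C$ and $D$ both have entries in $\mathbb{N}[v,v^{-1}]$, $C$ must be a permutation matrix, and as its entries are bar-invariant (they are coefficients of the bar-invariant element $\theta_i^{(n)}b$ in the bar-invariant basis $\mathbf{B}$) each equals $1$. This furnishes the bijection $\pi_{i,n}\colon\mathbf{B}_{i,0}\to\mathbf{B}_{i,n}$, the first formula $\theta_i^{(n)}b=\pi_{i,n}(b)+\sum_{b'\in\mathbf{B}_{i,\geqslant n+1}}c_{b'}b'$ with $c_{b'}\in\mathbb{N}[v,v^{-1}]$, and — reading $C$ and $D$ the other way — the formula for $(p_{ni}\otimes\mathrm{Id})r\,\pi_{i,n}(b)$; applying $\sigma$ then gives (d).

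The step I expect to be the genuine obstacle is (a), equivalently (b): the compatibility of $\mathbf{B}$ with the left ideals $\theta_i^{n}\mathbf{f}$. Bar-invariance and almost-orthonormality of $\mathbf{B}$ alone do not force this, and the inductive bookkeeping with $\epsilon_i$ and $b^{\flat}$ sketched above must be grounded in something more: this is precisely where Lusztig's proof uses either the geometry of the quiver varieties — the simple perverse sheaves in question being classified by the ``socle-at-$i$'' stratum on which they are supported, cf. \cite[Lemma 6.4, Theorem 11.7]{Lusztig-1991} — or, in the algebraic development of \cite[Chapter 14]{Lusztig-1993}, PBW-type bases attached to a reduced word beginning with $i$. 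Everything downstream of (a) — the bijectivity of $\pi_{i,n}$ and $\pi_{i,n}^{\sigma}$, the $\mathbb{N}[v,v^{-1}]$-positivity, and the normalization of leading coefficients in (c) and (d) — is then a formal consequence of Theorem~\ref{properties of B} and Lemma~\ref{14.2.2}.
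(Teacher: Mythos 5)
The paper does not prove this statement at all: it is quoted verbatim from Lusztig (\cite[Theorem 14.3.2]{Lusztig-1993}, or \cite[Lemma 6.4, Theorem 11.7]{Lusztig-1991}) and used as a black box, so there is no in-paper argument to compare yours against. Measured against that, your proposal is an attempt to re-derive the cited result from the other properties the paper records, and it succeeds only partially. The reduction of (b) and (d) to (a) and (c) via $\sigma$ is correct, and your derivation of (c) from (a) is the right kind of formal argument: positivity of multiplication plus (a) confines the expansion of $\theta_i^{(n)}b$ to $\mathbf{B}_{i,\geqslant n}$, the induced isomorphism $\mathbf{f}/\theta_i\mathbf{f}\cong\theta_i^{n}\mathbf{f}/\theta_i^{n+1}\mathbf{f}$ makes the degree-by-degree transition matrix invertible, and one-dimensionality of $\mathbf{f}_{ni}$ together with positivity of $r$ controls the coproduct side. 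Two caveats there: the ``$D$ is the inverse of $C$ up to a unit'' step is delicate because the unit $(\theta_i,\theta_i)^{n}=(1-v^{-2})^{-n}$ is not a Laurent polynomial, so concluding that two mutually inverse matrices over $\mathbb{N}[v,v^{-1}]$ are permutation matrices requires you to first strip off this factor correctly (the adjunction only pins down $C$ and $D$ modulo $v^{-1}$ via the Gram matrix, which is where Lemma \ref{14.2.2} and almost-orthogonality actually enter); and the assertion that the second tensor factors $b''$ appearing in $(p_{ni}\otimes\mathrm{Id})r\pi_{i,n}(b)$ lie in $\mathbf{B}$ at all (rather than being arbitrary elements of $\mathbf{f}$) is exactly the positivity of $r$, which you do invoke, but the claim that the distinguished one has coefficient exactly $1$ needs the bar-invariance argument spelled out.

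The genuine gap is the one you yourself flag: part (a), the compatibility of $\mathbf{B}$ with the filtration $\mathbf{f}\supset\theta_i\mathbf{f}\supset\theta_i^{2}\mathbf{f}\supset\cdots$. You are right that integrality, bar-invariance, almost-orthogonality and positivity do not imply it — one can perturb a basis so as to preserve all four properties while destroying compatibility with a left ideal — and your proposed ``$i$-crystal bookkeeping'' with $\epsilon_i(b)$ and $b^{\flat}$ is not an argument but a restatement of what must be proved. This is precisely the content of \cite[Lemma 6.4, Theorem 11.7]{Lusztig-1991} (via the support stratification of the simple perverse sheaves) or of the string of results in Chapters 14, 16 and 17 of \cite{Lusztig-1993} (via the operators $\tilde{\phi}_i,\tilde{\epsilon}_i$ and the lattice $\mathcal{L}(\mathbf{f})$). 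Since the hardest part of the theorem is imported rather than proved, your proposal does not constitute an independent proof; given that the paper itself cites the entire statement, the correct course is simply to cite it as well rather than to reprove the formal consequences (c) and (d) of an unproved (a).
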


By \cite[Section 14.2.5(a)]{Lusztig-1993}, we have $\mathbf{B}=\bigsqcup_{\nu\in \mathbb{N}[I]}\mathbf{B}_{\nu}$, where $\mathbf{B}_{\nu}=\mathbf{B}\cap \mathbf{f}_{\nu}$ is a basis of $\mathbf{f}_{\nu}$. For any $\nu=\sum_{i\in I}\nu_ii\in \mathbb{N}[I], b\in \mathbf{B}_{\nu}$ and $i\in I$, let $t_i(b),t_i^{\sigma}(b)\in [0,\nu_i]$ be the unique integers such that $b\in \mathbf{B}_{i,t_i(b)}\cap \mathbf{B}_{i,t_i^{\sigma}(b)}^{\sigma}$. Let $s_i(b) \in [0,\nu_i]$ to be the largest integer $r$ such that there exists $x\in \mathbf{f}$ such that $\theta_i^{(r)}x=\sum_{b'\in \mathbf{B}}c_{b'}b'$ with $c_b\not=0$. Dually, let $s_i^{\sigma}(b)\in [0,\nu_i]$ to be the largest integer $r$ such that there exists $x\in \mathbf{f}$ such that $x\theta_i^{(r)}=\sum_{b'\in \mathbf{B}}c_{b'}b'$ with $c_b\not=0$.

\begin{corollary}\label{t_i=s_i}
For any $b\in \mathbf{B}$ and $i\in I$, we have 
$$t_i(b)=s_i(b),\ t_i^{\sigma}(b)=s_i^{\sigma}(b).$$
\end{corollary}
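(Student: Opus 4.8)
The plan is to show the two equalities $t_i(b)=s_i(b)$ and $t_i^{\sigma}(b)=s_i^{\sigma}(b)$ separately; by applying $\sigma$ (which exchanges the multiplication and its opposite, sends $\mathbf{B}$ to $\mathbf{B}$, and interchanges $\theta_i^n\mathbf{f}$ with $\mathbf{f}\theta_i^n$, hence $t_i$ with $t_i^{\sigma}$ and $s_i$ with $s_i^{\sigma}$), it suffices to treat the first one. So fix $b\in\mathbf{B}_{\nu}$ and $i\in I$, and write $n=t_i(b)$, so that $b\in\mathbf{B}_{i,n}$.

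First I would prove $s_i(b)\geqslant n$. Since $b\in\mathbf{B}_{i,n}$, Theorem \ref{14.3.2}(c) supplies a $b_0\in\mathbf{B}_{i,0}$ with $\pi_{i,n}(b_0)=b$ and
$$\theta_i^{(n)}b_0=b+\sum_{b'\in\mathbf{B}_{i,\geqslant n+1}}c_{b'}b',$$
so taking $x=b_0$ exhibits $b$ with nonzero (indeed $=1$) coefficient in the expansion of $\theta_i^{(n)}x$ in the canonical basis; hence $s_i(b)\geqslant n$.

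The substantive direction is $s_i(b)\leqslant n$, i.e. one cannot produce $b$ with nonzero coefficient from any $\theta_i^{(r)}x$ with $r>n$. Here is the key point: for $r>n=t_i(b)$ we have $b\notin\mathbf{B}_{i,\geqslant r}$, and by Theorem \ref{14.3.2}(a) the set $\mathbf{B}_{i,\geqslant r}$ is precisely a basis of the subspace $\theta_i^r\mathbf{f}$. Therefore $\theta_i^r\mathbf{f}=\mathrm{span}_{\mathbb{Q}(v)}\mathbf{B}_{i,\geqslant r}$, and for \emph{any} $x\in\mathbf{f}$ the element $\theta_i^{(r)}x=[r]!^{-1}\theta_i^r x$ lies in $\theta_i^r\mathbf{f}$, hence its expansion in the canonical basis $\mathbf{B}$ involves only basis elements lying in $\mathbf{B}_{i,\geqslant r}$. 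Since $b\notin\mathbf{B}_{i,\geqslant r}$, the coefficient $c_b$ in $\theta_i^{(r)}x=\sum_{b'}c_{b'}b'$ is zero. Thus no $r>n$ is admissible in the definition of $s_i(b)$, giving $s_i(b)\leqslant n$. Combining the two inequalities yields $s_i(b)=n=t_i(b)$, and applying $\sigma$ gives $s_i^{\sigma}(b)=t_i^{\sigma}(b)$, completing the proof.

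The only place requiring any care is making sure the definition of $s_i(b)$ is being applied to the right object: $s_i(b)$ is the largest $r$ such that \emph{some} $x$ yields $b$ with nonzero coefficient, so the lower bound needs just one good choice of $x$ (namely $b_0$ above), while the upper bound must rule out \emph{all} $x$ — and that is exactly what the characterization of $\theta_i^r\mathbf{f}$ as $\mathrm{span}\,\mathbf{B}_{i,\geqslant r}$ in Theorem \ref{14.3.2}(a) delivers, since every $\theta_i^{(r)}x$ is automatically in that span. I do not anticipate a genuine obstacle; the whole statement is essentially a repackaging of parts (a) and (c) of Theorem \ref{14.3.2} together with the $\sigma$-symmetry.
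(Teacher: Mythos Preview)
Your proof is correct and follows essentially the same approach as the paper's. The only difference is that for the inequality $s_i(b)\leqslant t_i(b)$ the paper cites \cite[Section 11.6(c)]{Lusztig-1991} to obtain $b\in\theta_i^{s_i(b)}\mathbf{f}$, whereas you derive the equivalent fact directly from Theorem~\ref{14.3.2}(a), making your argument slightly more self-contained.
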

\begin{proof}
By Theorem \ref{14.3.2}, we have $t_i(b)\leqslant s_i(b)$. By \cite[Section 11.6(c)]{Lusztig-1991}, we have $b\in \theta_i^{s_i(b)}\mathbf{f}$, and so $b\in \mathbf{B}_{i,\geqslant s_i(b)}$. Thus we have $s_i(b)\leqslant t_i(b)$, and so $t_i(b)=s_i(b)$. The other statement can be proved dually.
\end{proof}

\subsubsection{Canonical basis of $\Lambda_{\lambda}$}\label{Canonical basis of Lambdalambda}

By Theorem \ref{14.3.2}, we know that $\bigcup_{i\in I}\mathbf{B}_{i,\geqslant \langle i,\lambda\rangle+1}^{\sigma}$ is a basis of $\sum_{i\in I}\mathbf{f}\theta_i^{\langle i,\lambda\rangle+1}$. Let $\mathbf{B}(\lambda)=\bigcap_{i\in I}\bigcup_{0\leqslant n\leqslant \langle i,\lambda\rangle}\mathbf{B}_{i,n}^{\sigma}$. The canonical basis of $\Lambda_{\lambda}$ is defined to be
$$\mathbf{B}(\Lambda_{\lambda})=\{b^-\eta_{\lambda}\mid b\in \mathbf{B}(\lambda)\},$$
see \cite[Theorem 14.4.11]{Lusztig-1993}. We summarize some properties of $\mathbf{B}(\Lambda_{\lambda})$ as follows.

\begin{theorem}[{\cite[Proposition 19.3.3, Theorem 19.3.5, 22.1.7]{Lusztig-1993}}]\label{properties of B(Lambda)}
The canonical basis $\mathbf{B}(\Lambda_{\lambda})$ of $\Lambda_{\lambda}$ has the following properties:\\
{\rm{(integral)}} $\mathbf{B}(\Lambda_{\lambda})$ is a $\mathcal{A}$-basis of the integral form ${_{\mathcal{A}}\Lambda_{\lambda}}$;\\
{\rm{(bar-invariant)}} for any $b\in \mathbf{B}(\Lambda_{\lambda})$, we have $\overline{b}=b$;\\
{\rm{(almost orthonormal)}} for any $b,b'\in \mathbf{B}(\Lambda_{\lambda})$, we have $(b,b')_{\lambda}\in \delta_{b,b'}+v^{-1}\mathbb{Z}[v^{-1}]$;\\
{\rm{(positive)}} for any $i\in I$ and $b\in \mathbf{B}(\Lambda_{\lambda})$, we have 
$$E_ib, F_ib\in \sum_{b'\in \mathbf{B}(\Lambda_{\lambda})}\mathbb{N}[v,v^{-1}]b'.$$
\end{theorem}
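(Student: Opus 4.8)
The plan is to deduce the four properties from the structure results already in hand: Theorem~\ref{14.3.2} on the compatibility of $\mathbf{B}$ with the $\sigma$-twisted powers of the $\theta_i$, the properties of $\mathbf{B}$ in Theorem~\ref{properties of B}, the admissible form of Proposition~\ref{19.1.2}, and the packaging Lemma~\ref{14.2.2}. I would first settle integrality and bar-invariance. Since $\mathbf{B}=\bigsqcup_{n\ge 0}\mathbf{B}_{i,n}^{\sigma}$ for every $i$, the set $\mathbf{B}(\lambda)=\bigcap_{i}\bigcup_{0\le n\le\langle i,\lambda\rangle}\mathbf{B}_{i,n}^{\sigma}$ is exactly the complement in $\mathbf{B}$ of $\bigcup_{i}\mathbf{B}_{i,\ge\langle i,\lambda\rangle+1}^{\sigma}$; by Theorem~\ref{14.3.2}(b) the latter is a basis of the left ideal $J_\lambda:=\sum_i\mathbf{f}\theta_i^{\langle i,\lambda\rangle+1}$, and moreover $\mathbf{B}\setminus\mathbf{B}(\lambda)\subset J_\lambda$. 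Hence the images $b^-\eta_\lambda$ ($b\in\mathbf{B}(\lambda)$) form a $\mathbb{Q}(v)$-basis of $\Lambda_\lambda=\mathbf{f}/J_\lambda$; and since $\bigoplus_{b\in\mathbf{B}\cap J_\lambda}\mathcal{A}b$ is then forced to equal $J_\lambda\cap{_{\mathcal{A}}\mathbf{f}}$ inside ${_{\mathcal{A}}\mathbf{f}}=\bigoplus_{b\in\mathbf{B}}\mathcal{A}b$, the quotient ${_{\mathcal{A}}\Lambda_\lambda}$ has $\mathcal{A}$-basis $\mathbf{B}(\Lambda_\lambda)$. Bar-invariance is immediate from the definition of $\bar{}$ on $\Lambda_\lambda$: $\overline{b^-\eta_\lambda}=\overline{b^-}\eta_\lambda=(\overline b)^-\eta_\lambda=b^-\eta_\lambda$, using $\overline{b}=b$ and that $x\mapsto x^-$ intertwines the two bar-involutions.

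For almost orthogonality I would compare $(,)_\lambda$ with $(,)$ at $v=\infty$. Writing $L(\Lambda_\lambda)=\{x:(x,x)_\lambda\in\mathbf{A}\}$, the admissibility of $(,)_\lambda$ (Proposition~\ref{19.1.2}) together with the commutation rule for $E_i$ against $\mathbf{U}^-$ (equivalently Lemma~\ref{3.4.2}) shows that the Kashiwara-type operators on $\Lambda_\lambda$ obtained by projecting those on $\mathbf{f}$ are, modulo $v^{-1}$, isometries of $(,)_\lambda$ relative to $(,)$; consequently the crystal lattice and crystal basis of $\Lambda_\lambda$ are the images of those of $\mathbf{f}$, and the almost orthogonality of $\mathbf{B}$ in Theorem~\ref{properties of B} transfers to give $(b^-\eta_\lambda,b'^-\eta_\lambda)_\lambda\in\delta_{b,b'}+v^{-1}\mathbb{Z}[v^{-1}]$. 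Lemma~\ref{14.2.2}, applied to the triple $(\Lambda_\lambda,(,)_\lambda,\mathbf{B}(\Lambda_\lambda))$, then also reconciles the $\mathbf{A}$- and $\mathcal{A}$-pictures.

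Positivity for $F_i$ is formal: $F_i(b^-\eta_\lambda)$ is the image of $\theta_i b=\sum_{b'\in\mathbf{B}}c_{b'}b'$ with $c_{b'}\in\mathbb{N}[v,v^{-1}]$ by the positivity of multiplication in $\mathbf{B}$, and the terms with $b'\notin\mathbf{B}(\lambda)$ lie in $J_\lambda$ and vanish, so $F_i(b^-\eta_\lambda)\in\sum_{b'\in\mathbf{B}(\Lambda_\lambda)}\mathbb{N}[v,v^{-1}]b'$. The $E_i$-direction is the main obstacle, and is precisely Lusztig's Theorem~22.1.7. The difficulty is that the commutation identity expressing $E_i(b^-\eta_\lambda)$ in the Verma module $M_\lambda$ involves the two skew-derivations ${}_ir(b)$ and $r_i(b)$ with coefficients of opposite sign, so positivity is invisible term by term; one must descend to $\Lambda_\lambda$ and exploit integrability of $\lambda$ so that the two contributions recombine with $\mathbb{N}[v,v^{-1}]$-coefficients, organising $\mathbf{B}(\lambda)$ into $i$-strings via the bijections $\pi_{i,n}^{\sigma}$ of Theorem~\ref{14.3.2}(d) and feeding in the positivity of the comultiplication $r$ of $\mathbf{B}$. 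I expect this step to be the genuinely hard one, and I emphasise that — as Lusztig's proof shows — it cannot be obtained without the positivity of $\mathbf{B}$, hence without the geometric realization of $\mathbf{f}$; that geometric positivity is exactly the external input the present paper retains and then transports, via the framed construction, to tensor products.
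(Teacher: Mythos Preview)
The paper does not give its own proof of this statement: it is quoted as a preliminary result with explicit citations to \cite[Proposition 19.3.3, Theorem 19.3.5, 22.1.7]{Lusztig-1993}, followed only by the remark that the proof of Theorem~22.1.7 remains valid for any symmetric Cartan datum. So there is nothing in the paper to compare your argument against beyond the references themselves.

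That said, your sketch is a faithful outline of Lusztig's arguments. The deduction of integrality and bar-invariance from Theorem~\ref{14.3.2}(b) and Theorem~\ref{properties of B} is correct and is essentially how \cite[Theorem 14.4.11]{Lusztig-1993} proceeds. Your treatment of almost orthogonality is the one genuinely thin spot: the claim that the Kashiwara operators are ``modulo $v^{-1}$ isometries'' and hence transfer almost orthogonality from $\mathbf{B}$ to $\mathbf{B}(\Lambda_\lambda)$ is correct in spirit, but to make it a proof you would need to invoke the comparison of $(,)$ on $\mathbf{f}$ with $(,)_\lambda$ on $\Lambda_\lambda$ (as in \cite[Proposition 19.3.3]{Lusztig-1993}) rather than assert it. Your handling of $F_i$-positivity is correct and standard. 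For $E_i$-positivity you rightly isolate \cite[Theorem 22.1.7]{Lusztig-1993} as the substantive step and correctly identify that the cancellation between the $_ir$ and $r_i$ contributions only becomes visible after projecting to $\Lambda_\lambda$ and using the positivity of $r$ on $\tilde{\mathbf{B}}$; this is exactly Lusztig's mechanism (via \cite[Corollary 22.1.5]{Lusztig-1993}), and indeed the present paper reuses the same mechanism verbatim in the proof of Theorem~\ref{positive} (see Lemmas~\ref{Ei formula} and~\ref{nonzero term power}).
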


We remark that the proof of \cite[Theorem 22.1.7]{Lusztig-1993} is still valid for any symmetric Cartan datum, not only the simply-laced Cartan datum.

\subsubsection{Canonical basis of $\Lambda_{\xi}\otimes \Lambda_{\lambda}$}\label{Canonical basis of tensor product}

We refer to \cite{Bao-Wang-2016} for the constructions of the canonical basis $\mathbf{B}(\Lambda_{\xi}\otimes \Lambda_{\lambda})$ of the tensor product $\Lambda_{\xi}\otimes \Lambda_{\lambda}$ which is a generalization of \cite[Chapter 24]{Lusztig-1993}. For convenience, we sketch the construction as follows.

For any $\nu\in \mathbb{N}[I]$, let $\{b^*\mid b\in \mathbf{B}_{\nu}\}$ be the basis of $\mathbf{f}_{\nu}$ dual to the canonical basis $\mathbf{B}_{\nu}$ under the bilinear form $(,)$ of $\mathbf{f}$. The quasi-$\mathcal{R}$-matrix is the linear map $\Theta:\Lambda_{\xi}\otimes \Lambda_{\lambda}\rightarrow \Lambda_{\xi}\otimes \Lambda_{\lambda}$ defined by 
$$\Theta(m_1\otimes m_2)=\sum_{\nu\in \mathbb{N}[I]}(-v)^{\mathrm{tr}\,\nu}\sum_{b\in \mathbf{B}_{\nu}}b^-m_1\otimes b^{*+}m_2\ \textrm{for any $m_1\in \Lambda_{\xi}$ and $m_2\in \Lambda_{\lambda}$}.$$
Note that this is well-defined, since $\Lambda_{\lambda}$ is a highest weight module, and there are only finitely many $b\in \mathbf{B}$ such that $b^{*+}m_2\not=0$ for any $m_2\in \Lambda_{\lambda}$. The $\Psi$-involution of $\Lambda_{\xi}\otimes \Lambda_{\lambda}$ is the $\mathbb{Q}$-linear involution $\Psi:\Lambda_{\xi}\otimes \Lambda_{\lambda}\rightarrow \Lambda_{\xi}\otimes \Lambda_{\lambda}$ defined by
$$\Psi(m)=\Theta(\overline{m})\ \textrm{for any $m\in \Lambda_{\xi}\otimes \Lambda_{\lambda}$}.$$
Then we have $\Psi(um)=\overline{u}\Psi(m)$ for any $u\in \mathbf{U}$ and $m\in \Lambda_{\xi}\otimes \Lambda_{\lambda}$ by \cite[Lemma 24.1.2]{Lusztig-1993}.

Let $\mathbf{B}(\Lambda_{\xi})\otimes \mathbf{B}(\Lambda_{\lambda})=\{b_1\otimes b_2\mid b_1\in \mathbf{B}(\Lambda_{\xi}),b_2\in \mathbf{B}(\Lambda_{\lambda})\}$. Since $\mathbf{B}(\Lambda_{\xi})$ and $\mathbf{B}(\Lambda_{\lambda})$ are almost orthonormal, by definitions, we have 
$$(b_1\otimes b_2,b'_1\otimes b'_2)_{\xi,\lambda}=(b_1,b'_1)_{\xi}(b_2,b'_2)_{\lambda}\in \delta_{b_1\otimes b_2,b'_1\otimes b'_2}+v^{-1}\mathbb{Z}[v^{-1}]\subset \delta_{b_1\otimes b_2,b'_1\otimes b'_2}+v^{-1}\mathbb{Z}[[v^{-1}]]$$
for any $b_1\otimes b_2,b'_1\otimes b'_2\in \mathbf{B}(\Lambda_{\xi})\otimes \mathbf{B}(\Lambda_{\lambda})$, and so $\mathbf{B}(\Lambda_{\xi})\otimes \mathbf{B}(\Lambda_{\lambda})$ is an almost orthonormal basis of $\Lambda_{\xi}\otimes \Lambda_{\lambda}$. Let $\mathcal{L}_{\xi,\lambda}$ be the $\mathbb{Z}[v^{-1}]$-submodule of $\Lambda_{\xi}\otimes \Lambda_{\lambda}$ generated by $\mathbf{B}(\Lambda_{\xi})\otimes \mathbf{B}(\Lambda_{\lambda})$.

\begin{theorem}[{\cite[Theorem 2.9]{Bao-Wang-2016}}]\label{unique canonical basis element}
{\rm{(a)}} For any $b_1\in \mathbf{B}(\Lambda_{\xi})$ and $b_2\in \mathbf{B}(\Lambda_{\lambda})$, there exists a unique element $b_1\diamondsuit b_2\in \mathcal{L}_{\xi,\lambda}$ such that
$$\Psi(b_1\diamondsuit b_2)=b_1\diamondsuit b_2\ \textrm{and}\  b_1\diamondsuit b_2-b_1\otimes b_2\in v^{-1}\mathcal{L}_{\xi,\lambda}.$$
{\rm{(b)}} For any $b_1\in \mathbf{B}(\Lambda_{\xi})$ and $b_2\in \mathbf{B}(\Lambda_{\lambda})$, we have $b_1\diamondsuit b_2\in b_1\otimes b_2+\sum v^{-1}\mathbb{Z}[v^{-1}]b'_1\otimes b'_2$, where the sum is taken over $b'_1\otimes b'_2\in \mathbf{B}(\Lambda_{\xi})\otimes \mathbf{B}(\Lambda_{\lambda})\setminus\{b_1\otimes b_2\}$.\\
{\rm{(c)}} The set $\{b_1\diamondsuit b_2\mid b_1\in \mathbf{B}(\Lambda_{\xi}),b_2\in \mathbf{B}(\Lambda_{\lambda})\}$ is a basis of $\Lambda_{\xi}\otimes \Lambda_{\lambda}$.
\end{theorem}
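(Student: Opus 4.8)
The plan is to deduce the statement from the standard straightening procedure for a semilinear involution on a lattice --- the argument of \cite[Chapter~24]{Lusztig-1993}, carried out in the generality of \cite{Bao-Wang-2016} --- applied to the $\Psi$-involution together with the $\mathcal{A}$-lattice ${_{\mathcal{A}}\Lambda_{\xi}}\otimes_{\mathcal{A}}{_{\mathcal{A}}\Lambda_{\lambda}}$, which is free with basis $\mathbf{B}(\Lambda_{\xi})\otimes\mathbf{B}(\Lambda_{\lambda})$ and whose $\mathbb{Z}[v^{-1}]$-span is $\mathcal{L}_{\xi,\lambda}$. First I would record the three properties of $\Psi$ that drive the argument. (i) $\Psi$ is $\mathbb{Q}$-linear with $\Psi(v^{n}m)=v^{-n}\Psi(m)$, which follows from the $\mathbb{Q}(v)$-linearity of $\Theta$ and $\overline{v^{n}}=v^{-n}$. (ii) $\Psi^{2}=\mathrm{id}$; computing $\Psi^{2}=\Theta\circ\overline{(\cdot)}\circ\Theta\circ\overline{(\cdot)}$ and using bar-invariance of the canonical basis $\mathbf{B}$ of $\mathbf{f}$ (Theorem~\ref{properties of B}) and of its dual basis, this reduces to the quasi-$\mathcal{R}$-matrix identity $\Theta\,\overline{\Theta}=\mathrm{id}$ of \cite[Chapter~4]{Lusztig-1993}. (iii) $\Psi$ preserves ${_{\mathcal{A}}\Lambda_{\xi}}\otimes_{\mathcal{A}}{_{\mathcal{A}}\Lambda_{\lambda}}$: the naive bar-involution fixes every $b_{1}\otimes b_{2}$ with $b_{1}\in\mathbf{B}(\Lambda_{\xi})$, $b_{2}\in\mathbf{B}(\Lambda_{\lambda})$ by bar-invariance of $\mathbf{B}(\Lambda_{\xi})$ and $\mathbf{B}(\Lambda_{\lambda})$ (Theorem~\ref{properties of B(Lambda)}), and each homogeneous component $\Theta_{\nu}$ of $\Theta$ preserves this lattice by the integrality of the quasi-$\mathcal{R}$-matrix on integrable modules \cite[Chapter~24]{Lusztig-1993} --- the point being that the individual summands $b^{-}m_{1}\otimes b^{*+}m_{2}$ need not be integral, only their weighted sum over $b\in\mathbf{B}_{\nu}$.

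Next I would set up the triangularity. For $b_{1}\in\mathbf{B}(\Lambda_{\xi})$ and $b_{2}\in\mathbf{B}(\Lambda_{\lambda})$ write $b_{1}=\beta_{1}^{-}\eta_{\xi}$, $b_{2}=\beta_{2}^{-}\eta_{\lambda}$ with $\beta_{1}\in\mathbf{B}(\xi)$, $\beta_{2}\in\mathbf{B}(\lambda)$, and declare $(b_{1}',b_{2}')\prec(b_{1},b_{2})$ when $|\beta_{1}'|+|\beta_{2}'|=|\beta_{1}|+|\beta_{2}|$ in $\mathbb{N}[I]$ and $|\beta_{1}'|-|\beta_{1}|\in\mathbb{N}[I]\setminus\{0\}$. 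Since the weight spaces of $\Lambda_{\xi}$ and $\Lambda_{\lambda}$ are finite-dimensional, this partial order is interval-finite. Using $\Theta_{0}=\mathrm{id}$, the fact that $b^{-}$ raises the degree of the first tensor factor by $|b|$ while $b^{*+}$ lowers that of the second by $|b|$ (so $\Theta_{\nu}$ keeps $|\beta_{1}|+|\beta_{2}|$ fixed and strictly raises $|\beta_{1}|$ for $\nu\neq 0$), together with $\overline{b_{1}\otimes b_{2}}=b_{1}\otimes b_{2}$ and (iii), I would obtain
$$\Psi(b_{1}\otimes b_{2})=\Theta(b_{1}\otimes b_{2})\in b_{1}\otimes b_{2}+\sum_{(b_{1}',b_{2}')\prec(b_{1},b_{2})}\mathcal{A}\,(b_{1}'\otimes b_{2}').$$
The off-diagonal coefficients lie in $\mathcal{A}=\mathbb{Z}[v,v^{-1}]$ but in general carry positive powers of $v$, so $\mathbf{B}(\Lambda_{\xi})\otimes\mathbf{B}(\Lambda_{\lambda})$ is not $\Psi$-fixed and Lemma~\ref{14.2.2} by itself does not suffice; a genuine straightening is needed.

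With these inputs the rest is the familiar Kazhdan--Lusztig-type bookkeeping. I would run the recursion along $\prec$: by induction on $(b_{1},b_{2})$, assuming $\Psi$-fixed elements $b_{1}'\diamondsuit b_{2}'\in b_{1}'\otimes b_{2}'+\sum_{(b_{1}'',b_{2}'')\prec(b_{1}',b_{2}')}v^{-1}\mathbb{Z}[v^{-1}]\,(b_{1}''\otimes b_{2}'')$ have been constructed for all $(b_{1}',b_{2}')\prec(b_{1},b_{2})$, one re-expands $\Psi(b_{1}\otimes b_{2})$ in the (locally unitriangular, $\mathcal{A}$-valued) basis $\{b_{1}'\diamondsuit b_{2}'\}$ and solves coefficientwise $\gamma-\overline{\gamma}=(\text{that coefficient})$ with $\gamma\in v^{-1}\mathbb{Z}[v^{-1}]$; involutivity $\Psi^{2}=\mathrm{id}$ forces each such coefficient to be bar-anti-invariant with vanishing constant term, so the equation has a unique solution, which defines $b_{1}\diamondsuit b_{2}$. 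This yields the existence part of (a) and the expansion in (b), and $b_{1}\diamondsuit b_{2}\in\mathcal{L}_{\xi,\lambda}$ since the corrections lie in $v^{-1}\mathbb{Z}[v^{-1}]$. For the uniqueness in (a) I would use a rigidity lemma: if $y\in\mathcal{L}_{\xi,\lambda}$ satisfies $\Psi(y)=y$ and $y\in v^{-1}\mathcal{L}_{\xi,\lambda}$, then $y=0$ --- writing $y=\sum p_{b_{1}',b_{2}'}(b_{1}'\otimes b_{2}')$ with $p_{\bullet}\in v^{-1}\mathbb{Z}[v^{-1}]$ and choosing $(b_{1}',b_{2}')$ maximal for $\prec$ among those with $p_{\bullet}\neq 0$, the $(b_{1}'\otimes b_{2}')$-coefficient of $\Psi(y)=y$ gives $p_{\bullet}=\overline{p_{\bullet}}$, hence $p_{\bullet}=0$, a contradiction. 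Applying this to the difference of two candidates yields uniqueness. Finally, (c) is immediate: $\{b_{1}\diamondsuit b_{2}\}$ differs from the basis $\{b_{1}\otimes b_{2}\}$ of $\Lambda_{\xi}\otimes\Lambda_{\lambda}$ by a change of basis that is unitriangular for $\prec$ on each finite-dimensional weight space, hence invertible.

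The step I expect to be the real obstacle is ingredient (iii), the integrality of $\Theta_{\nu}$ on the integral forms: the dual canonical basis elements $b^{*}$ are not individually in ${_{\mathcal{A}}\mathbf{f}}$, so one must check that the particular combination entering $\Theta_{\nu}$ nonetheless maps ${_{\mathcal{A}}\Lambda_{\xi}}\otimes_{\mathcal{A}}{_{\mathcal{A}}\Lambda_{\lambda}}$ to itself; everything else is formal. One may import this from \cite[Chapter~24]{Lusztig-1993} (or \cite{Bao-Wang-2016} in the present generality), or re-derive it from the integrality of $\mathbf{B}$ (Theorem~\ref{properties of B}) and the compatibility of $\Theta$ with the bilinear forms.
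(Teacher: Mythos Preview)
The paper does not supply its own proof of this statement: it is quoted verbatim as \cite[Theorem~2.9]{Bao-Wang-2016} and used as a black box in the subsequent arguments (notably in the proof of Theorem~\ref{framed construction of canonical basis}). Your proposal is a correct sketch of the proof given in the cited references --- the Kazhdan--Lusztig-type straightening of \cite[Chapter~24]{Lusztig-1993}, extended to the present generality in \cite{Bao-Wang-2016} --- with the three ingredients (semilinearity, involutivity via $\Theta\overline{\Theta}=1$, and integrality of $\Theta$ on the $\mathcal{A}$-lattice) and the partial order on $\mathbf{B}(\Lambda_{\xi})\otimes\mathbf{B}(\Lambda_{\lambda})$ all identified correctly. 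Your assessment that the only nontrivial point is (iii), the integrality of the quasi-$\mathcal{R}$-matrix on ${_{\mathcal{A}}\Lambda_{\xi}}\otimes_{\mathcal{A}}{_{\mathcal{A}}\Lambda_{\lambda}}$, is accurate; this is precisely what \cite[24.1.4--24.1.6]{Lusztig-1993} establishes (and what \cite{Bao-Wang-2016} checks goes through beyond finite type).
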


The canonical basis of $\Lambda_{\xi}\otimes \Lambda_{\lambda}$ is defined to be
$$\mathbf{B}(\Lambda_{\xi}\otimes \Lambda_{\lambda})=\{b_1\diamondsuit b_2\mid b_1\in \mathbf{B}(\Lambda_{\xi}),b_2\in \mathbf{B}(\Lambda_{\lambda})\}.$$

\section{Framed construction of the tensor product}

We fix two dominant weights $\xi,\lambda\in X$ with respect to the root datum $(Y,X,\langle,\rangle,...)$.

\subsection{Framed Cartan datum and root datum}\label{Framed Cartan datum and root datum}
For the Cartan datum $(I,\cdot)$, we define its framed Cartan datum to be $(\tilde{I},\cdot)$, where $\tilde{I}=I\sqcup I'=I\sqcup \{i'\mid i\in I\}$,
and the symmetric bilinear form $\mathbb{Z}[\tilde{I}]\times \mathbb{Z}[\tilde{I}]\rightarrow \mathbb{Z}$ is extended from the bilinear form in $(I,\cdot)$ such that 
$$i\cdot j'=-\delta_{i,j},\ i'\cdot j'=2\delta_{i,j}\ \textrm{for any $i,j\in I$}.$$
Then $(\tilde{I},\cdot)$ is a symmetric Cartan datum. For the root datum $(Y,X,\langle,\rangle,...)$, suppose that $Y=Y^0\oplus \mathbb{Z}[I]$ and $X\cong \mathrm{Hom}_{\mathbb{Z}}(Y^0,\mathbb{Z})\oplus\mathrm{Hom}_{\mathbb{Z}}(\mathbb{Z}[I],\mathbb{Z})$, we define its framed root datum to be $(\tilde{Y},\tilde{X},\langle,\rangle,...)$, where 
$\tilde{Y}=Y\oplus \mathbb{Z}[I'],\ \tilde{X}=X\oplus \mathbb{Z}[\{i'^*\mid i\in I\}]$ with the perfect bilinear pairing $\tilde{Y}\times \tilde{X}\rightarrow \mathbb{Z}$ extended from the pairing in $(Y,X,\langle,\rangle,...)$, and the embeddings $\tilde{I}\hookrightarrow\tilde{Y},\tilde{I}\hookrightarrow\tilde{X}$ extended from the embeddings in $(Y,X,\langle,\rangle,...)$ such that
$$\langle Y^0,i'^*\rangle=\langle i',\mathrm{Hom}_{\mathbb{Z}}(Y^0,\mathbb{Z})\rangle=0, \langle i,j'^*\rangle=\langle i',j^*\rangle=-\delta_{i,j}, \langle i',j'^*\rangle=2\delta_{i,j}\ \textrm{for any $i,j\in I$}.$$
Then $(\tilde{Y},\tilde{X},\langle,\rangle,...)$ is a $\tilde{Y}$-regular root datum of type $(\tilde{I},\cdot)$. The embeddings $\tilde{I}\hookrightarrow \tilde{Y}$ and $\tilde{I}\hookrightarrow \tilde{X}$ induce homomorphisms $\mathbb{Z}[\tilde{I}]\rightarrow \tilde{Y}$ and $\mathbb{Z}[\tilde{I}]\rightarrow \tilde{X}$, and we shall often denote the images of $\nu\in \mathbb{Z}[\tilde{I}]$ under either of these homomorphisms by the same notation $\nu$.

For the framed Cartan datum $(\tilde{I},\cdot)$, we denote by $\tilde{\mathbf{f}}$ the corresponding algebra defined in \ref{The algebra f}. For the framed root datum $(\tilde{Y},\tilde{X},\langle,\rangle,...)$, we denote by $\tilde{\mathbf{U}}$ the corresponding algebra defined in \ref{The algebra U}. It is clear that the natural embedding $I\hookrightarrow \tilde{I}$ induces subalgebra embeddings $\mathbf{f}\hookrightarrow \tilde{\mathbf{f}}$ and $\mathbf{U}\hookrightarrow \tilde{\mathbf{U}}$.

\subsection{Framed construction of $\Lambda_{\xi}\otimes \Lambda_\lambda$}\label{framed construction of tensor product}

For the dominant weight $\lambda\in X$, we define the element 
$$\theta_{\lambda}=\prod_{i\in I}\theta_{i'}^{(\langle i,\lambda\rangle)}\in \tilde{\mathbf{f}}.$$
Note that it is well-defined, since $i'\cdot j'=0$ and $\theta_{i'}\theta_{j'}=\theta_{j'}\theta_{i'}$ for any $i\not=j$ in $I$ by the quantum Serre relations. Consider the sequence $(\langle i,\lambda\rangle i')_{i\in I}\in \mathcal{V}_{|\theta_{\lambda}|}$ in arbitrary order, then $\theta_{\lambda}$ is accordance with the notation $\theta_{(\langle i,\lambda\rangle i')_{i\in I}}$ defined in \S \ref{The algebra f}.

For the dominant weights $\xi,\lambda\in X$, we regard them as elements of $\tilde{X}$ via the natural embedding $X\hookrightarrow \tilde{X}$. For any $i\in I$, let $\omega_{i'}\in \tilde{X}$ be the fundament weight corresponding to $i'\in I'$. By definitions, we have $\langle Y,\omega_{i'}\rangle=0,\langle j',\omega_{i'}\rangle=\delta_{i,j}$ for any $j\in I$. We consider
$$\xi\odot \lambda=\xi+\lambda+|\theta_{\lambda}|+\sum_{i\in I}(\langle i,\lambda\rangle-\langle i',\xi+\lambda+|\theta_{\lambda}|\rangle)\omega_{i'}\in \tilde{X},$$
and denote by $\tilde{M}_{\xi\odot \lambda}$ the Verma module of $\tilde{\mathbf{U}}$ defined in \ref{The module Lambda}. Note that
\begin{align*}
&\langle \mu,\xi\odot \lambda\rangle=\langle \mu,\xi+\lambda+|\theta_{\lambda}|\rangle\ \textrm{for any $\mu\in Y$};\\
&\langle i,\xi\odot \lambda\rangle=\langle i,\xi\rangle,\ \langle i',\xi\odot \lambda\rangle=\langle i,\lambda\rangle\ \textrm{for any $i\in I$}.
\end{align*}
In particular, $\xi\odot \lambda\in \tilde{X}$ is dominant with respect to the framed root datum $(\tilde{Y},\tilde{X},\langle,\rangle,...)$. We denote by $\tilde{\Lambda}_{\xi\odot \lambda}$ the integrable highest weight module of $\tilde{\mathbf{U}}$ defined in \ref{The module Lambda}. Then $\tilde{M}_{\xi\odot \lambda}$ and $\tilde{\Lambda}_{\xi\odot \lambda}$ are $\mathbf{U}$-modules via the natural embedding $\mathbf{U}\hookrightarrow \tilde{\mathbf{U}}$.

\begin{lemma}\label{submodules}
Let $\mathbf{f}\theta_{\lambda}\mathbf{f}$ be the subspace of $\tilde{\mathbf{f}}$ spanned by $\{x\theta_{\lambda}y\mid x,y\in \mathbf{f}\}$. Then it is a $\mathbf{U}$-submodule of $\tilde{M}_{\xi\odot \lambda}$.
\end{lemma}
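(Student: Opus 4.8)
The plan is to verify directly that the subspace $V:=\mathbf{f}\theta_{\lambda}\mathbf{f}$ of $\tilde{M}_{\xi\odot\lambda}=\tilde{\mathbf{f}}$ is stable under the action of each generator of $\mathbf{U}\subseteq\tilde{\mathbf{U}}$, i.e. under $K_{\mu}$ ($\mu\in Y$) and $E_i,F_i$ ($i\in I$). Two of these are immediate from the Verma module structure recalled in \ref{The module Lambda}: $K_{\mu}$ acts by a scalar on each homogeneous element $x\theta_{\lambda}y$, and $F_i$ acts by left multiplication by $\theta_i$ in $\tilde{\mathbf{f}}$, so $F_i(x\theta_{\lambda}y)=(\theta_ix)\theta_{\lambda}y\in V$; in fact $V$ is stable under left multiplication by all of $\mathbf{f}$. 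The whole content of the lemma is therefore the stability of $V$ under $E_i$ for $i\in I$.

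For that I would first record the identity governing the action of $E_i$ on a Verma module: for $i\in I$, $j\in\tilde{I}$ and homogeneous $w\in\tilde{\mathbf{f}}$,
\[
E_i(\theta_jw)=\theta_j(E_iw)+\delta_{i,j}\,[\langle i,\xi\rangle-\langle i,|w|\rangle]\,w .
\]
This follows at once from the relation $E_iF_j-F_jE_i=\delta_{i,j}\tfrac{K_i-K_{-i}}{v-v^{-1}}$, the weight formula $K_{\mu}w=v^{\langle\mu,\xi\odot\lambda-|w|\rangle}w$, the equalities $\langle i,i^*\rangle=2$ and $\langle i,\xi\odot\lambda\rangle=\langle i,\xi\rangle$ (the latter being exactly the point of the construction of $\xi\odot\lambda$). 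The crucial feature is that when $j=i'\in I'$ one has $\delta_{i,i'}=0$, so $E_i$ commutes past every framing generator $\theta_{i'}$, hence past $\theta_{\lambda}$.

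With this in hand I would show $E_i(x\theta_{\lambda}y)\in V$ for all $x,y\in\mathbf{f}$ by induction on $\mathrm{tr}\,|x|$. In the inductive step one may, by linearity, take $x=\theta_jx'$ with $j\in I$ and $x'\in\mathbf{f}$; applying the identity with $w=x'\theta_{\lambda}y$ gives $E_i(x\theta_{\lambda}y)=\theta_j\,E_i(x'\theta_{\lambda}y)+\delta_{i,j}[\langle i,\xi\rangle-\langle i,|x'\theta_{\lambda}y|\rangle]\,x'\theta_{\lambda}y$, where the first summand lies in $V$ by the inductive hypothesis together with left-multiplication stability, and the second summand visibly does. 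For the base case $x=1$, peeling the framing generators off $\theta_{\lambda}$ one at a time via the identity — each time with vanishing Kronecker term — yields $E_i(\theta_{\lambda}y)=\theta_{\lambda}(E_iy)$, where $E_iy$ denotes the action of $E_i$ on $y\in\tilde{M}_{\xi\odot\lambda}$. A further descent through the generators $\theta_j$ ($j\in I$) appearing in $y$, terminating at $E_i\cdot 1=0$, shows that $E_iy$ is a $\mathbb{Q}(v)$-linear combination of products of $\theta_j$'s with $j\in I$, so $E_iy\in\mathbf{f}$ and hence $E_i(\theta_{\lambda}y)=\theta_{\lambda}(E_iy)\in\theta_{\lambda}\mathbf{f}\subseteq V$, completing the induction.

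I do not expect a real obstacle here; the argument is essentially bookkeeping, and the only substantive inputs are the commutation identity above and the fact that the framing generators making up $\theta_{\lambda}$ are disjoint from $I$, so that $E_i$ ($i\in I$) both commutes with $\theta_{\lambda}^-$ in $\tilde{\mathbf{U}}$ and preserves the subalgebra $\mathbf{f}$ of $\tilde{\mathbf{f}}$ (equivalently, $\mathbf{f}\subseteq\tilde{M}_{\xi\odot\lambda}$ is itself a $\mathbf{U}$-submodule). The one place to stay alert is the reliance on $\langle i,\xi\odot\lambda\rangle=\langle i,\xi\rangle$, which is precisely what ensures the "unframed" computation $E_iy$ never leaves $\mathbf{f}$.
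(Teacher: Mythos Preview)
Your proposal is correct and follows essentially the same approach as the paper: after noting stability under $K_\mu$ and $F_i$, the paper also proves $E_i$-stability by induction on $\mathrm{tr}\,|x|$, commuting $E_i$ past $\theta_\lambda$ in the base case and then running a second induction on $\mathrm{tr}\,|y|$ (via the same commutation identity, quoted there as Lemma~\ref{3.4.2}) to show $E_iy\in\mathbf{f}$. One minor quibble with your closing remark: the equality $\langle i,\xi\odot\lambda\rangle=\langle i,\xi\rangle$ merely simplifies the scalar in the commutation formula and is not what keeps $E_iy$ inside $\mathbf{f}$---that follows purely from the Verma module structure and the fact that only generators $\theta_j$ with $j\in I$ occur in $y$.
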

\begin{proof}
It is well-known that any submodule of a weight module is also a weight module. It is clear that $\mathbf{f}\theta_{\lambda}\mathbf{f}$ is stable under the actions of $F_i$ and $K_\mu$ for any $i\in I$ and $\mu\in Y$. We need to prove that it is stable under the actions of $E_i$ for any $i\in I$. For any homogeneous $x,y\in \mathbf{f}$, we use induction on $\mathrm{tr}\,|x|\in \mathbb{N}$ to prove that $E_ix\theta_{\lambda}y\in \mathbf{f}\theta_{\lambda}\mathbf{f}$.\\
$\bullet$ If $\mathrm{tr}\,|x|=0$, we have 
\begin{align*}
E_i\theta_{\lambda}y=E_i\prod_{j\in I}F_{j'}^{(\langle j,\lambda\rangle)}y=\prod_{j\in I}F_{j'}^{(\langle j,\lambda\rangle)}E_iy=\theta_{\lambda}E_iy.
\end{align*}
Since $\mathbf{f}$ has a $\mathbf{U}$-module structure, it is claear that $E_iy\in \mathbf{f}$, and so $E_i\theta_{\lambda}y\in \mathbf{f}\theta_{\lambda}\mathbf{f}$.\\
$\bullet$ If $\mathrm{tr}\,|x|>0$, we may suppose that $x=\theta_kx'$ for some $k\in I$ and homogeneous $x'\in \mathbf{f}$. Notice that $\mathrm{tr}\,|x'|=\mathrm{tr}\,|x|-1$. By the inductive hypothesis, we have $E_ix'\theta_{\lambda}y\in \mathbf{f}\theta_{\lambda}\mathbf{f}$.
By Lemma \ref{3.4.2}, we have 
\begin{align*}
E_ix\theta_{\lambda}y=E_i\theta_kx'\theta_{\lambda}y=E_iF_kx'\theta_{\lambda}y
=F_kE_ix'\theta_{\lambda}y+\delta_{i,k}[\langle i,\xi\odot \lambda-|x'|-|\theta_{\lambda}|-|y|\rangle]x'\theta_{\lambda}y
\end{align*}
which belongs to $F_k\mathbf{f}\theta_{\lambda}\mathbf{f}+\mathbf{f}\theta_{\lambda}\mathbf{f}\subset \mathbf{f}\theta_{\lambda}\mathbf{f}$, as desired. Hence $\mathbf{f}\theta_{\lambda}\mathbf{f}$ is a submodule of $\tilde{M}_{\xi\odot \lambda}$.
\end{proof}

By \cite[Proposition 3.5.6(a)]{Lusztig-1993}, the left ideal $\sum_{i\in I}\tilde{\mathbf{f}}\theta_{i}^{\langle i,\xi\odot \lambda\rangle+1}+\sum_{i\in I}\tilde{\mathbf{f}}\theta_{i'}^{\langle i',\xi\odot \lambda\rangle+1}$ of $\tilde{\mathbf{f}}$ is a submodule of $\tilde{M}_{\xi\odot \lambda}$. By definitions, $\tilde{\Lambda}_{\xi\odot \lambda}$ is the quotient of $\tilde{M}_{\xi\odot \lambda}$ with respect to this submodule. Note that
$$\sum_{i\in I}\tilde{\mathbf{f}}\theta_{i}^{\langle i,\xi\odot \lambda\rangle+1}+\sum_{i\in I}\tilde{\mathbf{f}}\theta_{i'}^{\langle i',\xi\odot \lambda\rangle+1}=\sum_{i\in I}\tilde{\mathbf{f}}\theta_{i}^{\langle i,\xi\rangle+1}+\sum_{i\in I}\tilde{\mathbf{f}}\theta_{i'}^{\langle i,\lambda\rangle+1}.$$

\begin{lemma}\label{cap+=cap}
We have $$\mathbf{f}\theta_{\lambda}\mathbf{f}\cap(\sum_{i\in I}\tilde{\mathbf{f}}\theta_{i}^{\langle i,\xi\rangle+1}+\sum_{i\in I}\tilde{\mathbf{f}}\theta_{i'}^{\langle i, \lambda\rangle+1})=\mathbf{f}\theta_{\lambda}\mathbf{f}\cap\sum_{i\in I}\tilde{\mathbf{f}}\theta_{i}^{\langle i,\xi\rangle+1}.$$
\end{lemma}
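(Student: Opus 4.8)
The inclusion $\supseteq$ is immediate, since $\sum_{i\in I}\tilde{\mathbf{f}}\theta_i^{\langle i,\xi\rangle+1}\subseteq\sum_{i\in I}\tilde{\mathbf{f}}\theta_i^{\langle i,\xi\rangle+1}+\sum_{i\in I}\tilde{\mathbf{f}}\theta_{i'}^{\langle i,\lambda\rangle+1}$ and intersecting with $\mathbf{f}\theta_{\lambda}\mathbf{f}$ preserves this inclusion. So the plan is to prove $\subseteq$ by a degree count with respect to the grading $\mathbb{N}[\tilde{I}]=\mathbb{N}[I]\oplus\mathbb{N}[I']$ of $\tilde{\mathbf{f}}$; for $\nu\in\mathbb{N}[\tilde{I}]$ and $i\in I$ I write $\nu_{i'}$ for the $i'$-coordinate of $\nu$.

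First I would record two homogeneity observations. Since $\mathbf{f}\theta_{\lambda}\mathbf{f}$ is spanned by the homogeneous elements $x\theta_{\lambda}y$ with $x,y\in\mathbf{f}$ homogeneous, it is a homogeneous subspace of $\tilde{\mathbf{f}}$; moreover $|x\theta_{\lambda}y|=|x|+|\theta_{\lambda}|+|y|$ with $|x|,|y|\in\mathbb{N}[I]$ and $|\theta_{\lambda}|=\sum_{i\in I}\langle i,\lambda\rangle i'$, so the $i'$-coordinate of the degree of any homogeneous element of $\mathbf{f}\theta_{\lambda}\mathbf{f}$ equals $\langle i,\lambda\rangle$ for every $i\in I$. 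Likewise $\sum_{i\in I}\tilde{\mathbf{f}}\theta_i^{\langle i,\xi\rangle+1}+\sum_{i\in I}\tilde{\mathbf{f}}\theta_{i'}^{\langle i,\lambda\rangle+1}$ is a left ideal of the graded algebra $\tilde{\mathbf{f}}$ generated by homogeneous elements, hence a homogeneous subspace. Therefore the intersection on the left-hand side of the lemma is homogeneous, and it suffices to show that every homogeneous element $z$ of it lies in $\sum_{i\in I}\tilde{\mathbf{f}}\theta_i^{\langle i,\xi\rangle+1}$.

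Let $z$ be such an element, of degree $d$; by the first observation $d_{i'}=\langle i,\lambda\rangle$ for all $i\in I$. Write $z=\sum_{i\in I}a_i\theta_i^{\langle i,\xi\rangle+1}+\sum_{i\in I}b_i\theta_{i'}^{\langle i,\lambda\rangle+1}$ with $a_i,b_i\in\tilde{\mathbf{f}}$, and decompose each $a_i$ and $b_i$ into homogeneous components. Projecting onto the degree-$d$ component and using that $z$ is homogeneous of degree $d$, a homogeneous component $b$ of some $b_i$ can contribute only if $|b|+(\langle i,\lambda\rangle+1)i'=d$; but then $|b|_{i'}=d_{i'}-(\langle i,\lambda\rangle+1)=-1$, which is impossible for an element of $\tilde{\mathbf{f}}$. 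Hence the second sum contributes nothing to $z$, so $z=\sum_{i\in I}a_i'\theta_i^{\langle i,\xi\rangle+1}$ for suitable $a_i'\in\tilde{\mathbf{f}}$, which gives $z\in\mathbf{f}\theta_{\lambda}\mathbf{f}\cap\sum_{i\in I}\tilde{\mathbf{f}}\theta_i^{\langle i,\xi\rangle+1}$, as desired.

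I do not expect a real obstacle here: the argument is entirely a bookkeeping of $I'$-degrees, the only mild points being the two homogeneity claims above. Conceptually, the statement merely reflects that the factor $\theta_{\lambda}=\prod_{i\in I}\theta_{i'}^{(\langle i,\lambda\rangle)}$ already saturates each $\theta_{i'}$ to the exact exponent $\langle i,\lambda\rangle$, so no element of $\mathbf{f}\theta_{\lambda}\mathbf{f}$ can be right-divisible by $\theta_{i'}^{\langle i,\lambda\rangle+1}$, and the $\lambda$-generators of the big left ideal become invisible after intersecting with $\mathbf{f}\theta_{\lambda}\mathbf{f}$.
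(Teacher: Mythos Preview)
Your proof is correct and takes a genuinely different, more elementary route than the paper.

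The paper defers the proof of this lemma until after Proposition~\ref{B(fthetaf)}, because its argument goes through the canonical basis: it uses that $\tilde{\mathbf{B}}(\mathbf{f}\theta_{\lambda}\mathbf{f})$ is a basis of $\mathbf{f}\theta_{\lambda}\mathbf{f}$ (Proposition~\ref{B(fthetaf)}) and that $\bigcup_{i\in I}\tilde{\mathbf{B}}_{i,\geqslant\langle i,\xi\rangle+1}^{\sigma}$ and $\bigcup_{i\in I}(\tilde{\mathbf{B}}_{i,\geqslant\langle i,\xi\rangle+1}^{\sigma}\cup\tilde{\mathbf{B}}_{i',\geqslant\langle i,\lambda\rangle+1}^{\sigma})$ are bases of the two ideals (Theorem~\ref{14.3.2}), and then observes that $\tilde{\mathbf{B}}(\mathbf{f}\theta_{\lambda}\mathbf{f})\cap\bigcup_{i\in I}\tilde{\mathbf{B}}_{i',\geqslant\langle i,\lambda\rangle+1}^{\sigma}=\varnothing$. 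Your argument bypasses all of this: since every homogeneous element of $\mathbf{f}\theta_{\lambda}\mathbf{f}$ has $i'$-degree exactly $\langle i,\lambda\rangle$, the summands $\tilde{\mathbf{f}}\theta_{i'}^{\langle i,\lambda\rangle+1}$ (which live in $i'$-degree at least $\langle i,\lambda\rangle+1$) cannot contribute to such an element, and the claim follows immediately from the $\mathbb{N}[\tilde{I}]$-grading. This avoids any reliance on canonical basis theory and would allow the lemma to be proved at the point where it is stated rather than deferred. The paper's approach, on the other hand, fits naturally into its overall strategy of showing that all the relevant subspaces are spanned by subsets of $\tilde{\mathbf{B}}$, a fact it needs anyway for the later results.
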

\begin{proof}
The proof will be given after Proposition \ref{B(fthetaf)}.  
\end{proof}

\begin{definition}\label{Lambdaxilambda}
We define the $\mathbf{U}$-module $\Lambda_{\xi,\lambda}\subset \tilde{\Lambda}_{\xi\odot \lambda}$ to be the image of $\mathbf{f}\theta_{\lambda}\mathbf{f}\subset \tilde{M}_{\xi\odot \lambda}$ under the natural projection $\pi_{\xi\odot \lambda}:\tilde{M}_{\xi\odot \lambda}\rightarrow \tilde{\Lambda}_{\xi\odot \lambda}$. More precisely, 
\begin{align*}
\Lambda_{\xi,\lambda}=&(\mathbf{f}\theta_{\lambda}\mathbf{f}+\sum_{i\in I}\tilde{\mathbf{f}}\theta_{i}^{\langle i,\xi\odot \lambda\rangle+1}+\sum_{i\in I}\tilde{\mathbf{f}}\theta_{i'}^{\langle i',\xi\odot \lambda\rangle+1})/(\sum_{i\in I}\tilde{\mathbf{f}}\theta_{i}^{\langle i,\xi\odot \lambda\rangle+1}+\sum_{i\in I}\tilde{\mathbf{f}}\theta_{i'}^{\langle i',\xi\odot \lambda\rangle+1})\\
\cong&\mathbf{f}\theta_{\lambda}\mathbf{f}/(\mathbf{f}\theta_{\lambda}\mathbf{f}\cap(\sum_{i\in I}\tilde{\mathbf{f}}\theta_{i}^{\langle i,\xi\odot \lambda\rangle+1}+\sum_{i\in I}\tilde{\mathbf{f}}\theta_{i'}^{\langle i',\xi\odot \lambda\rangle+1}))=\mathbf{f}\theta_{\lambda}\mathbf{f}/(\mathbf{f}\theta_{\lambda}\mathbf{f}\cap\sum_{i\in I}\tilde{\mathbf{f}}\theta_{i}^{\langle i,\xi\rangle+1}).
\end{align*}
In particular, we have the $\mathbf{U}$-module $\Lambda_{0,\lambda}\subset \tilde{\Lambda}_{0\odot \lambda}$ such that $\Lambda_{0,\lambda}\cong \mathbf{f}\theta_{\lambda}\mathbf{f}/(\mathbf{f}\theta_{\lambda}\mathbf{f}\cap \sum_{i\in I}\tilde{\mathbf{f}}\theta_i)$. For convenience, we identify $\Lambda_{\xi,\lambda}$ with $\mathbf{f}\theta_{\lambda}\mathbf{f}/(\mathbf{f}\theta_{\lambda}\mathbf{f}\cap\sum_{i\in I}\tilde{\mathbf{f}}\theta_{i}^{\langle i,\xi\rangle+1})$. 
\end{definition}

\begin{proposition}\label{psi}
The right multiplication by $\theta_{\lambda}$ defines a $\mathbf{U}$-module homomorphism 
$$\cdot\theta_{\lambda}:M_{\lambda}=\mathbf{f}\rightarrow \mathbf{f}\theta_{\lambda}\mathbf{f},\,x\mapsto x\theta_{\lambda},$$
where we regard $\mathbf{f}\theta_{\lambda}\mathbf{f}\subset \tilde{M}_{0\odot\lambda}$. Moreover, it induces a $\mathbf{U}$-module isomorphism
$$\psi:\Lambda_{\lambda}=M_{\lambda}/\sum_{i\in I}\mathbf{f}\theta_i^{\langle i,\lambda\rangle+1}\rightarrow \mathbf{f}\theta_{\lambda}\mathbf{f}/(\mathbf{f}\theta_{\lambda}\mathbf{f}\cap \sum_{i\in I}\tilde{\mathbf{f}}\theta_i)=\Lambda_{0,\lambda}.$$
\end{proposition}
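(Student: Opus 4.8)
The plan is to verify that right multiplication by $\theta_{\lambda}$ intertwines the $\mathbf{U}$-actions on the Verma module $M_{\lambda}$ (of $\mathbf{U}$) and on $\mathbf{f}\theta_{\lambda}\mathbf{f}\subset\tilde{M}_{0\odot\lambda}$ (restricted to $\mathbf{U}$), and then to pass to quotients. First I would check well-definedness: for homogeneous $x\in\mathbf{f}$, the element $x\theta_{\lambda}$ lies in $\mathbf{f}\theta_{\lambda}\mathbf{f}$ by construction, and the map is $\mathbb{Q}(v)$-linear, so $\cdot\theta_{\lambda}$ is a linear map $M_{\lambda}\to\mathbf{f}\theta_{\lambda}\mathbf{f}$.

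Next I would check that $\cdot\theta_{\lambda}$ is a $\mathbf{U}$-module homomorphism by testing on the generators $F_i,K_{\mu},E_i$ for $i\in I,\mu\in Y$. For $F_i$ this is immediate since $F_i$ acts as left multiplication by $\theta_i$ on both sides: $(F_ix)\theta_{\lambda}=\theta_ix\theta_{\lambda}=F_i(x\theta_{\lambda})$. For $K_{\mu}$ with $\mu\in Y$ one compares weights: on $M_{\lambda}$, $K_{\mu}x=v^{\langle\mu,\lambda-|x|\rangle}x$, while on $\tilde{M}_{0\odot\lambda}$, $K_{\mu}(x\theta_{\lambda})=v^{\langle\mu,0\odot\lambda-|x|-|\theta_{\lambda}|\rangle}x\theta_{\lambda}$; using $\langle\mu,0\odot\lambda\rangle=\langle\mu,\lambda+|\theta_{\lambda}|\rangle$ for $\mu\in Y$ (from the displayed identities for $\xi\odot\lambda$ with $\xi=0$), the two exponents agree. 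The substantive case is $E_i$ for $i\in I$: here one argues by induction on $\mathrm{tr}\,|x|$, exactly parallel to (indeed a special case of) the computation in the proof of Lemma \ref{submodules}. When $x=1$ one has $E_i\theta_{\lambda}=\theta_{\lambda}E_i\cdot 1=0$ since $E_i$ commutes past each $F_{j'}=\theta_{j'}$ (as $[E_i,F_{j'}]$ involves $\delta_{ij'}$ which vanishes for $i\in I,j'\in I'$) and $E_i$ kills the highest weight vector; on the $M_{\lambda}$ side $E_i\cdot 1=0$ too. For the inductive step write $x=\theta_kx'$, apply Lemma \ref{3.4.2} on both modules, and compare: on $M_{\lambda}$ one gets $F_kE_ix'+\delta_{ik}[\langle i,\lambda-|x'|\rangle]x'$, while on $\mathbf{f}\theta_{\lambda}\mathbf{f}$ one gets $F_kE_i(x'\theta_{\lambda})+\delta_{ik}[\langle i,0\odot\lambda-|x'|-|\theta_{\lambda}|\rangle]x'\theta_{\lambda}$, and the two scalars coincide because $\langle i,0\odot\lambda-|x'|-|\theta_{\lambda}|\rangle=\langle i,\lambda-|x'|\rangle$ (again from $\langle i,0\odot\lambda\rangle=\langle i,0\rangle$ together with $\langle i,|\theta_{\lambda}|\rangle=-\langle i,\lambda\rangle$, so that $\langle i,0\odot\lambda-|\theta_{\lambda}|\rangle=\langle i,\lambda\rangle$ — one should double-check this bookkeeping carefully, as it is the point where the definition of $\odot$ is engineered to make the map equivariant). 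By induction $E_ix'\theta_{\lambda}$ equals $(E_ix')\theta_{\lambda}$, so the first terms match too, completing the homomorphism claim.

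Finally I would deduce the induced isomorphism $\psi$. By the homomorphism property, $\cdot\theta_{\lambda}$ sends the submodule $\sum_{i\in I}\mathbf{f}\theta_i^{\langle i,\lambda\rangle+1}\subset M_{\lambda}$ into $\sum_{i\in I}\tilde{\mathbf{f}}\theta_i^{\langle i,\lambda\rangle+1}\theta_{\lambda}\subset\mathbf{f}\theta_{\lambda}\mathbf{f}$; since $\theta_{i'}$ and $\theta_j$ commute for all $i,j$, one has $\theta_i^{\langle i,\lambda\rangle+1}\theta_{\lambda}=\theta_{\lambda}\theta_i^{\langle i,\lambda\rangle+1}\in\mathbf{f}\theta_{\lambda}\mathbf{f}\cap\sum_{i\in I}\tilde{\mathbf{f}}\theta_i$, so we get a well-defined $\mathbf{U}$-module map $\psi:\Lambda_{\lambda}\to\Lambda_{0,\lambda}$. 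Surjectivity is clear because $\mathbf{f}\theta_{\lambda}\mathbf{f}=\mathbf{f}\theta_{\lambda}\cdot\mathbf{f}$ and, modulo $\sum_{i\in I}\tilde{\mathbf{f}}\theta_i$, any $x\theta_{\lambda}y$ with $y$ having positive degree in the $\theta_j$ ($j\in I$) can be rewritten using commutativity of $\theta_{j}$ with $\theta_{\lambda}$ as $xy\theta_{\lambda}$ plus terms in $\sum_{i\in I}\tilde{\mathbf{f}}\theta_i$ — hence $\Lambda_{0,\lambda}$ is spanned by images of $x\theta_{\lambda}$, $x\in\mathbf{f}$. For injectivity I would compare weight spaces: both $\Lambda_{\lambda}$ and $\Lambda_{0,\lambda}$ are highest weight $\mathbf{U}$-modules generated by the image of $\eta$, the map $\psi$ sends highest weight vector to highest weight vector, and $\Lambda_{\lambda}$ is simple; alternatively, one checks directly that $\ker(\cdot\theta_{\lambda}:M_{\lambda}\to\Lambda_{0,\lambda})$ is a proper submodule of $M_{\lambda}$ containing $\sum_{i\in I}\mathbf{f}\theta_i^{\langle i,\lambda\rangle+1}$, hence equals it by maximality of that submodule (\cite[Proposition 3.5.6]{Lusztig-1993}). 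The main obstacle is the equivariance computation for $E_i$ — specifically, making sure the weight shift built into $\xi\odot\lambda$ (and in particular the $|\theta_{\lambda}|$ correction) produces exactly the scalar $[\langle i,\lambda-|x'|\rangle]$ that appears on the $M_{\lambda}$ side; once that identity of scalars is pinned down, everything else is formal.
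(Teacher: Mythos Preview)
Your argument for the $\mathbf{U}$-equivariance of $\cdot\theta_{\lambda}$ (the $F_i$, $K_{\mu}$, and inductive $E_i$ computations) matches the paper's proof, and your injectivity argument via simplicity of $\Lambda_{\lambda}$ is the same as the paper's. The weight bookkeeping you flag as ``the point where the definition of $\odot$ is engineered'' is exactly right: $\langle i,0\odot\lambda-|\theta_{\lambda}|\rangle=\langle i,\lambda\rangle$ holds because $\langle i,0\odot\lambda\rangle=\langle i,0\rangle=0$ and $\langle i,|\theta_{\lambda}|\rangle=\sum_{j}\langle j,\lambda\rangle\langle i,j'^*\rangle=-\langle i,\lambda\rangle$.

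There is, however, a genuine gap in your well-definedness step for $\psi$. You assert that ``$\theta_{i'}$ and $\theta_j$ commute for all $i,j$,'' and use this to conclude $\theta_i^{\langle i,\lambda\rangle+1}\theta_{\lambda}=\theta_{\lambda}\theta_i^{\langle i,\lambda\rangle+1}$. This is false when $i=j$: since $i\cdot i'=-1$, the generators $\theta_i$ and $\theta_{i'}$ satisfy an $A_2$-type Serre relation, not a commutation relation. (Only for $j\neq i$ do $\theta_j$ and $\theta_{i'}$ commute.) The paper handles this by invoking \cite[Lemma 42.1.2(b)]{Lusztig-1993}, which gives an explicit expression
\[
\theta_i^{(\langle i,\lambda\rangle+1)}\theta_{i'}^{(\langle i,\lambda\rangle)}=\sum_{n=0}^{\langle i,\lambda\rangle}\frac{v^{-(\langle i,\lambda\rangle+1-n)(\langle i,\lambda\rangle-n)}}{[n]!}\theta_{i'}^{(\langle i,\lambda\rangle-n)}(\theta_i\theta_{i'}-v^{-1}\theta_{i'}\theta_i)^n\theta_i^{(\langle i,\lambda\rangle+1-n)},
\]
each term of which ends in a positive power of $\theta_i$; combined with the genuine commutativity $\theta_i\theta_{j'}=\theta_{j'}\theta_i$ for $j\neq i$, this yields $\theta_i^{\langle i,\lambda\rangle+1}\theta_{\lambda}\in\tilde{\mathbf{f}}\theta_i$ as needed.

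Your surjectivity argument uses the same false commutativity, but here the repair is trivial and does not require any such formula: if $y\in\mathbf{f}$ is homogeneous with $|y|\neq 0$, then $y$ is a combination of monomials each ending in some $\theta_j$ with $j\in I$, so $x\theta_{\lambda}y\in\sum_{j\in I}\tilde{\mathbf{f}}\theta_j$ already, hence vanishes in $\Lambda_{0,\lambda}$. Thus $\Lambda_{0,\lambda}$ is spanned by the images of $x\theta_{\lambda}$, which is exactly the image of $\psi$.
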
 
\begin{proof}
It is clear that $\cdot\theta_{\lambda}$ is linear, and it commutes with the actions of $F_i$ for any $i\in I$. For any $\mu\in Y$ and homogeneous $x\in \mathbf{f}$, we have 
$$(K_{\mu} x)\theta_\lambda=v^{\langle \mu,\lambda-|x|\rangle}x\theta_{\lambda}=v^{\langle\mu,\lambda+|\theta_{\lambda}|-|x\theta_\lambda|\rangle}x\theta_{\lambda}=v^{\langle \mu,0\odot \lambda-|x\theta_\lambda|\rangle}=K_{\mu}(x\theta_{\lambda}).$$
For any $i\in I$, we use induction on $\mathrm{tr}\,|x|\in \mathbb{N}$ to prove that $(E_ix)\theta_{\lambda}=E_i(x\theta_{\lambda})$.\\
$\bullet$ If $\mathrm{tr}\,|x|=0$, we have $(E_i1)\theta_{\lambda}=0,\ E_i\theta_{\lambda}=\prod_{i\in I}F_{i'}^{(\langle i,\lambda\rangle)}E_i1=0$.\\
$\bullet$ If $\mathrm{tr}\,|x|>0$, we may suppose that $x=\theta_jx'$ for some $j\in I$ and homogeneous $x'\in \mathbf{f}$. Notice that $\mathrm{tr}\,|x'|=\mathrm{tr}\,|x|-1$. By the inductive hypothesis, we have $(E_ix')\theta_{\lambda}=E_i(x'\theta_{\lambda})$. By Lemma \ref{3.4.2}, we have 
\begin{align*}
&(E_ix)\theta_{\lambda}=(E_i\theta_jx')\theta_{\lambda}=(E_iF_jx')\theta_{\lambda}
=(F_jE_ix'+\delta_{i,j}[\langle i,\lambda-|x'|\rangle]x')\theta_{\lambda},\\
&E_i(x\theta_{\lambda})=E_i(\theta_jx'\theta_{\lambda})=E_iF_j(x'\theta_{\lambda})=F_jE_i(x'\theta_{\lambda})+\delta_{i,j}[\langle i,0\odot \lambda-|x'|-|\theta_{\lambda}|\rangle]x'\theta_{\lambda}.
\end{align*}
Notice that $\langle i,0\odot \lambda-|x'|-|\theta_{\lambda}|\rangle=\langle i,\lambda-|x'|\rangle$. Since $(E_ix')\theta_{\lambda}=E_i(x'\theta_{\lambda})$ and $\cdot\theta_{\lambda}$ commutes the action of $F_i$, we have $(E_ix)\theta_{\lambda}=E_i(x\theta_{\lambda})$, as desired. Hence the map $\cdot\theta_{\lambda}$ is a $\mathbf{U}$-module homomorphism. Moreover, since $i\cdot j'=-\delta_{i,j}$ for any $i,j\in I$, we have 
\begin{align*}
&\theta_i^{\langle i,\lambda\rangle+1}\theta_{j'}^{(\langle j,\lambda\rangle)}=\theta_{j'}^{(\langle j,\lambda\rangle)}\theta_i^{\langle i,\lambda\rangle+1}\ \textrm{if $j\not=i$};\\
&\theta_i^{(\langle i,\lambda\rangle+1)}\theta_{i'}^{(\langle i,\lambda\rangle)}=\sum_{n=0}^{\langle i,\lambda\rangle}\frac{v^{-(\langle i,\lambda\rangle+1-n)(\langle i,\lambda\rangle-n)}}{[n]!}\theta_{i'}^{(\langle i,\lambda\rangle-n)}(\theta_i\theta_{i'}-v^{-1}\theta_{i'}\theta_i)^n\theta_i^{(\langle i,\lambda\rangle+1-n)}
\end{align*}
by the quantum Serre relations and \cite[Lemma 42.1.2(b)]{Lusztig-1993}. As a result, we have $$(\sum_{i\in I}\mathbf{f}\theta^{\langle i,\lambda\rangle+1})\theta_\lambda\subset \mathbf{f}\theta_{\lambda}\mathbf{f}\cap \sum_{i\in I}\tilde{\mathbf{f}}\theta_i,$$ and so $\cdot\theta_{\lambda}$ induces a well-defined $\mathbf{U}$-module homomorphism $\psi:\Lambda_{\lambda}\rightarrow \Lambda_{0,\lambda}$. It is clear that $\psi$ is surjective, and $\Lambda_{0,\lambda}\not=0$. Since $\Lambda_{\lambda}$ is an integrable highest weight module, $\psi$ is an isomorphism.
\end{proof}

For any $\nu\in \mathbb{N}[I]$, we regard it as an element of $\mathbb{N}[\tilde{I}]$ via the natural embedding $I\hookrightarrow \tilde{I}$. Consider the following composition of linear maps
\begin{align*}
\varphi'':&\mathbf{f}\theta_{\lambda}\mathbf{f}\hookrightarrow\tilde{\mathbf{f}}\xrightarrow{r}\tilde{\mathbf{f}}\otimes \tilde{\mathbf{f}}\xrightarrow{p\otimes \mathrm{Id}}\bigoplus_{\nu\in \mathbb{N}[I]}\tilde{\mathbf{f}}_{\nu+|\theta_{\lambda}|}\otimes \tilde{\mathbf{f}}\xrightarrow{\tau}\tilde{\mathbf{f}}\otimes \bigoplus_{\nu\in \mathbb{N}[I]}\tilde{\mathbf{f}}_{\nu+|\theta_{\lambda}|}\\
&\hookrightarrow \tilde{\mathbf{f}}\otimes \tilde{\mathbf{f}}=\tilde{\mathbf{f}}\otimes \tilde{M}_{0\odot \lambda}\xrightarrow{\mathrm{Id}\otimes \pi_{0\odot \lambda}} \tilde{\mathbf{f}}\otimes \tilde{\Lambda}_{0\odot \lambda},
\end{align*}
where $r$ is the comultiplication of $\tilde{\mathbf{f}}$ defined in \ref{The algebra f}, $p$ is the natural projection from $\tilde{\mathbf{f}}$ to the direct sum of its certain homogeneous components, $\tau$ is the natural isomorphism swapping two factors, and $\pi_{0\odot \lambda}$ is the natural projection.

\begin{lemma}\label{imvarphi''=fLambda0lambda}
The image of the linear map $\varphi''$ is contained in $\mathbf{f}\otimes \Lambda_{0,\lambda}$, and so there is a linear map $\varphi'':\mathbf{f}\theta_{\lambda}\mathbf{f}\rightarrow \mathbf{f}\otimes \Lambda_{0,\lambda}$.
\end{lemma}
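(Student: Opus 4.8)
The plan is to evaluate $\varphi''$ on the spanning set $\{x\theta_{\lambda}y\mid x,y\in\mathbf{f}\}$ of $\mathbf{f}\theta_{\lambda}\mathbf{f}$ and to follow such an element through the composition defining $\varphi''$, keeping track of where the primed part of the $\tilde{I}$-grading sits at each stage. I would first record two elementary facts about the comultiplication $r$ of $\tilde{\mathbf{f}}$. Since $r$ is an algebra homomorphism with $r(\theta_i)=\theta_i\otimes 1+1\otimes\theta_i$ and $\theta_i\in\mathbf{f}$ for $i\in I$, its restriction to $\mathbf{f}$ takes values in $\mathbf{f}\otimes\mathbf{f}$. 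And since $i'\cdot j'=0$ for $i\ne j$, the generators $\theta_{i'}$ pairwise commute without twist, so $r(\theta_{\lambda})=\prod_{i\in I}r(\theta_{i'}^{(\langle i,\lambda\rangle)})$; expanding, each term $\theta'\otimes\theta''$ of $r(\theta_{\lambda})$ has both tensor factors in the subalgebra of $\tilde{\mathbf{f}}$ generated by $\{\theta_{i'}\mid i\in I\}$, and the unique term whose first factor has primed degree exactly $|\theta_{\lambda}|$ is $\theta_{\lambda}\otimes 1$ (with coefficient $1$).

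Next I would apply $r$ to $x\theta_{\lambda}y$ via $r(x\theta_{\lambda}y)=r(x)\,r(\theta_{\lambda})\,r(y)$, using the twisted multiplication on $\tilde{\mathbf{f}}\otimes\tilde{\mathbf{f}}$. Writing $r(x)=\sum x'_{\alpha}\otimes x''_{\alpha}$, $r(y)=\sum y'_{\gamma}\otimes y''_{\gamma}$ (all factors in $\mathbf{f}$) and $r(\theta_{\lambda})=\sum\theta'_{\beta}\otimes\theta''_{\beta}$ as above, a generic term has first tensor factor $x'_{\alpha}\theta'_{\beta}y'_{\gamma}$, whose primed degree equals that of $\theta'_{\beta}$ and is therefore $\le|\theta_{\lambda}|$. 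The projection $p\otimes\mathrm{Id}$ retains precisely the terms whose first factor has primed degree $|\theta_{\lambda}|$, so only the contributions with $\theta'_{\beta}\otimes\theta''_{\beta}=\theta_{\lambda}\otimes 1$ survive; hence $(p\otimes\mathrm{Id})r(x\theta_{\lambda}y)$ is a linear combination of elements $(x'_{\alpha}\theta_{\lambda}y'_{\gamma})\otimes(x''_{\alpha}y''_{\gamma})$ with first factor in $\mathbf{f}\theta_{\lambda}\mathbf{f}$ and second factor in $\mathbf{f}$.

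Finally I would push this through the remaining arrows: $\tau$ produces a combination of $(x''_{\alpha}y''_{\gamma})\otimes(x'_{\alpha}\theta_{\lambda}y'_{\gamma})\in\mathbf{f}\otimes(\mathbf{f}\theta_{\lambda}\mathbf{f})$, which sits inside $\tilde{\mathbf{f}}\otimes\tilde{\mathbf{f}}=\tilde{\mathbf{f}}\otimes\tilde{M}_{0\odot\lambda}$ with its second factor already in $\mathbf{f}\theta_{\lambda}\mathbf{f}$; then $\mathrm{Id}\otimes\pi_{0\odot\lambda}$ sends the second factor into $\pi_{0\odot\lambda}(\mathbf{f}\theta_{\lambda}\mathbf{f})=\Lambda_{0,\lambda}$ by Definition \ref{Lambdaxilambda}. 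Thus $\varphi''(x\theta_{\lambda}y)\in\mathbf{f}\otimes\Lambda_{0,\lambda}$, and since these elements span $\mathbf{f}\theta_{\lambda}\mathbf{f}$ the image of $\varphi''$ lies in $\mathbf{f}\otimes\Lambda_{0,\lambda}$, giving the asserted factored map. I expect the only genuinely substantive point to be the second observation above — identifying $\theta_{\lambda}\otimes 1$ as the unique term of $r(\theta_{\lambda})$ with maximal primed degree in the first slot — with everything else being routine bookkeeping with the $\tilde{I}$-grading and the twisted multiplication.
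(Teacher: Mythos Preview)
Your argument is correct and follows essentially the same approach as the paper's proof: you track the $\tilde{I}$-grading through the composition, identify $\theta_{\lambda}\otimes 1$ as the unique term of $r(\theta_{\lambda})$ surviving the projection $p$, and conclude that the result lands in $\mathbf{f}\otimes\Lambda_{0,\lambda}$. The only difference is that the paper pushes one step further, using that $\pi_{0\odot\lambda}(x'_{\alpha}\theta_{\lambda}y'_{\gamma})=0$ whenever $|y'_{\gamma}|\neq 0$ (since $\langle i,0\odot\lambda\rangle=0$) to record the explicit formula $\varphi''(x\theta_{\lambda}y)=\sum v^{|x_2|\cdot|\theta_{\lambda}|}x_2y\otimes\pi_{0\odot\lambda}(x_1\theta_{\lambda})$, which is needed later; for the lemma as stated your stopping point is entirely adequate.
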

\begin{proof}
For any homogeneous $x,y\in \mathbf{f}$, we calculate the image $\varphi''(x\theta_{\lambda}y)$ as follows. Suppose that $r(x)=\sum x_1\otimes x_2,r(y)=\sum y_1\otimes y_2$, where $x_1,x_2,y_1,y_2\in \mathbf{f}$ are homogeneous. By definitions, we have  
\begin{align*}
\tau(p\otimes \mathrm{Id})r(x\theta_{\lambda}y)=&\tau(\sum (x_1\otimes x_2)(\theta_{\lambda}\otimes 1)(y_1\otimes y_2))\\
=&\tau(\sum v^{|x_2|\cdot|\theta_{\lambda}|+|x_2|\cdot |y_1|}x_1\theta_{\lambda}y_1\otimes x_2y_2)\\
=&\sum v^{|x_2|\cdot|\theta_{\lambda}|+|x_2|\cdot |y_1|} x_2y_2\otimes x_1\theta_{\lambda}y_1.
\end{align*}
Notice that $\pi_{0\odot \lambda}(x_1\theta_{\lambda}y_1)=0$ whenever $|y_1|\not=0$. Hence we have
\begin{equation}\label{Im varphi''}
\varphi''(x\theta_{\lambda}y)=\sum v^{|x_2|\cdot|\theta_{\lambda}|}x_2y\otimes \pi_{0\odot \lambda}(x_1\theta_{\lambda})\in \mathbf{f}\otimes \Lambda_{0,\lambda}.
\end{equation}
Hence we have $\varphi''(\mathbf{f}\theta_{\lambda}\mathbf{f})\subset \mathbf{f}\otimes \Lambda_{0,\lambda}$.
\end{proof}

Moreover, consider the composition of linear maps
$$\varphi':\mathbf{f}\theta_{\lambda}\mathbf{f}\xrightarrow{\varphi''}  \mathbf{f}\otimes \Lambda_{0,\lambda}=M_{\xi}\otimes \Lambda_{0,\lambda}\xrightarrow{\pi_{\xi}\otimes \psi^{-1}}\Lambda_{\xi}\otimes \Lambda_{\lambda},$$
where $\pi_{\xi}$ is the natural projection, and $\psi$ is the $\mathbf{U}$-module isomorphism in Proposition \ref{psi}. 

\begin{corollary}\label{Im varphi'}
For any $x,y\in \mathbf{f}$, suppose that $r(x)=\sum x_1\otimes x_2$, where $x_1,x_2\in \mathbf{f}$ are homogeneous, then we have 
\begin{align*}
\varphi'(x\theta_{\lambda}y)=\sum v^{|x_2|\cdot|\theta_{\lambda}|}x_2^-y^-\eta_{\xi}\otimes x_1^-\eta_{\lambda}.
\end{align*}
\end{corollary}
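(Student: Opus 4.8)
The plan is to combine the explicit description of $\varphi''$ obtained inside the proof of Lemma~\ref{imvarphi''=fLambda0lambda} with the definition $\varphi'=(\pi_\xi\otimes\psi^{-1})\circ\varphi''$, and then simply unwind the two identifications. Concretely, \eqref{Im varphi''} already records that
$$\varphi''(x\theta_\lambda y)=\sum v^{|x_2|\cdot|\theta_\lambda|}\,x_2y\otimes\pi_{0\odot\lambda}(x_1\theta_\lambda)$$
when $r(x)=\sum x_1\otimes x_2$, so the remaining task is just to evaluate $\pi_\xi(x_2y)\in\Lambda_\xi$ and $\psi^{-1}\bigl(\pi_{0\odot\lambda}(x_1\theta_\lambda)\bigr)\in\Lambda_\lambda$.

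For the first factor I would use that $M_\xi=\mathbf{f}$ as a vector space, with $F_i$ acting by left multiplication by $\theta_i$ and with highest weight vector $\eta_\xi$ equal to the image of $1\in\mathbf{f}$; consequently any homogeneous $z\in\mathbf{f}$, regarded as a vector of $M_\xi$, equals $z^-\eta_\xi$. Applying $\pi_\xi$ and specializing $z=x_2y$ gives $\pi_\xi(x_2y)=(x_2y)^-\eta_\xi=x_2^-y^-\eta_\xi$, the last step being the compatibility of the isomorphism $\mathbf{f}\to\mathbf{U}^-$ with multiplication. For the second factor I would invoke Proposition~\ref{psi}: by construction $\psi$ carries the class in $\Lambda_\lambda$ of an element $z\in M_\lambda=\mathbf{f}$, i.e.\ the vector $z^-\eta_\lambda$, to $\pi_{0\odot\lambda}(z\theta_\lambda)\in\Lambda_{0,\lambda}$; hence $\psi^{-1}\bigl(\pi_{0\odot\lambda}(x_1\theta_\lambda)\bigr)=x_1^-\eta_\lambda$. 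Substituting both evaluations into $(\pi_\xi\otimes\psi^{-1})\varphi''(x\theta_\lambda y)$ yields precisely $\sum v^{|x_2|\cdot|\theta_\lambda|}\,x_2^-y^-\eta_\xi\otimes x_1^-\eta_\lambda$, which is the assertion.

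This argument is purely formal once \eqref{Im varphi''} is available, so I do not expect a genuine obstacle; the only thing to keep straight is the double bookkeeping role of $\mathbf{f}$ — as the underlying space of the Verma module $M_\xi$ versus as the domain of the comultiplication $r$ — together with the fact that in Proposition~\ref{psi} the map $\psi$ is defined through right multiplication by $\theta_\lambda$, so that $\psi^{-1}$ just removes the trailing $\theta_\lambda$ and the projection to $\Lambda_{0,\lambda}$.
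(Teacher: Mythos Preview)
Your proposal is correct and follows exactly the paper's approach: the paper's proof reads in full ``It follows from (\ref{Im varphi''}) and definitions,'' and what you have written is precisely the unpacking of those definitions --- applying $\pi_\xi\otimes\psi^{-1}$ to the formula~\eqref{Im varphi''} and identifying $\pi_\xi(x_2y)=x_2^-y^-\eta_\xi$ via $M_\xi=\mathbf{f}$ and $\psi^{-1}\pi_{0\odot\lambda}(x_1\theta_\lambda)=x_1^-\eta_\lambda$ via Proposition~\ref{psi}.
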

\begin{proof}
It follows from (\ref{Im varphi''}) and definitions.
\end{proof}

\begin{lemma}\label{surjections are isomorphisms}
Let $\mathbf{U}^0$ be the subalgebra $\mathbf{U}$ generated by $K_{\mu}$ for any $\mu\in Y$. Then any surjective $\mathbf{U}^0$-module homomorphisms from $\Lambda_{\xi,\lambda}$ to $\Lambda_{\xi}\otimes \Lambda_{\lambda}$ are $\mathbf{U}^0$-module isomorphisms.
\end{lemma}
\begin{proof}
The proof will be given after Corollary \ref{framed basis}.
\end{proof}

\begin{theorem}\label{framed construction of tensor}
The linear map $\varphi':\mathbf{f}\theta_{\lambda}\mathbf{f}\rightarrow \Lambda_{\xi}\otimes \Lambda_{\lambda}$ is a $\mathbf{U}$-module homomorphism, and it induces a $\mathbf{U}$-module isomorphism $$\varphi:\Lambda_{\xi,\lambda}=\mathbf{f}\theta_{\lambda}\mathbf{f}/(\mathbf{f}\theta_{\lambda}\mathbf{f}\cap \sum_{i\in I}\tilde{\mathbf{f}}\theta_i^{\langle i,\xi\rangle+1})\rightarrow \Lambda_{\xi}\otimes \Lambda_{\lambda}$$
such that
\begin{align*}
\varphi\pi_{\xi\odot \lambda}(x\theta_{\lambda}y)=\sum v^{|x_2|\cdot|\theta_{\lambda}|}x_2^-y^-\eta_{\xi}\otimes x_1^-\eta_{\lambda}
\end{align*}
for any $x,y\in \mathbf{f}$, where $r(x)=\sum x_1\otimes x_2$ and $x_1,x_2\in \mathbf{f}$ are homogeneous.
\end{theorem}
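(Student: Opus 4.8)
The plan is to verify that $\varphi'$ intertwines the $\mathbf{U}$-action and then deduce the isomorphism statement from the results already set up. First I would check that $\varphi'$ is a $\mathbf{U}$-module homomorphism $\mathbf{f}\theta_{\lambda}\mathbf{f}\to\Lambda_{\xi}\otimes\Lambda_{\lambda}$, where the source carries its $\mathbf{U}$-module structure as a submodule of $\tilde{M}_{\xi\odot\lambda}$ (Lemma \ref{submodules}) and the target carries the comultiplication action of $\mathbf{U}$. The cleanest route is to factor $\varphi'=(\pi_{\xi}\otimes\psi^{-1})\circ\varphi''$ and check $\mathbf{U}$-equivariance of $\varphi''$ directly on generators, since $\pi_{\xi}$ and $\psi^{-1}$ are already known to be $\mathbf{U}$-module maps (the latter by Proposition \ref{psi}). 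Equivariance for $K_{\mu}$ is a weight bookkeeping matter using the identities $\langle\mu,\xi\odot\lambda\rangle=\langle\mu,\xi+\lambda+|\theta_{\lambda}|\rangle$. For $F_i$ and $E_i$ the key point is that the maps $r$, $p\otimes\mathrm{Id}$, $\tau$, and $\mathrm{Id}\otimes\pi_{0\odot\lambda}$ assembling $\varphi''$ are each compatible with the relevant actions: $r$ is an algebra homomorphism so it converts left multiplication by $\theta_i$ into the twisted action on $\tilde{\mathbf{f}}\otimes\tilde{\mathbf{f}}$, which after the swap $\tau$ becomes exactly $\Delta(F_i)$ acting on the second-then-first tensor factor; for $E_i$ one uses Lemma \ref{3.4.2} (the commutation formula) together with the fact that $E_i\theta_{\lambda}=\theta_{\lambda}E_i$ since $i\cdot j'=-\delta_{i,j}$, the same computation that underlies Proposition \ref{psi}. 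I expect this verification to be the main obstacle: carrying the $\tilde{\mathbf{f}}\otimes\tilde{\mathbf{f}}$-multiplication twist through the projections and matching the resulting sign/power of $v$ with the comultiplication $\Delta(E_i)=E_i\otimes 1+K_i\otimes E_i$ and $\Delta(F_i)=1\otimes F_i+F_i\otimes K_{-i}$ requires care, and one must confirm that the ``error terms'' in Lemma \ref{3.4.2} land correctly after applying $\pi_{0\odot\lambda}$.

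Granting that $\varphi'$ is a $\mathbf{U}$-module homomorphism, the next step is to show it kills the submodule $\mathbf{f}\theta_{\lambda}\mathbf{f}\cap\sum_{i\in I}\tilde{\mathbf{f}}\theta_i^{\langle i,\xi\rangle+1}$, so that it descends to a map $\varphi:\Lambda_{\xi,\lambda}\to\Lambda_{\xi}\otimes\Lambda_{\lambda}$. For this I would use the explicit formula from Corollary \ref{Im varphi'}: elements of $\sum_{i}\tilde{\mathbf{f}}\theta_i^{\langle i,\xi\rangle+1}\cap\mathbf{f}\theta_{\lambda}\mathbf{f}$, when expanded, contribute via their comultiplication a term whose first tensor factor lies in $\sum_i\mathbf{f}\theta_i^{\langle i,\xi\rangle+1}$ (because the positive part of $r(\theta_i^{\langle i,\xi\rangle+1})$ still begins with a high power of $\theta_i$), and $\pi_{\xi}$ annihilates exactly that ideal in $M_{\xi}=\mathbf{f}$. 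More carefully, one writes a general element of the intersection using Lemma \ref{cap+=cap} to reduce to $\sum_i\tilde{\mathbf{f}}\theta_i^{\langle i,\xi\rangle+1}$, and then checks that each generator $z\theta_i^{\langle i,\xi\rangle+1}$ lying in $\mathbf{f}\theta_{\lambda}\mathbf{f}$ maps to zero by pushing the power of $\theta_i$ through $r$. Thus $\varphi$ is well-defined and surjective — surjectivity because $\varphi'(1\cdot\theta_{\lambda}\cdot 1)=\eta_{\xi}\otimes\eta_{\lambda}$ generates $\Lambda_{\xi}\otimes\Lambda_{\lambda}$ as a $\mathbf{U}$-module, and $\varphi$ is a $\mathbf{U}$-module map.

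Finally, to upgrade surjectivity to an isomorphism I would invoke Lemma \ref{surjections are isomorphisms}: since $\varphi$ is a $\mathbf{U}$-module homomorphism it preserves weight subspaces, and any weight-preserving surjection $\Lambda_{\xi,\lambda}\to\Lambda_{\xi}\otimes\Lambda_{\lambda}$ is automatically an isomorphism. (This is where the dimension count between the two modules, established earlier in the section, is used.) The displayed formula $\varphi\pi_{\xi\odot\lambda}(x\theta_{\lambda}y)=\sum v^{|x_2|\cdot|\theta_{\lambda}|}x_2^-y^-\eta_{\xi}\otimes x_1^-\eta_{\lambda}$ is then immediate from Corollary \ref{Im varphi'} together with the definition of $\varphi$ as the map induced by $\varphi'$ through $\pi_{\xi\odot\lambda}$. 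The whole argument is therefore a matter of assembling equivariance of $\varphi''$ on generators, vanishing on the relevant ideal, and the two cited lemmas; the equivariance check for $E_i$ is the only genuinely computational part and is the step I would write out in full detail.
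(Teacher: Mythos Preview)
Your proposal is correct and follows essentially the same route as the paper: verify $\mathbf{U}$-equivariance of $\varphi'$ on generators (with the $E_i$ check done by induction via Lemma \ref{3.4.2}), show the relevant ideal is killed so the map descends, prove surjectivity, and conclude by Lemma \ref{surjections are isomorphisms}. Two small remarks: the paper carries out the equivariance check directly on $\varphi'$ using the explicit formula of Corollary \ref{Im varphi'} rather than trying to argue that each constituent of $\varphi''$ is separately equivariant (the intermediate spaces do not carry $\mathbf{U}$-module structures making $r$, $p\otimes\mathrm{Id}$, $\tau$ individually equivariant, so your factor-by-factor description would in practice collapse into the same computation); and for surjectivity the paper does an explicit induction showing every $y^-\eta_{\xi}\otimes x^-\eta_{\lambda}$ lies in the image, which is precisely the proof that $\eta_{\xi}\otimes\eta_{\lambda}$ is a cyclic vector, so your appeal to cyclicity is not a shortcut but the same argument packaged differently.
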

\begin{proof}
Firstly, we prove that $\varphi'$ is a $\mathbf{U}$-module homomorphism. For any homogeneous $x,y\in \mathbf{f}$, suppose that $r(x)=\sum x_1\otimes x_2$, where $x_1,x_2\in \mathbf{f}$ are homogeneous. Notice that $|x|=|x_1|+|x_2|$. For any $\mu\in Y$, by Corollary \ref{Im varphi'}, we have
\begin{align*}
\varphi'(K_\mu x\theta_{\lambda}y)=&\varphi'(v^{\langle\mu,\xi\odot \lambda-|x|-|\theta_{\lambda}|-|y|\rangle}x\theta_{\lambda}y)\\
=&\varphi'(v^{\langle\mu,\xi+\lambda+|\theta_{\lambda}|-|x|-|\theta_{\lambda}|-|y|\rangle}x\theta_{\lambda}y)\\
=&\sum v^{|x_2|\cdot|\theta_{\lambda}|}v^{\langle \mu,\xi-|x_2|-|y|\rangle}x_2^-y^-\eta_{\xi}\otimes v^{\langle \mu,\lambda-|x_1|\rangle}x_1^-\eta_{\lambda}\\
=&\sum v^{|x_2|\cdot|\theta_{\lambda}|}K_\mu x_2^-y^-\eta_{\xi}\otimes K_\mu x_1^-\eta_{\lambda}=K_\mu \varphi'(x\theta_{\lambda}y).
\end{align*}
For any $i\in I$, we have $r(\theta_ix)=(\theta_i\otimes 1+1\otimes \theta_i)\sum x_1\otimes x_2=\sum \theta_ix_1\otimes x_2+\sum v^{i\cdot |x_1|}x_1\otimes \theta_ix_2$. By Corollary \ref{Im varphi'}, we have
\begin{align*}
\varphi'(F_ix\theta_{\lambda}y)=&\varphi'(\theta_ix\theta_{\lambda}y)\\
=&\sum v^{|x_2|\cdot|\theta_{\lambda}|}x_2^-y^-\eta_{\xi}\otimes F_ix_1^-\eta_{\lambda}
+\sum v^{i\cdot |x_1|}v^{|\theta_ix_2|\cdot|\theta_{\lambda}|}F_ix_2^-y^-\eta_{\xi}\otimes x_1^-\eta_{\lambda}\\
=&\sum v^{|x_2|\cdot|\theta_{\lambda}|}x_2^-y^-\eta_{\xi}\otimes F_ix_1^-\eta_{\lambda}
+\sum v^{|x_2|\cdot|\theta_{\lambda}|}F_ix_2^-y^-\eta_{\xi}\otimes v^{\langle -i,\lambda-|x_1|\rangle}x_1^-\eta_{\lambda}\\
=&\sum v^{|x_2|\cdot|\theta_{\lambda}|}x_2^-y^-\eta_{\xi}\otimes F_ix_1^-\eta_{\lambda}
+\sum v^{|x_2|\cdot|\theta_{\lambda}|}F_ix_2^-y^-\eta_{\xi}\otimes K_{-i}x_1^-\eta_{\lambda}\\
=&F_i\varphi'(x\theta_{\lambda}y).
\end{align*}
For any $i\in I$, we use induction on $\mathrm{tr}\,|x|\in \mathbb{N}$ to prove that $\varphi'(E_ix\theta_{\lambda}y)=E_i\varphi'(x\theta_{\lambda}y)$.\\
$\bullet$ If $\mathrm{tr}\,|x|=0$, recall that we have proved that $E_iy\in \mathbf{f}$ in the proof of Lemma \ref{submodules}. By Corollary \ref{Im varphi'}, we have
\begin{align*}
\varphi'(E_i\theta_{\lambda}y)=&\varphi'(E_i\prod_{i\in I}F_{i'}^{(\langle i,\lambda\rangle)}y)=\varphi'(\prod_{i\in I}F_{i'}^{(\langle i,\lambda\rangle)}E_iy)=\varphi'(\theta_{\lambda}(E_iy))=(E_iy)^-\eta_{\xi}\otimes \eta_{\lambda}.
\end{align*}
Since $E_i1=0$ in $\tilde{M}_{\xi\odot\lambda}$ and $E_i\eta_{\xi}=0$ in $\Lambda_\xi$, by \cite[Proposition 3.1.6]{Lusztig-1993}, we have
\begin{align*}
&E_iy=E_iy^-1=\frac{K_i{(_ir(y))}^-1-(r_i(y))^-K_{-i}1}{v-v^{-1}}=\frac{v^{\langle i,\xi\odot\lambda-|y|\rangle+2}{_ir(y)}-v^{\langle -i,\xi\odot\lambda \rangle}r_i(y)}{v-v^{-1}},\\
&E_iy^-\eta_{\xi}=\frac{K_i{(_ir(y))}^--(r_i(y))^-K_{-i}}{v-v^{-1}}\eta_{\xi}=\frac{v^{\langle i,\xi-|y|\rangle+2}({_ir(y)})^--v^{\langle -i,\xi \rangle}(r_i(y))^-}{v-v^{-1}}\eta_{\xi}.
\end{align*}
Since $\langle i,\xi\odot\lambda\rangle=\langle i,\xi\rangle$, we have $(E_iy)^-\eta_{\xi}=E_iy^-\eta_{\xi}$, and so
$$\varphi'(E_i\theta_{\lambda}y)=(E_iy)^-\eta_{\xi}\otimes \eta_{\lambda}=E_iy^-\eta_{\xi}\otimes \eta_{\lambda}=E_i(y^-\eta_{\xi}\otimes \eta_{\lambda})=E_i\varphi'(\theta_{\lambda}y).$$
$\bullet$ If $\mathrm{tr}\,|x|>0$, we may suppose that $x=\theta_jx'$ for some $j\in I$ and homogeneous $x'\in \mathbf{f}$. Notice that $\mathrm{tr}\,|x'|=\mathrm{tr}\,|x|-1$. By the inductive hypothesis, we have $\varphi'(E_ix'\theta_{\lambda}y)=E_i\varphi'(x'\theta_{\lambda}y)$. By Lemma \ref{3.4.2} and Corollary \ref{Im varphi'}, we have
\begin{align*}
\varphi'(E_ix\theta_{\lambda}y)=&\varphi'(E_iF_jx'\theta_{\lambda}y)\\
=&\varphi'(F_jE_ix'\theta_{\lambda}y+\delta_{i,j}[\langle i,\xi\odot \lambda-|x'|-|\theta_{\lambda}|-|y|\rangle]x'\theta_{\lambda}y)\\
=&F_j\varphi'(E_ix'\theta_{\lambda}y)+\delta_{i,j}[\langle i,\xi-|x'|-|\theta_{\lambda}|-|y|\rangle]\varphi'(x'\theta_{\lambda}y)\\
=&F_jE_i\varphi'(x'\theta_{\lambda}y)+\delta_{i,j}[\langle i,\xi+\lambda-|x'|-|y|\rangle]\varphi'(x'\theta_{\lambda}y)\\
=&E_iF_j\varphi'(x'\theta_{\lambda}y)=E_i\varphi'(F_jx'\theta_{\lambda}y)=E_i\varphi'(x\theta_{\lambda}y),
\end{align*}
as desired. Hence the map $\varphi'$ is a $\mathbf{U}$-module homomorphism. 

Secondly, we prove that $\varphi'(\mathbf{f}\theta_{\lambda}\mathbf{f}\cap \sum_{i\in I}\tilde{\mathbf{f}}\theta_i^{\langle i,\xi\rangle+1})=0$. It suffices to prove that $$\varphi''(\mathbf{f}\theta_{\lambda}\mathbf{f}\cap \sum_{i\in I}\tilde{\mathbf{f}}\theta_i^{(\langle i,\xi\rangle+1)})\subset \sum_{i\in I}\mathbf{f}\theta_i^{\langle i,\xi\rangle+1}\otimes \Lambda_{0,\lambda}.$$
For any $\sum_{i\in I} x_i\theta_i^{(\langle i,\xi\rangle+1)}\in \mathbf{f}\theta_{\lambda}\mathbf{f}\cap \sum_{i\in I}\tilde{\mathbf{f}}\theta_i^{(\langle i,\xi\rangle+1)}$, where $x_i\in \tilde{\mathbf{f}}$, we calculate the image $\varphi''(\sum_{i\in I} x_i\theta_i^{(\langle i,\xi\rangle+1)})$ as follows. Suppose that $r(x_i)=\sum x_{i1}\otimes x_{i2}$, where $x_{i1},x_{i2}\in \tilde{\mathbf{f}}$ are homogeneous, by definitions and \cite[Lemma 1.4.2]{Lusztig-1993}, we have
\begin{align*}
&\tau(p\otimes \mathrm{Id})r(\sum_{i\in I}x_i\theta_i^{(\langle i,\xi\rangle+1)})\\
=&\tau(p\otimes \mathrm{Id})(\sum_{i\in I}\sum (x_{i1}\otimes x_{i2})\sum_{n=0}^{\langle i,\xi\rangle+1}v^{n(\langle i,\xi\rangle+1-n)}(\theta_i^{(n)}\otimes\theta_i^{(\langle i,\xi\rangle+1-n)}))\\
=&\tau(\sum_{i\in I}\sum\sum_{n=0}^{\langle i,\xi\rangle+1}v^{n(\langle i,\xi\rangle+1-n)+|x_{i2}|\cdot ni}x_{i1}\theta_i^{(n)}\otimes x_{i2}\theta_i^{(\langle i,\xi\rangle+1-n)})\\
=&\sum_{i\in I}\sum\sum_{n=0}^{\langle i,\xi\rangle+1}v^{n(\langle i,\xi\rangle+1-n)+|x_{i2}|\cdot ni}x_{i2}\theta_i^{(\langle i,\xi\rangle+1-n)}\otimes x_{i1}\theta_i^{(n)},
\end{align*}
where the second sum is taken over $x_{i1},x_{i2}\in \tilde{\mathbf{f}}$ such that $|x_{i1}|-|\theta_\lambda|,|x_{i2}|\in \mathbb{N}[I]$. In particular, we have $x_{i2}\in \mathbf{f}$. Notice that $\pi_{0\odot\lambda}(x_{i1}\theta_i^{(n)})=0$ whenever $n\not=0$, and so
\begin{align*}
\varphi''(\sum_{i\in I} x_i\theta_i^{(\langle i,\xi\rangle+1)})=\sum_{i\in I}\sum x_{i2}\theta_i^{(\langle i,\xi\rangle+1)}\otimes \pi_{0\odot\lambda}(x_{i1})\in \sum_{i\in I}\mathbf{f}\theta_i^{\langle i,\xi\rangle+1}\otimes \Lambda_{0,\lambda}.
\end{align*}
Hence $\varphi'$ induces a well-defined $\mathbf{U}$-module homomorphism $\varphi:\Lambda_{\xi,\lambda}\rightarrow \Lambda_{\xi}\otimes \Lambda_{\lambda}$. 

Thirdly, we prove that $\varphi$ is an isomorphism. Notice that the vector space $\Lambda_{\xi}\otimes \Lambda_{\lambda}$ is spanned by $\{y^-\eta_{\xi}\otimes x^-\eta_{\lambda}\mid x,y\in \mathbf{f}\ \textrm{are homogeneous}\}$. We use induction on $\mathrm{tr}\,|x|\in \mathbb{N}$ to prove that $y^-\eta_{\xi}\otimes x^-\eta_{\lambda}\in \mathrm{Im}\,\varphi$.\\
$\bullet$ If $\mathrm{tr}\,|x|=0$, by Corollary \ref{Im varphi'}, we have
$y^-\eta_{\xi}\otimes \eta_{\lambda}=\varphi\pi_{\xi\odot\lambda}(\theta_{\lambda}y)\in\mathrm{Im}\,\varphi$.\\
$\bullet$ If $\mathrm{tr}\,|x|>0$, suppose that $r(x)=x\otimes 1+\sum x_1\otimes x_2$, where $x_1,x_2\in \mathbf{f}$ are homogeneous such that $(x_1,x_2)\not=(x,1)$. Notice that such $x_1$ satisfies $\mathrm{tr}\,|x_1|<\mathrm{tr}\,|x|$. By the inductive hypothesis, we have $x_2^-y^-\eta_{\xi}\otimes x_1^-\eta_{\lambda}\in \mathrm{Im}\,\varphi$. By Corollary \ref{Im varphi'}, we have
\begin{align*}
\varphi\pi_{\xi\odot\lambda}(x\theta_{\lambda}y)=y^-\eta_{\xi}\otimes x^-\eta_{\lambda}+\sum v^{|x_2|\cdot|\theta_\lambda|}x_2^-y^-\eta_{\xi}\otimes x_1^-\eta_{\lambda}\in \mathrm{Im}\,\varphi,
\end{align*}
and so $y^-\eta_{\xi}\otimes x^-\eta_{\lambda}\in \mathrm{Im}\,\varphi$, as desired. Hence $\varphi$ is surjective. By Lemma \ref{surjections are isomorphisms}, $\varphi$ is an isomorphism.
\end{proof}

We remark that Theorem \ref{framed construction of tensor} still hold for any symmetrizable Cartan datum.

It is clear that the bar-involution of $\tilde{\mathbf{f}}$ can be restricted to be a bar-involution of the subspace $\mathbf{f}\theta_{\lambda}\mathbf{f}$, and the bar-involution of $\tilde{\Lambda}_{\xi\odot\lambda}$ can be restricted to be a bar-involution of the submodule $\Lambda_{\xi,\lambda}$ such that $\pi_{\xi\odot\lambda}(\overline{x})=\overline{\pi_{\xi\odot\lambda}(x)}$ for any $x\in \mathbf{f}\theta_{\lambda}\mathbf{f}$. 

\begin{lemma}\label{varphi commutes with involutions}
For any $m\in \Lambda_{\xi,\lambda}$, we have $\varphi(\overline{m})=\Psi\varphi(m)$.
\end{lemma}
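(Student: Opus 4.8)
\emph{Strategy.} Set $T_1(m)=\varphi(\overline{m})$ and $T_2(m)=\Psi(\varphi(m))$ for $m\in\Lambda_{\xi,\lambda}$; the goal is $T_1=T_2$. The plan is to show that $T_1$ and $T_2$ are both additive and \emph{bar-twisted $\mathbf{U}$-linear}, meaning $T_j(um)=\bar{u}\,T_j(m)$ for all $u\in\mathbf{U}$, and then to check $T_1=T_2$ on a set of $\mathbf{U}$-module generators of $\Lambda_{\xi,\lambda}$. For $T_1$ this property holds because the bar-involution of $\tilde{\Lambda}_{\xi\odot\lambda}$ restricts to that of $\Lambda_{\xi,\lambda}$ and satisfies $\overline{um}=\bar{u}\,\bar{m}$ for $u\in\mathbf{U}\subseteq\tilde{\mathbf{U}}$ (the two bar-involutions agreeing on $\mathbf{U}$), combined with the fact that $\varphi$ is a $\mathbf{U}$-module homomorphism (Theorem \ref{framed construction of tensor}); for $T_2$ it holds because $\varphi$ is $\mathbf{U}$-linear and $\Psi(um)=\bar{u}\Psi(m)$ was recorded in \ref{Canonical basis of tensor product}. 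Since $\mathbb{Q}(v)\subseteq\mathbf{U}$ and $\overline{f(v)}=f(v^{-1})$, both $T_j$ are in particular bar-semilinear, so if $S\subseteq\Lambda_{\xi,\lambda}$ satisfies $\Lambda_{\xi,\lambda}=\sum_{s\in S}\mathbf{U}s$ and $T_1|_S=T_2|_S$, then $T_1=T_2$.

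\emph{A generating set.} I claim $S=\{\pi_{\xi\odot\lambda}(\theta_{\lambda}y)\mid y\in\mathbf{f}\}$ works. In $\tilde{M}_{\xi\odot\lambda}=\tilde{\mathbf{f}}$ the operator $F_i$ acts by left multiplication by $\theta_i$, hence $x^{-}(\theta_{\lambda}y)=x\theta_{\lambda}y$ in $\tilde{\mathbf{f}}$ for all $x,y\in\mathbf{f}$. Therefore $\mathbf{f}\theta_{\lambda}\mathbf{f}=\sum_{y\in\mathbf{f}}\mathbf{U}\cdot(\theta_{\lambda}y)$ as a $\mathbf{U}$-submodule of $\tilde{M}_{\xi\odot\lambda}$, and applying $\pi_{\xi\odot\lambda}$ yields $\Lambda_{\xi,\lambda}=\sum_{y\in\mathbf{f}}\mathbf{U}\cdot\pi_{\xi\odot\lambda}(\theta_{\lambda}y)$.

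\emph{Checking on generators.} Taking $x=1$ in the formula of Theorem \ref{framed construction of tensor} (so $r(1)=1\otimes 1$) gives $\varphi\pi_{\xi\odot\lambda}(\theta_{\lambda}y)=y^{-}\eta_{\xi}\otimes\eta_{\lambda}$. On one side, each $\theta_{i'}^{(n)}$ is bar-invariant and the bar-involution of $\tilde{\mathbf{f}}$ is an algebra homomorphism, so $\overline{\theta_{\lambda}}=\theta_{\lambda}$, whence $\overline{\pi_{\xi\odot\lambda}(\theta_{\lambda}y)}=\pi_{\xi\odot\lambda}(\theta_{\lambda}\bar{y})$ and, by the same formula, $T_1(\pi_{\xi\odot\lambda}(\theta_{\lambda}y))=(\bar{y})^{-}\eta_{\xi}\otimes\eta_{\lambda}$. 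On the other side, using $\overline{y^{-}\eta_{\xi}\otimes\eta_{\lambda}}=(\bar{y})^{-}\eta_{\xi}\otimes\eta_{\lambda}$ (as $\overline{\eta_{\lambda}}=\eta_{\lambda}$ and $\overline{y^{-}\eta_{\xi}}=(\bar{y})^{-}\eta_{\xi}$), we get $T_2(\pi_{\xi\odot\lambda}(\theta_{\lambda}y))=\Theta\big((\bar{y})^{-}\eta_{\xi}\otimes\eta_{\lambda}\big)$; since $\eta_{\lambda}$ is a highest weight vector, $b^{*+}\eta_{\lambda}=0$ for every $b\in\mathbf{B}_{\nu}$ with $\nu\neq 0$, so only the $\nu=0$ summand of $\Theta$ survives and $\Theta$ is the identity on $\Lambda_{\xi}\otimes\eta_{\lambda}$, giving $T_2(\pi_{\xi\odot\lambda}(\theta_{\lambda}y))=(\bar{y})^{-}\eta_{\xi}\otimes\eta_{\lambda}$. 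The two values agree, so $T_1|_S=T_2|_S$ and hence $T_1=T_2$.

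\emph{Main obstacle.} The computation on generators is short; the point needing care is the verification that both $T_1$ and $T_2$ are genuinely bar-twisted $\mathbf{U}$-linear, in particular that the bar-involution of $\tilde{\mathbf{U}}$ restricts to that of $\mathbf{U}$ and that of $\tilde{\Lambda}_{\xi\odot\lambda}$ restricts to that of $\Lambda_{\xi,\lambda}$, which is what makes the reduction to a $\mathbf{U}$-generating set legitimate, together with the two collapses $\overline{\theta_{\lambda}}=\theta_{\lambda}$ and $b^{*+}\eta_{\lambda}=0$ for $b\in\mathbf{B}_{\nu}$, $\nu\neq 0$, that trivialize $\Theta$ on the relevant subspace.
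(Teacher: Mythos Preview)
Your proof is correct and follows essentially the same approach as the paper: both reduce to checking the identity on elements of the form $\pi_{\xi\odot\lambda}(\theta_{\lambda}y)$ using that $\Theta$ acts as the identity on $\Lambda_{\xi}\otimes\eta_{\lambda}$, and then propagate via the compatibility of both sides with the $\mathbf{U}$-action and the bar-involution. The only difference is packaging: the paper carries out an explicit induction on $\mathrm{tr}\,\nu_1$ peeling off one $F_{i_1}^{(a_1)}$ at a time, whereas you phrase this step once and for all as ``bar-twisted $\mathbf{U}$-linearity plus agreement on a $\mathbf{U}$-generating set.''
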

\begin{proof}
It suffices to prove that $\varphi'(\overline{x})=\Psi\varphi'(x)$ for any $x\in \mathbf{f}\theta_{\lambda}\mathbf{f}$. Notice that $\mathbf{f}\theta_{\lambda}\mathbf{f}$ is spanned by $\{\theta_{\bnu_1}\theta_{\lambda}\theta_{\bnu_2}\mid \bnu_1\in \mathcal{V}_{\nu_1},\bnu_2\in \mathcal{V}_{\nu_2},\nu_1,\nu_2\in \mathbb{N}[I]\}$. We use induction on $\mathrm{tr}\,\nu_1\in \mathbb{N}$ to prove that $\varphi'(\overline{\theta_{\bnu_1}\theta_{\lambda}\theta_{\bnu_2}})=\Psi\varphi'(\theta_{\bnu_1}\theta_{\lambda}\theta_{\bnu_2})$.\\
$\bullet$ If $\mathrm{tr}\,\nu_1=0$, by definitions and Corollary \ref{Im varphi'}, we have
\begin{align*}
\Psi\varphi'(\theta_{\lambda}\theta_{\bnu_2})=&\Theta(\overline{\varphi'(\theta_{\lambda}\theta_{\bnu_2})})=\Theta(\overline{\theta_{\bnu_2}^-\eta_{\xi}\otimes \eta_{\lambda}})
=\Theta(\theta_{\bnu_2}^-\eta_{\xi}\otimes \eta_{\lambda})\\=&
\theta_{\bnu_2}^-\eta_{\xi}\otimes \eta_{\lambda}
=\varphi'(\theta_{\lambda}\theta_{\bnu_2})=\varphi'(\overline{\theta_{\lambda}\theta_{\bnu_2}}),
\end{align*}
where we use $b^{*+}\eta_{\lambda}=0$ for any $b\in \mathbf{B}\setminus\{1\}$ to calculate $\Theta(\theta_{\bnu_2}^-\eta_{\xi}\otimes \eta_{\lambda})$.\\
$\bullet$ If $\mathrm{tr}\,\nu_1>0$, suppose that $\bnu_1=(a_1i_1,...,a_ni_n)\in \mathcal{V}_{\nu_1}$ with $a_1\geqslant 1$. By definitions, we have $\theta_{\bnu_1}=\theta_{i_1}^{(a_1)}\theta_{\bnu}$, where $\bnu=(a_2i_2,...,a_ni_n)\in \mathcal{V}_{\nu_1-a_1i_1}$. By the inductive hypothesis, we have $\varphi'(\overline{\theta_{\bnu}\theta_{\lambda}\theta_{\bnu_2}})=\Psi\varphi'(\theta_{\bnu}\theta_{\lambda}\theta_{\bnu_2})$. By Corollary \ref{Im varphi'}, we have
\begin{align*}
\varphi'(\overline{\theta_{\bnu_1}\theta_{\lambda}\theta_{\bnu_2}})=&\varphi'(F_{i_1}^{(a_1)}\overline{\theta_{\bnu}\theta_{\lambda}\theta_{\bnu_2}})=F_{i_1}^{(a_1)}\varphi'(\overline{\theta_{\bnu}\theta_{\lambda}\theta_{\bnu_2}})=\overline{F_{i_1}^{(a_1)}}\Psi\varphi'(\theta_{\bnu}\theta_{\lambda}\theta_{\bnu_2})\\
=&\Psi(F_{i_1}^{(a_1)}\varphi'(\theta_{\bnu}\theta_{\lambda}\theta_{\bnu_2}))=\Psi\varphi'(F_{i_1}^{(a_1)}\theta_{\bnu}\theta_{\lambda}\theta_{\bnu_2})=\Psi\varphi'(\theta_{\bnu_1}\theta_{\lambda}\theta_{\bnu_2}),
\end{align*}
where we use $\Psi(um)=\overline{u}\Psi(m)$ for any $u\in \mathbf{U}$ and $m\in \Lambda_{\xi}\otimes \Lambda_{\lambda}$, as desired. It is clear that both $\varphi'(\overline{x})$ and $\Psi\varphi'(x)$ are $\mathbb{Q}$-linear in $x$, and swap $v,v^{-1}$. Hence $\varphi'(\overline{x})=\Psi\varphi'(x)$ for any $x\in \mathbf{f}\theta_{\lambda}\mathbf{f}$, and so $\varphi(\overline{m})=\Psi\varphi(m)$ for any $m\in \Lambda_{\xi,\lambda}$.
\end{proof}

By Proposition \ref{19.1.2}, there is a unique admissible (with respect to the $\tilde{\mathbf{U}}$-action) symmetric bilinear form $(,)_{\xi\odot\lambda}:\tilde{\Lambda}_{\xi\odot\lambda}\times \tilde{\Lambda}_{\xi\odot\lambda}\rightarrow \mathbb{Q}(v)$ such that $(\eta_{\xi\odot\lambda},\eta_{\xi\odot\lambda})_{\xi\odot\lambda}=1$.

\begin{lemma}\label{two pairings}
For any $\nu=\sum_{i\in I}\nu_ii\in \mathbb{N}[I]$ and homogeneous $y,y'\in \mathbf{f}_\nu$, we have 
$$(\pi_{\xi\odot\lambda}(\theta_{\lambda}y),\pi_{\xi\odot\lambda}(\theta_{\lambda}y'))_{\xi\odot\lambda}=(\varphi\pi_{\xi\odot\lambda}(\theta_{\lambda}y),\varphi\pi_{\xi\odot\lambda}(\theta_{\lambda}y'))_{\xi,\lambda}\prod_{i\in I}v^{-\langle i,\lambda\rangle \nu_i}\begin{bmatrix}\langle i,\lambda\rangle+\nu_i\\\langle i,\lambda\rangle\end{bmatrix}.$$ 
\end{lemma}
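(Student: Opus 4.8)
The plan is to compute both sides explicitly. The right‑hand pairing is controlled by Theorem~\ref{framed construction of tensor}: since $r(\theta_\lambda y)$ has the form $\theta_\lambda y\otimes 1 + \sum(\text{lower terms in the first factor})$, applying $\varphi\pi_{\xi\odot\lambda}$ to $\theta_\lambda y$ gives $\eta_\xi\otimes\, (\text{something})$ plus terms of the shape $z^-\eta_\xi\otimes w^-\eta_\lambda$ with $\operatorname{tr}|w|<\operatorname{tr}|y|$; more precisely, because $\theta_\lambda$ commutes appropriately and $y\in\mathbf f$, one gets that $\varphi\pi_{\xi\odot\lambda}(\theta_\lambda y)$ lies in $\eta_\xi\otimes\Lambda_\lambda$ after identifying $\psi$, so the inner product $(\varphi\pi_{\xi\odot\lambda}(\theta_\lambda y),\varphi\pi_{\xi\odot\lambda}(\theta_\lambda y'))_{\xi,\lambda}$ reduces via the definition of $(,)_{\xi,\lambda}$ and $(\eta_\xi,\eta_\xi)_\xi=1$ to $(y^-\eta_\lambda,{y'}^-\eta_\lambda)_\lambda$, i.e.\ to the pairing $(y,y')$ on $\mathbf f$ restricted through the quotient $\Lambda_\lambda$. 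So the task becomes to show that the left‑hand side $(\pi_{\xi\odot\lambda}(\theta_\lambda y),\pi_{\xi\odot\lambda}(\theta_\lambda y'))_{\xi\odot\lambda}$ equals that same quantity times the stated binomial factor $\prod_i v^{-\langle i,\lambda\rangle\nu_i}\binom{\langle i,\lambda\rangle+\nu_i}{\langle i,\lambda\rangle}$.

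First I would express the left‑hand pairing in terms of the bilinear form $(,)$ on $\tilde{\mathbf f}$. The admissible form on $\tilde\Lambda_{\xi\odot\lambda}$ with $(\eta_{\xi\odot\lambda},\eta_{\xi\odot\lambda})=1$ is, on the image of $\tilde{\mathbf f}$, the descent of a modified form on $\tilde{\mathbf f}$; concretely, for $u,u'\in\tilde{\mathbf f}_{\mu}$ one has $(\pi_{\xi\odot\lambda}(u),\pi_{\xi\odot\lambda}(u'))_{\xi\odot\lambda} = (u,u')_{\xi\odot\lambda}$ where $(,)_{\xi\odot\lambda}$ on $\tilde{\mathbf f}$ is the weighted form depending on $\langle i,\xi\odot\lambda\rangle$ (this is the standard description used in \cite[Chapter~19]{Lusztig-1993}, with the convention that $({_{a}}x,{_{a}}y)_\lambda$ rescales the $\mathbf f$‑form by factors attached to the highest weight). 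Then $u=\theta_\lambda y=\prod_{i}\theta_{i'}^{(\langle i,\lambda\rangle)}\,y$. Using property~(b) of the form (compatibility with $r$) together with the fact that the $i'$'s commute with everything in $\mathbf f$ up to the bilinear form $i\cdot j'=-\delta_{ij}$, $i'\cdot j'=2\delta_{ij}$, I would compute $r(\theta_\lambda y)$ and pair it against $\theta_\lambda y'$. The $I'$‑part and the $I$‑part decouple because $\theta_\lambda\in\tilde{\mathbf f}_{\sum_i\langle i,\lambda\rangle i'}$ sits in the $I'$‑graded piece while $y,y'\in\mathbf f$; the cross terms $i\cdot i'=-1$ produce exactly the powers of $v$ and the quantum‑binomial normalization.

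The cleanest route is to isolate a one‑variable computation: for a single $i\in I$, with $a=\langle i,\lambda\rangle$ and $n=\nu_i$, one needs the identity $(\theta_{i'}^{(a)}\theta_i^{(n)}z,\ \theta_{i'}^{(a)}\theta_i^{(n)}z')_{\text{rescaled}} = v^{-an}\binom{a+n}{a}\,(\theta_i^{(n)}z,\theta_i^{(n)}z')$ inside the rank‑$2$ Cartan subdatum spanned by $\{i,i'\}$, which is of type $A_2$‑like but with $i\cdot i'=-1$; this is precisely a computation with the divided powers and the relation from \cite[Lemma~42.1.2]{Lusztig-1993} already invoked in Proposition~\ref{psi}. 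One then multiplies over all $i\in I$ since distinct $i'$'s are orthogonal. I would organize the proof as: (i) reduce the RHS to $(y^-\eta_\lambda,{y'}^-\eta_\lambda)_\lambda$ via Theorem~\ref{framed construction of tensor}; (ii) rewrite the LHS as a form on $\tilde{\mathbf f}$ via the Verma/quotient description of the admissible form; (iii) apply $r$ and the pairing axiom~(b) to peel off $\theta_\lambda$, doing the rank‑one $\{i,i'\}$ computation; (iv) collect the scalar $\prod_i v^{-\langle i,\lambda\rangle\nu_i}\binom{\langle i,\lambda\rangle+\nu_i}{\langle i,\lambda\rangle}$.

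The main obstacle I expect is step~(iii): correctly tracking the $v$‑powers produced by the twisted multiplication $(x_1\otimes x_2)(y_1\otimes y_2)=v^{|x_2|\cdot|y_1|}x_1y_1\otimes x_2y_2$ when $\theta_\lambda$ is commuted through $y$ and when one expands $r(\theta_{i'}^{(a)})=\sum_{k} v^{k(a-k)}\theta_{i'}^{(k)}\otimes\theta_{i'}^{(a-k)}$, and then matching the surviving term (only $k=a$, or only $k=0$, depending on which tensor factor must land in $\Lambda_{0,\lambda}$) against the analogous expansion on the $y'$ side. The bookkeeping is routine but error‑prone; once the single‑$i$ normalization constant is pinned down and shown to be $v^{-an}\binom{a+n}{a}$, the rest follows by multiplicativity and the definitions of $(,)_{\xi,\lambda}$, $\psi$, and $\varphi$.
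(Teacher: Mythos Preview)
Your step~(i) has the factors reversed: by Corollary~\ref{Im varphi'} with $x=1$ one gets $\varphi\pi_{\xi\odot\lambda}(\theta_\lambda y)=y^-\eta_\xi\otimes\eta_\lambda$, so the image lies in $\Lambda_\xi\otimes\{\eta_\lambda\}$, not in $\{\eta_\xi\}\otimes\Lambda_\lambda$. Consequently the right-hand side reduces to $(y^-\eta_\xi,{y'}^-\eta_\xi)_\xi$, not to $(y^-\eta_\lambda,{y'}^-\eta_\lambda)_\lambda$. This is not merely cosmetic, since the roles of $\xi$ and $\lambda$ in the statement are asymmetric.

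More seriously, steps~(ii)--(iii) conflate two different bilinear forms. The form $(,)$ on $\tilde{\mathbf f}$ satisfying axiom~(b) (compatibility with $r$) is \emph{not} the form that descends to the admissible form $(,)_{\xi\odot\lambda}$ on $\tilde\Lambda_{\xi\odot\lambda}$: already $(\theta_i,\theta_i)=(1-v^{-2})^{-1}$ whereas $(F_i\eta_{\xi\odot\lambda},F_i\eta_{\xi\odot\lambda})_{\xi\odot\lambda}=(1-v^{-2\langle i,\xi\odot\lambda\rangle})/(1-v^{-2})$. So you cannot ``peel off $\theta_\lambda$'' via the $r$-compatibility of the $\tilde{\mathbf f}$-form and expect to land on $(,)_{\xi\odot\lambda}$. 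Your allusion to a ``weighted form depending on $\langle i,\xi\odot\lambda\rangle$'' is the right instinct, but that weighted form does not satisfy axiom~(b), and \cite[Lemma~42.1.2]{Lusztig-1993} does not by itself deliver the required scalar in that setting.

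The paper avoids all of this by working entirely inside $\tilde\Lambda_{\xi\odot\lambda}$ with the admissibility relation $(um,m')=(m,\rho(u)m')$. One first identifies the $\mathbf U$-submodule of $\tilde\Lambda_{\xi\odot\lambda}$ generated by $\eta_{\xi\odot\lambda}$ with $\Lambda_\xi$; by the uniqueness in Proposition~\ref{19.1.2} this gives $(y^-\eta_{\xi\odot\lambda},{y'}^-\eta_{\xi\odot\lambda})_{\xi\odot\lambda}=(y^-\eta_\xi,{y'}^-\eta_\xi)_\xi$, which is exactly the right-hand side of the lemma. Then, writing $\pi_{\xi\odot\lambda}(\theta_\lambda y)=\prod_j F_{j'}^{(\langle j,\lambda\rangle)}y^-\eta_{\xi\odot\lambda}$, one moves each $F_{i'}^{(\langle i,\lambda\rangle)}$ across the form via $\rho(F_{i'}^{(a)})=v^{a^2}K_{-ai'}E_{i'}^{(a)}$ and evaluates $E_{i'}^{(a)}F_{i'}^{(a)}{y'}^-\eta_{\xi\odot\lambda}$ using Lemma~\ref{3.4.2} together with $E_{i'}{y'}^-\eta_{\xi\odot\lambda}=0$. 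Each index $i$ contributes the factor $v^{-\langle i,\lambda\rangle\nu_i}\begin{bmatrix}\langle i,\lambda\rangle+\nu_i\\\langle i,\lambda\rangle\end{bmatrix}$, and the product over $i\in I$ gives the claim.
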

\begin{proof}
It is clear that the subspace $\mathbf{f}\subset \tilde{\mathbf{f}}$ is a $\mathbf{U}$-submodule of $\tilde{M}_{\xi\odot\lambda}$, see the proof of Lemma \ref{submodules}, and  the image of $\mathbf{f}\subset \tilde{M}_{\xi\odot\lambda}$ under the natural projection  $\pi_{\xi\odot\lambda}:\tilde{M}_{\xi\odot\lambda}\rightarrow \tilde{\Lambda}_{\xi\odot\lambda}$ can be identified with $\Lambda_{\xi}$ such that $\eta_{\xi\odot\lambda}$ is identified with $\eta_{\xi}$. By Proposition \ref{19.1.2} and the uniqueness of $(,)_{\xi}$, we have 
$(y^-\eta_{\xi\odot\lambda},{y'}^-\eta_{\xi\odot\lambda})_{\xi\odot\lambda}=(y^-\eta_{\xi},{y'}^-\eta_{\xi})_{\xi}$.
By definitions, we have $(y^-\eta_{\xi},{y'}^-\eta_{\xi})_{\xi}=(y^-\eta_{\xi}\otimes \eta_{\lambda},{y'}^-\eta_{\xi}\otimes \eta_{\lambda})_{\xi,\lambda}=(\varphi\pi_{\xi\odot\lambda}(\theta_{\lambda}y),\varphi\pi_{\xi\odot\lambda}(\theta_{\lambda}y'))_{\xi,\lambda}$,
and so 
$$(y^-\eta_{\xi\odot\lambda},{y'}^-\eta_{\xi\odot\lambda})_{\xi\odot\lambda}=(\varphi\pi_{\xi\odot\lambda}(\theta_{\lambda}y),\varphi\pi_{\xi\odot\lambda}(\theta_{\lambda}y'))_{\xi,\lambda}.$$
By definitions, we have
$$(\pi_{\xi\odot\lambda}(\theta_{\lambda}y),\pi_{\xi\odot\lambda}(\theta_{\lambda}y'))_{\xi\odot\lambda}=(\prod_{j\in I}F_{j'}^{(\langle j,\lambda\rangle)}y^-\eta_{\xi\odot\lambda},\prod_{j\in I}F_{j'}^{(\langle j,\lambda\rangle)}{y'}^-\eta_{\xi\odot\lambda})_{\xi\odot \lambda}.$$
For any $i\in I$, by definitions, we have $\rho(F_{i'}^{(\langle i,\lambda\rangle)})=v^{\langle i,\lambda\rangle^2}K_{-\langle i,\lambda\rangle i'}E_{i'}^{(\langle i,\lambda\rangle)}$. We denote $m=\prod_{j\in I,j\not=i}F_{j'}^{(\langle j,\lambda\rangle)}y^-\eta_{\xi\odot\lambda}$ and $m'=\prod_{j\in I,j\not=i}F_{j'}^{(\langle j,\lambda\rangle)}{y'}^-\eta_{\xi\odot\lambda}$. By definitions and Lemma \ref{3.4.2}, we have
\begin{align*}
E_{i'}^{(\langle i,\lambda\rangle)}\prod_{j\in I}F_{j'}^{(\langle j,\lambda\rangle)}{y'}^-\eta_{\xi\odot\lambda}=&\prod_{j\in I,j\not=i}F_{j'}^{(\langle j,\lambda\rangle)}E_{i'}^{(\langle i,\lambda\rangle)}F_{i'}^{(\langle i,\lambda\rangle)}{y'}^-\eta_{\xi\odot\lambda}\\
=&\prod_{j\in I,j\not=i}F_{j'}^{(\langle j,\lambda\rangle)}\sum_{k=0}^{\langle i,\lambda\rangle}\begin{bmatrix}\langle i',\xi\odot\lambda-\nu\rangle\\k\end{bmatrix}F_{i'}^{(\langle i,\lambda\rangle-k)}E_{i'}^{(\langle i,\lambda\rangle-k)}{y'}^-\eta_{\xi\odot\lambda}
\end{align*}
Notice that $E_{i'}{y'}^-\eta_{\xi\odot\lambda}={y'}^-E_{i'}\eta_{\xi\odot\lambda}=0$, and so $E_{i'}^{(\langle i,\lambda\rangle)}\pi_{\xi\odot\lambda}(\theta_{\lambda}y')=\begin{bmatrix}\langle i,\lambda\rangle+\nu_i\\\langle i,\lambda\rangle\end{bmatrix}m'$. By definitions, we have
\begin{align*}
K_{-\langle i,\lambda\rangle i'}m'=&K_{-\langle i,\lambda\rangle i'}\prod_{j\in I,j\not=i}F_{j'}^{(\langle j,\lambda\rangle)}{y'}^-\eta_{\xi\odot\lambda}=\prod_{j\in I,j\not=i}F_{j'}^{(\langle j,\lambda\rangle)}K_{-\langle i,\lambda\rangle i'}{y'}^-\eta_{\xi\odot\lambda}\\
=&\prod_{j\in I,j\not=i}F_{j'}^{(\langle j,\lambda\rangle)}v^{-\langle i,\lambda\rangle\langle i',\xi\odot\lambda-\nu\rangle}{y'}^-\eta_{\xi\odot\lambda}=v^{-\langle i,\lambda\rangle (\langle i,\lambda\rangle+\nu_i)}m'.
\end{align*}
Hence $(\prod_{j\in I}F_{j'}^{(\langle j,\lambda\rangle)}y^-\eta_{\xi\odot\lambda},\prod_{j\in I}F_{j'}^{(\langle j,\lambda\rangle)}{y'}^-\eta_{\xi\odot\lambda})_{\xi\odot \lambda}$ is equal to
\begin{align*}
(F_{i'}^{(\langle i,\lambda\rangle)}m,\prod_{j\in I}F_{j'}^{(\langle j,\lambda\rangle)}{y'}^-\eta_{\xi\odot\lambda})_{\xi\odot\lambda}
=&(m,v^{\langle i,\lambda\rangle^2}K_{-\langle i,\lambda\rangle i'}E_{i'}^{(\langle i,\lambda\rangle)}\pi_{\xi\odot\lambda}(\theta_{\lambda}y'))_{\xi\odot\lambda}\\
=&v^{\langle i,\lambda\rangle^2}v^{-\langle i,\lambda\rangle (\langle i,\lambda\rangle+\nu_i)}\begin{bmatrix}\langle i,\lambda\rangle+\nu_i\\\langle i,\lambda\rangle\end{bmatrix}(m,m')_{\xi\odot\lambda}.
\end{align*}
By repeating above calculation, we obtain 
\begin{align*}
(\pi_{\xi\odot\lambda}(\theta_{\lambda}y),\pi_{\xi\odot\lambda}(\theta_{\lambda}y'))_{\xi\odot\lambda}=&(y^-\eta_{\xi\odot\lambda},{y'}^-\eta_{\xi\odot\lambda})_{\xi\odot\lambda}\prod_{i\in I}v^{-\langle i,\lambda\rangle \nu_i}\begin{bmatrix}\langle i,\lambda\rangle+\nu_i\\\langle i,\lambda\rangle\end{bmatrix}\\
=&(\varphi\pi_{\xi\odot\lambda}(\theta_{\lambda}y),\varphi\pi_{\xi\odot\lambda}(\theta_{\lambda}y'))_{\xi,\lambda}\prod_{i\in I}v^{-\langle i,\lambda\rangle \nu_i}\begin{bmatrix}\langle i,\lambda\rangle+\nu_i\\\langle i,\lambda\rangle\end{bmatrix},
\end{align*}
as desired.
\end{proof}

\section{Framed construction of the canonical basis of tensor product}\label{Framed construction of the canonical basis of tensor product}

\subsection{Framed construction of $\mathbf{B}(\Lambda_{\xi}\otimes \Lambda_{\lambda})$}

Let $\tilde{\mathbf{B}}$ be the canonical basis of $\tilde{\mathbf{f}}$ defined in \cite[Chapter 14]{Lusztig-1993}.

\begin{definition}\label{part of basis}
For any $b\in \tilde{\mathbf{B}}$ and $x\in \tilde{\mathbf{f}}$, suppose that $x=\sum_{b'\in \tilde{\mathbf{B}}}c_{b'}b'$. Then we say $b$ appears in $x$ if $c_b\not=0$. We define the subset $\tilde{\mathbf{B}}(\mathbf{f}\theta_{\lambda}\mathbf{f})\subset \tilde{\mathbf{B}}$ consisting of $b\in \tilde{\mathbf{B}}$ which appear in $\theta_{\bnu_1}\theta_{\lambda}\theta_{\bnu_2}$ for some $\bnu_1\in \mathcal{V}_{\nu_1},\bnu_2\in \mathcal{V}_{\nu_2}$ and $\nu_1,\nu_2\in \mathbb{N}[I]$.
\end{definition}

For any $i\in I$ and $n\in \mathbb{N}$, by Theorem \ref{14.3.2}, there are bijections
$\pi_{i,n}:\tilde{\mathbf{B}}_{i,0}\rightarrow \tilde{\mathbf{B}}_{i,n}$ and $\pi_{i,n}^{\sigma}:\tilde{\mathbf{B}}_{i,0}^{\sigma}\rightarrow \tilde{\mathbf{B}}_{i,n}^{\sigma}$ associated to the canonical basis $\tilde{\mathbf{B}}$ of $\tilde{\mathbf{f}}$.

\begin{lemma}\label{restricted bijection}
The bijections
$\pi_{i,n}:\tilde{\mathbf{B}}_{i,0}\rightarrow \tilde{\mathbf{B}}_{i,n}$ and $\pi_{i,n}^{\sigma}:\tilde{\mathbf{B}}_{i,0}^{\sigma}\rightarrow \tilde{\mathbf{B}}_{i,n}^{\sigma}$ can be restricted to be the bijections
\begin{align*}
\pi_{i,n}:\tilde{\mathbf{B}}_{i,0}\cap \tilde{\mathbf{B}}(\mathbf{f}\theta_{\lambda}\mathbf{f})\rightarrow \tilde{\mathbf{B}}_{i,n}\cap \tilde{\mathbf{B}}(\mathbf{f}\theta_{\lambda}\mathbf{f}),\ \pi_{i,n}^{\sigma}:\tilde{\mathbf{B}}_{i,0}^{\sigma}\cap \tilde{\mathbf{B}}(\mathbf{f}\theta_{\lambda}\mathbf{f})\rightarrow \tilde{\mathbf{B}}_{i,n}^{\sigma}\cap \tilde{\mathbf{B}}(\mathbf{f}\theta_{\lambda}\mathbf{f}).
\end{align*}
Moreover, for any $b\in \tilde{\mathbf{B}}_{i,0}\cap \tilde{\mathbf{B}}(\mathbf{f}\theta_{\lambda}\mathbf{f})$, we have 
\begin{align*}
&\theta_i^{(n)}b=\pi_{i,n}(b)+\sum_{b'\in \tilde{\mathbf{B}}_{i,\geqslant n+1}\cap\tilde{\mathbf{B}}(\mathbf{f}\theta_{\lambda}\mathbf{f})}c_{b'}b',\\
&(p_{n i}\otimes \mathrm{Id})r\pi_{i,n}(b)=\theta_i^{(n)}\otimes b+\sum_{b''\in \tilde{\mathbf{B}}_{i,\geqslant 1}\cap\tilde{\mathbf{B}}(\mathbf{f}\theta_{\lambda}\mathbf{f})}d_{b''}\theta_i^{(n)}\otimes b'',
\end{align*}
where $c_{b'},d_{b''}\in \mathbb{N}[v,v^{-1}]$; for any $b\in \tilde{\mathbf{B}}_{i,0}^{\sigma}\cap \tilde{\mathbf{B}}(\mathbf{f}\theta_{\lambda}\mathbf{f})$, we have 
\begin{align*}
&b\theta_i^{(n)}=\pi_{i,n}^{\sigma}(b)+\sum_{b'\in \tilde{\mathbf{B}}_{i,\geqslant n+1}^{\sigma}\cap \tilde{\mathbf{B}}(\mathbf{f}\theta_{\lambda}\mathbf{f})}c'_{b'}b',\\ 
&(\mathrm{Id}\otimes p_{ni})r\pi_{i,n}^{\sigma}(b)=b\otimes \theta_i^{(n)}+\sum_{b''\in \tilde{\mathbf{B}}_{i,\geqslant 1}^{\sigma}\cap \tilde{\mathbf{B}}(\mathbf{f}\theta_{\lambda}\mathbf{f})}d'_{b''}b''\otimes \theta_i^{(n)}
\end{align*}
where $c_{b'},d_{b''}\in \mathbb{N}[v,v^{-1}]$.
\end{lemma}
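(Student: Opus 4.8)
The plan is to deduce the whole statement from the positivity of the multiplication and comultiplication of $\tilde{\mathbf{f}}$ (Theorem~\ref{properties of B}) together with the formulas of Theorem~\ref{14.3.2}, via a ``no cancellation'' mechanism. Call a linear map out of $\tilde{\mathbf{f}}$ or $\tilde{\mathbf{f}}\otimes\tilde{\mathbf{f}}$ \emph{positive} if it sends each element of $\tilde{\mathbf{B}}$ (resp.\ of $\tilde{\mathbf{B}}\otimes\tilde{\mathbf{B}}$) into an $\mathbb{N}[v,v^{-1}]$-linear combination of $\tilde{\mathbf{B}}$ (resp.\ of $\tilde{\mathbf{B}}\otimes\tilde{\mathbf{B}}$); for such an $f$ and $z=\sum_b c_b b$ with all $c_b\in\mathbb{N}[v,v^{-1}]$ one then has $\operatorname{supp}f(z)=\bigcup_{c_b\neq 0}\operatorname{supp}f(b)$, since no term can cancel. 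By Theorem~\ref{properties of B}, left and right multiplications by $\theta_i^{(n)}$, the comultiplication $r$, the homogeneous projections $p_{ni}$, $p_{ni}\otimes\mathrm{Id}$ and $\mathrm{Id}\otimes p_{ni}$ (being coordinate projections), and all their composites are positive; moreover every $\mathbb{N}[v,v^{-1}]$-combination of products $\theta_{\bnu_1}\theta_{\lambda}\theta_{\bnu_2}$ is, by the definition of $\tilde{\mathbf{B}}(\mathbf{f}\theta_{\lambda}\mathbf{f})$ and positivity of multiplication, supported on $\tilde{\mathbf{B}}(\mathbf{f}\theta_{\lambda}\mathbf{f})$.

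The one genuine computation I would carry out, in the spirit of the proof of Lemma~\ref{imvarphi''=fLambda0lambda}, is that for $i\in I$ and $n\in\mathbb{N}$,
\begin{align*}
(p_{ni}\otimes\mathrm{Id})\,r(\theta_{\bnu_1}\theta_{\lambda}\theta_{\bnu_2})&\in\theta_i^{(n)}\otimes\Big(\sum\mathbb{N}[v,v^{-1}]\,\theta_{\boldsymbol{\mu}_1}\theta_{\lambda}\theta_{\boldsymbol{\mu}_2}\Big),\\
(\mathrm{Id}\otimes p_{ni})\,r(\theta_{\bnu_1}\theta_{\lambda}\theta_{\bnu_2})&\in\Big(\sum\mathbb{N}[v,v^{-1}]\,\theta_{\boldsymbol{\mu}_1}\theta_{\lambda}\theta_{\boldsymbol{\mu}_2}\Big)\otimes\theta_i^{(n)},
\end{align*}
the sums running over $\boldsymbol{\mu}_1,\boldsymbol{\mu}_2\in\bigsqcup_{\nu\in\mathbb{N}[I]}\mathcal{V}_{\nu}$. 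Indeed, expanding $r(\theta_{\bnu_1}\theta_{\lambda}\theta_{\bnu_2})=r(\theta_{\bnu_1})\,r(\theta_{\lambda})\,r(\theta_{\bnu_2})$ in the twisted tensor algebra and writing $r(\theta_{\lambda})=\sum\lambda_1\otimes\lambda_2$ with $\lambda_1,\lambda_2$ products of divided powers of the $\theta_{i'}$, one has $|\lambda_1|,|\lambda_2|\in\mathbb{N}[I']$; since $ni\in\mathbb{N}[I]$ has zero $I'$-component, keeping only the terms whose first (resp.\ second) tensor factor has weight $ni$ forces $\lambda_1=1,\ \lambda_2=\theta_{\lambda}$ (resp.\ $\lambda_1=\theta_{\lambda},\ \lambda_2=1$), so $\theta_{\lambda}$ survives intact inside the opposite factor, which then becomes a product $\theta_{\boldsymbol{\mu}_1}\theta_{\lambda}\theta_{\boldsymbol{\mu}_2}$, and all the $v$-monomials that occur (the twisting monomials, and those in $r(\theta_{\bnu_1})$ and $r(\theta_{\bnu_2})$) are nonnegative. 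One also notes that $\theta_i^{(n)}\theta_{\bnu_1}\theta_{\lambda}\theta_{\bnu_2}$ and $\theta_{\bnu_1}\theta_{\lambda}\theta_{\bnu_2}\theta_i^{(n)}$ are themselves of the form $\theta_{\boldsymbol{\mu}_1}\theta_{\lambda}\theta_{\boldsymbol{\mu}_2}$.

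With these in hand the lemma follows quickly. Fix $b\in\tilde{\mathbf{B}}_{i,0}\cap\tilde{\mathbf{B}}(\mathbf{f}\theta_{\lambda}\mathbf{f})$ and sequences with $b$ appearing in $w:=\theta_{\bnu_1}\theta_{\lambda}\theta_{\bnu_2}$. Applying the no-cancellation principle to left multiplication by $\theta_i^{(n)}$ gives $\operatorname{supp}(\theta_i^{(n)}b)\subseteq\operatorname{supp}(\theta_i^{(n)}w)\subseteq\tilde{\mathbf{B}}(\mathbf{f}\theta_{\lambda}\mathbf{f})$; as $\pi_{i,n}(b)$ and every $b'$ with $c_{b'}\neq 0$ in Theorem~\ref{14.3.2}(c) lie in $\operatorname{supp}(\theta_i^{(n)}b)$, this yields $\pi_{i,n}(b)\in\tilde{\mathbf{B}}(\mathbf{f}\theta_{\lambda}\mathbf{f})$ together with the first ``moreover'' identity. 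For surjectivity onto $\tilde{\mathbf{B}}_{i,n}\cap\tilde{\mathbf{B}}(\mathbf{f}\theta_{\lambda}\mathbf{f})$ and the comultiplication identity, take $b'\in\tilde{\mathbf{B}}_{i,n}\cap\tilde{\mathbf{B}}(\mathbf{f}\theta_{\lambda}\mathbf{f})$ appearing in some $w'$ of the above form; by the displayed computation and no cancellation for $(p_{ni}\otimes\mathrm{Id})r$, writing $(p_{ni}\otimes\mathrm{Id})r(b')=\theta_i^{(n)}\otimes z$ one gets that $\operatorname{supp}(z)$ is contained in the $\tilde{\mathbf{B}}$-support of the second tensor factor of $(p_{ni}\otimes\mathrm{Id})r(w')$, hence in $\tilde{\mathbf{B}}(\mathbf{f}\theta_{\lambda}\mathbf{f})$; comparing with Theorem~\ref{14.3.2}(c), where $z=\pi_{i,n}^{-1}(b')+\sum_{b''\in\tilde{\mathbf{B}}_{i,\geqslant 1}}d_{b''}b''$, gives $\pi_{i,n}^{-1}(b')\in\tilde{\mathbf{B}}(\mathbf{f}\theta_{\lambda}\mathbf{f})$ and that all $b''$ with $d_{b''}\neq 0$ lie in $\tilde{\mathbf{B}}(\mathbf{f}\theta_{\lambda}\mathbf{f})$. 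Together with the injectivity of $\pi_{i,n}$ this proves the first restricted bijection. The assertions for $\pi_{i,n}^{\sigma}$ follow either by the symmetric argument (right multiplication by $\theta_i^{(n)}$ and $\mathrm{Id}\otimes p_{ni}$) or, more economically, by applying $\sigma$: it permutes $\tilde{\mathbf{B}}$, satisfies $\pi_{i,n}^{\sigma}=\sigma\pi_{i,n}\sigma$, and preserves $\tilde{\mathbf{B}}(\mathbf{f}\theta_{\lambda}\mathbf{f})$ because $\sigma(\theta_{\bnu_1}\theta_{\lambda}\theta_{\bnu_2})=\sigma(\theta_{\bnu_2})\,\theta_{\lambda}\,\sigma(\theta_{\bnu_1})$ is again of the required form.

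I expect the main obstacle to be the second paragraph: one must track weights and twisting powers carefully enough to see that projecting one tensor factor onto weight $ni$ with $i\in I$ both keeps $\theta_{\lambda}$ whole in the other factor and preserves nonnegativity of coefficients. This is, however, only a routine variant of the computation already done in Lemma~\ref{imvarphi''=fLambda0lambda}; the conceptual content lies entirely in the positivity/no-cancellation machinery of the first paragraph.
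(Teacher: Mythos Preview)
Your proposal is correct and follows essentially the same approach as the paper: both directions use positivity of multiplication and comultiplication (Theorem~\ref{properties of B}) as a no-cancellation device, and the key computation that $(p_{ni}\otimes\mathrm{Id})r(\theta_{\bnu_1}\theta_{\lambda}\theta_{\bnu_2})$ lands in $\theta_i^{(n)}\otimes(\mathbb{N}[v,v^{-1}]\text{-span of }\theta_{\boldsymbol{\mu}_1}\theta_{\lambda}\theta_{\boldsymbol{\mu}_2})$ is exactly the paper's formula~(\ref{pniIdr}), which it obtains by citing \cite[Section~9.2.11]{Lusztig-1993} rather than your weight argument. Your optional shortcut via $\sigma$ for the second half is a small bonus the paper does not spell out (it just says ``proved dually'').
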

\begin{proof}
For any $b\in \tilde{\mathbf{B}}_{i,0}$, by Theorem \ref{14.3.2}, we have 
\begin{align*}
\theta_i^{(n)}b=\pi_{i,n}(b)+\sum_{b'\in \tilde{\mathbf{B}}_{i,\geqslant n+1}}c_{b'}b',\
(p_{ni}\otimes \mathrm{Id})r\pi_{i,n}(b)=\theta_i^{(n)}\otimes b+\sum_{b''\in \tilde{\mathbf{B}}_{i,\geqslant 1}}d_{b''}\theta_i^{(n)}\otimes b'',
\end{align*}
where $c_{b'},d_{b''}\in \mathbb{N}[v,v^{-1}]$. If $b\in \tilde{\mathbf{B}}_{i,0}\cap \tilde{\mathbf{B}}(\mathbf{f}\theta_{\lambda}\mathbf{f})$, suppose that $b$ appears in $\theta_{\bnu_1}\theta_{\lambda}\theta_{\bnu_2}$ for some $\bnu_1\in \mathcal{V}_{\nu_1},\bnu_2\in \mathcal{V}_{\nu_2}$ and $\nu_1,\nu_2\in \mathbb{N}[I]$, that is, if we write $\theta_{\bnu_1}\theta_{\lambda}\theta_{\bnu_2}=\sum_{b_0\in \tilde{\mathbf{B}}}k_{b_0}b_0$, then $k_b\not=0$. Thus we have
$$\theta_i^{(n)}\theta_{\bnu_1}\theta_{\lambda}\theta_{\bnu_2}=\theta_i^{(n)}(k_bb+\sum_{b_0\in \tilde{\mathbf{B}},b_0\not=b}k_{b_0}b_0)=k_b\pi_{i,n}(b)+\sum_{b'\in \tilde{\mathbf{B}}_{i,\geqslant n+1}}k_bc_{b'}b'+\sum_{b_0\in \tilde{\mathbf{B}},b_0\not=b}k_{b_0}\theta_i^{(n)}b_0.$$
Since the monomial $\theta_{\bnu_1}\theta_{\lambda}\theta_{\bnu_2}$ is a product of divided powers which are canonical basis elements, by the positivity of $\tilde{\mathbf{B}}$ with respect to the multiplication, we have $k_b,k_{b_0}\in \mathbb{N}[v,v^{-1}]$, and $\theta_i^{(n)}b_0$ is a $\mathbb{N}[v,v^{-1}]$-linear combination of canonical basis elements in $\tilde{\mathbf{B}}$. Hence $\pi_{i,n}(b)$ and those $b'\in \tilde{\mathbf{B}}_{i,\geqslant n+1}$ such that $c_{b'}\not=0$, appear in $\theta_{(ni,\bnu_1)}\theta_{\lambda}\theta_{\bnu_2}$, where $(ni,\bnu_1)\in \mathcal{V}_{ni+\nu_1}$ is the sequence obtained by adding $ni$ before the sequence $\bnu_1$, and so we have $\pi_{i,n}(b)\in \tilde{\mathbf{B}}_{i,n}\cap \tilde{\mathbf{B}}(\mathbf{f}\theta_{\lambda}\mathbf{f})$ and $b'\in \tilde{\mathbf{B}}_{i,\geqslant n+1}\cap\tilde{\mathbf{B}}(\mathbf{f}\theta_{\lambda}\mathbf{f})$. Conversely, if $b\in \tilde{\mathbf{B}}_{i,0}$ and $\pi_{i,n}(b)\in \tilde{\mathbf{B}}_{i,n}\cap \tilde{\mathbf{B}}(\mathbf{f}\theta_{\lambda}\mathbf{f})$, suppose that $\pi_{i,n}(b)$ appears in $\theta_{\bnu_1}\theta_{\lambda}\theta_{\bnu_2}$ for some $\bnu_1=(a_1i_1,...,a_ni_n)\in \mathcal{V}_{\nu_1},\bnu_2=(a'_1j_1,...,a'_mj_m)\in \mathcal{V}_{\nu_2}$ and $\nu_1,\nu_2\in \mathbb{N}[I]$. We identify $\theta_{\lambda}$ with $\theta_{(\langle i,\lambda\rangle i')_{i\in I}}$ for $(\langle i,\lambda\rangle)i')_{i\in I}\in \mathcal{V}_{|\theta_{\lambda}|}$ in arbitrary order. By \cite[Section 9.2.11]{Lusztig-1993}, we have 
\begin{equation}\label{pniIdr}
(p_{ni}\otimes \mathrm{Id})r(\theta_{\bnu_1}\theta_{\lambda}\theta_{\bnu_2})=\sum v^{d(\bnu,\bnu')}\theta_{\bnu}\otimes \theta_{\bnu'},
\end{equation}
where the sum is taken over $\bnu=(x_1,...,x_{n+m+|I|})\in \mathcal{V}_{ni},\bnu'=(y_1,...,y_{n+m+|I|})\in \mathcal{V}_{\nu_1+|\theta_{\lambda}|+\nu_2-ni}$ such that $$(x_1+y_1,...,x_{n+m+|I|}+y_{n+m+|I|})=(a_1i_1,...,a_ni_n,(\langle i,\lambda\rangle)i')_{i\in I},a'_1j_1,...,a'_mj_m),$$
and $d(\bnu,\bnu')\in \mathbb{Z}$. Notice that such $\bnu'$ must have the form $(\bnu'_1,(\langle i,\lambda\rangle)i')_{i\in I},\bnu'_2)$, where $\bnu'_1\in \mathcal{V}_{\nu'_1},\bnu'_2\in \mathcal{V}_{\nu'_2}$ and $\nu'_1,\nu'_2\in \mathbb{N}[I]$ such that $\nu'_1+\nu'_2=\nu_1+\nu_2-ni$, thus $\theta_{\bnu'}=\theta_{\bnu'_1}\theta_{\lambda}\theta_{\bnu'_2}$. Notice that $\theta_{\bnu}=c_{\bnu}\theta_i^{(n)}$ for some $c_{\bnu}\in \mathbb{N}[v,v^{-1}]$, so $b$ and those $b''\in \tilde{\mathbf{B}}_{i,\geqslant 1}$ such that $d_{b''}\not=0$ appear in $\sum c_{\bnu}\theta_{\bnu'_1}\theta_{\lambda}\theta_{\bnu'_2}$, and moreover, they appear in some $\theta_{\bnu'_1}\theta_{\lambda}\theta_{\bnu'_2}$, and so we have  $b\in \tilde{\mathbf{B}}_{i,0}\cap \tilde{\mathbf{B}}(\mathbf{f}\theta_{\lambda}\mathbf{f})$ and $b''\in \tilde{\mathbf{B}}_{i,\geqslant 1}\cap\tilde{\mathbf{B}}(\mathbf{f}\theta_{\lambda}\mathbf{f})$. Hence $\pi_{i,n}$ can be restricted to be the bijection from $\tilde{\mathbf{B}}_{i,0}\cap \tilde{\mathbf{B}}(\mathbf{f}\theta_{\lambda}\mathbf{f})$ to $\tilde{\mathbf{B}}_{i,n}\cap \tilde{\mathbf{B}}(\mathbf{f}\theta_{\lambda}\mathbf{f})$. The other statements can be proved dually.
\end{proof}

\begin{proposition}\label{B(fthetaf)}
We have $\tilde{\mathbf{B}}(\mathbf{f}\theta_{\lambda}\mathbf{f})\subset \mathbf{f}\theta_{\lambda}\mathbf{f}$, and so $\tilde{\mathbf{B}}(\mathbf{f}\theta_{\lambda}\mathbf{f})$ is a basis of $\mathbf{f}\theta_{\lambda}\mathbf{f}$.
\end{proposition}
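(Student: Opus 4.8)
The plan is to reduce the proposition to the single inclusion $\tilde{\mathbf{B}}(\mathbf{f}\theta_\lambda\mathbf{f})\subseteq\mathbf{f}\theta_\lambda\mathbf{f}$. Granting this, the space $\mathbf{f}\theta_\lambda\mathbf{f}$ is spanned by the products $\theta_{\bnu_1}\theta_\lambda\theta_{\bnu_2}$ with $\bnu_1\in\mathcal{V}_{\nu_1}$, $\bnu_2\in\mathcal{V}_{\nu_2}$, $\nu_1,\nu_2\in\mathbb{N}[I]$, and each such product is by Definition \ref{part of basis} a linear combination of elements of $\tilde{\mathbf{B}}(\mathbf{f}\theta_\lambda\mathbf{f})$; thus $\mathbf{f}\theta_\lambda\mathbf{f}$ is contained in the span of $\tilde{\mathbf{B}}(\mathbf{f}\theta_\lambda\mathbf{f})$, which is contained in $\mathbf{f}\theta_\lambda\mathbf{f}$, and since $\tilde{\mathbf{B}}(\mathbf{f}\theta_\lambda\mathbf{f})\subseteq\tilde{\mathbf{B}}$ is linearly independent it is a basis of $\mathbf{f}\theta_\lambda\mathbf{f}$. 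Before the main argument I would record two facts. First, $\mathbf{f}\theta_\lambda\mathbf{f}$ is an $(\mathbf{f},\mathbf{f})$-sub-bimodule of $\tilde{\mathbf{f}}$, hence stable under left and right multiplication by the $\theta_i^{(n)}$, $i\in I$. Second, $\theta_\lambda=\prod_{i\in I}\theta_{i'}^{(\langle i,\lambda\rangle)}$ lies in $\tilde{\mathbf{B}}$: the vertices $i'$ are pairwise non-adjacent in $\tilde{I}$, so applying Theorem \ref{14.3.2}(c) at $i'$ with $n=\langle i,\lambda\rangle$ to a product of the remaining $\theta_{j'}^{(\langle j,\lambda\rangle)}$ produces no error term (for $i'$-degree reasons), which identifies the successive partial products, and finally $\theta_\lambda$, as elements of $\tilde{\mathbf{B}}$. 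In particular the only canonical basis element appearing in $\theta_\lambda$ is $\theta_\lambda$ itself.

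For the inclusion $\tilde{\mathbf{B}}(\mathbf{f}\theta_\lambda\mathbf{f})\subseteq\mathbf{f}\theta_\lambda\mathbf{f}$ I would run a double induction. For $b\in\tilde{\mathbf{B}}(\mathbf{f}\theta_\lambda\mathbf{f})$ set $\delta(b)=|b|-|\theta_\lambda|$, which lies in $\mathbb{N}[I]$ since $b$ appears in some $\theta_{\bnu_1}\theta_\lambda\theta_{\bnu_2}$, and set $N(b)=\max\big(\{t_i(b):i\in I\}\cup\{t_i^{\sigma}(b):i\in I\}\big)$. The outer induction is on $\mathrm{tr}\,\delta(b)$. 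If $\mathrm{tr}\,\delta(b)=0$ then $b$ appears in $\theta_\lambda$, so $b=\theta_\lambda\in\mathbf{f}\theta_\lambda\mathbf{f}$. If $\mathrm{tr}\,\delta(b)>0$, fix a witness $\theta_{\bnu_1}\theta_\lambda\theta_{\bnu_2}$ of $b$; then $\nu_1\neq0$ or $\nu_2\neq0$, and peeling off the leading divided power of $\bnu_1$ (resp.\ the trailing divided power of $\bnu_2$) together with Corollary \ref{t_i=s_i} shows $t_i(b)\geqslant1$ for some $i\in I$ (resp.\ $t_i^{\sigma}(b)\geqslant1$), so $1\leqslant N(b)\leqslant\max_{i\in I}(\delta(b))_i$. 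The inner induction is downward on $N(b)$. Pick $i\in I$ with, say, $t_i(b)=N(b)=n\geqslant1$ (the case where the maximum is a $t_i^{\sigma}$ is handled symmetrically, using the $\sigma$-statements of Lemma \ref{restricted bijection} and right multiplication by $\theta_i^{(n)}$). Writing $b=\pi_{i,n}(b_0)$ with $b_0\in\tilde{\mathbf{B}}_{i,0}$, Lemma \ref{restricted bijection} gives $b_0\in\tilde{\mathbf{B}}_{i,0}\cap\tilde{\mathbf{B}}(\mathbf{f}\theta_\lambda\mathbf{f})$ and
\[
\theta_i^{(n)}b_0=b+\sum_{b'\in\tilde{\mathbf{B}}_{i,\geqslant n+1}\cap\tilde{\mathbf{B}}(\mathbf{f}\theta_\lambda\mathbf{f})}c_{b'}b',\qquad c_{b'}\in\mathbb{N}[v,v^{-1}].
\]
Since $\mathrm{tr}\,\delta(b_0)=\mathrm{tr}\,\delta(b)-n<\mathrm{tr}\,\delta(b)$, the outer hypothesis gives $b_0\in\mathbf{f}\theta_\lambda\mathbf{f}$, hence $\theta_i^{(n)}b_0\in\mathbf{f}\theta_\lambda\mathbf{f}$. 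Each $b'$ appearing satisfies $|b'|=|b|$, so $\delta(b')=\delta(b)$ and $t_i(b')\geqslant n+1$, whence $N(b')>N(b)$; by the inner hypothesis $b'\in\mathbf{f}\theta_\lambda\mathbf{f}$. Therefore $b=\theta_i^{(n)}b_0-\sum_{b'}c_{b'}b'\in\mathbf{f}\theta_\lambda\mathbf{f}$, closing the induction.

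The main obstacle is the bookkeeping of the induction. An induction on the $I$-degree $\mathrm{tr}\,\delta(b)$ alone does not close, because the correction terms $b'$ produced by Lemma \ref{restricted bijection} have the \emph{same} $I$-degree as $b$; the auxiliary downward induction on the string datum $N(b)$ is precisely what handles them, its base case being the situation in which the correction sum is empty for degree reasons. The remaining points---treating left- and right-divisibility symmetrically through $\pi_{i,n}$ and $\pi_{i,n}^{\sigma}$, and checking $\theta_\lambda\in\tilde{\mathbf{B}}$ to anchor the outer induction---are routine consequences of Theorem \ref{14.3.2}, Corollary \ref{t_i=s_i} and Lemma \ref{restricted bijection}.
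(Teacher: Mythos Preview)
Your proof is correct and follows essentially the same strategy as the paper's: an outer induction on $\mathrm{tr}\,\delta(b)$ combined with an inner descending induction, using Lemma~\ref{restricted bijection} to peel off a divided power and Corollary~\ref{t_i=s_i} to guarantee the peeling is nontrivial. The only difference is cosmetic: the paper runs the inner descending induction on $t_{i_1}(b)$ (respectively $t_{j_m}^{\sigma}(b)$) for the specific index $i_1$ (respectively $j_m$) read off a chosen witness $\theta_{\bnu_1}\theta_\lambda\theta_{\bnu_2}$, whereas you run it on the global invariant $N(b)=\max_i\{t_i(b),t_i^{\sigma}(b)\}$; both variants terminate for the same degree reason and handle the correction terms in the same way.
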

\begin{proof}
For any $b\in \tilde{\mathbf{B}}(\mathbf{f}\theta_{\lambda}\mathbf{f})$, suppose that $b$ appears in $\theta_{\bnu_1}\theta_{\lambda}\theta_{\bnu_2}$, where 
$$\bnu_1=(a_1i_1,...,a_ni_n)\in \mathcal{V}_{\nu_1},\bnu_2=(a'_1j_1,...,a'_mj_m)\in \mathcal{V}_{\nu_2},\nu_1,\nu_2\in \mathbb{N}[I].$$ 
We use induction on $\mathrm{tr}\,\nu_1+\mathrm{tr}\,\nu_2\in \mathbb{N}$ and descending inductions on 
$$t_{i_1}(b)\in [0,(\nu_1)_{i_1}+(\nu_2)_{i_1}],t_{j_m}^{\sigma}(b)\in [0,(\nu_1)_{j_m}+(\nu_2)_{j_m}]$$ 
to prove that $b\in \mathbf{f}\theta_{\lambda}\mathbf{f}$.\\
$\bullet$ If $\mathrm{tr}\,\nu_1+\mathrm{tr}\,\nu_2=0$, then $b$ appears in $\theta_{\lambda}$. We claim that $\theta_{\lambda}\in \tilde{\mathbf{B}}_{i',0}$. Indeed, it is clear that $1\in \tilde{\mathbf{B}}$. For any $i\in I$, by Theorem \ref{14.3.2}, we have $\theta_{i'}^{(\langle i,\lambda\rangle)}=\theta_{i'}^{(\langle i,\lambda\rangle)}1=\pi_{i',\langle i,\lambda\rangle}(1)\in \tilde{\mathbf{B}}$. Notice that we have $\theta_{i'}^{(\langle i,\lambda\rangle)}\in \bigcap_{j\in I,j\not=i}\tilde{\mathbf{B}}_{j',0}$, by repeating above process, we obtain $\theta_{\lambda}\in \tilde{\mathbf{B}}$. Hence $b=\theta_{\lambda}\in \mathbf{f}\theta_{\lambda}\mathbf{f}$.\\
$\bullet$ If $\mathrm{tr}\,\nu_1+\mathrm{tr}\,\nu_2>0$ and $\mathrm{tr}\,\nu_1>0$, then we have $a_1\geqslant 1$ and $\theta_{\bnu_1}\theta_{\lambda}\theta_{\bnu_2}=\theta_{i_1}^{(a_1)}\theta_{\bnu'_1}\theta_{\lambda}\theta_{\bnu_2}$, where $\bnu'_1=(a_2i_2,...,a_ni_n)\in \mathcal{V}_{\nu_1-a_1i_1}$. By Corollary \ref{t_i=s_i}, we have $t_{i_1}(b)\geqslant a_1$. By Theorem \ref{14.3.2} and Lemma \ref{restricted bijection}, there exists a unique $b_1\in \tilde{\mathbf{B}}_{i_1,0}\cap \tilde{\mathbf{B}}(\mathbf{f}\theta_{\lambda}\mathbf{f})$ such that
\begin{equation}\label{thetab1}
\theta_{i_1}^{(t_{i_1}(b))}b_1=b+\sum_{b'_1\in \tilde{\mathbf{B}}_{i_1,\geqslant t_{i_1}(b)+1}\cap \tilde{\mathbf{B}}(\mathbf{f}\theta_{\lambda}\mathbf{f})}c_{b'_1}b'_1.
\end{equation}
Notice that $\mathrm{tr}\,|b_1|=\mathrm{tr}\,|b|-t_{i_1}(b)$, by the inductive hypothesis, we have $b_1\in \mathbf{f}\theta_{\lambda}\mathbf{f}$, and moreover, $\theta_{i_1}^{(t_{i_1}(b))}b_1\in \mathbf{f}\theta_{\lambda}\mathbf{f}$.\\
$\bullet'$ If $t_{i_1}(b)=(\nu_1)_{i_1}+(\nu_2)_{i_1}$, then the right hand side of (\ref{thetab1}) is $b$, and so $b\in \mathbf{f}\theta_{\lambda}\mathbf{f}$.\\
$\bullet'$ If $t_{i_1}(b)<(\nu_1)_{i_1}+(\nu_2)_{i_1}$, by the inductive hypothesis, we have $b'_1\in \mathbf{f}\theta_{\lambda}\mathbf{f}$, and so $b\in \mathbf{f}\theta_{\lambda}\mathbf{f}$, as desired.\\
$\bullet$ If $\mathrm{tr}\,\nu_1+\mathrm{tr}\,\nu_2>0$ and $\mathrm{tr}\,\nu_2>0$, then we have $a'_m\geqslant 1$ and $\theta_{\bnu_1}\theta_{\lambda}\theta_{\bnu_2}\!=\!\theta_{\bnu_1}\theta_{\lambda}\theta_{\bnu'_2}\theta_{j_m}^{(a'_m)}$, where $\bnu'_2=(a'_1j_1,...,a'_{m-1}j_{m-1})\in \mathcal{V}_{\nu_2-a'_mj_m}$. By Corollary \ref{t_i=s_i}, we have $t_{j_m}^{\sigma}(b)\geqslant a'_m$. By Theorem \ref{14.3.2} and Lemma \ref{restricted bijection}, there exists a unique $b_2\in \tilde{\mathbf{B}}_{j_m,0}^{\sigma}\cap \tilde{\mathbf{B}}(\mathbf{f}\theta_{\lambda}\mathbf{f})$ such that 
\begin{equation}\label{b2theta}
b_2\theta_{j_m}^{(t_{j_m}^{\sigma}(b))}=b+\sum_{b'_2\in \tilde{\mathbf{B}}_{j_m,\geqslant t_{j_m}^{\sigma}(b)+1}^{\sigma}\cap \tilde{\mathbf{B}}(\mathbf{f}\theta_{\lambda}\mathbf{f})}c'_{b'_2}b'_2.
\end{equation}
Notice that $\mathrm{tr}\,|b_2|=\mathrm{tr}\,|b|-t_{j_m}^{\sigma}(b)$, by the inductive hypothesis, we have $b_2\in \mathbf{f}\theta_{\lambda}\mathbf{f}$, and moreover, $b_2\theta_{j_m}^{(t_{j_m}^{\sigma}(b))}\in \mathbf{f}\theta_{\lambda}\mathbf{f}$.\\
$\bullet'$ If $t_{j_m}^{\sigma}(b)=(\nu_1)_{j_m}+(\nu_2)_{j_m}$, then the right hand side of (\ref{b2theta}) is $b$, and so $b\in \mathbf{f}\theta_{\lambda}\mathbf{f}$. \\
$\bullet'$ If $t_{j_m}^{\sigma}(b)<(\nu_1)_{j_m}+(\nu_2)_{j_m}$, by the inductive hypothesis, we have $b'_2\in \mathbf{f}\theta_{\lambda}\mathbf{f}$, and so $b\in \mathbf{f}\theta_{\lambda}\mathbf{f}$, as desired. Hence we have $\tilde{\mathbf{B}}(\mathbf{f}\theta_{\lambda}\mathbf{f})\subset \mathbf{f}\theta_{\lambda}\mathbf{f}$. By definitions, $\tilde{\mathbf{B}}(\mathbf{f}\theta_{\lambda}\mathbf{f})$ consists of canonical basis elements of $\tilde{\mathbf{f}}$ which can span $\mathbf{f}\theta_{\lambda}\mathbf{f}$, and so it is a basis of $\mathbf{f}\theta_{\lambda}\mathbf{f}$.
\end{proof}

\begin{proof}[Proof of Lemma {\rm{\ref{cap+=cap}}}]
By Theorem \ref{14.3.2}, we know that $\bigcup_{i\in I}\tilde{\mathbf{B}}_{i,\geqslant \langle i,\xi\rangle+1}^{\sigma}$ is a basis of $\sum_{i\in I}\tilde{\mathbf{f}}\theta_i^{\langle i,\xi\rangle+1}$, and $\bigcup_{i\in I}(\tilde{\mathbf{B}}_{i,\geqslant \langle i,\xi\rangle+1}^{\sigma}\cup\tilde{\mathbf{B}}_{i',\geqslant \langle i,\lambda\rangle+1}^{\sigma})$ is a basis of $\sum_{i\in I}\tilde{\mathbf{f}}\theta_i^{\langle i,\xi\rangle+1}+\sum_{i\in I}\tilde{\mathbf{f}}\theta_{i'}^{\langle i,\lambda\rangle+1}$. Moreover, by Proposition \ref{B(fthetaf)}, we know $\tilde{\mathbf{B}}(\mathbf{f}\theta_{\lambda}\mathbf{f})\cap\bigcup_{i\in I}\tilde{\mathbf{B}}_{i,\geqslant \langle i,\xi\rangle+1}^{\sigma}$ is a basis of $\mathbf{f}\theta_{\lambda}\mathbf{f}\cap\sum_{i\in I}\tilde{\mathbf{f}}\theta_i^{\langle i,\xi\rangle+1}$, and $\tilde{\mathbf{B}}(\mathbf{f}\theta_{\lambda}\mathbf{f})\cap\bigcup_{i\in I}(\tilde{\mathbf{B}}_{i,\geqslant \langle i,\xi\rangle+1}^{\sigma}\cup\tilde{\mathbf{B}}_{i',\geqslant \langle i,\lambda\rangle+1}^{\sigma})$ is a basis of $\mathbf{f}\theta_{\lambda}\mathbf{f}\cap(\sum_{i\in I}\tilde{\mathbf{f}}\theta_i^{\langle i,\xi\rangle+1}+\sum_{i\in I}\tilde{\mathbf{f}}\theta_{i'}^{\langle i,\lambda\rangle+1})$. Notice that $\tilde{\mathbf{B}}(\mathbf{f}\theta_{\lambda}\mathbf{f})\cap\bigcup_{i\in I}\tilde{\mathbf{B}}_{i',\geqslant \langle i,\lambda\rangle+1}^{\sigma}=\varnothing$, and so $$\mathbf{f}\theta_{\lambda}\mathbf{f}\cap(\sum_{i\in I}\tilde{\mathbf{f}}\theta_i^{\langle i,\xi\rangle+1}+\sum_{i\in I}\tilde{\mathbf{f}}\theta_{i'}^{\langle i,\lambda\rangle+1})=\mathbf{f}\theta_{\lambda}\mathbf{f}\cap\sum_{i\in I}\tilde{\mathbf{f}}\theta_i^{\langle i,\xi\rangle+1},$$
as desired.
\end{proof}

\begin{definition}\label{basis definition}
We define the set
\begin{align*}
\tilde{\mathbf{B}}(\xi,\lambda)=\tilde{\mathbf{B}}(\mathbf{f}\theta_{\lambda}\mathbf{f})\cap \bigcap_{i\in I}\bigcup_{0\leqslant n\leqslant \langle i,\xi\rangle}\tilde{\mathbf{B}}_{i,n}^{\sigma}\subset \tilde{\mathbf{B}}(\mathbf{f}\theta_{\lambda}\mathbf{f}),
\end{align*}
and define the set 
\begin{align*}
\tilde{\mathbf{B}}(\Lambda_{\xi,\lambda})=\{\pi_{\xi\odot\lambda}(b)\mid b\in \tilde{\mathbf{B}}(\xi,\lambda)\}\subset \Lambda_{\xi,\lambda},
\end{align*}
where $\pi_{\xi\odot\lambda}:\mathbf{f}\theta_{\lambda}\mathbf{f}\rightarrow \Lambda_{\xi,\lambda}$ is the natural projection. In particular, we have the sets
$$\tilde{\mathbf{B}}(0,\lambda)\subset \tilde{\mathbf{B}}(\mathbf{f}\theta_{\lambda}\mathbf{f}),\ \tilde{\mathbf{B}}(\Lambda_{0,\lambda})\subset \Lambda_{0,\lambda}.$$
\end{definition}

Let $\tilde{\mathbf{B}}(\tilde{\Lambda}_{\xi\odot\lambda})=\{\pi_{\xi\odot\lambda}(b)\mid b\in \tilde{\mathbf{B}}(\xi\odot\lambda)\}$ be the canonical basis of $\tilde{\Lambda}_{\xi\odot\lambda}$, see \ref{Canonical basis of Lambdalambda}. By definitions, we have 
\begin{align*}
\tilde{\mathbf{B}}(\xi\odot\lambda)=&(\bigcap_{i\in I}\bigcup_{0\leqslant n\leqslant \langle i,\xi\odot\lambda\rangle}\tilde{\mathbf{B}}_{i,n}^{\sigma})\cap (\bigcap_{i\in I}\bigcup_{0\leqslant n\leqslant \langle i',\xi\odot\lambda\rangle}\tilde{\mathbf{B}}_{i',n}^{\sigma})\\
=&(\bigcap_{i\in I}\bigcup_{0\leqslant n\leqslant \langle i,\xi\rangle}\tilde{\mathbf{B}}_{i,n}^{\sigma})\cap (\bigcap_{i\in I}\bigcup_{0\leqslant n\leqslant \langle i,\lambda\rangle}\tilde{\mathbf{B}}_{i',n}^{\sigma}),
\end{align*}
Note that $\tilde{\mathbf{B}}(\mathbf{f}\theta_{\lambda}\mathbf{f})\subset \bigcap_{i\in I}\bigcup_{0\leqslant n\leqslant \langle i,\lambda\rangle}\tilde{\mathbf{B}}_{i',n}^{\sigma}$, and so 
$$\tilde{\mathbf{B}}(\xi,\lambda)\subset \tilde{\mathbf{B}}(\xi\odot\lambda),\ \tilde{\mathbf{B}}(\Lambda_{\xi,\lambda})\subset \tilde{\mathbf{B}}(\tilde{\Lambda}_{\xi\odot\lambda}).$$

\begin{corollary}\label{framed basis}
{\rm{(a)}} The set $\tilde{\mathbf{B}}(\Lambda_{\xi,\lambda})$ is a basis of $\Lambda_{\xi,\lambda}$. In particular, the set $\tilde{\mathbf{B}}(\Lambda_{0,\lambda})$ is a basis of $\Lambda_{0,\lambda}$.\\
{\rm{(b)}} For any $b\in \tilde{\mathbf{B}}(\mathbf{f}\theta_{\lambda}\mathbf{f})\setminus \tilde{\mathbf{B}}(\xi,\lambda)$, we have $\pi_{\xi\odot\lambda}(b)=0$.
\end{corollary}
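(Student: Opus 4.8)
The plan is to deduce both parts directly from Proposition~\ref{B(fthetaf)} together with the bookkeeping already carried out inside the proof of Lemma~\ref{cap+=cap}, using only the elementary fact that if $B$ is a basis of a vector space $V$ and $B_{0}\subseteq B$ is a basis of a subspace $W\subseteq V$, then $\{v+W\mid v\in B\setminus B_{0}\}$ is a basis of $V/W$.

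First I would describe $\tilde{\mathbf{B}}(\mathbf{f}\theta_{\lambda}\mathbf{f})\setminus\tilde{\mathbf{B}}(\xi,\lambda)$ explicitly. Since $\tilde{\mathbf{B}}=\bigsqcup_{n\in\mathbb{N}}\tilde{\mathbf{B}}_{i,n}^{\sigma}$ for each $i\in I$, the complement in $\tilde{\mathbf{B}}$ of $\bigcap_{i\in I}\bigcup_{0\leqslant n\leqslant\langle i,\xi\rangle}\tilde{\mathbf{B}}_{i,n}^{\sigma}$ is $\bigcup_{i\in I}\tilde{\mathbf{B}}_{i,\geqslant\langle i,\xi\rangle+1}^{\sigma}$, and intersecting with $\tilde{\mathbf{B}}(\mathbf{f}\theta_{\lambda}\mathbf{f})$ gives
$$\tilde{\mathbf{B}}(\mathbf{f}\theta_{\lambda}\mathbf{f})\setminus\tilde{\mathbf{B}}(\xi,\lambda)=\tilde{\mathbf{B}}(\mathbf{f}\theta_{\lambda}\mathbf{f})\cap\bigcup_{i\in I}\tilde{\mathbf{B}}_{i,\geqslant\langle i,\xi\rangle+1}^{\sigma}.$$
By the argument inside the proof of Lemma~\ref{cap+=cap} (which uses Theorem~\ref{14.3.2} and Proposition~\ref{B(fthetaf)}), this set is a basis of $\mathbf{f}\theta_{\lambda}\mathbf{f}\cap\sum_{i\in I}\tilde{\mathbf{f}}\theta_i^{\langle i,\xi\rangle+1}$, which by Lemma~\ref{cap+=cap} and Definition~\ref{Lambdaxilambda} is precisely the kernel of the natural projection $\pi_{\xi\odot\lambda}\colon\mathbf{f}\theta_{\lambda}\mathbf{f}\to\Lambda_{\xi,\lambda}$.

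Part~(b) is then immediate: every $b\in\tilde{\mathbf{B}}(\mathbf{f}\theta_{\lambda}\mathbf{f})\setminus\tilde{\mathbf{B}}(\xi,\lambda)$ lies in $\tilde{\mathbf{f}}\theta_i^{\langle i,\xi\rangle+1}$ for some $i\in I$ and, by Proposition~\ref{B(fthetaf)}, in $\mathbf{f}\theta_{\lambda}\mathbf{f}$, hence in $\ker\pi_{\xi\odot\lambda}$, so $\pi_{\xi\odot\lambda}(b)=0$. For part~(a), Proposition~\ref{B(fthetaf)} says $\tilde{\mathbf{B}}(\mathbf{f}\theta_{\lambda}\mathbf{f})$ is a basis of $\mathbf{f}\theta_{\lambda}\mathbf{f}$ whose subset $\tilde{\mathbf{B}}(\mathbf{f}\theta_{\lambda}\mathbf{f})\setminus\tilde{\mathbf{B}}(\xi,\lambda)$ is a basis of $\ker\pi_{\xi\odot\lambda}$, so the images of the remaining elements $\tilde{\mathbf{B}}(\xi,\lambda)$ under $\pi_{\xi\odot\lambda}$ form a basis of $\Lambda_{\xi,\lambda}$; by Definition~\ref{basis definition} this image is exactly $\tilde{\mathbf{B}}(\Lambda_{\xi,\lambda})$. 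The statements for $\Lambda_{0,\lambda}$ follow by specializing $\xi=0$. I do not expect a real obstacle here, since the substantive content is already contained in Proposition~\ref{B(fthetaf)} and the proof of Lemma~\ref{cap+=cap}; the only points requiring care are the set-theoretic identification of $\tilde{\mathbf{B}}(\mathbf{f}\theta_{\lambda}\mathbf{f})\setminus\tilde{\mathbf{B}}(\xi,\lambda)$ displayed above, and the observation that $\pi_{\xi\odot\lambda}$ is injective on $\tilde{\mathbf{B}}(\xi,\lambda)$, which holds because its complement inside the basis $\tilde{\mathbf{B}}(\mathbf{f}\theta_{\lambda}\mathbf{f})$ already spans the kernel.
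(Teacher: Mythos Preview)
Your proposal is correct and follows essentially the same approach as the paper's proof: both identify $\tilde{\mathbf{B}}(\mathbf{f}\theta_{\lambda}\mathbf{f})\setminus\tilde{\mathbf{B}}(\xi,\lambda)$ with $\tilde{\mathbf{B}}(\mathbf{f}\theta_{\lambda}\mathbf{f})\cap\bigcup_{i\in I}\tilde{\mathbf{B}}_{i,\geqslant\langle i,\xi\rangle+1}^{\sigma}$, observe via Theorem~\ref{14.3.2} and Proposition~\ref{B(fthetaf)} that this is a basis of the kernel, and conclude by the standard basis-of-quotient argument. The only difference is presentational: you route through the proof of Lemma~\ref{cap+=cap}, while the paper cites Theorem~\ref{14.3.2} and Proposition~\ref{B(fthetaf)} directly.
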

\begin{proof}
By Theorem \ref{14.3.2}, $\bigcup_{i\in I}\tilde{\mathbf{B}}_{i,\geqslant \langle i,\xi\rangle+1}^{\sigma}$ is a basis of $\sum_{i\in I}\tilde{\mathbf{f}}\theta_i^{\langle i,\xi\rangle+1}$. Hence $\pi_{\xi\odot\lambda}(b)=0$ for any $b\in \tilde{\mathbf{B}}(\mathbf{f}\theta_{\lambda}\mathbf{f})\setminus \tilde{\mathbf{B}}(\xi,\lambda)$. By Proposition \ref{B(fthetaf)}, we know that $\tilde{\mathbf{B}}(\mathbf{f}\theta_{\lambda}\mathbf{f})\cap \bigcup_{i\in I}\tilde{\mathbf{B}}_{i,\geqslant \langle i,\xi\rangle+1}^{\sigma}$ is a basis of $\mathbf{f}\theta_{\lambda}\mathbf{f}\cap\sum_{i\in I}\tilde{\mathbf{f}}\theta_i^{\langle i,\xi\rangle+1}$. Hence $\tilde{\mathbf{B}}(\Lambda_{\xi,\lambda})$ is a basis of $\Lambda_{\xi,\lambda}$.
\end{proof}

\begin{proof}[Proof of Lemma {\rm{\ref{surjections are isomorphisms}}}]
Consider the following composition of linear maps
\begin{align*}
\chi':M_{\xi}\otimes \mathbf{f}\theta_{\lambda}\mathbf{f}=\mathbf{f}\otimes \mathbf{f}\theta_{\lambda}\mathbf{f}\xrightarrow{\tau}\mathbf{f}\theta_{\lambda}\mathbf{f}\otimes \mathbf{f}\xrightarrow{\mathrm{multiplication\ of}\ \tilde{\mathbf{f}}}\mathbf{f}\theta_{\lambda}\mathbf{f}\xrightarrow{\pi_{\xi\odot\lambda}}\Lambda_{\xi,\lambda},
\end{align*}
where $\tau$ is the isomorphism swapping two factors, and $\pi_{\xi\odot\lambda}$ is the natural projection. It is clear that $\chi'$ is surjective, and $\chi'((\sum_{i\in I}\mathbf{f}\theta_i^{\langle i,\xi\rangle+1})\otimes \mathbf{f}\theta_{\lambda}\mathbf{f})=0$, so $\chi'$ induces a well-defined surjective linear map
$\chi:\Lambda_{\xi}\otimes \mathbf{f}\theta_{\lambda}\mathbf{f}\rightarrow \Lambda_{\xi,\lambda}$, that is, $\chi(\Lambda_{\xi}\otimes \mathbf{f}\theta_{\lambda}\mathbf{f})=\Lambda_{\xi,\lambda}$. Let $S$ be the subspace of $\mathbf{f}\theta_{\lambda}\mathbf{f}$ spanned by $\tilde{\mathbf{B}}(0,\lambda)$. We claim that 
\begin{align}\label{chilambdaxiotimesS=}
\chi(\Lambda_{\xi}\otimes S)=\chi(\Lambda_{\xi}\otimes \mathbf{f}\theta_{\lambda}\mathbf{f})=\Lambda_{\xi,\lambda}.
\end{align}
It is clear that $\chi(\Lambda_{\xi}\otimes S)\subset \chi(\Lambda_{\xi}\otimes \mathbf{f}\theta_{\lambda}\mathbf{f})$. For any $b\in \tilde{\mathbf{B}}(\mathbf{f}\theta_{\lambda}\mathbf{f})\setminus \tilde{\mathbf{B}}(0,\lambda)$, we have $b\in \bigcup_{i\in I}\tilde{\mathbf{B}}_{i,\geqslant 1}^{\sigma}$. We use induction on $\mathrm{tr}\,|b|$ and a descending induction on $t_i^{\sigma}(b)\in [1,|b|_i]$ for any $i\in I$ to prove that $\chi(m\otimes b)\in \chi(\Lambda_{\xi}\otimes S)$ for any $m\in \Lambda_{\xi}$. Suppose that $b\in \tilde{\mathbf{B}}_{i,\geqslant 1}^{\sigma}$, and so $t_i^{\sigma}(b)\geqslant 1$. By Theorem \ref{14.3.2} and Lemma \ref{restricted bijection}, there exists a unique $b'\in \tilde{\mathbf{B}}_{i,0}^{\sigma}\cap \tilde{\mathbf{B}}(\mathbf{f}\theta_{\lambda}\mathbf{f})$ such that 
\begin{equation}\label{b'theta}
b'\theta_i^{(t_i^{\sigma}(b))}=b+\sum_{b''\in \tilde{\mathbf{B}}_{i,\geqslant t_i^{\sigma}(b)+1}^{\sigma}\cap \tilde{\mathbf{B}}(\mathbf{f}\theta_{\lambda}\mathbf{f})}c_{b''}b''.
\end{equation}
If $b'\in \tilde{\mathbf{B}}(0,\lambda)$, then 
$\chi(m\otimes b'\theta_i^{(t_i^{\sigma}(b))})=\chi(F_i^{(t_i^{\sigma}(b))}m\otimes b')\in \chi(\Lambda_{\xi}\otimes S)$; if $b'\in \tilde{\mathbf{B}}(\mathbf{f}\theta_{\lambda}\mathbf{f})\setminus \tilde{\mathbf{B}}(0,\lambda)$, notice that $\mathrm{tr}\,|b'|<\mathrm{tr}\,|b|$, then we have 
$\chi(m\otimes b'\theta_i^{(t_i^{\sigma}(b))})=\chi(F_i^{(t_i^{\sigma}(b))}m\otimes b')\in \chi(\Lambda_{\xi}\otimes S)$ by the inductive hypothesis. If $t_i^{\sigma}(b)=|b|_i$, then the right hand side of (\ref{b'theta}) is $b$, and so $\chi(m\otimes b)\in \chi(\Lambda_{\xi}\otimes S)$. If $t_i^{\sigma}(b)<|b|_i$, for those $b''\in \tilde{\mathbf{B}}(0,\lambda)$, we have $\chi(m\otimes b'')\in \chi(\Lambda_{\xi}\otimes S)$ by definitions; for those $b''\in \tilde{\mathbf{B}}(\mathbf{f}\theta_{\lambda}\mathbf{f})\setminus \tilde{\mathbf{B}}(0,\lambda)$, we have $\chi(m\otimes b'')\in \chi(\Lambda_{\xi}\otimes S)$ by the inductive hypothesis. Hence $\chi(m\otimes b)\in \chi(\Lambda_{\xi}\otimes S)$, as desired. This finishes the proof of (\ref{chilambdaxiotimesS=}). As a result, the surjective linear map $\chi:\Lambda_{\xi}\otimes \mathbf{f}\theta_{\lambda}\mathbf{f}\rightarrow \Lambda_{\xi,\lambda}$ can be restricted to be a surjective linear map $\chi:\Lambda_{\xi}\otimes S\rightarrow \Lambda_{\xi,\lambda}$. We regard $\mathbf{f}\theta_{\lambda}\mathbf{f}\subset \tilde{M}_{0\odot\lambda}$. Since $S$ is spanned by homogeneous elements, $S$ is compatible with the weight space decomposition of $\tilde{M}_{0\odot\lambda}$. For any $\mu\in Y$ and homogeneous $x\in \mathbf{f},x'\in S$, we have
\begin{align*}
\chi(K_{\mu}(x^-\eta_{\xi}\otimes x'))=&\chi(v^{\langle \mu,\xi-|x|\rangle+\langle \mu,0\odot\lambda-|x'|\rangle}x^-\eta_{\xi}\otimes x')=v^{\langle \mu,\xi+\lambda+|\theta_{\lambda}|-|x|-|x'|\rangle}\pi_{\xi\odot\lambda}(x'x)\\
=&v^{\langle \mu,\xi\odot\lambda-|x|-|x'|\rangle}\pi_{\xi\odot\lambda}(x'x)=K_{\mu}\pi_{\xi\odot\lambda}(x'x).
\end{align*}
Hence $\chi$ commutes with the actions of $K_{\mu}$ for any $\mu\in Y$. By Corollary \ref{framed basis}, $\Lambda_{0,\lambda}$ has a basis $\tilde{\mathbf{B}}(\Lambda_{0,\lambda})$, and so $\dim\,\Lambda_{0,\lambda}^{\lambda'}=\dim\,S^{\lambda'}$ for any $\lambda'\in X$, where $S^{\lambda'}=S\cap \tilde{M}_{0\odot\lambda}^{\lambda'}$. By Proposition \ref{psi}, we have
$$\mathrm{dim}\, \Lambda_{\xi}^{\xi'}\otimes \Lambda_{\lambda}^{\lambda'}=\mathrm{dim}\, \Lambda_{\xi}^{\xi'}\otimes \Lambda_{0,\lambda}^{\lambda'}=\mathrm{dim}\, \Lambda_{\xi}^{\xi'}\otimes S^{\lambda'}\geqslant \mathrm{dim}\, \Lambda_{\xi,\lambda}^{\xi'+\lambda'}$$
for any $\xi',\lambda'\in X$. Hence any surjective $\mathbf{U}^0$-module homomorphisms from $\Lambda_{\xi,\lambda}$ to $\Lambda_{\xi}\otimes \Lambda_{\lambda}$ are isomorphisms.
\end{proof}

\begin{lemma}\label{almost orthonormal}
For any $b\in \tilde{\mathbf{B}}(\xi,\lambda)$, we have 
$(\varphi\pi_{\xi\odot\lambda}(b),\varphi\pi_{\xi\odot\lambda}(b))_{\xi,\lambda}\in 1+v^{-1}\mathbf{A}$.
\end{lemma}
\begin{proof}
The proof will be given after Lemma \ref{(L,L)inA}.
\end{proof}

We remark that the isomorphism $\varphi:\Lambda_{\xi,\lambda}\rightarrow \Lambda_{\xi}\otimes \Lambda_{\lambda}$ does not preserve the bilinear forms $(,)_{\xi\odot\lambda}$ and $(,)_{\xi,\lambda}$, see Lemma \ref{two pairings}. As a result, we can not use the almost orthonormal property of the canonical basis $\tilde{\mathbf{B}}$ to prove Lemma \ref{almost orthonormal} directly. 

\begin{theorem}\label{framed construction of canonical basis}
The image of $\tilde{\mathbf{B}}(\Lambda_{\xi,\lambda})$ under the $\mathbf{U}$-module isomorphism $\varphi:\Lambda_{\xi,\lambda}\rightarrow \Lambda_{\xi}\otimes \Lambda_{\lambda}$ coincides with the canonical basis $\mathbf{B}(\Lambda_{\xi}\otimes \Lambda_{\lambda})$ of $\Lambda_{\xi}\otimes \Lambda_{\lambda}$, that is,
$$\mathbf{B}(\Lambda_{\xi}\otimes \Lambda_{\lambda})=\{\varphi\pi_{\xi\odot\lambda}(b)\mid b\in \tilde{\mathbf{B}}(\xi,\lambda)\}.$$
\end{theorem}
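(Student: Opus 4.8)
The plan is to fix $b\in\tilde{\mathbf{B}}(\xi,\lambda)$, put $m_b:=\varphi\pi_{\xi\odot\lambda}(b)\in\Lambda_{\xi}\otimes\Lambda_{\lambda}$, and verify that $\pm m_b$ satisfies the two conditions of Theorem \ref{unique canonical basis element}(a) characterising the canonical basis element $b_1\diamondsuit b_2$; then one eliminates the sign and checks that $b\mapsto b_1\otimes b_2$ is a bijection. For $\Psi$-invariance: $b$ is bar-invariant by Theorem \ref{properties of B} and $\pi_{\xi\odot\lambda}$ intertwines the bar-involutions, so $\pi_{\xi\odot\lambda}(b)$ is bar-invariant in $\Lambda_{\xi,\lambda}$, whence $\Psi(m_b)=\varphi(\overline{\pi_{\xi\odot\lambda}(b)})=m_b$ by Lemma \ref{varphi commutes with involutions}. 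For integrality: Proposition \ref{B(fthetaf)} (with its proof) and the integrality part of Theorem \ref{properties of B} show that $b$ is an $\mathcal{A}$-linear combination of the monomials $\theta_{\bnu_1}\theta_{\lambda}\theta_{\bnu_2}$; since the comultiplication of $\mathbf{f}$ preserves ${_{\mathcal{A}}\mathbf{f}}$, the explicit formula of Theorem \ref{framed construction of tensor} gives $\varphi\pi_{\xi\odot\lambda}(\theta_{\bnu_1}\theta_{\lambda}\theta_{\bnu_2})\in{_{\mathcal{A}}\Lambda_{\xi}}\otimes_{\mathcal{A}}{_{\mathcal{A}}\Lambda_{\lambda}}$, so $m_b$ lies in this $\mathcal{A}$-lattice too.

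Next I would invoke Lemma \ref{almost orthogonal}: $(m_b,m_b)_{\xi,\lambda}\in 1+v^{-1}\mathbf{A}$. Since $\mathbf{B}(\Lambda_{\xi})\otimes\mathbf{B}(\Lambda_{\lambda})$ is almost orthogonal for $(,)_{\xi,\lambda}$, Lemma \ref{14.2.2}(a) identifies its $\mathbf{A}$-span with $L(\Lambda_{\xi}\otimes\Lambda_{\lambda})$; as $m_b$ lies in this $\mathbf{A}$-span and in ${_{\mathcal{A}}\Lambda_{\xi}}\otimes_{\mathcal{A}}{_{\mathcal{A}}\Lambda_{\lambda}}$, and $\mathcal{A}\cap\mathbf{A}=\mathbb{Z}[v^{-1}]$, we obtain $m_b\in\mathcal{L}_{\xi,\lambda}$. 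Then Lemma \ref{14.2.2}(b) produces a unique $b_1\otimes b_2\in\mathbf{B}(\Lambda_{\xi})\otimes\mathbf{B}(\Lambda_{\lambda})$ with $m_b\mp(b_1\otimes b_2)\in v^{-1}\mathcal{L}_{\xi,\lambda}$. Thus $\pm m_b$ is $\Psi$-invariant, lies in $\mathcal{L}_{\xi,\lambda}$, and is congruent to $b_1\otimes b_2$ modulo $v^{-1}\mathcal{L}_{\xi,\lambda}$, so by the uniqueness in Theorem \ref{unique canonical basis element}(a) we get $\pm m_b=b_1\diamondsuit b_2\in\mathbf{B}(\Lambda_{\xi}\otimes\Lambda_{\lambda})$. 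Since $\varphi$ is an isomorphism and $\tilde{\mathbf{B}}(\Lambda_{\xi,\lambda})$ is a basis of $\Lambda_{\xi,\lambda}$ by Corollary \ref{framed basis}, the family $\{m_b\}_{b\in\tilde{\mathbf{B}}(\xi,\lambda)}$ is a basis of $\Lambda_{\xi}\otimes\Lambda_{\lambda}$; in particular no two of its members are proportional, so $b\mapsto b_1\otimes b_2$ is injective and, by comparing cardinalities, a bijection onto $\mathbf{B}(\Lambda_{\xi})\otimes\mathbf{B}(\Lambda_{\lambda})$. Hence $\{\pm m_b\}=\mathbf{B}(\Lambda_{\xi}\otimes\Lambda_{\lambda})$.

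It remains, and this is the main point, to show every sign is $+$; I would deduce this from positivity. By the positivity part of Theorem \ref{properties of B}, $r(b)=\sum_{b',b''}c_{b',b''}\,b'\otimes b''$ with $c_{b',b''}\in\mathbb{N}[v,v^{-1}]$ and $b',b''\in\tilde{\mathbf{B}}$. Passing to $\varphi''=(\mathrm{Id}\otimes\pi_{0\odot\lambda})\circ\tau\circ(p\otimes\mathrm{Id})\circ r$: the surviving terms have $b'$ of $I'$-degree equal to that of $\theta_\lambda$, hence (as $b$ itself has $I'$-degree $|\theta_{\lambda}|$) $b''$ of $I'$-degree $0$, so $b''\in\mathbf{B}$; moreover $\pi_{0\odot\lambda}$ sends each canonical basis element of $\tilde{\mathbf{f}}$ to $0$ or to an element of $\tilde{\mathbf{B}}(\tilde{\Lambda}_{0\odot\lambda})$, and since $\varphi''(b)\in\mathbf{f}\otimes\Lambda_{0,\lambda}$ by Lemma \ref{imvarphi''=fLambda0lambda} while $\tilde{\mathbf{B}}(\Lambda_{0,\lambda})=\tilde{\mathbf{B}}(\tilde{\Lambda}_{0\odot\lambda})\cap\Lambda_{0,\lambda}$ by Corollary \ref{framed basis}, only terms with $\pi_{0\odot\lambda}(b')\in\tilde{\mathbf{B}}(\Lambda_{0,\lambda})\cup\{0\}$ occur; thus $\varphi''(b)\in\sum\mathbb{N}[v,v^{-1}]\,\mathbf{B}\otimes\tilde{\mathbf{B}}(\Lambda_{0,\lambda})$. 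Applying $\pi_{\xi}\otimes\psi^{-1}$ and using that $\pi_{\xi}(\mathbf{B})\subseteq\mathbf{B}(\Lambda_{\xi})\cup\{0\}$, Lusztig's positivity for $\mathbf{B}(\Lambda_{\xi})$ (Theorem \ref{properties of B(Lambda)}), and that $\psi$ carries $\mathbf{B}(\Lambda_{\lambda})$ onto $\tilde{\mathbf{B}}(\Lambda_{0,\lambda})$ (the case $\xi=0$: $\psi$ is a bar-equivariant $\mathbf{U}$-isomorphism sending $\eta_{\lambda}$ to the canonical basis element $\pi_{0\odot\lambda}(\theta_{\lambda})$ and respecting integral forms and inner products, so Lemma \ref{14.2.2}(b), the positivity of Theorem \ref{properties of B(Lambda)}, and the cyclicity of $\Lambda_{\lambda}$ pin down the correspondence with the correct sign), we conclude $m_b=\varphi'(b)\in\sum\mathbb{N}[v,v^{-1}]\,\mathbf{B}(\Lambda_{\xi})\otimes\mathbf{B}(\Lambda_{\lambda})$. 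By Theorem \ref{unique canonical basis element}(b) the coefficient of $b_1\otimes b_2$ in $\pm(b_1\diamondsuit b_2)$ equals $\pm1$, and positivity forces it to be $+1$; hence $m_b=b_1\diamondsuit b_2$ and $\mathbf{B}(\Lambda_{\xi}\otimes\Lambda_{\lambda})=\{\varphi\pi_{\xi\odot\lambda}(b)\mid b\in\tilde{\mathbf{B}}(\xi,\lambda)\}$.

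I expect the sign determination to be the crux: $\Psi$-invariance together with the integral and crystal-lattice properties only identify $m_b$ up to a sign, and removing the ambiguity is exactly where one must use positivity — of the comultiplication of $\tilde{\mathbf{f}}$ and of the canonical bases $\mathbf{B}(\Lambda_{\xi})$, $\tilde{\mathbf{B}}(\Lambda_{0,\lambda})$ — including the reduction of the base case $\xi=0$ to the fact that $\varphi\circ\psi$ is the canonical isomorphism $\Lambda_{\lambda}\cong\Lambda_{0}\otimes\Lambda_{\lambda}$. Everything preceding is a matter of assembling Lemma \ref{varphi commutes with involutions}, Lemma \ref{almost orthogonal}, Lemma \ref{14.2.2}, Theorem \ref{framed construction of tensor} and Theorem \ref{unique canonical basis element}.
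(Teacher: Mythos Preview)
Your proposal is correct and follows the same strategy as the paper: establish that $m_b=\varphi\pi_{\xi\odot\lambda}(b)$ lies in the $\mathcal{A}$-lattice, invoke Lemma~\ref{almost orthogonal} and Lemma~\ref{14.2.2}(b) to get $m_b\equiv\pm b_1\otimes b_2\bmod v^{-1}\mathcal{L}_{\xi,\lambda}$, use Lemma~\ref{varphi commutes with involutions} for $\Psi$-invariance, appeal to the uniqueness in Theorem~\ref{unique canonical basis element}, and remove the sign ambiguity via positivity of the comultiplication on $\tilde{\mathbf{B}}$. You unpack the sign step in more detail than the paper (which simply says ``by the positivity of $\tilde{\mathbf{B}}$ with respect to the comultiplication and the definition of $\varphi$''); one caution is that $\psi$ does \emph{not} literally preserve the inner products, so your parenthetical justification that $\psi$ carries $\mathbf{B}(\Lambda_\lambda)$ onto $\tilde{\mathbf{B}}(\Lambda_{0,\lambda})$ is cleanest argued instead via Lemma~\ref{thetalambdaB} together with the identity $b_0\theta_\lambda=\sigma(\theta_\lambda\,\sigma(b_0))$ and $\sigma(\tilde{\mathbf{B}})=\tilde{\mathbf{B}}$, and your invocation of positivity of $\mathbf{B}(\Lambda_\xi)$ at that point is superfluous since $\pi_\xi(\mathbf{B})\subseteq\mathbf{B}(\Lambda_\xi)\cup\{0\}$ already.
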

\begin{proof}
We use Lemma \ref{14.2.2} and Theorem \ref{unique canonical basis element}. Recall that $\mathcal{L}_{\xi,\lambda}$ is the $\mathbb{Z}[v^{-1}]$-submodule of $\Lambda_{\xi}\otimes \Lambda_{\lambda}$ generated by $\mathbf{B}(\Lambda_{\xi})\otimes \mathbf{B}(\Lambda_{\lambda})$, and so $\mathcal{A}\otimes_{\mathbb{Z}[v^{-1}]}\mathcal{L}_{\xi,\lambda}$ is the $\mathcal{A}$-submodule of $\Lambda_{\xi}\otimes \Lambda_{\lambda}$ generated by $\mathbf{B}(\Lambda_{\xi})\otimes \mathbf{B}(\Lambda_{\lambda})$.

For any $b\in \tilde{\mathbf{B}}(\xi,\lambda)\subset \tilde{\mathbf{B}}$, suppose that $r(b)=\sum_{b'_1,b'_2\in \tilde{\mathbf{B}}}c_{b'_1,b'_2} b'_1\otimes b'_2$, where $c_{b'_1,b'_2}\in \mathbb{N}[v,v^{-1}]$ by the positivity property of $\tilde{\mathbf{B}}$ with respect to the comultiplication. By the definition of $\varphi$, we have 
$$\varphi\pi_{\xi\odot\lambda}(b)=\sum c_{b'_1,b'_2}{b'_2}^-\eta_{\xi}\otimes \phi_{\lambda}^{-1}\pi_{0\odot\lambda}(b'_1),$$
where the summation is taken over $b'_1,b'_2\in \tilde{\mathbf{B}}$ such that $|b'_1|-|\theta_{\lambda}|\in \mathbb{N}[I]$. By the proof of Lemma \ref{imvarphi''=fLambda0lambda} and Corollary \ref{framed basis}, we have $\sum c_{b'_1,b'_2}{b'_2}^-\eta_{\xi}\otimes \pi_{0\odot\lambda}(b'_1)\in \Lambda_{\xi}\otimes \Lambda_{0,\lambda}$, where $\Lambda_{0,\lambda}=\pi_{0\odot\lambda}((\mathbf{f}\theta_{\lambda}\mathbf{f})_{0\odot\lambda})\subset \Lambda_{0\odot\lambda}$, and then ${b'_2}^-\eta_{\xi}\in \mathbf{B}(\Lambda_{\xi}),b'_1\in \mathbf{B}((\mathbf{f}\theta_{\lambda}\mathbf{f})_{0\odot\lambda})$. If $\pi_{0\odot\lambda}(b'_1)\not=0$, by definitions and \eqref{Fang-Lan-2025-5.3*}, we have $b'_1\in \tilde{\mathbf{B}}(0\odot\lambda)\subset \bigcap_{i\in I}\tilde{\mathbf{B}}_{i,0}^{\sigma}$, and there exists a unique $b''_1\in\mathbf{B}(\lambda)\subset \mathbf{B}$ such that $b'_1=b''_1\theta_{\lambda}$. Hence
\begin{align}\label{varphib integral}
\varphi\pi_{\xi\odot\lambda}(b)=\sum c_{b'_1,b'_2}{b'_2}^-\eta_{\xi}\otimes {b''_1}^-\eta_{\lambda}\in \mathbb{N}[v,v^{-1}][\mathbf{B}(\Lambda_{\xi})\otimes \mathbf{B}(\Lambda_{\lambda})]\subset \mathcal{A}\otimes_{\mathbb{Z}[v^{-1}]}\mathcal{L}_{\xi,\lambda}.   
\end{align}
By Lemma \ref{almost orthonormal}, $(\varphi\pi_{\xi\odot\lambda}(b),\varphi\pi_{\xi\odot\lambda}(b))_{\xi,\lambda}\in 1+v^{-1}\mathbf{A}$. By Lemma \ref{14.2.2}, there exists $b_1^-\eta_{\xi}\otimes b_2^-\eta_{\lambda}\in \mathbf{B}(\Lambda_{\xi})\otimes \mathbf{B}(\Lambda_{\lambda})$ such that 
$$\varphi\pi_{\xi\odot\lambda}(b)=\pm b_1^-\eta_{\xi}\otimes b_2^-\eta_{\lambda}\ \mathrm{mod}\, v^{-1}L(\Lambda_{\xi}\otimes \Lambda_{\lambda}),$$
where $L(\Lambda_{\xi}\otimes \Lambda_{\lambda})$ is the $\mathbf{A}$-submodule of $\Lambda_{\xi}\otimes \Lambda_{\lambda}$ generated by $\mathbf{B}(\Lambda_{\xi})\otimes \mathbf{B}(\Lambda_{\lambda})$. By \eqref{varphib integral} and $\mathcal{A}\cap\mathbf{A}=\mathbb{Z}[v^{-1}]$,
\begin{align*}
\varphi\pi_{\xi\odot\lambda}(b)=\pm b_1^-\eta_{\xi}\otimes b_2^-\eta_{\lambda}\ \mathrm{mod}\,v^{-1}\mathcal{L}_{\xi,\lambda}=b_1^-\eta_{\xi}\otimes b_2^-\eta_{\lambda}\ \mathrm{mod}\,v^{-1}\mathcal{L}_{\xi,\lambda},
\end{align*}
that is, $\varphi\pi_{\xi\odot\lambda}(b)-b_1^-\eta_{\xi}\otimes b_2^-\eta_{\lambda}\in v^{-1}\mathcal{L}_{\xi,\lambda}$. Since $b\in\tilde{\mathbf{B}}(\xi,\lambda) \subset \tilde{\mathbf{B}}(\xi\odot\lambda)$, we know that $\pi_{\xi\odot\lambda}(b)$ is bar-invariant with respect to the bar-involution of $\Lambda_{\xi,\lambda}$, see Theorem \ref{properties of B(Lambda)}. By Lemma \ref{varphi commutes with involutions}, we have
$$\Psi\varphi\pi_{\xi\odot\lambda}(b)=\varphi\pi_{\xi\odot\lambda}(b).$$
By the uniqueness of $b_1^-\eta_{\xi}\diamondsuit b_2^-\eta_{\lambda}$, we have  $\varphi\pi_{\xi\odot\lambda}(b)=b_1^-\eta_{\xi}\diamondsuit b_2^-\eta_{\lambda}$. Since $\varphi:\Lambda_{\xi,\lambda}\rightarrow \Lambda_{\xi}\otimes \Lambda_{\lambda}$ is an isomorphism, we conclude that the image of the basis $\tilde{\mathbf{B}}(\Lambda_{\xi,\lambda})$ of $\Lambda_{\xi,\lambda}$ is the canonical basis $\mathbf{B}(\Lambda_{\xi}\otimes \Lambda_{\lambda})$ of $\Lambda_{\xi}\otimes \Lambda_{\lambda}$.
\end{proof}

\begin{corollary}\label{positivity of transition matrix}
For any $b_1\in \mathbf{B}(\Lambda_{\xi})$ and $b_2\in \mathbf{B}(\Lambda_{\lambda})$, we have 
$$b_1\diamondsuit b_2\in b_1\otimes b_2+\sum v^{-1}\mathbb{N}[v^{-1}]b'_1\otimes b'_2,$$
where the sum is taken over $b'_1\otimes b'_2\in \mathbf{B}(\Lambda_{\xi})\otimes \mathbf{B}(\Lambda_{\lambda})\setminus \{b_1\otimes b_2\}$
\end{corollary}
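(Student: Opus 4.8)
The plan is to combine Theorem~\ref{framed construction of canonical basis}, the positivity of $\tilde{\mathbf{B}}$ under the comultiplication $r$ of $\tilde{\mathbf{f}}$ (Theorem~\ref{properties of B}), and the already established $\mathbb{Z}[v^{-1}]$-statement of Theorem~\ref{unique canonical basis element}(b). By Theorem~\ref{framed construction of canonical basis} we may write $b_1\diamondsuit b_2=\varphi\pi_{\xi\odot\lambda}(b)$ for some $b\in\tilde{\mathbf{B}}(\xi,\lambda)$, and the key point is to prove
$$\varphi\pi_{\xi\odot\lambda}(b)\in\sum_{b_1'\otimes b_2'}\mathbb{N}[v,v^{-1}]\,b_1'\otimes b_2',$$
the sum running over $b_1'\otimes b_2'\in\mathbf{B}(\Lambda_\xi)\otimes\mathbf{B}(\Lambda_\lambda)$. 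Granting this, since $\mathbb{N}[v,v^{-1}]\cap v^{-1}\mathbb{Z}[v^{-1}]=v^{-1}\mathbb{N}[v^{-1}]$, intersecting with Theorem~\ref{unique canonical basis element}(b) forces the coefficient of $b_1\otimes b_2$ to be $1$ and every other coefficient to lie in $v^{-1}\mathbb{N}[v^{-1}]$, which is exactly the claim.

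To prove the displayed positivity I will track non-negative integral combinations of canonical basis elements through the maps defining $\varphi''$ and $\varphi'=(\pi_\xi\otimes\psi^{-1})\circ\varphi''$. By positivity, $r(b)=\sum_{b_{(1)},b_{(2)}\in\tilde{\mathbf{B}}}c_{b_{(1)}b_{(2)}}\,b_{(1)}\otimes b_{(2)}$ with $c_{b_{(1)}b_{(2)}}\in\mathbb{N}[v,v^{-1}]$. Applying $p\otimes\mathrm{Id}$ retains exactly the homogeneous summands whose first factor has $I'$-degree $|\theta_\lambda|$; as in the computation in the proof of Lemma~\ref{imvarphi''=fLambda0lambda}, this sub-sum lies in $(\mathbf{f}\theta_\lambda\mathbf{f})\otimes\mathbf{f}$, so by Proposition~\ref{B(fthetaf)} together with the fact that $\tilde{\mathbf{B}}\cap\mathbf{f}=\mathbf{B}$ (the bar-involution, the bilinear form and the comultiplication all restrict to $\mathbf{f}\subset\tilde{\mathbf{f}}$, so the uniqueness characterisation of the canonical basis applies) every surviving term has $b_{(1)}\in\tilde{\mathbf{B}}(\mathbf{f}\theta_\lambda\mathbf{f})$ and $b_{(2)}\in\mathbf{B}$. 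After $\tau$ and $\mathrm{Id}\otimes\pi_{0\odot\lambda}$, Corollary~\ref{framed basis}(b) at $\xi=0$ kills the terms with $b_{(1)}\notin\tilde{\mathbf{B}}(0,\lambda)$ and sends the rest into the basis $\tilde{\mathbf{B}}(\Lambda_{0,\lambda})$ of $\Lambda_{0,\lambda}$, so
$$\varphi''(b)=\sum_{b_{(1)}\in\tilde{\mathbf{B}}(0,\lambda),\ b_{(2)}\in\mathbf{B}}c_{b_{(1)}b_{(2)}}\,b_{(2)}\otimes\pi_{0\odot\lambda}(b_{(1)}),\qquad c_{b_{(1)}b_{(2)}}\in\mathbb{N}[v,v^{-1}].$$
Finally, by the construction of $\mathbf{B}(\Lambda_\xi)$ recalled in~\ref{Canonical basis of Lambdalambda}, $\pi_\xi$ sends each $b_{(2)}\in\mathbf{B}$ to $0$ or to an element of $\mathbf{B}(\Lambda_\xi)$; and $\psi^{-1}$ carries $\tilde{\mathbf{B}}(\Lambda_{0,\lambda})$ bijectively onto $\mathbf{B}(\Lambda_\lambda)$. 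This last point is Theorem~\ref{framed construction of canonical basis} in the case $\xi=0$: there $\varphi$ is the map $m\mapsto\eta_0\otimes\psi^{-1}(m)$ by Corollary~\ref{Im varphi'}, so the image of $\tilde{\mathbf{B}}(\Lambda_{0,\lambda})$ equals $\mathbf{B}(\Lambda_0\otimes\Lambda_\lambda)=\{\eta_0\diamondsuit b_2\mid b_2\in\mathbf{B}(\Lambda_\lambda)\}$; and $\eta_0\diamondsuit b_2=\eta_0\otimes b_2$ because $\Lambda_0$ is one-dimensional and trivial, whence $F_i\eta_0=0$, $\Theta$ acts as the identity on $\eta_0\otimes(-)$, and $\eta_0\otimes b_2$ is already $\Psi$-invariant. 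Combining these, $\varphi\pi_{\xi\odot\lambda}(b)=\varphi'(b)$ is a non-negative integral combination of the $b_1'\otimes b_2'$, as required.

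The only real work is the bookkeeping ensuring that each projection preserves the property of being a non-negative integral combination of the relevant canonical basis elements — chiefly, that after $p\otimes\mathrm{Id}$ the surviving first tensor factors remain in the span of $\tilde{\mathbf{B}}(\mathbf{f}\theta_\lambda\mathbf{f})$, so that Corollary~\ref{framed basis}(b) is applicable, and that $\psi^{-1}$ matches the two canonical bases of $\Lambda_{0,\lambda}$ and $\Lambda_\lambda$. Neither point is deep: getting the $\mathbb{N}[\tilde{I}]$-degree conditions exactly right needs some care, but there is no risk of a sign cancellation, since several pairs $(b_{(1)},b_{(2)})$ contributing to the same $b_1'\otimes b_2'$ simply add their non-negative coefficients. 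Once Theorem~\ref{framed construction of canonical basis} and the positivity of $\tilde{\mathbf{B}}$ under $r$ are granted, everything else is formal.
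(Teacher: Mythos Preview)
Your proof is correct and follows essentially the same strategy as the paper's: write $b_1\diamondsuit b_2=\varphi\pi_{\xi\odot\lambda}(b)$ via Theorem~\ref{framed construction of canonical basis}, use positivity of the comultiplication on $\tilde{\mathbf{B}}$ together with the definition of $\varphi$ to get coefficients in $\mathbb{N}[v,v^{-1}]$, and intersect with the known $v^{-1}\mathbb{Z}[v^{-1}]$ bound from Theorem~\ref{unique canonical basis element}(b). The paper simply asserts the positivity step (``by the positivity of $\tilde{\mathbf{B}}$ with respect to the comultiplication and the definition of $\varphi$''), whereas you carefully unpack it---in particular the observation that $(p\otimes\mathrm{Id})r(b)\in(\mathbf{f}\theta_\lambda\mathbf{f})\otimes\mathbf{f}$ forces each surviving $b_{(1)}$ to lie in $\tilde{\mathbf{B}}(\mathbf{f}\theta_\lambda\mathbf{f})$, and the use of the $\xi=0$ case to identify $\psi^{-1}(\tilde{\mathbf{B}}(\Lambda_{0,\lambda}))=\mathbf{B}(\Lambda_\lambda)$---so your version is more explicit but not different in substance.
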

\begin{proof}
By Theorem \ref{unique canonical basis element}, we have $b_1\diamondsuit b_2\in b_1\otimes b_2+\sum v^{-1}\mathbb{Z}[v^{-1}]b'_1\otimes b'_2$. Suppose that $b_1\diamondsuit b_2=\varphi\pi_{\xi\odot\lambda}(b)$, where $b\in \tilde{\mathbf{B}}(\xi,\lambda)$. By \eqref{varphib integral}, we have $\varphi\pi_{\xi\odot\lambda}(b)\in \sum\mathbb{N}[v,v^{-1}]b''_1\otimes b''_2$, where the sum is taken over $b''_1\otimes b''_2\in \mathbf{B}(\Lambda_{\xi})\otimes \mathbf{B}(\Lambda_{\lambda})$. Since $v^{-1}\mathbb{Z}[v^{-1}]\cap \mathbb{N}[v,v^{-1}]=v^{-1}\mathbb{N}[v^{-1}]$, we have $b_1\diamondsuit b_2\in b_1\otimes b_2+\sum v^{-1}\mathbb{N}[v^{-1}]b'_1\otimes b'_2$.
\end{proof}

\subsection{Example}\label{Example}

We present an example for Theorem \ref{framed construction of canonical basis}. Let $(I,\cdot)$ be the symmetric Cartan datum of type $A_1$ and $(Y,X,\langle,\rangle,...)$ be the simply connected root datum of type $(I,\cdot)$, that is, $I=\{i\}, i\cdot i=2$ and $Y=\mathbb{Z}[I],X=\mathrm{Hom}(Y,\mathbb{Z})$ with the natural pairing $\langle,\rangle:Y\times X\rightarrow \mathbb{Z}$ and natural embeddings $I\hookrightarrow Y,I\hookrightarrow X$. Then the canonical basis of $\mathbf{f}$ is 
$$\mathbf{B}=\{\theta_i^{(k)}\mid k\in \mathbb{N}\}.$$ 
For the dominant weight $n\in \mathbb{N}$, we have $\mathbf{B}(n)=\{\theta_i^{(k)}\mid 0\leqslant k\leqslant n\}$, and the canonical basis of $\Lambda_{n}$ is $$\mathbf{B}(\Lambda_n)=\{F_i^{(k)}\eta_n\mid 0\leqslant k\leqslant n\}.$$ 
For the dominant weights $m,n\in \mathbb{N}$, the canonical basis of $\Lambda_m\otimes \Lambda_n$ is 
\begin{align*}
\mathbf{B}(\Lambda_m\otimes \Lambda_n)=&\{\alpha_{k,l}\mid 0\leqslant k\leqslant m,0\leqslant l\leqslant n,k-l\leqslant m-n\}\\
\cup&\{\beta_{k,l}\mid 0\leqslant k\leqslant m,0\leqslant l\leqslant n,k-l\geqslant m-n\},
\end{align*}
where 
\begin{align*}
\alpha_{k,l}=\sum_{s=0}^{\mathrm{min}(k,l)}v^{s(k-m-s)}\begin{bmatrix}n-l+s\\s\end{bmatrix}F_i^{(m-k+s)}\eta_m\otimes F_i^{(l-s)}\eta_n,\\
\beta_{k,l}=\sum_{s=0}^{\mathrm{min}(k,l)}v^{s(l-n-s)}\begin{bmatrix}m-k+s\\s\end{bmatrix}F_i^{(m-k+s)}\eta_m\otimes F_i^{(l-s)}\eta_n
\end{align*}
with the identification $\alpha_{k,l}=\beta_{k,l}$ for $k-l=m-n$, see \cite[Section 6]{Lusztig-1992}.

The framed Cartan datum $(\tilde{I},\cdot)$ is of type $A_2$. The canonical basis of $\tilde{\mathbf{f}}$ is
$$\tilde{\mathbf{B}}=\{\theta_i^{(p)}\theta_{i'}^{(q)}\theta_i^{(r)}\mid p,q,r\in \mathbb{N},q\geqslant p+r\}\cup\{\theta_{i'}^{(r)}\theta_i^{(q)}\theta_{i'}^{(p)}\mid p,q,r\in \mathbb{N},q\geqslant p+r\}$$
with the identification $\theta_i^{(p)}\theta_{i'}^{(q)}\theta_i^{(r)}=\theta_{i'}^{(r)}\theta_i^{(q)}\theta_{i'}^{(p)}$ for $q=p+r$, see \cite[Section 14.5.4]{Lusztig-1993}. 

\begin{proposition}
We have 
\begin{equation}
\begin{aligned}
\tilde{\mathbf{B}}(m,n)=&\{\theta_{i'}^{(n-l)}\theta_i^{(m-k+l)}\theta_{i'}^{(l)}\mid 0\leqslant k\leqslant m,0\leqslant l\leqslant n,k-l\leqslant m-n\}\\
\cup&\{\theta_i^{(l)}\theta_{i'}^{(n)}\theta_i^{(m-k)}\mid 0\leqslant k\leqslant m,0\leqslant l\leqslant n,k-l\geqslant m-n\}
\end{aligned}
\end{equation}
with the identification $\theta_{i'}^{(n-l)}\theta_i^{(m-k+l)}\theta_{i'}^{(l)}=\theta_i^{(l)}\theta_{i'}^{(n)}\theta_i^{(m-k)}$ for $k-l=m-n$. Moreover, 
\begin{align*}
\varphi\pi_{m\odot n}(\theta_{i'}^{(n-l)}\theta_i^{(m-k+l)}\theta_{i'}^{(l)})=\alpha_{k,l},\ 
\varphi\pi_{m\odot n}(\theta_i^{(l)}\theta_{i'}^{(n)}\theta_i^{(m-k)})=\beta_{k,l}.
\end{align*}
\end{proposition}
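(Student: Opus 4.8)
The plan is first to identify the set $\tilde{\mathbf B}(m,n)$ from Definitions \ref{part of basis} and \ref{basis definition} using the explicit canonical basis $\tilde{\mathbf B}$ of $\tilde{\mathbf f}$ recalled above, and then to evaluate $\varphi\pi_{m\odot n}$ on its elements via Corollary \ref{Im varphi'}. Here $\theta_\lambda=\theta_{i'}^{(n)}$, so $\mathbf f\theta_\lambda\mathbf f$ is contained in $\bigoplus_{c\geqslant0}\tilde{\mathbf f}_{ci+ni'}$, and hence $\tilde{\mathbf B}(\mathbf f\theta_\lambda\mathbf f)$, a fortiori $\tilde{\mathbf B}(m,n)$, consists of canonical basis elements of $\tilde{\mathbf f}$ whose $i'$-coordinate equals $n$; by the explicit form of $\tilde{\mathbf B}$ these are the type-one elements $\theta_i^{(p)}\theta_{i'}^{(n)}\theta_i^{(r)}$ with $p+r\leqslant n$ and the type-two elements $\theta_{i'}^{(r)}\theta_i^{(q)}\theta_{i'}^{(p)}$ with $p+r=n$, $q\geqslant n$. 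From the explicit stratification of $\tilde{\mathbf B}$ (Lusztig's rank-two analysis, \cite[Section~14.5.4]{Lusztig-1993}) one reads off $t_i^\sigma(\theta_i^{(p)}\theta_{i'}^{(n)}\theta_i^{(r)})=r$ and $t_i^\sigma(\theta_{i'}^{(r)}\theta_i^{(q)}\theta_{i'}^{(p)})=q-p$. Since $\langle i,\xi\rangle=m$, Definition \ref{basis definition} gives $\tilde{\mathbf B}(m,n)\subseteq\{b\in\tilde{\mathbf B}:\ i'\text{-coordinate of }|b|\text{ equals }n\text{ and }t_i^\sigma(b)\leqslant m\}$, and after the substitutions $r=m-k,\ p=l$ (type one) and $p=l,\ r=n-l,\ q=m-k+l$ (type two), the latter set is exactly the union of the two displayed families, the elements with $q=p+r$ (i.e. $k-l=m-n$) coinciding in $\tilde{\mathbf B}$, which is the stated identification. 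This union has $(m+1)(n+1)$ elements; by Corollary \ref{framed basis} and Theorem \ref{framed construction of tensor} so does $\tilde{\mathbf B}(m,n)$, since $\pi_{m\odot n}$ carries it bijectively onto a basis of $\Lambda_{\xi,\lambda}\cong\Lambda_m\otimes\Lambda_n$; hence the inclusion is an equality.

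For the second family, $\theta_i^{(l)}\theta_{i'}^{(n)}\theta_i^{(m-k)}$ is already of the form $x\theta_\lambda y$ with $x=\theta_i^{(l)}$, $y=\theta_i^{(m-k)}$. Corollary \ref{Im varphi'}, together with $r(\theta_i^{(l)})=\sum_{s+t=l}v^{st}\theta_i^{(s)}\otimes\theta_i^{(t)}$ (\cite[Lemma~1.4.2]{Lusztig-1993}), $|\theta_i^{(t)}|\cdot|\theta_\lambda|=-tn$, and $F_i^{(t)}F_i^{(m-k)}\eta_m=\begin{bmatrix}t+m-k\\t\end{bmatrix}F_i^{(t+m-k)}\eta_m$ (which vanishes for $t>k$), yields $\varphi\pi_{m\odot n}(\theta_i^{(l)}\theta_{i'}^{(n)}\theta_i^{(m-k)})=\sum_{t=0}^{\min(k,l)}v^{t(l-n-t)}\begin{bmatrix}t+m-k\\t\end{bmatrix}F_i^{(t+m-k)}\eta_m\otimes F_i^{(l-t)}\eta_n=\beta_{k,l}$. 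For the first family one may either straighten $z=\theta_{i'}^{(n-l)}\theta_i^{(m-k+l)}\theta_{i'}^{(l)}$ into a combination of $\theta_i^{(a)}\theta_{i'}^{(n)}\theta_i^{(b)}$'s by the rank-two commutation formula \cite[Lemma~42.1.2(b)]{Lusztig-1993} and apply the previous case termwise, or argue as follows. By Theorem \ref{framed construction of canonical basis}, $\varphi\pi_{m\odot n}(z)\in\mathbf B(\Lambda_m\otimes\Lambda_n)=\{\alpha_{k',l'}\}\cup\{\beta_{k',l'}\}$; its $\mathbf U$-weight paired with $i$ equals $n-m+2(k-l)$, forcing $k'-l'=k-l$. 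Unwinding $\varphi''$, the only terms of $r(z)$ contributing to $\varphi'(z)$ place all $n$ letters $\theta_{i'}$ in the first tensor factor, producing the terms $\theta_i^{(q_2)}\otimes\pi_{0\odot\lambda}(\theta_{i'}^{(n-l)}\theta_i^{(p_2)}\theta_{i'}^{(l)})$ with $p_2+q_2=m-k+l$; and $\pi_{0\odot\lambda}(\theta_{i'}^{(n-l)}\theta_i^{(p_2)}\theta_{i'}^{(l)})=F_{i'}^{(n-l)}F_i^{(p_2)}F_{i'}^{(l)}\eta_{0\odot\lambda}$ vanishes for $p_2>l$ (as $F_i$ annihilates the $i$-weight-$l$ vector $F_{i'}^{(l)}\eta_{0\odot\lambda}$) and is a nonzero multiple of $\psi(F_i^{(p_2)}\eta_\lambda)$ for $p_2\leqslant l$. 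Hence the largest power of $F_i$ on the $\Lambda_\lambda$-factor of $\varphi\pi_{m\odot n}(z)$ is exactly $l$; since this power equals $l'$ in both $\alpha_{k',l'}$ and $\beta_{k',l'}$, we get $l'=l$, so $\varphi\pi_{m\odot n}(z)=\alpha_{k,l}$.

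I expect the main obstacle to be precisely the first family: $\theta_{i'}^{(n-l)}\theta_i^{(m-k+l)}\theta_{i'}^{(l)}$ is not directly of the form $x\theta_\lambda y$, so one must either carry out the rank-two straightening and verify the resulting $v$-binomial identity, or—as above—extract from the definition of $\varphi''$ exactly enough invariants ($\mathbf U$-weight and the top $\Lambda_\lambda$-component) to pin the image down inside $\mathbf B(\Lambda_m\otimes\Lambda_n)$ through Theorem \ref{framed construction of canonical basis}. A secondary technical point is the justification of the values of $t_i^\sigma$ on the type-$A_2$ canonical basis and of the inclusion $\mathbf f\theta_\lambda\mathbf f\subseteq\bigoplus_c\tilde{\mathbf f}_{ci+ni'}$, both of which follow from the explicit description of $\tilde{\mathbf B}$ and the quantum Serre relations in rank two.
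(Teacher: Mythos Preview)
Your argument is correct and takes a genuinely different route from the paper in both halves.

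For the identification of $\tilde{\mathbf B}(m,n)$, the paper shows each listed element lies in $\tilde{\mathbf B}(\mathbf f\theta_\lambda\mathbf f)$ and then computes its $t_i^\sigma$ from scratch (via Theorem~\ref{14.3.2} and \cite[Lemma~42.1.2]{Lusztig-1993}), never invoking a dimension count. You instead take the $t_i^\sigma$-values on the $A_2$ canonical basis as known, read off the inclusion $\tilde{\mathbf B}(m,n)\subseteq\{b\in\tilde{\mathbf B}:\ |b|_{i'}=n,\ t_i^\sigma(b)\leqslant m\}$, and close by counting via Corollary~\ref{framed basis} and Theorem~\ref{framed construction of tensor}. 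This is cleaner; just note that \cite[14.5.4]{Lusztig-1993} lists $\tilde{\mathbf B}$ but does not record $t_i^\sigma$, so a line of justification is warranted (for instance, $\theta_i^{(p)}\theta_{i'}^{(n)}\cdot\theta_i^{(r)}$ is already a single element of $\tilde{\mathbf B}$, so Theorem~\ref{14.3.2}(d) gives $t_i^\sigma=r$ once one checks $t_i^\sigma(\theta_i^{(p)}\theta_{i'}^{(n)})=0$; the type-two case is similar).

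For $\varphi\pi_{m\odot n}$ on the first family, the paper expands $(p\otimes\mathrm{Id})r(\theta_{i'}^{(n-l)}\theta_i^{(m-k+l)}\theta_{i'}^{(l)})$, straightens each resulting factor $\theta_{i'}^{(n-l)}\theta_i^{(l-s)}\theta_{i'}^{(l)}$ into $\theta_i^{(\cdot)}\theta_{i'}^{(n)}\theta_i^{(\cdot)}$ form via \cite[42.1.2(d)]{Lusztig-1993}, and reads off $\alpha_{k,l}$ directly. Your Option~B bypasses this by invoking Theorem~\ref{framed construction of canonical basis} and identifying the image through its weight and its top $\Lambda_n$-component. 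This is slick and legitimate, since Theorem~\ref{framed construction of canonical basis} precedes the example in the paper's logical order. One step needs care, however: you assert that $F_{i'}^{(n-l)}F_i^{(p_2)}F_{i'}^{(l)}\eta_{0\odot n}$ is a \emph{nonzero} multiple of $\psi(F_i^{(p_2)}\eta_n)$ for $p_2\leqslant l$, while your parenthetical only justifies that $F_i^{(p_2)}F_{i'}^{(l)}\eta_{0\odot n}\neq0$. The missing point (only the case $p_2=l$ is actually needed) is that $F_{i'}^{(n-l)}$ is injective on the $i'$-weight-$(n-l)$ subspace of any integrable module, by elementary $\mathfrak{sl}_2$ theory, so the final application of $F_{i'}^{(n-l)}$ cannot kill the vector. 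Without this the ``exactly $l$'' claim is unsupported and $l'$ cannot be pinned down; with it, your Option~B goes through.
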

\begin{proof}
We only need to prove $\theta_{i'}^{(n-l)}\theta_i^{(m-k+l)}\theta_{i'}^{(l)},\theta_i^{(l)}\theta_{i'}^{(n)}\theta_i^{(m-k)}\in \tilde{\mathbf{B}}(m,n)$ and calculate their images $\varphi\pi_{m\odot n}(\theta_{i'}^{(n-l)}\theta_i^{(m-k+l)}\theta_{i'}^{(l)}), \varphi\pi_{m\odot n}(\theta_i^{(l)}\theta_{i'}^{(n)}\theta_i^{(m-k)})$. 

For any $0\leqslant k\leqslant m,0\leqslant l\leqslant n,k-l\leqslant m-n$, by \cite[Lemma 42.1.2(d)]{Lusztig-1993}, we have 
\begin{align}\label{42.1.2}
\theta_i^{(l)}\theta_{i'}^{(n)}\theta_i^{(m-k)}=\sum_{s=m-n-k}^{m-n-k+l}\begin{bmatrix}m-n-k+l\\s\end{bmatrix}\theta_{i'}^{(m-k-s)}\theta_i^{(m-k+l)}\theta_{i'}^{(-m+n+k+s)}
\end{align}
such that $\theta_{i'}^{(n-l)}\theta_i^{(m-k+l)}\theta_{i'}^{(l)}$ appears in $\theta_i^{(l)}\theta_{i'}^{(n)}\theta_i^{(m-k)}$, and so $\theta_{i'}^{(n-k)}\theta_i^{(m-k+l)}\theta_{i'}^{(l)}\in \tilde{\mathbf{B}}(\mathbf{f}\theta_n\mathbf{f})$. We claim that $t_i^{\sigma}(\theta_{i'}^{(n-l)}\theta_i^{(m-k+l)}\theta_{i'}^{(l)})=m-k$. Indeed, let $t_i^{\sigma}(\theta_{i'}^{(n-l)}\theta_i^{(m-k+l)}\theta_{i'}^{(l)})=t$. By (\ref{42.1.2}) and Corollary \ref{t_i=s_i}, we have $t\geqslant m-k$. By Theorem \ref{14.3.2}, there exists a unique $b_1\in \tilde{\mathbf{B}}_{i,0}^{\sigma}$ such that 
\begin{equation}\label{btheta}
b_1\theta_i^{(t)}=\theta_{i'}^{(n-l)}\theta_i^{(m-k+l)}\theta_{i'}^{(l)}+\sum_{b'_1\in \tilde{\mathbf{B}}_{i,\geqslant t+1}^{\sigma}}c_{b'_1}b'_1.
\end{equation}
By the conditions $|b_1|=|\theta_{i'}^{(n-l)}\theta_i^{(m-k+l)}\theta_{i'}^{(l)}|-ti$ and $t_i^{\sigma}(b_1)=0$, we know that $b_1$ has the form $\theta_{i'}^{(n_1)}\theta_i^{(m-k+l-t)}\theta_{i'}^{(n_2)}$ satisfying $ n=n_1+n_2<m-k+l-t, n_2\not=0$, or is $\theta_i^{(m-k+l-t)}\theta_{i'}^{(n)}$. In the first case, we have $t<m-n-k+l\leqslant m-k$, a contradiction. In the second case, we have $b_1\theta_i^{(t)}=\theta_i^{(m-k+l-t)}\theta_{i'}^{(n)}\theta_i^{(t)}$. By \cite[Lemma 42.1.2(d)]{Lusztig-1993} and (\ref{btheta}), we have 
$$\theta_{i'}^{(n-l)}\theta_i^{(m-k+l)}\theta_{i'}^{(l)}+\sum_{b'_1\in \tilde{\mathbf{B}}_{i,\geqslant t+1}^{\sigma}}c_{b'_1}b'_1=\sum_{s=-n+t}^{m-n-k+l}\begin{bmatrix}m-n-k+l\\s\end{bmatrix}\theta_{i'}^{(t-s)}\theta_i^{(m-k+l)}\theta_{i'}^{(n-t+s)}.$$
Notice that $\theta_{i'}^{(t-s)}\theta_i^{(m-k+l)}\theta_{i'}^{(n-t+s)}\in \tilde{\mathbf{B}}$. Comparing the coefficient of $\theta_{i'}^{(n-l)}\theta_i^{(m-k+l)}\theta_{i'}^{(l)}$, we obtain $t=m-k$ or $t=n-l\leqslant m-k$. Hence we have $t=m-k\leqslant m$, and so $\theta_{i'}^{(n-l)}\theta_i^{(m-k+l)}\theta_{i'}^{(l)}\in\tilde{\mathbf{B}}(m,n)$. Moreover, we have 
\begin{align*}
\tau(p\otimes\mathrm{Id})r(\theta_{i'}^{(n-l)}\theta_i^{(m-k+l)}\theta_{i'}^{(l)})=\sum_{s=-m+k}^lv^{s(k-m-s)}\theta_i^{(m-k+s)}\otimes \theta_{i'}^{(n-l)}\theta_{i}^{(l-s)}\theta_{i'}^{(l)}.
\end{align*}
For any $-m+k\leqslant s<0$, by similarly argument of $t_i^{\sigma}(\theta_{i'}^{(n-l)}\theta_i^{(m-k+l)}\theta_{i'}^{(l)})=m-k$, we have $t_i^{\sigma}(\theta_{i'}^{(n-l)}\theta_{i}^{(l-s)}\theta_{i'}^{(l)})=-s>0$, and so $\pi_{0\odot n}(\theta_{i'}^{(n-l)}\theta_{i}^{(l-s)}\theta_{i'}^{(l)})=0\in \Lambda_{0,\lambda}$. For any $0\leqslant s\leqslant l$, by \cite[Lemma 42.1.2(d)]{Lusztig-1993}, we have 
$$\theta_{i'}^{(n-l)}\theta_{i}^{(l-s)}\theta_{i'}^{(l)}=\sum_{t=0}^{n-l+s}\begin{bmatrix}n-l+s\\t\end{bmatrix}\theta_i^{(m-k+s)}\otimes \theta_i^{(l-t)}\theta_{i'}^{(n)}\theta_{i}^{(-s+t)},$$
and so 
\begin{align*}
\varphi\pi_{m\odot n}(\theta_{i'}^{(n-l)}\theta_i^{(m-k+l)}\theta_{i'}^{(l)})=&\sum_{s=0}^lv^{s(k-m-s)}\begin{bmatrix}n-l+s\\s\end{bmatrix}\eta_{m}\otimes F_i^{(l-s)}\eta_n\\
=&\sum_{s=0}^{\mathrm{min}(k,l)}v^{s(k-m-s)}\begin{bmatrix}n-l+s\\s\end{bmatrix}\eta_{m}\otimes F_i^{(l-s)}\eta_n=\alpha_{k,l},
\end{align*}
where the condition $s\leqslant k$ is given by $F_i^{(m-k+s)}\eta_m\not=0$, as desired.

For any $0\leqslant k\leqslant m,0\leqslant l\leqslant n,k-l\geqslant m-n$, it is clear that $\theta_i^{(l)}\theta_{i'}^{(n)}\theta_i^{(m-k)}\in \tilde{\mathbf{B}}(\mathbf{f}\theta_n\mathbf{f})$. We claim that $t_i^{\sigma}(\theta_i^{(l)}\theta_{i'}^{(n)}\theta_i^{(m-k)})=m-k$. Indeed, let $t_i^{\sigma}(\theta_i^{(l)}\theta_{i'}^{(n)}\theta_i^{(m-k)})=t$, then it is clear that $t\geqslant m-k$. By Theorem \ref{14.3.2}, there exists a unique $b_2\in \tilde{\mathbf{B}}_{i,0}^{\sigma}$ such that 
\begin{equation}\label{b2thetait}
b_2\theta_i^{(t)}=\theta_i^{(l)}\theta_{i'}^{(n)}\theta_i^{(m-k)}+\sum_{b'_2\in \tilde{\mathbf{B}}_{i,\geqslant t+1}^{\sigma}}c_{b'_2}b'_2.
\end{equation}
By the conditions $|b_2|=|\theta_i^{(l)}\theta_{i'}^{(n)}\theta_i^{(m-k)}|-ti$ and $t_i^{\sigma}(b_2)=0$, we know that $b$ is of the form $\theta_{i'}^{(n_1)}\theta_i^{(m-k+l-t)}\theta_{i'}^{(n_2)}$ satisfying $ n=n_1+n_2<m-k+l-t, n_2\not=0$, or is $\theta_i^{(m-k+l-t)}\theta_{i'}^{(n)}$. In the first case, we have $t<m-n-k+l\leqslant m-k$, a contradiction. In the second case, notice that $b\theta_i^{(t)}=\theta_i^{(m-k+l-t)}\theta_{i'}^{(n)}\theta_i^{(t)}\in \tilde{\mathbf{B}}$, and the right hand side of (\ref{btheta}) is $\theta_i^{(l)}\theta_{i'}^{(n)}\theta_i^{(m-k)}$. Hence we have $t=m-k\leqslant m$, and so $\theta_i^{(l)}\theta_{i'}^{(n)}\theta_i^{(m-k)}\in \tilde{\mathbf{B}}(m,n)$. Moreover, by Theorem \ref{framed construction of tensor}, we have 
\begin{align*}
\varphi\pi_{m\odot n}(\theta_i^{(l)}\theta_{i'}^{(n)}\theta_i^{(m-k)})=&\sum_{s=0}^lv^{s(l-s)-sn}F_i^{(s)}F_i^{(m-k)}\eta_m\otimes F_i^{(l-s)}\eta_n\\
=&\sum_{s=0}^{\mathrm{min}(k,l)}v^{s(l-n-s)}\begin{bmatrix}m-k+s\\s\end{bmatrix}\eta_m\otimes F_i^{(l-s)}\eta_n=\beta_{k,l},
\end{align*}
where the condition $s\leqslant k$ is given by $F_i^{(m-k+s)}\eta_m\not=0$, as desired. 
\end{proof}

\subsection{Positivity}\label{Positivity}

For any $i\in I$, let ${_ir}:\tilde{\mathbf{f}}\rightarrow \tilde{\mathbf{f}} ,r_i:\tilde{\mathbf{f}}\rightarrow \tilde{\mathbf{f}}$ be the linear maps defined in \cite[Section 1.2.13]{Lusztig-1993} associated to the algebra $\mathbf{\tilde{f}}$. They are the unique linear maps such that 
\begin{equation}\label{Lusztig-1.2.13}
\begin{aligned}
&{_ir}(1)=0,\ {_ir}(\theta_j)=\delta_{i,j},\ {_ir}(xy)={_ir}(x)y+v^{|x|\cdot i}x{_ir}(y),\\
&r_i(1)=0,\ r_i(\theta_j)=\delta_{i,j},\ r_i(xy)=v^{|y|\cdot i}r_i(x)y+xr_i(y)
\end{aligned}
\end{equation}
for any $j\in \tilde{I}$ and homogeneous $x,y\in \mathbf{\tilde{f}}$. Moreover, we have
\begin{equation}\label{randriir}
\begin{aligned}
r(x)=\theta_i\otimes {_ir}(x)+\sum x_1\otimes x_2=r_i(x)\otimes \theta_i+\sum x'_1\otimes x'_2,
\end{aligned}
\end{equation}
where the summations are finite, and $x_1,x_2,x'_1,x'_2\in \mathbf{f}$ are homogeneous such that $|x_1|\not=i$ and $|x'_2|\not=i$.

\begin{lemma}\label{Ei formula}
For any $i\in I$ and homogeneous $x\in \mathbf{f}\theta_{\lambda}\mathbf{f}\subset \tilde{M}_{\xi\odot \lambda}$, we have 
\begin{align*}
E_ix=&\frac{v^{\langle i,\xi\odot \lambda-|x|\rangle+2}{_ir}(x)-v^{-\langle i,\xi\odot \lambda\rangle}r_i(x)}{v-v^{-1}}\\
=&\frac{v^{\langle i,\xi-|x|\rangle+2}{_ir(x)}-v^{-\langle i,\xi-|x|\rangle-2}\overline{{_ir}(\overline{x})}}{v-v^{-1}}.
\end{align*}
\end{lemma}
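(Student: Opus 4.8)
The plan is to work inside the Verma module $\tilde{M}_{\xi\odot\lambda}$ for the \emph{enlarged} algebra $\tilde{\mathbf{U}}$, where the action of $E_i$ on a homogeneous element $x\in\tilde{\mathbf{f}}$ (viewed as a vector in $\tilde{M}_{\xi\odot\lambda}$) is governed by a standard commutator identity. Recall from \cite[Proposition 3.1.6, 3.5.6]{Lusztig-1993} that for any homogeneous $x\in\tilde{\mathbf{f}}$ one has, inside $\tilde M_{\xi\odot\lambda}$,
\begin{align*}
E_i x^-\eta_{\xi\odot\lambda}=\frac{v^{\langle i,\xi\odot\lambda-|x|+i\rangle}\,{_ir}(x)^-\eta_{\xi\odot\lambda}-v^{-\langle i,\xi\odot\lambda-|x|+i\rangle}\,r_i(x)^-\eta_{\xi\odot\lambda}}{v-v^{-1}},
\end{align*}
where I have written the Verma module action in the normalization of the excerpt; here $\langle i,\cdot\rangle$ is computed with respect to the framed root datum and $\langle i,i\rangle=i\cdot i=2$. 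First I would record this identity precisely (matching the sign and $K$-normalization conventions of \ref{The module Lambda}), so that the numerator reads $v^{\langle i,\xi\odot\lambda-|x|\rangle+2}{_ir}(x)-v^{-\langle i,\xi\odot\lambda\rangle}r_i(x)$ after using $|x|=|{_ir}(x)|+i=|r_i(x)|+i$ to rewrite the exponent of the second term: indeed $-\langle i,\xi\odot\lambda-|x|+i\rangle=-\langle i,\xi\odot\lambda\rangle+\langle i,|x|-i\rangle=-\langle i,\xi\odot\lambda\rangle$ precisely when $x$ has degree making $\langle i,|x|-i\rangle$ cancel—more carefully, one uses $r_i(x)$ has degree $|x|-i$ and the Verma action tracks this, so the displayed first line of the lemma is just this identity specialized, restricted to $x\in\mathbf{f}\theta_\lambda\mathbf{f}$. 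This is the first equality.

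For the second equality, I would substitute the two identifications coming from the framed setup. On one hand, since $x\in\mathbf{f}\theta_\lambda\mathbf{f}$ and $i\in I$ (not $I'$), the operator $r_i$ only differentiates the $\mathbf{f}$-factors and commutes past $\theta_\lambda$ in the sense that $r_i(x)$ again lies in $\mathbf{f}\theta_\lambda\mathbf{f}$; crucially I would invoke the compatibility of $r_i$ and ${_ir}$ with the bar-involution, namely $r_i(x)=\overline{{_ir}(\overline{x})}$ when $x$ is bar-invariant, and more generally the identity $\sigma{_ir}\sigma=r_i$ together with $\overline{{_ir}(x)}={_ir}(\overline{x})$ (see \cite[Section 1.2.13]{Lusztig-1993} and the remarks after Theorem \ref{properties of B}). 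Combined with $\langle i,\xi\odot\lambda\rangle=\langle i,\xi\rangle$ and $\langle i,\xi\odot\lambda-|x|\rangle=\langle i,\xi-|x|\rangle$ (valid because $|x|=|x'|+|\theta_\lambda|$ with $|x'|\in\mathbb N[I]$ and $\langle i,|\theta_\lambda|\rangle$ is absorbed: this is exactly the computation $\langle i,\xi\odot\lambda-|x|-|\theta_\lambda|\rangle=\langle i,\xi-|x|\rangle$ already used repeatedly in the proofs of Lemma \ref{submodules} and Proposition \ref{psi}), the exponent $-\langle i,\xi\odot\lambda\rangle$ becomes $-\langle i,\xi\rangle=-\langle i,\xi-|x|\rangle-2-\langle i,|x|-i\rangle=-\langle i,\xi-|x|\rangle-2$ after again using that the $I'$-part of $|x|$ is $|\theta_\lambda|$ and $\langle i, j'\rangle=-\delta_{ij}$, so that $\langle i,|x|\rangle=\langle i,|x'|\rangle-\langle i,\lambda\rangle$ while the compensating terms arrange into the clean exponent $-\langle i,\xi-|x|\rangle-2$. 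I would carry out this bookkeeping explicitly but compactly.

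The main obstacle I anticipate is \emph{not} the commutator identity itself, which is standard, but rather the careful conversion of weight-pairings between the framed datum $(\tilde Y,\tilde X,\langle,\rangle,...)$ and the original datum $(Y,X,\langle,\rangle,...)$: one must be scrupulous that $|x|$ is being read as an element of $\mathbb N[\tilde I]$ whose $I'$-component is the fixed $|\theta_\lambda|$, so that $\langle i,|x|\rangle$ picks up the $-\langle i,\lambda\rangle$ correction from $i\cdot j'=-\delta_{ij}$, and that this correction is exactly what turns $-\langle i,\xi\odot\lambda\rangle$ into $-\langle i,\xi-|x|\rangle-2$. A secondary subtlety is checking that $r_i$ and ${_ir}$ applied to an element of $\mathbf{f}\theta_\lambda\mathbf{f}$ stay within the relevant space and interact correctly with $\theta_\lambda$ (since $\theta_\lambda$ is a product of $\theta_{i'}^{(\langle i,\lambda\rangle)}$ with $i\cdot i'=-\delta_{ii'}$ nonzero), but this is controlled by the Leibniz rule for ${_ir},r_i$ and the observation that ${_ir}(\theta_{i'})=r_i(\theta_{i'})=0$ for $i\in I$. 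Once the exponent arithmetic is pinned down, both displayed equalities follow immediately.
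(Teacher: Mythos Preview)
Your overall strategy matches the paper's: cite the standard Verma-module commutator formula (the paper points to \cite[Lemma 3.2.2]{Lusztig-1993}) for the first equality, then pass from $r_i$ to $\overline{{_ir}(\overline{\,\cdot\,})}$ for the second. However, the key identity you invoke for that passage is misstated, and this breaks the exponent bookkeeping.

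Specifically, the two claims ``$r_i(x)=\overline{{_ir}(\overline{x})}$ when $x$ is bar-invariant'' and ``$\overline{{_ir}(x)}={_ir}(\overline{x})$'' are both false. A one-line check: for $x=\theta_j\theta_i$ with $i\neq j$ one has ${_ir}(x)=v^{i\cdot j}\theta_j$ while ${_ir}(\bar{x})={_ir}(x)=v^{i\cdot j}\theta_j$, so $\overline{{_ir}(x)}=v^{-i\cdot j}\theta_j\neq{_ir}(\bar{x})$. The correct relation, which is exactly what the paper cites as \cite[Lemma 1.2.14]{Lusztig-1993}, carries a power of $v$:
\[
r_i(x)=v^{\,i\cdot(|x|-i)}\,\overline{{_ir}(\overline{x})}=v^{\langle i,|x|\rangle-2}\,\overline{{_ir}(\overline{x})}.
\]
With this factor in hand, together with $\langle i,\xi\odot\lambda\rangle=\langle i,\xi\rangle$, the second term becomes
\[
v^{-\langle i,\xi\odot\lambda\rangle}r_i(x)=v^{-\langle i,\xi\rangle+\langle i,|x|\rangle-2}\,\overline{{_ir}(\overline{x})}=v^{-\langle i,\xi-|x|\rangle-2}\,\overline{{_ir}(\overline{x})},
\]
and the first term needs only $\langle i,\xi\odot\lambda-|x|\rangle=\langle i,\xi-|x|\rangle$, which is immediate from $\langle i,\xi\odot\lambda\rangle=\langle i,\xi\rangle$. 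Your attempted exponent simplification ``$-\langle i,\xi-|x|\rangle-2-\langle i,|x|-i\rangle=-\langle i,\xi-|x|\rangle-2$'' would require $\langle i,|x|-i\rangle=0$, which is false; the missing $v^{\langle i,|x|-i\rangle}$ is precisely the factor supplied by \cite[Lemma 1.2.14]{Lusztig-1993}. The elaborate discussion of converting between the framed and original root data is unnecessary here: $|x|$ stays an element of $\mathbb{N}[\tilde I]$ throughout, and the only input is the scalar identity $\langle i,\xi\odot\lambda\rangle=\langle i,\xi\rangle$.
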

\begin{proof}
Since $E_ix=E_ix^-1$ and $E_i1=0$ in $\tilde{M}_{\xi\odot \lambda}$, by \cite[Proposition 3.1.6]{Lusztig-1993}, we have 
\begin{align*}
E_ix=\frac{K_i({_ir(x)})^-1-(r_i(x))^-K_{-i}1}{v-v^{-1}}=\frac{v^{\langle i,\xi\odot \lambda-|x|\rangle+2}{_ir}(x)-v^{-\langle i,\xi\odot \lambda\rangle}r_i(x)}{v-v^{-1}}.
\end{align*}
The second formula follows from \cite[Lemma 1.2.14]{Lusztig-1993} and $\langle i,\xi\odot\lambda\rangle=\langle i,\xi\rangle$.
\end{proof}

\begin{lemma}\label{nonzero term power}
For any $i\in I$ and $b\in \tilde{\mathbf{B}}(\mathbf{f}\theta_{\lambda}\mathbf{f})$, suppose that ${_ir}(b)=\sum_{b'\in \tilde{\mathbf{B}}}\sum_{n\in \mathbb{Z}}d_{b',n}^bv^nb'$ for any $b'\in \tilde{\mathbf{B}}$ and $n\in \mathbb{Z}$, we have $d_{b',n}^b\in \mathbb{N}$. Moreover, for any $b'\in \tilde{\mathbf{B}}(\xi,\lambda)$ and $n\in \mathbb{Z}$ such that $d_{b',n}^b\not=0$, we have 
$$\langle i,\xi-|b|\rangle+2+n\geqslant 0.$$
\end{lemma}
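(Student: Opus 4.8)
The plan is to obtain the positivity of the coefficients directly from the positivity of the comultiplication, and to deduce the degree bound --- via Lemma~\ref{Ei formula} and the inequality $t^{\sigma}_{i}(b')\leqslant\langle i,\xi\rangle$ --- from a combinatorial estimate on ${_ir}(b)$ proved by an induction modelled on the proof of Proposition~\ref{B(fthetaf)}. For the first assertion: by (\ref{randriir}) the homogeneous summand of $r(b)$ whose first tensor factor lies in degree $i$ equals $\theta_{i}\otimes{_ir}(b)$, and $\tilde{\mathbf{f}}_{i}$ is one-dimensional with basis the canonical basis element $\theta_{i}$, so the coefficient of $\theta_{i}\otimes b'$ in $r(b)$ is exactly $\sum_{n}d^{b}_{b',n}v^{n}$; by the positivity of $\tilde{\mathbf{B}}$ with respect to the comultiplication of $\tilde{\mathbf{f}}$ (Theorem~\ref{properties of B}) this coefficient lies in $\mathbb{N}[v,v^{-1}]$, hence $d^{b}_{b',n}\in\mathbb{N}$ for all $b',n$. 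As a further (though not by itself conclusive) input for the second assertion, note that since $b\in\tilde{\mathbf{B}}$ is bar-invariant, Lemma~\ref{Ei formula} identifies the coefficient of any $b'\in\tilde{\mathbf{B}}$ in $E_{i}b\in\tilde{M}_{\xi\odot\lambda}$ with $\sum_{n}d^{b}_{b',n}[\langle i,\xi-|b|\rangle+2+n]$; applying $\pi_{\xi\odot\lambda}$, using Corollary~\ref{framed basis}(b) and $\tilde{\mathbf{B}}(\xi,\lambda)\subset\tilde{\mathbf{B}}(\xi\odot\lambda)$, and then the positivity of $E_{i}$ on $\tilde{\mathbf{B}}(\tilde{\Lambda}_{\xi\odot\lambda})$ (Theorem~\ref{properties of B(Lambda)}), one gets $\sum_{n}d^{b}_{b',n}[\langle i,\xi-|b|\rangle+2+n]\in\mathbb{N}[v,v^{-1}]$ for each $b'\in\tilde{\mathbf{B}}(\xi,\lambda)$.

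The heart of the second assertion is the following intrinsic estimate, which I would isolate: for all $b,b'\in\tilde{\mathbf{B}}(\mathbf{f}\theta_{\lambda}\mathbf{f})$ the $v$-degree of the coefficient of $b'$ in ${_ir}(b)$ is at most $t_{i}(b')$. Granting it, apply the anti-automorphism $\sigma$ (which fixes $\tilde{\mathbf{B}}$ and $\tilde{\mathbf{B}}(\mathbf{f}\theta_{\lambda}\mathbf{f})$, satisfies $\sigma\circ{_ir}=r_{i}\circ\sigma$, and interchanges $t_{i}$ with $t^{\sigma}_{i}$) to restate it as: the $v$-degree of the coefficient of $b'$ in $r_{i}(b)$ is at most $t^{\sigma}_{i}(b')$. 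On the other hand, since $b\in\tilde{\mathbf{B}}$ is bar-invariant, \cite[Lemma~1.2.14]{Lusztig-1993} gives $r_{i}(b)=v^{\langle i,|b|\rangle-2}\,\overline{{_ir}(b)}$, so the $v$-exponents occurring in the coefficient of $b'$ in $r_{i}(b)$ are precisely the integers $\langle i,|b|\rangle-2-n$ with $d^{b}_{b',n}\neq0$. Hence each such $n$ satisfies $\langle i,|b|\rangle-2-n\leqslant t^{\sigma}_{i}(b')$; and for $b'\in\tilde{\mathbf{B}}(\xi,\lambda)$ we have $t^{\sigma}_{i}(b')\leqslant\langle i,\xi\rangle$ by Definition~\ref{basis definition}, which gives $\langle i,\xi-|b|\rangle+2+n\geqslant0$, as required. (Here $\tilde{\mathbf{B}}(\mathbf{f}\theta_{\lambda}\mathbf{f})$ is stable under ${_ir}$ and $r_{i}$ because ${_ir}(\theta_{\lambda})=r_{i}(\theta_{\lambda})=0$, so these operations only produce basis elements in $\tilde{\mathbf{B}}(\mathbf{f}\theta_{\lambda}\mathbf{f})$.)

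It remains to prove the intrinsic estimate, which I expect to be the main obstacle. I would argue by the induction used in Proposition~\ref{B(fthetaf)}: with $b$ appearing in $\theta_{\bnu_{1}}\theta_{\lambda}\theta_{\bnu_{2}}$, induct on $\mathrm{tr}\,\nu_{1}+\mathrm{tr}\,\nu_{2}$ with descending inductions on $t_{i_{1}}(b)$ and $t^{\sigma}_{j_{m}}(b)$. When $\mathrm{tr}\,\nu_{1}>0$, Theorem~\ref{14.3.2} and Lemma~\ref{restricted bijection} supply $b_{1}\in\tilde{\mathbf{B}}_{i_{1},0}\cap\tilde{\mathbf{B}}(\mathbf{f}\theta_{\lambda}\mathbf{f})$ with $\mathrm{tr}\,|b_{1}|<\mathrm{tr}\,|b|$ and, writing $s:=t_{i_{1}}(b)$, $\theta_{i_{1}}^{(s)}b_{1}=b+\sum_{b''\in\tilde{\mathbf{B}}_{i_{1},\geqslant s+1}\cap\tilde{\mathbf{B}}(\mathbf{f}\theta_{\lambda}\mathbf{f})}c_{b''}b''$; one then expands via the Leibniz rule ${_ir}(\theta_{i_{1}}^{(s)}b_{1})={_ir}(\theta_{i_{1}}^{(s)})b_{1}+v^{s(i_{1}\cdot i)}\theta_{i_{1}}^{(s)}{_ir}(b_{1})$ and controls $v$-degrees using the inductive hypotheses for ${_ir}(b_{1})$ and for the ${_ir}(b'')$ (these have the same total degree but strictly larger $t_{i_{1}}$-value) together with the triangular expansion of left multiplication by $\theta_{i_{1}}^{(s)}$ from Theorem~\ref{14.3.2}(c); the case $\mathrm{tr}\,\nu_{1}=0$, $\mathrm{tr}\,\nu_{2}>0$ is symmetric, peeling $\theta_{j_{m}}$ off the right of $\bnu_{2}$ and using $r_{i}$. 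The delicate point, and the reason this is the main obstacle, is that one needs more than the positivity of the structure constants of Theorem~\ref{14.3.2}: one must control precisely how the $v$-degree of a coefficient and the $\theta_{i}$-divisibility exponents $t_{i}(\cdot)$ transform under the Leibniz rule and under multiplication by $\theta_{i}^{(n)}$.
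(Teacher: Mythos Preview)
Your argument for the first assertion (positivity of the $d^b_{b',n}$) is correct and identical to the paper's. For the second assertion, the paper proceeds very differently and much more briefly: it simply invokes the inclusion $\tilde{\mathbf{B}}(\xi,\lambda)\subset\tilde{\mathbf{B}}(\xi\odot\lambda)$ and cites \cite[Corollary~22.1.5]{Lusztig-1993} applied to $\tilde{\mathbf{f}}$ with dominant weight $\xi\odot\lambda$; since $\langle i,\xi\odot\lambda\rangle=\langle i,\xi\rangle$ for $i\in I$, Lusztig's degree bound reads exactly $\langle i,\xi-|b|\rangle+2+n\geqslant0$ whenever $b'\in\tilde{\mathbf{B}}(\xi\odot\lambda)$ and $d^b_{b',n}\neq0$.

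Your ``intrinsic estimate'' (that the top $v$-degree of the $b'$-coefficient in ${_ir}(b)$ is at most $t_i(b')$) is, after applying $\sigma$ and the bar relation from \cite[Lemma~1.2.14]{Lusztig-1993}, essentially the content of Lusztig's Corollary~22.1.5 --- and that result holds for all of $\tilde{\mathbf{B}}$, so restricting to $\tilde{\mathbf{B}}(\mathbf{f}\theta_{\lambda}\mathbf{f})$ gains nothing. Your plan therefore amounts to reproving Lusztig's corollary from scratch. The inductive scheme you sketch is reasonable in outline, but as you yourself acknowledge, the control of $v$-degrees under the Leibniz rule and under multiplication by divided powers is not carried out; this is a genuine gap in your proposal. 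The remedy is simply to cite Lusztig. (Your side observation via Lemma~\ref{Ei formula} and Theorem~\ref{properties of B(Lambda)} --- that $\sum_n d^b_{b',n}[\langle i,\xi-|b|\rangle+2+n]\in\mathbb{N}[v,v^{-1}]$ for $b'\in\tilde{\mathbf{B}}(\xi,\lambda)$ --- is correct and is exactly how Theorem~\ref{positive} uses the present lemma, but as you rightly say it does not by itself force each individual bracket to lie in $\mathbb{N}[v,v^{-1}]$.)
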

\begin{proof}
Suppose $r(b)=\sum_{b_1,b_2\in \tilde{\mathbf{B}}}c_{b_1,b_2}^bb_1\otimes b_2$. By the positivity of the canonical basis $\tilde{\mathbf{B}}$ with respect to the comultiplication of $\tilde{\mathbf{f}}$, see Theorem \ref{properties of B}, we have $c_{b_1,b_2}^bb_1\in \mathbb{N}[v,v^{-1}]$. By (\ref{randriir}), we have $\sum_{n\in \mathbb{Z}}d_{b',n}^bv^n=c_{\theta_i,b'}^b$, and so $d_{b',n}^b\in \mathbb{N}$. The second statement follows from $\tilde{\mathbf{B}}(\xi,\lambda)\subset \tilde{\mathbf{B}}(\xi\odot\lambda)$ and \cite[Corollary 22.1.5]{Lusztig-1993}.
\end{proof}

\begin{theorem}\label{positive}
The basis $\tilde{\mathbf{B}}(\Lambda_{\xi,\lambda})$ of $\Lambda_{\xi,\lambda}$ and the canonical basis $\mathbf{B}(\Lambda_{\xi}\otimes \Lambda_{\lambda})$ of $\Lambda_{\xi}\otimes \Lambda_{\lambda}$ have the positivity, that is, for any $i\in I$, we have
\begin{align*}
&E_ib, F_ib\in\sum_{b'\in \tilde{\mathbf{B}}(\Lambda_{\xi,\lambda})}\mathbb{N}[v,v^{-1}]b'\ \textrm{for any $b\in \tilde{\mathbf{B}}(\Lambda_{\xi,\lambda})$},\\ 
&E_ib, F_ib\in\sum_{b'\in \mathbf{B}(\Lambda_{\xi}\otimes \Lambda_{\lambda})}\mathbb{N}[v,v^{-1}]b'\ \textrm{for any $b\in \mathbf{B}(\Lambda_{\xi}\otimes \Lambda_{\lambda})$}.
\end{align*}
\end{theorem}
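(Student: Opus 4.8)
I would prove the first assertion, the positivity of $\tilde{\mathbf{B}}(\Lambda_{\xi,\lambda})$, and then transport it along $\varphi$. By Theorem \ref{framed construction of canonical basis} the $\mathbf{U}$-module isomorphism $\varphi$ carries $\tilde{\mathbf{B}}(\Lambda_{\xi,\lambda})$ bijectively onto $\mathbf{B}(\Lambda_{\xi}\otimes\Lambda_{\lambda})$ and $E_i,F_i\in\mathbf{U}$, so the two statements are equivalent and it suffices to work inside $\Lambda_{\xi,\lambda}$. The structural fact I would use throughout is that $\tilde{\mathbf{B}}(\Lambda_{\xi,\lambda})=\{\pi_{\xi\odot\lambda}(b)\mid b\in\tilde{\mathbf{B}}(\xi,\lambda)\}$ is simultaneously (i) a basis of the $\mathbf{U}$-submodule $\Lambda_{\xi,\lambda}\subset\tilde{\Lambda}_{\xi\odot\lambda}$ (Lemma \ref{submodules}, Definition \ref{Lambdaxilambda}, Corollary \ref{framed basis}(a)) and (ii) a subset of the canonical basis $\tilde{\mathbf{B}}(\tilde{\Lambda}_{\xi\odot\lambda})$, since $\tilde{\mathbf{B}}(\xi,\lambda)\subset\tilde{\mathbf{B}}(\xi\odot\lambda)$ and $\pi_{\xi\odot\lambda}$ is injective on $\tilde{\mathbf{B}}(\xi\odot\lambda)$. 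Hence any element of $\Lambda_{\xi,\lambda}$, expanded in the canonical basis $\tilde{\mathbf{B}}(\tilde{\Lambda}_{\xi\odot\lambda})$, involves only elements of $\tilde{\mathbf{B}}(\Lambda_{\xi,\lambda})$: for $b\in\tilde{\mathbf{B}}(\xi,\lambda)$ it is enough to exhibit an $\mathbb{N}[v,v^{-1}]$-expansion of $E_i\pi_{\xi\odot\lambda}(b)$ and $F_i\pi_{\xi\odot\lambda}(b)$ in $\tilde{\mathbf{B}}(\tilde{\Lambda}_{\xi\odot\lambda})$, and the terms not belonging to $\tilde{\mathbf{B}}(\Lambda_{\xi,\lambda})$ then necessarily have zero coefficient.

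\textbf{The action of $F_i$.} On the Verma picture $\mathbf{f}\theta_{\lambda}\mathbf{f}\subset\tilde{M}_{\xi\odot\lambda}$ the operator $F_i$ is left multiplication by $\theta_i$, so $F_i\pi_{\xi\odot\lambda}(b)=\pi_{\xi\odot\lambda}(\theta_i b)$. Since $\theta_i\in\tilde{\mathbf{B}}$, the positivity of the structure constants of $\tilde{\mathbf{B}}$ for multiplication (Theorem \ref{properties of B}) gives $\theta_i b\in\sum_{b'\in\tilde{\mathbf{B}}}\mathbb{N}[v,v^{-1}]b'$; projecting by $\pi_{\xi\odot\lambda}$ and invoking the structural fact above yields $F_i\pi_{\xi\odot\lambda}(b)\in\sum_{b'\in\tilde{\mathbf{B}}(\Lambda_{\xi,\lambda})}\mathbb{N}[v,v^{-1}]b'$.

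\textbf{The action of $E_i$.} Here I would use the second formula of Lemma \ref{Ei formula} together with $\overline{b}=b$: applying $\pi_{\xi\odot\lambda}$ to $E_ib$ gives
\begin{align*}
E_i\pi_{\xi\odot\lambda}(b)=\pi_{\xi\odot\lambda}\!\left(\frac{v^{\langle i,\xi-|b|\rangle+2}\,{_ir}(b)-v^{-\langle i,\xi-|b|\rangle-2}\,\overline{{_ir}(b)}}{v-v^{-1}}\right).
\end{align*}
Write ${_ir}(b)=\sum_{b'\in\tilde{\mathbf{B}}}\sum_{n}d^b_{b',n}v^nb'$ with $d^b_{b',n}\in\mathbb{N}$, which holds by Lemma \ref{nonzero term power} (each $b'$ here being bar-invariant). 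Then the coefficient of $b'$ in $E_i\pi_{\xi\odot\lambda}(b)$ equals $\sum_{n}d^b_{b',n}\,[\langle i,\xi-|b|\rangle+2+n]$. For $b'\in\tilde{\mathbf{B}}(\xi,\lambda)$, Lemma \ref{nonzero term power} also guarantees $\langle i,\xi-|b|\rangle+2+n\geqslant 0$ whenever $d^b_{b',n}\ne 0$, and $[m]\in\mathbb{N}[v,v^{-1}]$ for every $m\geqslant 0$; hence each such coefficient lies in $\mathbb{N}[v,v^{-1}]$. Since $E_i\pi_{\xi\odot\lambda}(b)\in\Lambda_{\xi,\lambda}$, the structural fact forces the coefficients of all basis elements outside $\tilde{\mathbf{B}}(\Lambda_{\xi,\lambda})$ to vanish, and we conclude $E_i\pi_{\xi\odot\lambda}(b)\in\sum_{b'\in\tilde{\mathbf{B}}(\Lambda_{\xi,\lambda})}\mathbb{N}[v,v^{-1}]b'$. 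Transporting along $\varphi$ and using Theorem \ref{framed construction of canonical basis} then transfers both positivity statements to $\mathbf{B}(\Lambda_{\xi}\otimes\Lambda_{\lambda})$.

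\textbf{Main obstacle.} Essentially all the genuine content is already packaged: the one delicate ingredient is the sign inequality $\langle i,\xi-|b|\rangle+2+n\geqslant 0$ in Lemma \ref{nonzero term power}, which rests on Lusztig's \cite[Corollary 22.1.5]{Lusztig-1993} and the inclusion $\tilde{\mathbf{B}}(\xi,\lambda)\subset\tilde{\mathbf{B}}(\xi\odot\lambda)$; without it the quantum integer $[\langle i,\xi-|b|\rangle+2+n]$ could contribute negative coefficients. Everything else is bookkeeping with the positivity of $\tilde{\mathbf{B}}$ and the double role of $\tilde{\mathbf{B}}(\Lambda_{\xi,\lambda})$ as a subset of $\tilde{\mathbf{B}}(\tilde{\Lambda}_{\xi\odot\lambda})$ that bases the submodule $\Lambda_{\xi,\lambda}$. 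I note that, alternatively, the whole theorem follows at once by applying Lusztig's positivity (Theorem \ref{properties of B(Lambda)}) directly to the $\tilde{\mathbf{U}}$-module $\tilde{\Lambda}_{\xi\odot\lambda}$ for $i\in I\subset\tilde{I}$ and restricting to the $\mathbf{U}$-submodule $\Lambda_{\xi,\lambda}$; the computational route above has the advantage of also producing the explicit expansion coefficients.
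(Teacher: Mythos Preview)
Your proposal is correct and follows essentially the same route as the paper: positivity of $F_i$ via the multiplication positivity of $\tilde{\mathbf{B}}$, positivity of $E_i$ via Lemma \ref{Ei formula} together with the expansion ${_ir}(b)=\sum d^b_{b',n}v^nb'$ and the sign inequality of Lemma \ref{nonzero term power}, the passage from $\tilde{\mathbf{B}}$ to $\tilde{\mathbf{B}}(\Lambda_{\xi,\lambda})$ via Corollary \ref{framed basis}, and the transfer to $\mathbf{B}(\Lambda_{\xi}\otimes\Lambda_{\lambda})$ via Theorems \ref{framed construction of tensor} and \ref{framed construction of canonical basis}. Your closing remark that one could instead invoke Theorem \ref{properties of B(Lambda)} directly for $\tilde{\Lambda}_{\xi\odot\lambda}$ with $i\in I\subset\tilde{I}$ and then restrict to the submodule is also valid and slightly more economical; the paper in effect unfolds that very argument (Lusztig's proof of \cite[Theorem 22.1.7]{Lusztig-1993}) rather than citing it as a black box.
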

\begin{proof}
Recall that $\tilde{\mathbf{B}}(\Lambda_{\xi,\lambda})=\{\pi_{\xi\odot\lambda}(b)\mid b\in \tilde{\mathbf{B}}(\xi,\lambda)\}$, see Definition \ref{basis definition}.
For any $b\in \tilde{\mathbf{B}}(\xi,\lambda)$, by the positivity of the canonical basis $\tilde{\mathbf{B}}$ with respect to the multiplication of $\tilde{\mathbf{f}}$, see Theorem \ref{properties of B}, we have $\theta_ib\in \sum_{b'\in \tilde{\mathbf{B}}}\mathbb{N}[v,v^{-1}]b'$. By Corollary \ref{framed basis}, we have 
$$F_i\pi_{\xi\odot\lambda}(b)=\pi_{\xi\odot\lambda}(\theta_ib)\in \sum_{b'\in \tilde{\mathbf{B}}(\xi,\lambda)}\mathbb{N}[v,v^{-1}]\pi_{\xi\odot\lambda}(b').$$
For any $b\in \tilde{\mathbf{B}}(\xi,\lambda)$, since $b$ is bar-invariant, by Lemma \ref{Ei formula}, we have
\begin{align*}
E_ib=&\frac{v^{\langle i,\xi-|b|\rangle+2}{_ir(b)}-v^{-\langle i,\xi-|b|\rangle-2}\overline{{_ir}(\overline{b})}}{v-v^{-1}}\\
=&\sum_{b'\in \tilde{\mathbf{B}}}\sum_{n\in \mathbb{Z}}\frac{v^{\langle i,\xi-|b|\rangle+2}d_{b',n}^bv^nb'-v^{-\langle i,\xi-|b|\rangle-2}d_{b',n}^bv^{-n}b'}{v-v^{-1}}\\
=&\sum_{b'\in \tilde{\mathbf{B}}}\sum_{n\in \mathbb{Z}}\frac{v^{\langle i,\xi-|b|\rangle+2+n}-v^{-\langle i,\xi-|b|\rangle-2-n}}{v-v^{-1}}d_{b',n}^bb'.
\end{align*}
By Corollary \ref{framed basis} and Lemma \ref{nonzero term power}, we have
\begin{align*}
E_i\pi_{\xi\odot\lambda}(b)=\pi_{\xi\odot\lambda}(E_ib)=\sum_{b'\in \tilde{\mathbf{B}}(\xi,\lambda)}\sum_{n\in \mathbb{Z}}\frac{v^{\langle i,\xi-|b|\rangle+2+n}-v^{-\langle i,\xi-|b|\rangle-2-n}}{v-v^{-1}}d_{b',n}^b\pi_{\xi\odot\lambda}(b'),
\end{align*}
where $d_{b',n}^b\geqslant 0$, and $\langle i,\xi-|b|\rangle+2+n\geqslant 0$ whenever $d_{b',n}^b\not=0$. Notice that 
$$\frac{v^N-v^{-N}}{v-v^{-1}}=v^{N-1}+v^{N-3}+...+v^{1-N}\in \mathbb{N}[v,v^{-1}],\ \textrm{if $N\geqslant 0$},$$
and so 
$$E_i\pi_{\xi\odot\lambda}(b)\in \sum_{b'\in \tilde{\mathbf{B}}(\xi,\lambda)}\mathbb{N}[v,v^{-1}]\pi_{\xi\odot\lambda}(b').$$
Hence the basis $\tilde{\mathbf{B}}(\Lambda_{\xi,\lambda})$ of $\Lambda_{\xi,\lambda}$ has the positivity. Moreover, by Theorem \ref{framed construction of tensor} and Theorem \ref{framed construction of canonical basis}, the canonical basis $\mathbf{B}(\Lambda_{\xi}\otimes \Lambda_{\lambda})$ of $\Lambda_{\xi}\otimes \Lambda_{\lambda}$ also has the positivity.
\end{proof}

\subsection{Generalized case}\label{generalized case}

Our method can be generalized to the case for the tensor product of several integrable highest weight modules by further enlarging the Cartan datum. For example, we sketch the construction for the tensor product $\Lambda_{\xi}\otimes \Lambda_{\lambda}\otimes \Lambda_{\zeta}$, where $\zeta$ is another dominant weight with respect to the root datum $(Y,X,\langle,\rangle,...)$. For the Cartan datum $(I,\cdot)$, we enlarge its framed Cartan datum $(\tilde{I},\cdot)$ to be the $2$-framed Cartan datum $(\tilde{I}^2,\cdot)$, where $\tilde{I}^2=\tilde{I}\sqcup I''=\tilde{I}\sqcup\{i''\mid i\in I\}$ and 
$$i\cdot j''=-\delta_{i,j},\ i'\cdot j''=0,\ i''\cdot j''=2\delta_{i,j}\ \textrm{for any $i,j\in I$}.$$
Similarly, for the root datum $(Y,X,\langle,\rangle,...)$, we enlarge its framed root datum $(\tilde{Y},\tilde{X},\langle,\rangle,...)$ to be the $2$-framed root datum $(\tilde{Y}^2,\tilde{X}^2,\langle,\rangle,...)$. Then we have the corresponding algebras $\tilde{\mathbf{f}}^2$ with the canonical basis $\tilde{\mathbf{B}}^2$, and the quantized enveloping algebra $\tilde{\mathbf{U}}^2$. We define
$$\xi\odot\lambda\odot\zeta=\xi\odot\lambda+|\theta_{\zeta}|+\sum_{i\in I}(\langle i,\zeta\rangle-\langle i',\xi\odot\lambda+|\theta_{\zeta}|\rangle)\omega_{i''}\in \tilde{X}^2.$$
Then we have the corresponding Verma module $\tilde{M}_{\xi\odot\lambda\odot\zeta}^2$ and the simple highest weight module $\tilde{M}_{\xi\odot\lambda\odot\zeta}^2$ of $\tilde{\mathbf{U}}^2$ with the canonical basis $\tilde{\mathbf{B}}^2(\tilde{\Lambda}_{\xi\odot \lambda\odot \zeta}^2)$. We restrict them to be $\mathbf{U}$-modules, and can prove that the subspace $\mathbf{f}\theta_{\zeta}\mathbf{f}\theta_{\lambda}\mathbf{f}$ of $\tilde{\mathbf{f}}^2$ spanned by $\{x\theta_{\zeta}y\theta_{\lambda}z\mid x,y,z\in \mathbf{f}\}$, where $\theta_{\zeta}=\prod_{i\in I}\theta_{i''}^{(\langle i,\zeta\rangle)}\in \tilde{\mathbf{f}}^2$, is a $\mathbf{U}$-submodule $\mathbf{f}\theta_{\zeta}\mathbf{f}\theta_{\lambda}\mathbf{f}$ of $\tilde{M}_{\xi\odot\lambda\odot\zeta}^2$. This is analogue to the submodule $\mathbf{f}\theta_{\lambda}\mathbf{f}$ of $\tilde{M}_{\xi\odot\lambda}$. We define the $\mathbf{U}$-module $\Lambda_{\xi,\lambda,\zeta}$ to be the image of $\mathbf{f}\theta_{\zeta}\mathbf{f}\theta_{\lambda}\mathbf{f}$ under the natural projection $\pi_{\xi\odot\lambda\odot\zeta}:\tilde{M}_{\xi\odot\lambda\odot\zeta}^2\rightarrow \tilde{\Lambda}_{\xi\odot\lambda\odot\zeta}^2$. This is analogue to the module $\Lambda_{\xi,\lambda}$. Moreover, we can construct a subset $\tilde{\mathbf{B}}^2(\Lambda_{\xi,\lambda,\zeta})\subset \tilde{\mathbf{B}}^2(\tilde{\Lambda}_{\xi\odot \lambda\odot \zeta}^2)$, and prove that it is a basis of $\Lambda_{\xi,\lambda,\zeta}$. This is analogue to the basis $\tilde{\mathbf{B}}(\Lambda_{\xi,\lambda})$ of $\Lambda_{\xi,\lambda}$. We can construct a $\mathbf{U}$-module isomorphism $\varphi^2:\Lambda_{\xi,\lambda,\zeta}\rightarrow \Lambda_{\xi,\lambda}\otimes \Lambda_{\zeta}$ by using the comultiplication of $\tilde{\mathbf{f}}^2$, and prove that the image of $\tilde{\mathbf{B}}^2(\Lambda_{\xi,\lambda,\zeta})$ under $\varphi^2$ coincides with the canonical basis of the tensor product $\Lambda_{\xi,\lambda}\otimes \Lambda_{\zeta}$. This is analogue to Theorem \ref{framed construction of tensor} and \ref{framed construction of canonical basis}.

By Theorem \ref{framed construction of canonical basis}, the map $\varphi:\Lambda_{\xi,\lambda}\rightarrow \Lambda_{\xi}\otimes \Lambda_{\lambda}$ is a based module isomorphism, then so are $\varphi\otimes \mathrm{Id}:\Lambda_{\xi,\lambda}\otimes \Lambda_{\zeta}\rightarrow \Lambda_{\xi}\otimes \Lambda_{\lambda}\otimes \Lambda_{\zeta}$ and $(\varphi\otimes \mathrm{Id})\varphi^2:\Lambda_{\xi,\lambda,\zeta}\rightarrow \Lambda_{\xi}\otimes \Lambda_{\lambda}\otimes \Lambda_{\zeta}$. Hence the image of $\tilde{\mathbf{B}}^2(\Lambda_{\xi,\lambda,\zeta})$ under $(\varphi\otimes \mathrm{Id})\varphi^2$ coincides with the canonical basis of the tensor product $\Lambda_{\xi}\otimes \Lambda_{\lambda}\otimes \Lambda_{\zeta}$. Based on the positivity of $\tilde{\mathbf{B}}^2(\tilde{\Lambda}_{\xi\odot \lambda\odot \zeta}^2)$ due to Lusztig, we obtain the positivity of the canonical basis of the tensor product $\Lambda_{\xi}\otimes \Lambda_{\lambda}\otimes \Lambda_{\zeta}$. By repeating above process inductively, we can solve the case for the tensor product of several integrable highest weight modules.

\section{Kashiwara's operators}\label{Kashiwara's operators}

The purpose of this section is to prove Lemma \ref{almost orthonormal}. Our idea comes from the proof of \cite[Lemma 19.1.4]{Lusztig-1993} which proves the almost orthonormal property of the canonical basis $\mathbf{B}(\Lambda_{\lambda})$ of $\Lambda_{\lambda}$ for any $\lambda\in X^+$ by using Kashiwara's operators.

\subsection{Kashiwara's operators on $\mathbf{f}\theta_{\lambda}\mathbf{f}$}

For any $i\in I$, let $\tilde{\phi}_i:\tilde{\mathbf{f}}\rightarrow \tilde{\mathbf{f}},\tilde{\epsilon}_i:\tilde{\mathbf{f}}\rightarrow \tilde{\mathbf{f}}$ be the Kashiwara's operators associated to the algebra $\tilde{\mathbf{f}}$, see \cite[Section 17.3.1]{Lusztig-1993}. Recall that $\tilde{\phi}_i$ is induced by $\phi_i$ the left multiplication by $\theta_i$, and $\tilde{\epsilon}_i$ is induced by the linear map ${_ir}$, see \S \ref{Positivity} for the definition of ${_ir}$. It is clear that $\mathbf{f}\theta_{\lambda}\mathbf{f}$ is stable under $\phi_i$.
By (\ref{pniIdr}) for the case $n=1$, we know that $\mathbf{f}\theta_{\lambda}\mathbf{f}$ is also stable under ${_ir}$. Hence $\tilde{\phi}_i$ and $\tilde{\epsilon}_i$ can be restricted to $\tilde{\phi}_i:\mathbf{f}\theta_{\lambda}\mathbf{f}\rightarrow \mathbf{f}\theta_{\lambda}\mathbf{f},\ \tilde{\epsilon}_i:\mathbf{f}\theta_{\lambda}\mathbf{f}\rightarrow \mathbf{f}\theta_{\lambda}\mathbf{f}$.

Let $\mathcal{L}(\tilde{\mathbf{f}})=\{x\in {_{\mathcal{A}}}\tilde{\mathbf{f}}\mid (x,x)\in \mathbf{A}\}$. By Lemma \ref{14.2.2}, Theorem \ref{properties of B} and $\mathcal{A}\cap\mathbf{A}=\mathbb{Z}[v^{-1}]$, we know that $\mathcal{L}(\tilde{\mathbf{f}})$ is the $\mathbb{Z}[v^{-1}]$-submodule of $\tilde{\mathbf{f}}$ generated by $\tilde{\mathbf{B}}$. We have the direct sum decomposition $\mathcal{L}(\tilde{\mathbf{f}})=\bigoplus_{\tilde{\nu}\in \mathbb{N}[I]}\mathcal{L}(\tilde{\mathbf{f}})_{\tilde{\nu}}$, where $\mathcal{L}(\tilde{\mathbf{f}})_{\tilde{\nu}}$ is the $\mathbb{Z}[v^{-1}]$-submodule generated by $\tilde{\mathbf{B}}_{\tilde{\nu}}$, see \cite[Section 17.3.3, 18.1.5]{Lusztig-1993}. 

Let $\mathcal{L}(\mathbf{f}\theta_{\lambda}\mathbf{f})=\{x\in \mathbf{f}\theta_{\lambda}\mathbf{f}\cap{_{\mathcal{A}}}\tilde{\mathbf{f}}\mid (x,x)\in \mathbf{A}\}$. By Lemma \ref{14.2.2}, Proposition \ref{B(fthetaf)}, Theorem \ref{properties of B} and $\mathcal{A}\cap\mathbf{A}=\mathbb{Z}[v^{-1}]$, we know that $\mathcal{L}(\mathbf{f}\theta_{\lambda}\mathbf{f})$ is the $\mathbb{Z}[v^{-1}]$-submodule of $\mathbf{f}\theta_{\lambda}\mathbf{f}$ generated by $\tilde{\mathbf{B}}(\mathbf{f}\theta_{\lambda}\mathbf{f})$. We have the direct sum decomposition $\mathcal{L}(\mathbf{f}\theta_{\lambda}\mathbf{f})=\bigoplus_{\nu\in \mathbb{N}[I]}\mathcal{L}(\mathbf{f}\theta_{\lambda}\mathbf{f})_{\nu+|\theta_{\lambda}|}$, where $\mathcal{L}(\mathbf{f}\theta_{\lambda}\mathbf{f})_{\nu+|\theta_{\lambda}|}$ is the $\mathbb{Z}[v^{-1}]$-submodule generated by $\tilde{\mathbf{B}}(\mathbf{f}\theta_{\lambda}\mathbf{f})\cap\tilde{\mathbf{B}}_{\nu+|\theta_{\lambda}|}$. 

By \cite[Lemma 16.2.8]{Lusztig-1993}, we know that $\mathcal{L}(\mathbf{f}\theta_{\lambda}\mathbf{f})$ is stable under $\tilde{\phi}_i,\tilde{\epsilon}_i$ for any $i\in I$.

\begin{lemma}\label{17.3.7}
For any $i\in I,n\in \mathbb{N}$ and $b\in \tilde{\mathbf{B}}_{i,n}\cap \tilde{\mathbf{B}}(\mathbf{f}\theta_{\lambda}\mathbf{f})$, let $b'=\pi_{i,n}^{-1}(b)\in \tilde{\mathbf{B}}_{i,0}\cap \tilde{\mathbf{B}}(\mathbf{f}\theta_{\lambda}\mathbf{f})$, see Lemma {\rm{\ref{restricted bijection}}}. We have\\
{\rm{(a)}} $\tilde{\phi}_i(b)=\pi_{i,n+1}(b')\ \mathrm{mod}\, v^{-1}\mathcal{L}(\mathbf{f}\theta_{\lambda}\mathbf{f})$;\\
{\rm{(b)}} $\tilde{\epsilon}_i(b)=\pi_{i,n-1}(b')\ \mathrm{mod}\, v^{-1}\mathcal{L}(\mathbf{f}\theta_{\lambda}\mathbf{f})$, if $n\geqslant 1$; $\tilde{\epsilon}_i(b)=0\ \mathrm{mod}\, v^{-1}\mathcal{L}(\mathbf{f}\theta_{\lambda}\mathbf{f})$, if $n=0$.\\
As a result, for any $i\in I$ and $b\in \tilde{\mathbf{B}}(\mathbf{f}\theta_{\lambda}\mathbf{f})$, there exists a unique $b_1\in \tilde{\mathbf{B}}(\mathbf{f}\theta_{\lambda}\mathbf{f})$ such that $$\tilde{\phi}_i(b)=b_1\ \mathrm{mod}\, v^{-1}\mathcal{L}(\mathbf{f}\theta_{\lambda}\mathbf{f}),\ \tilde{\epsilon}_i(b_1)=b\ \mathrm{mod}\, v^{-1}\mathcal{L}(\mathbf{f}\theta_{\lambda}\mathbf{f}).$$
If in addition $b\in \tilde{\mathbf{B}}_{i,n}$ for some $n\geqslant 1$, then there exists a unique $b_2\in \tilde{\mathbf{B}}(\mathbf{f}\theta_{\lambda}\mathbf{f})$ such that $$\tilde{\epsilon}_i(b)=b_2\ \mathrm{mod}\, v^{-1}\mathcal{L}(\mathbf{f}\theta_{\lambda}\mathbf{f}),\ \tilde{\phi}_i(b_2)=b\ \mathrm{mod}\, v^{-1}\mathcal{L}(\mathbf{f}\theta_{\lambda}\mathbf{f}).$$
\end{lemma}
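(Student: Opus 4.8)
The plan is to reduce everything to the corresponding statement for $\tilde{\mathbf{f}}$, namely \cite[Lemma 17.3.7]{Lusztig-1993}, and to transfer the congruences there modulo $v^{-1}\mathcal{L}(\tilde{\mathbf{f}})$ to congruences modulo $v^{-1}\mathcal{L}(\mathbf{f}\theta_{\lambda}\mathbf{f})$. The first step is a lattice-compatibility observation: since $\tilde{\mathbf{B}}(\mathbf{f}\theta_{\lambda}\mathbf{f})\subset\tilde{\mathbf{B}}$ is a basis of $\mathbf{f}\theta_{\lambda}\mathbf{f}$ by Proposition \ref{B(fthetaf)} and $\mathcal{L}(\tilde{\mathbf{f}})=\bigoplus_{b\in\tilde{\mathbf{B}}}\mathbb{Z}[v^{-1}]b$, comparing coefficients with respect to the global basis $\tilde{\mathbf{B}}$ shows that any element of $\mathbf{f}\theta_{\lambda}\mathbf{f}$ lying in $v^{-1}\mathcal{L}(\tilde{\mathbf{f}})$ already has all of its $\tilde{\mathbf{B}}(\mathbf{f}\theta_{\lambda}\mathbf{f})$-coefficients in $v^{-1}\mathbb{Z}[v^{-1}]$; that is,
\[\mathbf{f}\theta_{\lambda}\mathbf{f}\cap v^{-1}\mathcal{L}(\tilde{\mathbf{f}})=v^{-1}\mathcal{L}(\mathbf{f}\theta_{\lambda}\mathbf{f}).\]

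Granting this, I would obtain (a) and (b) immediately. Fix $i\in I$, $n\in\mathbb{N}$, $b\in\tilde{\mathbf{B}}_{i,n}\cap\tilde{\mathbf{B}}(\mathbf{f}\theta_{\lambda}\mathbf{f})$ and $b'=\pi_{i,n}^{-1}(b)\in\tilde{\mathbf{B}}_{i,0}\cap\tilde{\mathbf{B}}(\mathbf{f}\theta_{\lambda}\mathbf{f})$ (Lemma \ref{restricted bijection}). Applying \cite[Lemma 17.3.7]{Lusztig-1993} in $\tilde{\mathbf{f}}$ gives $\tilde{\phi}_i(b)-\pi_{i,n+1}(b')\in v^{-1}\mathcal{L}(\tilde{\mathbf{f}})$, and likewise $\tilde{\epsilon}_i(b)-\pi_{i,n-1}(b')\in v^{-1}\mathcal{L}(\tilde{\mathbf{f}})$ when $n\geqslant 1$, while $\tilde{\epsilon}_i(b)\in v^{-1}\mathcal{L}(\tilde{\mathbf{f}})$ when $n=0$. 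Now $\tilde{\phi}_i(b),\tilde{\epsilon}_i(b)\in\mathbf{f}\theta_{\lambda}\mathbf{f}$ because $\mathbf{f}\theta_{\lambda}\mathbf{f}$ is stable under $\tilde{\phi}_i,\tilde{\epsilon}_i$ (as recorded before the statement), and $\pi_{i,n+1}(b'),\pi_{i,n-1}(b')\in\tilde{\mathbf{B}}(\mathbf{f}\theta_{\lambda}\mathbf{f})\subset\mathbf{f}\theta_{\lambda}\mathbf{f}$ again by Lemma \ref{restricted bijection}; hence each of these differences lies in $\mathbf{f}\theta_{\lambda}\mathbf{f}\cap v^{-1}\mathcal{L}(\tilde{\mathbf{f}})=v^{-1}\mathcal{L}(\mathbf{f}\theta_{\lambda}\mathbf{f})$, proving (a) and (b).

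For the concluding assertions I would use $\tilde{\mathbf{B}}=\bigsqcup_{n}\tilde{\mathbf{B}}_{i,n}$, so each $b\in\tilde{\mathbf{B}}(\mathbf{f}\theta_{\lambda}\mathbf{f})$ lies in a unique $\tilde{\mathbf{B}}_{i,n}$; with $b'=\pi_{i,n}^{-1}(b)$, set $b_1=\pi_{i,n+1}(b')\in\tilde{\mathbf{B}}(\mathbf{f}\theta_{\lambda}\mathbf{f})$. Part (a) gives $\tilde{\phi}_i(b)\equiv b_1$, and part (b) applied to $b_1\in\tilde{\mathbf{B}}_{i,n+1}$, for which $\pi_{i,n+1}^{-1}(b_1)=b'$ and $n+1\geqslant 1$, gives $\tilde{\epsilon}_i(b_1)\equiv\pi_{i,n}(b')=b$, all modulo $v^{-1}\mathcal{L}(\mathbf{f}\theta_{\lambda}\mathbf{f})$. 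Uniqueness of $b_1$ is then clear, since $\tilde{\mathbf{B}}(\mathbf{f}\theta_{\lambda}\mathbf{f})$ is a $\mathbb{Z}[v^{-1}]$-basis of $\mathcal{L}(\mathbf{f}\theta_{\lambda}\mathbf{f})$, which is stable under $\tilde{\phi}_i$, so $\tilde{\phi}_i(b)$ is congruent to exactly one basis element. If moreover $b\in\tilde{\mathbf{B}}_{i,n}$ with $n\geqslant 1$, I would take $b_2=\pi_{i,n-1}(b')$ and run the same argument with the roles of (a) and (b) interchanged.

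I do not foresee a genuine obstacle: the mathematical content is carried entirely by \cite[Lemma 17.3.7]{Lusztig-1993} for $\tilde{\mathbf{f}}$, together with Proposition \ref{B(fthetaf)}, Lemma \ref{restricted bijection}, and the stability of $\mathcal{L}(\mathbf{f}\theta_{\lambda}\mathbf{f})$ under the Kashiwara operators. The one point deserving care is the lattice-compatibility identity $\mathbf{f}\theta_{\lambda}\mathbf{f}\cap v^{-1}\mathcal{L}(\tilde{\mathbf{f}})=v^{-1}\mathcal{L}(\mathbf{f}\theta_{\lambda}\mathbf{f})$, which is what legitimizes transferring the ``$\bmod\,v^{-1}\mathcal{L}$'' statements; its proof is only a comparison of coefficients in the global basis $\tilde{\mathbf{B}}$, the essential point being that $\tilde{\mathbf{B}}(\mathbf{f}\theta_{\lambda}\mathbf{f})$ is literally a \emph{subset} of $\tilde{\mathbf{B}}$. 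The remainder is bookkeeping of the indices $n$, $n+1$, $n-1$ attached to $b$, $b_1$, $b_2$.
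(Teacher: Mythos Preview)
Your proposal is correct and follows essentially the same approach as the paper: both reduce to Lusztig's result for $\tilde{\mathbf{f}}$ (the paper cites \cite[Proposition 17.3.5, Corollary 17.3.7]{Lusztig-1993}), then pass from congruences modulo $v^{-1}\mathcal{L}(\tilde{\mathbf{f}})$ to congruences modulo $v^{-1}\mathcal{L}(\mathbf{f}\theta_{\lambda}\mathbf{f})$ using that both terms in each difference lie in $\mathbf{f}\theta_{\lambda}\mathbf{f}$ together with the basis compatibility from Proposition~\ref{B(fthetaf)} and Lemma~\ref{restricted bijection}. Your explicit lattice identity $\mathbf{f}\theta_{\lambda}\mathbf{f}\cap v^{-1}\mathcal{L}(\tilde{\mathbf{f}})=v^{-1}\mathcal{L}(\mathbf{f}\theta_{\lambda}\mathbf{f})$ is precisely the step the paper leaves implicit when it notes $\tilde{\phi}_i(b),\pi_{i,n+1}(b')\in\mathcal{L}(\mathbf{f}\theta_{\lambda}\mathbf{f})$ and then concludes.
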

\begin{proof}
By \cite[Proposition 17.3.5, Corollary 17.3.7]{Lusztig-1993}, we have $\tilde{\phi}_i(b)=\pi_{i,n+1}(b')\ \mathrm{mod}\, v^{-1}\mathcal{L}(\tilde{\mathbf{f}})$.
By Lemma \ref{restricted bijection}, we have $\pi_{i,n+1}(b')\in \mathcal{L}(\mathbf{f}\theta_{\lambda}\mathbf{f})$. Since $\mathcal{L}(\mathbf{f}\theta_{\lambda}\mathbf{f})$ is stable under $\tilde{\phi}_i$, we have $\tilde{\phi}_i(b)\in \mathcal{L}(\mathbf{f}\theta_{\lambda}\mathbf{f})$. Hence $\tilde{\phi}_i(b)=\pi_{i,n+1}(b')\ \mathrm{mod}\, v^{-1}\mathcal{L}(\mathbf{f}\theta_{\lambda}\mathbf{f})$. The other statements can be proved similarly.
\end{proof}

For any $\nu\in \mathbb{N}[I]$, we define $\mathcal{L}(\mathbf{f}\theta_{\lambda}\mathbf{f})^{\nu}$ to be the $\mathbb{Z}[v^{-1}]$-submodule of $\mathbf{f}\theta_{\lambda}\mathbf{f}$ generated by 
$\{\tilde{\phi}_{i_1}...\tilde{\phi}_{i_{\mathrm{tr}\,\nu}}(b)\mid i_1,...,i_{\mathrm{tr}\,\nu}\in I,b\in \bigcap_{i\in I}\tilde{\mathbf{B}}_{i,0}\cap \tilde{\mathbf{B}}(\mathbf{f}\theta_{\lambda}\mathbf{f}),\sum_{k=1}^{\mathrm{tr}\,\nu} i_k=\nu\}$. Since $\mathcal{L}(\mathbf{f}\theta_{\lambda}\mathbf{f})$ is stable under $\tilde{\phi}_i$ for any $i\in I$, we have $\mathcal{L}(\mathbf{f}\theta_{\lambda}\mathbf{f})^{\nu}\subset \mathcal{L}(\mathbf{f}\theta_{\lambda}\mathbf{f})$.

\begin{corollary}\label{18.1.7}
We have $\mathcal{L}(\mathbf{f}\theta_{\lambda}\mathbf{f})=\sum_{\nu\in \mathbb{N}[I]}\mathcal{L}(\mathbf{f}\theta_{\lambda}\mathbf{f})^{\nu}$.
\end{corollary}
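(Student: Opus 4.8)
The plan is to adapt Lusztig's proof of the analogous identity $\mathcal{L}(\mathbf{f})=\sum_{\nu}\mathcal{L}(\mathbf{f})^{\nu}$ (cf.\ \cite[Section~18.1]{Lusztig-1993}), the only new ingredient being the restricted Kashiwara operators $\tilde{\phi}_i,\tilde{\epsilon}_i$ on $\mathbf{f}\theta_{\lambda}\mathbf{f}$ together with Lemma~\ref{17.3.7}. The inclusion $\sum_{\nu}\mathcal{L}(\mathbf{f}\theta_{\lambda}\mathbf{f})^{\nu}\subseteq\mathcal{L}(\mathbf{f}\theta_{\lambda}\mathbf{f})$ has already been noted (each $\mathcal{L}(\mathbf{f}\theta_{\lambda}\mathbf{f})^{\nu}$ is built by applying $\tilde{\phi}_i$'s, which preserve $\mathcal{L}(\mathbf{f}\theta_{\lambda}\mathbf{f})$). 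For the reverse inclusion, since $\mathcal{L}(\mathbf{f}\theta_{\lambda}\mathbf{f})$ is the $\mathbb{Z}[v^{-1}]$-span of $\tilde{\mathbf{B}}(\mathbf{f}\theta_{\lambda}\mathbf{f})$, it suffices to show $b\in\sum_{\nu}\mathcal{L}(\mathbf{f}\theta_{\lambda}\mathbf{f})^{\nu}$ for every $b\in\tilde{\mathbf{B}}(\mathbf{f}\theta_{\lambda}\mathbf{f})$. By Proposition~\ref{B(fthetaf)} we may write $|b|=\mu+|\theta_{\lambda}|$ with $\mu\in\mathbb{N}[I]$, and I would argue by induction on $\mathrm{tr}\,\mu$.

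For the base case $\mathrm{tr}\,\mu=0$ the $|\theta_{\lambda}|$-graded component of $\mathbf{f}\theta_{\lambda}\mathbf{f}$ is one-dimensional, spanned by $\theta_{\lambda}$, so $b=\theta_{\lambda}$; since $|\theta_{\lambda}|$ has no $I$-component, $t_i(\theta_{\lambda})=0$ for all $i\in I$ by Corollary~\ref{t_i=s_i}, so $\theta_{\lambda}\in\bigcap_{i\in I}\tilde{\mathbf{B}}_{i,0}\cap\tilde{\mathbf{B}}(\mathbf{f}\theta_{\lambda}\mathbf{f})$, i.e.\ $\theta_{\lambda}\in\mathcal{L}(\mathbf{f}\theta_{\lambda}\mathbf{f})^{0}$. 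For the inductive step, if $b\in\bigcap_{i\in I}\tilde{\mathbf{B}}_{i,0}$ then $b\in\mathcal{L}(\mathbf{f}\theta_{\lambda}\mathbf{f})^{0}$ directly; otherwise pick $i\in I$ with $b\in\tilde{\mathbf{B}}_{i,n}$ for $n=t_i(b)\geqslant 1$. By Lemma~\ref{17.3.7} there is $b_2\in\tilde{\mathbf{B}}(\mathbf{f}\theta_{\lambda}\mathbf{f})$, namely $b_2=\pi_{i,n-1}(\pi_{i,n}^{-1}(b))$, with $\tilde{\phi}_i(b_2)\equiv b\pmod{v^{-1}\mathcal{L}(\mathbf{f}\theta_{\lambda}\mathbf{f})}$; its degree is $|b|-i$, so $\mathrm{tr}$ of the corresponding $\mathbb{N}[I]$-part equals $\mathrm{tr}\,\mu-1$. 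By the inductive hypothesis $b_2\in\sum_{\nu}\mathcal{L}(\mathbf{f}\theta_{\lambda}\mathbf{f})^{\nu}$, and since $\tilde{\phi}_i\bigl(\mathcal{L}(\mathbf{f}\theta_{\lambda}\mathbf{f})^{\nu}\bigr)\subseteq\mathcal{L}(\mathbf{f}\theta_{\lambda}\mathbf{f})^{\nu+i}$ by the very definition of these submodules (prepending $i$ to a $\tilde{\phi}$-string), we get $\tilde{\phi}_i(b_2)\in\sum_{\nu}\mathcal{L}(\mathbf{f}\theta_{\lambda}\mathbf{f})^{\nu}$; hence $b\in\sum_{\nu}\mathcal{L}(\mathbf{f}\theta_{\lambda}\mathbf{f})^{\nu}+v^{-1}\mathcal{L}(\mathbf{f}\theta_{\lambda}\mathbf{f})$. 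Running this over all $b$ gives $\mathcal{L}(\mathbf{f}\theta_{\lambda}\mathbf{f})=\sum_{\nu}\mathcal{L}(\mathbf{f}\theta_{\lambda}\mathbf{f})^{\nu}+v^{-1}\mathcal{L}(\mathbf{f}\theta_{\lambda}\mathbf{f})$, and hence $\mathcal{L}(\mathbf{f}\theta_{\lambda}\mathbf{f})=\sum_{\nu}\mathcal{L}(\mathbf{f}\theta_{\lambda}\mathbf{f})^{\nu}+v^{-N}\mathcal{L}(\mathbf{f}\theta_{\lambda}\mathbf{f})$ for every $N\geqslant 1$.

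To upgrade this congruence to an exact equality I would pass to the $\mathbb{N}[\tilde{I}]$-grading: the $\tilde{\phi}_i$ are homogeneous of degree $i$, and only the finitely many $\nu\leqslant\mu$ and, for each, finitely many elements of $\bigcap_{i\in I}\tilde{\mathbf{B}}_{i,0}\cap\tilde{\mathbf{B}}(\mathbf{f}\theta_{\lambda}\mathbf{f})$ contribute to the degree $\mu+|\theta_{\lambda}|$, so in each degree both $\mathcal{L}(\mathbf{f}\theta_{\lambda}\mathbf{f})$ and $\sum_{\nu}\mathcal{L}(\mathbf{f}\theta_{\lambda}\mathbf{f})^{\nu}$ are finitely generated $\mathbb{Z}[v^{-1}]$-modules, and one concludes degree by degree by the Nakayama-type argument carried out in \cite[Section~18.1]{Lusztig-1993}. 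I expect this last closure step to be the real obstacle: the inductive production of elements of $\sum_{\nu}\mathcal{L}(\mathbf{f}\theta_{\lambda}\mathbf{f})^{\nu}$ via the $\tilde{\phi}_i$ is routine once Lemma~\ref{restricted bijection} and Lemma~\ref{17.3.7} are in place, but deducing the genuine equality of $\mathbb{Z}[v^{-1}]$-lattices from $\mathcal{L}=\sum_{\nu}\mathcal{L}^{\nu}\bmod v^{-1}\mathcal{L}$ requires keeping track of which canonical basis elements the various $\tilde{\phi}$-strings hit exactly, modulo those strictly more divisible by $\theta_i$ (the terms indexed by $\tilde{\mathbf{B}}_{i,\geqslant n+1}$ in Theorem~\ref{14.3.2}(c)); this is precisely the combinatorial bookkeeping that Lusztig's graded argument resolves, and it transports verbatim to $\mathbf{f}\theta_{\lambda}\mathbf{f}$ using Lemma~\ref{restricted bijection}.
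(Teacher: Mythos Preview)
Your proposal is correct and follows essentially the same approach as the paper: induct on $\mathrm{tr}\,\mu$ using Lemma~\ref{17.3.7} to write each $b\in\tilde{\mathbf{B}}(\mathbf{f}\theta_{\lambda}\mathbf{f})$ as $\tilde{\phi}_i(b_2)$ modulo $v^{-1}\mathcal{L}(\mathbf{f}\theta_{\lambda}\mathbf{f})$, obtain $\mathcal{L}(\mathbf{f}\theta_{\lambda}\mathbf{f})\subset\sum_{\nu}\mathcal{L}(\mathbf{f}\theta_{\lambda}\mathbf{f})^{\nu}+v^{-1}\mathcal{L}(\mathbf{f}\theta_{\lambda}\mathbf{f})$, and close with a Nakayama argument. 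Your discussion of finite generation in each graded piece is exactly the justification the paper's one-line appeal to Nakayama rests on.
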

\begin{proof}
We denote $\mathcal{L}'=\sum_{\nu\in \mathbb{N}[I]}\mathcal{L}(\mathbf{f}\theta_{\lambda}\mathbf{f})^{\nu}$, then $\mathcal{L}'$ is a submodule of $\mathcal{L}$. For any $b\in \tilde{\mathbf{B}}(\mathbf{f}\theta_{\lambda}\mathbf{f})$, we use induction on $\mathrm{tr}\, |b|$ to prove that there exist $b'\in \bigcap_{i\in I}\tilde{\mathbf{B}}_{i,0}\cap \tilde{\mathbf{B}}(\mathbf{f}\theta_{\lambda}\mathbf{f})$ and $i_1,...,i_n\in I$ such that $b=\tilde{\phi}_{i_1}...\tilde{\phi}_{i_n}(b')\ \mathrm{mod}\ v^{-1}\mathcal{L}(\mathbf{f}\theta_{\lambda}\mathbf{f})$.\\
$\bullet$ If $b\in \bigcap_{i\in I}\tilde{\mathbf{B}}_{i,0}$, then it is clear.\\
$\bullet$ If $b\in \tilde{\mathbf{B}}_{i,\geqslant 1}$ for some $i\in I$, by Lemma \ref{17.3.7}, we have $b=\tilde{\phi}_i(b_2)\ \mathrm{mod}\, v^{-1}\mathcal{L}(\mathbf{f}\theta_{\lambda}\mathbf{f})$ for a unique $b_2\in \tilde{\mathbf{B}}(\mathbf{f}\theta_{\lambda}\mathbf{f})$. Notice that $\mathrm{tr}\,|b_2|=\mathrm{tr}\,|b|-1$, by the inductive hypothesis, there exist $b'\in \bigcap_{i\in I}\tilde{\mathbf{B}}_{i,0}\cap \tilde{\mathbf{B}}(\mathbf{f}\theta_{\lambda}\mathbf{f})$ and $i_2,...,i_n\in I$ such that $b_2=\tilde{\phi}_{i_2}...\tilde{\phi}_{i_n}(b')\ \mathrm{mod}\ v^{-1}\mathcal{L}(\mathbf{f}\theta_{\lambda}\mathbf{f})$. Since $\mathcal{L}(\mathbf{f}\theta_{\lambda}\mathbf{f})$ is stable under $\tilde{\phi}_i$, we have $b=\tilde{\phi}_i\tilde{\phi}_{i_2}...\tilde{\phi}_{i_n}(b')\ \mathrm{mod}\ v^{-1}\mathcal{L}(\mathbf{f}\theta_{\lambda}\mathbf{f})$, as desired. Hence $\mathcal{L}(\mathbf{f}\theta_{\lambda}\mathbf{f})\subset \mathcal{L}'+v^{-1}\mathcal{L}(\mathbf{f}\theta_{\lambda}\mathbf{f})$, and so the $\mathbb{Z}[v^{-1}]$-module $\mathcal{L}(\mathbf{f}\theta_{\lambda}\mathbf{f})/\mathcal{L}'$ is annihilated by $v^{-1}$. By Nakayama's lemma, it is zero, and so $\mathcal{L}(\mathbf{f}\theta_{\lambda}\mathbf{f})=\mathcal{L}'$.
\end{proof}

\begin{lemma}\label{thetalambdaB}
{\rm{(a)}} The left multiplication by $\theta_{\lambda}$ defines an injective map from $\mathbf{B}$ to $\tilde{\mathbf{B}}(\mathbf{f}\theta_{\lambda}\mathbf{f})$ such that $t_i^{\sigma}(b_0)=t_i^{\sigma}(\theta_{\lambda}b_0)$ for any $b_0\in \mathbf{B}$ and $i\in I$. As a result, $b_0\in \mathbf{B}(\xi)$ if and only if $\theta_{\lambda}b_0\in \tilde{\mathbf{B}}(\xi,\lambda)$.\\
{\rm{(b)}} This map can be restricted to be a bijection 
$$\theta_{\lambda}\cdot:\sigma(\mathbf{B}(\lambda))\rightarrow \bigcap_{i\in I}\tilde{\mathbf{B}}_{i,0}\cap\tilde{\mathbf{B}}(\mathbf{f}\theta_{\lambda}\mathbf{f}).$$
\end{lemma}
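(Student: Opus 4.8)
The plan is to establish both statements by carefully tracking how the canonical basis of $\tilde{\mathbf{f}}$ interacts with left multiplication by $\theta_{\lambda}$, using the positivity and the filtration results already available. First I would prove that $\theta_{\lambda}b_0 \in \tilde{\mathbf{B}}(\mathbf{f}\theta_{\lambda}\mathbf{f})$ for every $b_0 \in \mathbf{B}$. Writing $b_0$ as a basis element appearing in some monomial $\theta_{\bnu}$ with $\bnu \in \mathcal{V}_{\nu}$, $\nu \in \mathbb{N}[I]$, left multiplication by $\theta_{\lambda} = \prod_{i\in I}\theta_{i'}^{(\langle i,\lambda\rangle)}$ shows that every canonical basis element of $\tilde{\mathbf{f}}$ appearing in $\theta_{\lambda}b_0$ also appears in $\theta_{\lambda}\theta_{\bnu}$, hence lies in $\tilde{\mathbf{B}}(\mathbf{f}\theta_{\lambda}\mathbf{f})$ by Definition \ref{part of basis}. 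To see that $\theta_{\lambda}b_0$ is itself a single canonical basis element — not just a combination — I would use the $\sigma$-refined structure: since $\theta_{\lambda}$ is a product of the $\theta_{i'}$ generators and $i \cdot j' = -\delta_{i,j}$ with $i' \cdot j' = 2\delta_{i,j}$, multiplication on the left by $\theta_{i'}^{(\langle i,\lambda\rangle)}$ corresponds, via the bijections $\pi_{i',\langle i,\lambda\rangle}$ of Theorem \ref{14.3.2}(c) applied successively for distinct $i$ (they commute since $i' \cdot j' = 0$), to sending $b_0$ to a well-defined canonical basis element, because $b_0 \in \tilde{\mathbf{B}}_{i',0}$ for every $i \in I$ (as $b_0 \in \mathbf{f} \subset \tilde{\mathbf{f}}$ has no $I'$-component). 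Thus $\theta_{\lambda}b_0 = \pi_{i'_1,\langle i_1,\lambda\rangle}\cdots\pi_{i'_r,\langle i_r,\lambda\rangle}(b_0) \in \tilde{\mathbf{B}}$, modulo checking that the error terms in Theorem \ref{14.3.2}(c) vanish here — which they do because $b_0$ has zero $I'$-degree, so no higher $\theta_{i'}$-powers can appear.

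Next, for the equality $t_i^{\sigma}(b_0) = t_i^{\sigma}(\theta_{\lambda}b_0)$ for $i \in I$ (note: $i \in I$, not $I'$), I would argue via Corollary \ref{t_i=s_i}, which identifies $t_i^{\sigma}$ with $s_i^{\sigma}$. The point is that $s_i^{\sigma}(b)$ is the largest $r$ such that $b$ appears with nonzero coefficient in some $x\theta_i^{(r)}$; since $\theta_{\lambda}$ involves only the $\theta_{i'}$ with $i' \in I'$ and these (quantum-)commute suitably past $\theta_i^{(r)}$ — more precisely, using the identity from \cite[Lemma 42.1.2]{Lusztig-1993} for $\theta_i^{(r)}\theta_{i'}^{(\langle i,\lambda\rangle)}$ quoted in the proof of Proposition \ref{psi} — one checks that $b_0$ appears in $x\theta_i^{(r)}$ if and only if $\theta_{\lambda}b_0$ appears in $(\theta_{\lambda}x')\theta_i^{(r)}$ for a corresponding $x'$, with the roles of the coefficients preserved up to invertible scalars. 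This gives $s_i^{\sigma}(b_0) = s_i^{\sigma}(\theta_{\lambda}b_0)$. The claimed equivalence $b_0 \in \mathbf{B}(\xi) \iff \theta_{\lambda}b_0 \in \tilde{\mathbf{B}}(\xi,\lambda)$ then follows by unwinding Definitions \ref{basis definition}: $\mathbf{B}(\xi) = \bigcap_i \bigcup_{0 \le n \le \langle i,\xi\rangle} \mathbf{B}_{i,n}^{\sigma}$, and $\tilde{\mathbf{B}}(\xi,\lambda) = \tilde{\mathbf{B}}(\mathbf{f}\theta_{\lambda}\mathbf{f}) \cap \bigcap_i \bigcup_{0 \le n \le \langle i,\xi\rangle} \tilde{\mathbf{B}}_{i,n}^{\sigma}$, so once we know $\theta_{\lambda}b_0 \in \tilde{\mathbf{B}}(\mathbf{f}\theta_{\lambda}\mathbf{f})$ and $t_i^{\sigma}$ is preserved, both sides reduce to the same condition on $b_0$.

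For part (b): an element of $\mathbf{B}$ lies in $\sigma(\mathbf{B}(\lambda))$ precisely when, via $\sigma$, it records the condition defining $\mathbf{B}(\lambda)$; but I would instead observe directly that $\bigcap_{i\in I}\tilde{\mathbf{B}}_{i,0}$ picks out those $b \in \tilde{\mathbf{B}}(\mathbf{f}\theta_{\lambda}\mathbf{f})$ with $t_i(b) = 0$ for all $i \in I$, i.e. those not left-divisible by any $\theta_i$. For $b = \theta_{\lambda}b_0$ with $b_0 \in \mathbf{B}$, left-divisibility by $\theta_i$ ($i \in I$) is governed by moving $\theta_i$ past $\theta_{\lambda}$; using again \cite[Lemma 42.1.2]{Lusztig-1993}, $\theta_i^{(r)}$ on the left of $\theta_{\lambda}$ can be rewritten so that $t_i(\theta_{\lambda}b_0)$ is controlled by $t_i^{\sigma}(b_0)$ — this is the crossing relation that turns a left $\theta_i$ into right $\theta_i$'s when passing the $\theta_{i'}$ block. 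Concretely, $\theta_{\lambda}b_0 \in \bigcap_i \tilde{\mathbf{B}}_{i,0}$ iff $b_0$ has $t_i^{\sigma}(b_0) = \langle i,\lambda\rangle$ "absorbed" — equivalently iff $b_0 \in \sigma(\mathbf{B}(\lambda))$, since $\mathbf{B}(\lambda) = \bigcap_i \bigcup_{0 \le n \le \langle i,\lambda\rangle}\mathbf{B}_{i,n}^{\sigma}$ so $\sigma(\mathbf{B}(\lambda)) = \bigcap_i \bigcup_{0\le n \le \langle i,\lambda\rangle}\mathbf{B}_{i,n}$. Surjectivity and injectivity of the restricted map are then immediate from part (a) together with the fact, from Lemma \ref{restricted bijection}, that the bijections $\pi_{i',n}$ restrict compatibly to $\tilde{\mathbf{B}}(\mathbf{f}\theta_{\lambda}\mathbf{f})$. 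I expect the main obstacle to be the bookkeeping in the crossing relations — precisely establishing that the $\theta_i / \theta_{i'}$ commutation identity from \cite[Lemma 42.1.2]{Lusztig-1993} translates cleanly into the stated equalities $t_i^{\sigma}(b_0) = t_i^{\sigma}(\theta_\lambda b_0)$ and the characterization of $\bigcap_i \tilde{\mathbf{B}}_{i,0}\cap\tilde{\mathbf{B}}(\mathbf{f}\theta_\lambda\mathbf{f})$, rather than any conceptual difficulty.
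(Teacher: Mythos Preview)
Your argument for $\theta_{\lambda}b_0\in\tilde{\mathbf{B}}$ via successive applications of $\pi_{i',\langle i,\lambda\rangle}$ matches the paper. However, two points diverge significantly.

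For the equality $t_i^{\sigma}(b_0)=t_i^{\sigma}(\theta_{\lambda}b_0)$ in (a), your route through $s_i^{\sigma}$ and the commutation identity of \cite[Lemma~42.1.2]{Lusztig-1993} is workable in one direction (if $b_0$ appears in $x\theta_i^{(r)}$ with $x\in\mathbf{f}$ then $\theta_{\lambda}b_0$ appears in $\theta_{\lambda}x\theta_i^{(r)}$, since left multiplication by $\theta_{\lambda}$ sends each element of $\mathbf{B}$ to a distinct element of $\tilde{\mathbf{B}}$), but the reverse inequality is left vague: given that $\theta_{\lambda}b_0$ appears in some $\tilde{x}\theta_i^{(r)}$ with $\tilde{x}\in\tilde{\mathbf{f}}$, you have not explained how to produce an $x\in\mathbf{f}$ with $b_0$ appearing in $x\theta_i^{(r)}$. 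The paper avoids this by applying $\sigma$ to reduce to $t_i(\sigma(b_0))=t_i(\sigma(b_0)\theta_{\lambda})$, then observing that \emph{right} multiplication by $\theta_{\lambda}$ commutes with both $\phi_i$ and ${_ir}$ for $i\in I$ (immediate from the derivation property and ${_ir}(\theta_{\lambda})=0$), hence with the Kashiwara operators $\tilde{\phi}_i,\tilde{\epsilon}_i$, and then invoking the crystal-theoretic characterization of $t_i$ from \cite[Corollary~17.3.7]{Lusztig-1993}. This is both shorter and two-sided.

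The more serious gap is in (b). You assert that surjectivity is ``immediate from part (a) together with Lemma~\ref{restricted bijection}'', but neither ingredient yields it. Part~(a) only shows that $\theta_{\lambda}\cdot$ lands in $\tilde{\mathbf{B}}(\mathbf{f}\theta_{\lambda}\mathbf{f})$ and preserves $t_i^{\sigma}$; it says nothing about which elements of $\bigcap_{i\in I}\tilde{\mathbf{B}}_{i,0}\cap\tilde{\mathbf{B}}(\mathbf{f}\theta_{\lambda}\mathbf{f})$ are hit. Lemma~\ref{restricted bijection} concerns $\pi_{i,n}$ for $i\in I$, not $i'\in I'$, so it does not let you ``peel off'' the $\theta_{i'}^{(\langle i,\lambda\rangle)}$ factors; and even if it did, you would need $t_{i'}(b)=\langle i,\lambda\rangle$ for every $i$, which is not automatic from $b\in\tilde{\mathbf{B}}(\mathbf{f}\theta_{\lambda}\mathbf{f})$ (the degree bound only gives $t_{i'}(b)\le\langle i,\lambda\rangle$). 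The paper proves surjectivity by an induction on $\mathrm{tr}\,|b|$: for $b\in\bigcap_{i}\tilde{\mathbf{B}}_{i,0}\cap\tilde{\mathbf{B}}(\mathbf{f}\theta_{\lambda}\mathbf{f})$ with some $t_i^{\sigma}(b)>0$, one passes via $\pi_{i,t_i^{\sigma}(b)}^{\sigma}$ to a shorter $b'\in\tilde{\mathbf{B}}^{\sigma}_{i,0}\cap\tilde{\mathbf{B}}(\mathbf{f}\theta_{\lambda}\mathbf{f})$, verifies (nontrivially, using positivity and Corollary~\ref{t_i=s_i}) that $b'\in\bigcap_{j}\tilde{\mathbf{B}}_{j,0}$ as well, applies the inductive hypothesis to write $b'=\theta_{\lambda}b'_0$, and then matches the $\pi_{i,n}^{\sigma}$-images on both sides. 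A separate argument (using \cite[Lemma~42.1.2(b)]{Lusztig-1993}) then confirms $b_0\in\sigma(\mathbf{B}(\lambda))$. Your proposal skips all of this.
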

\begin{proof}
(a) Firstly, we prove that the left multiplication by $\theta_{\lambda}$ defines an injective linear map from $\mathbf{f}$ to $\tilde{\mathbf{f}}$. For any homogeneous $x\in \mathbf{f}$, we use induction on $\mathrm{tr}\,|x|\in \mathbb{N}$ to prove that if $\theta_{\lambda}x=0$, then $x=0$.\\
$\bullet$ If $\mathrm{tr}\,|x|=0$, then it is clear.\\
$\bullet$ If $\mathrm{tr}\,|x|>0$, then for any $i\in I$, by (\ref{Lusztig-1.2.13}), we have $${_ir}(\theta_{\lambda}x)={_ir}(\theta_{\lambda})x+v^{|\theta_{\lambda}|\cdot i}\theta_{\lambda}{_ir}(x)=v^{-\langle i\,\lambda\rangle}\theta_{\lambda}{_ir}(x)=0$$
and then $\theta_{\lambda}{_ir}(x)=0$. Notice that $\mathrm{tr}\,|{_ir}(x)|=\mathrm{tr}\,|x|-1$, by the inductive hypothesis, we have ${_ir}(x)=0$, and so $x=0$ by \cite[Lemma 1.2.15]{Lusztig-1993}, as desired. Hence the map $\theta_{\lambda}\cdot:\mathbf{f}\rightarrow \tilde{\mathbf{f}}$ is injective.

Secondly, for any $b_0\in \mathbf{B}$, it is clear that $b_0\in \bigcap_{i\in I}\tilde{\mathbf{B}}_{i',0}$. For any $j\in I$, by Theorem \ref{14.3.2}, we have $\theta_{j'}^{(\langle j,\lambda\rangle)}b_0=\pi_{j',\langle j,\lambda\rangle}(b_0)\in \tilde{\mathbf{B}}$. Notice that we have $\theta_{j'}^{(\langle j,\lambda\rangle)}b_0\in \bigcap_{i\in I,i\not=j}\tilde{\mathbf{B}}_{i',0}$, by repeating above process, we obtain $\theta_{\lambda}b_0\in \tilde{\mathbf{B}}$. By definitions, it is clear that $\theta_{\lambda}b_0\in \tilde{\mathbf{B}}(\mathbf{f}\theta_{\lambda}\mathbf{f})$. It remains to prove that $t_i^{\sigma}(b_0)=t_i^{\sigma}(\theta_{\lambda}b_0)$ for any $i\in I$. By \cite[Section 14.3.1]{Lusztig-1993}, it is equivalent to
$t_i(\sigma(b_0))=t_i(\sigma(\theta_{\lambda}b_0))=t_i(\sigma(b_0)\theta_{\lambda})$ for any $i\in I$. For any $x\in \mathbf{f}$ and $i\in I$, it is clear that $\phi_i(x)\theta_{\lambda}=\phi_i(x\theta_{\lambda})$. By \eqref{Lusztig-1.2.13}, we have ${_ir}(x)\theta_{\lambda}={_ir}(x\theta_{\lambda})$. Hence the right multiplication by $\theta_{\lambda}$ commutes with the actions of $\phi_i,\epsilon_i$ for any $i\in I$, and so it commutes with the Kashiwara's operators $\tilde{\phi_i},\tilde{\epsilon}_i$ on $\mathbf{f},\mathbf{f}\theta_{\lambda}\mathbf{f}$ for any $i\in I$. By \cite[Corollary 17.3.7]{Lusztig-1993} and Lemma \ref{17.3.7}, we have $t_i(\sigma(b_0))=t_i(\sigma(b_0)\theta_{\lambda})$ for any $i\in I$, as desired.

(b) Firstly, we prove that $\theta_{\lambda}b_0\in \bigcap_{i\in I}\tilde{\mathbf{B}}_{i,0}$ for any $b_0\in \sigma(\mathbf{B}(\lambda))$. Suppose $\theta_{\lambda}b_0\in \tilde{\mathbf{B}}_{j,\geqslant 1}$ for some $j\in I$. By \cite[Section 14.3.1]{Lusztig-1993}, we have $t_j^{\sigma}(\sigma(\theta_{\lambda}b_0))=t_j(\theta_{\lambda}b_0)\geqslant 1$, and so
$\sigma(b_0)\theta_{\lambda}=\sigma(b_0)\sigma(\theta_{\lambda})=\sigma(\theta_{\lambda}b_0)\in \tilde{\mathbf{f}}\theta_j\subset\sum_{i\in I}\tilde{\mathbf{f}}\theta_i$. By Proposition \ref{psi}, we know that $\sigma(b_0)\in \sum_{i\in I}\mathbf{f}\theta_i^{\langle i,\lambda\rangle+1}$, a contradiction to $b_0\in \sigma(\mathbf{B}(\lambda))$. Hence $\theta_{\lambda}\cdot$ can be restricted to be an injective map $\sigma(\mathbf{B}(\lambda))\rightarrow \bigcap_{i\in I}\tilde{\mathbf{B}}_{i,0}\cap\tilde{\mathbf{B}}(\mathbf{f}\theta_{\lambda}\mathbf{f})$. Secondly, for any $b\in \bigcap_{i\in I}\tilde{\mathbf{B}}_{i,0}\cap\tilde{\mathbf{B}}(\mathbf{f}\theta_{\lambda}\mathbf{f})$, we use induction on $\mathrm{tr}\, |b|$ to prove that there exists $b_0\in \mathbf{B}$ such that $b=\theta_{\lambda}b_0$.\\
$\bullet$ If $t_i^{\sigma}(b)=0$ for any $i\in I$, by Corollary \ref{t_i=s_i}, $b$ can only appear in $\theta_{\lambda}$, and so $b=\theta_{\lambda}$.\\
$\bullet$ If $t_i^{\sigma}(b)>0$ for some $i\in I$, by Theorem \ref{14.3.2} and Lemma \ref{restricted bijection}, there exists a unique $b'\in\tilde{\mathbf{B}}_{i,0}^{\sigma}\cap \tilde{\mathbf{B}}
(\mathbf{f}\theta_{\lambda}\mathbf{f})$ such that $b'\theta_i^{(t_i^{\sigma}(b))}=b+\sum_{b''\in \tilde{\mathbf{B}}_{i,\geqslant t_i^{\sigma}(b)+1}^{\sigma}\cap \tilde{\mathbf{B}}(\mathbf{f}\theta_{\lambda}\mathbf{f})}c_{b''}b''$. We claim that $b'\in \bigcap_{j\in I}\tilde{\mathbf{B}}_{j,0}$. Indeed, suppose that $t_j(b')>0$ for some $j\in I$. By Theorem \ref{14.3.2} and Lemma \ref{restricted bijection}, there exists a unique $b'_1\in \tilde{\mathbf{B}}_{j,0}\cap \tilde{\mathbf{B}}
(\mathbf{f}\theta_{\lambda}\mathbf{f})$ such that $\theta_j^{t_j(b')}b'_1=b'+\sum_{b''_1\in\tilde{\mathbf{B}}_{j,\geqslant t_j(b')}\cap \tilde{\mathbf{B}}
(\mathbf{f}\theta_{\lambda}\mathbf{f})}c'_{b''_1}b''_1$. Then we have 
\begin{align*}
\theta_j^{t_j(b')}b'_1\theta_i^{(t_i^{\sigma}(b))}=&b'\theta_i^{(t_i^{\sigma}(b))}+\sum_{b''_1\in\tilde{\mathbf{B}}_{j,\geqslant t_j(b')}\cap \tilde{\mathbf{B}}
(\mathbf{f}\theta_{\lambda}\mathbf{f})}c'_{b''_1}b''_1\theta_i^{(t_i^{\sigma}(b))}\\
=&b+\sum_{b''\in \tilde{\mathbf{B}}_{i,\geqslant t_i^{\sigma}(b)+1}^{\sigma}\cap \tilde{\mathbf{B}}(\mathbf{f}\theta_{\lambda}\mathbf{f})}c_{b''}b''+\sum_{b''_1\in\tilde{\mathbf{B}}_{j,\geqslant t_j(b')}\cap \tilde{\mathbf{B}}
(\mathbf{f}\theta_{\lambda}\mathbf{f})}c'_{b''_1}b''_1\theta_i^{(t_i^{\sigma}(b))}.
\end{align*}
Notice that $c_{b''},c'_{b''_1}\in \mathbb{N}[v,v^{-1}]$ and $b''_1\theta_i^{(t_i^{\sigma}(b))}$ is a $\mathbb{N}[v,v^{-1}]$-linear combination of $\tilde{\mathbf{B}}$. Hence $b$ appears in $\theta_j^{t_j(b')}b'_1\theta_i^{(t_i^{\sigma}(b))}$. By Corollary \ref{t_i=s_i}, we have $t_j(b)\geqslant t_j(b')>0$, a contradiction. Notice that $\mathrm{tr}\,|b'|=\mathrm{tr}\,|b|-t_i^{\sigma}(b)$, by the inductive hypothesis, there exists $b'_0\in \mathbf{B}$ such that $b'=\theta_{\lambda}b'_0$. By (a), we have $t_i^{\sigma}(b'_0)=t_i^{\sigma}(b')=0$. By Theorem \ref{14.3.2}, there exists a unique $b_0\in \mathbf{B}_{i,t_i^{\sigma}(b)}^{\sigma}$ such that $b'_0\theta_i^{(t_i^{\sigma}(b))}=b_0+\sum_{b''_0\in \mathbf{B}_{i,\geqslant t_i^{\sigma}(b)+1}^{\sigma}}c''_{b''_0}b''_0$.
Hence 
$$b-\theta_{\lambda}b_0=\sum_{b''_0 \in \mathbf{B}_{i,\geqslant t_i^{\sigma}(b)+1}^{\sigma}}c''_{b''_0}\theta_{\lambda}b''_0-\sum_{b''\in \tilde{\mathbf{B}}_{i,\geqslant t_i^{\sigma}(b)+1}^{\sigma}\cap \tilde{\mathbf{B}}(\mathbf{f}\theta_{\lambda}\mathbf{f})}c_{b''}b''.$$
By (a), we have $\theta_{\lambda}b_0\in \tilde{\mathbf{B}}_{i,t_i^{\sigma}(b)}^{\sigma}\cap \tilde{\mathbf{B}}(\mathbf{f}\theta_{\lambda}\mathbf{f})$ and $\theta_{\lambda}b''_0\in \tilde{\mathbf{B}}_{i,\geqslant t_i^{\sigma}(b)+1}^{\sigma}\cap \tilde{\mathbf{B}}(\mathbf{f}\theta_{\lambda}\mathbf{f})$. Hence we have $b=\theta_{\lambda}b_0$, as desired. It remains to prove that $b_0\in \sigma(\mathbf{B}(\lambda))$. Suppose that $t_j(b_0)\geqslant \langle j,\lambda\rangle+1$ for some $j\in I$, since $i'\cdot j=-\delta_{i,j}$ for any $i\in I$, by the quantum Serre relations and \cite[Lemma 42.1.2(b)]{Lusztig-1993}, we have 
\begin{align*}
&\theta_{i'}^{\langle i,\lambda\rangle}\theta_{j}^{(\langle j,\lambda\rangle)+1}=\theta_{j}^{(\langle j,\lambda\rangle)+1}\theta_{i'}^{\langle i,\lambda\rangle}\ \textrm{if $j\not=j$};\\
&\theta_{j'}^{\langle i,\lambda\rangle}\theta_{j}^{(\langle j,\lambda\rangle)+1}=\sum_{n=0}^{\langle j,\lambda\rangle}\frac{v^{-(\langle j,\lambda\rangle+1-n)(\langle j,\lambda\rangle-n)}}{[n]!}\theta_{j}^{(\langle j,\lambda\rangle+1-n)}(\theta_{j'}\theta_j-v^{-1}\theta_j\theta_{j'})^n\theta_{j'}^{(\langle j,\lambda\rangle-n)}.
\end{align*}
such that $b=\theta_{\lambda}b_0\in \theta_j\tilde{\mathbf{f}}$, and so $t_j(b)\geqslant 1$, a contradiction. Hence by \cite[Section 14.3.1]{Lusztig-1993}, we have $t_i^{\sigma}(\sigma(b_0))=t_i(b_0)\leqslant \langle i,\lambda\rangle$ for any $i\in I$, and so $b_0\in \sigma(\mathbf{B}(\lambda))$. Therefore, the map $\theta_{\lambda}\cdot:\sigma(\mathbf{B}(\lambda))\rightarrow \bigcap_{i\in I}\tilde{\mathbf{B}}_{i,0}\cap\tilde{\mathbf{B}}(\mathbf{f}\theta_{\lambda}\mathbf{f})$ is surjective, and so it is a bijection.
\end{proof}

Similarly, there is a bijection
\begin{equation}\label{Fang-Lan-2025-5.3*}
\cdot\theta_{\lambda}:\mathbf{B}(\lambda)\rightarrow \bigcap_{i\in I}\tilde{\mathbf{B}}_{i,0}^{\sigma}\cap\tilde{\mathbf{B}}(\mathbf{f}\theta_{\lambda}\mathbf{f}).
\end{equation}

For any $\nu\in \mathbb{N}[I]$ and $b_0\in \sigma(\mathbf{B}(\lambda))$, we define $\mathcal{L}(\mathbf{f}\theta_{\lambda}\mathbf{f})^{\nu,b_0}$ to be the $\mathbb{Z}[v^{-1}]$-submodule of $\mathbf{f}\theta_{\lambda}\mathbf{f}$ generated by 
$\{\tilde{\phi}_{i_1}...\tilde{\phi}_{i_{\mathrm{tr}\,\nu}}(\theta_{\lambda}b_0)\mid i_1,...,i_{\mathrm{tr}\,\nu}\in I,\sum_{k=1}^{\mathrm{tr}\,\nu} i_k=\nu\}$. By Lemma \ref{thetalambdaB}, we have $\mathcal{L}(\mathbf{f}\theta_{\lambda}\mathbf{f})^{\nu}=\sum_{b_0\in \sigma(\mathbf{B}(\lambda))}\mathcal{L}(\mathbf{f}\theta_{\lambda}\mathbf{f})^{\nu,b_0}$.

\subsection{Kashiwara's operators on $\Lambda_{\xi,\lambda}$}

By definitions, the $\mathbf{U}$-modules $\Lambda_{\xi},\Lambda_{\lambda}$ and $\tilde{\Lambda}_{\xi\odot\lambda}$ are integrable. By \cite[Section 3.5.2]{Lusztig-1993} and Theorem \ref{framed construction of tensor}, the $\mathbf{U}$-modules $\Lambda_{\xi}\otimes \Lambda_{\lambda}$ and $\Lambda_{\xi,\lambda}$ are integrable. For any $i\in I$, let $\tilde{F}_i,\tilde{E}_i$ be the Kashiwara's operators associated to these integrable modules, see \cite[Lemma 16.1.4]{Lusztig-1993}.

Let $L(\tilde{\Lambda}_{\xi\odot\lambda})=\{m\in \tilde{\Lambda}_{\xi\odot\lambda}\mid (m,m)_{\xi\odot\lambda}\in \mathbf{A}\}$ . By Lemma \ref{14.2.2} and Theorem \ref{properties of B(Lambda)}, we know that $L(\tilde{\Lambda}_{\xi\odot\lambda})$ is the $\mathbf{A}$-submodule of $\tilde{\Lambda}_{\xi\odot\lambda}$ generated by $\tilde{\mathbf{B}}(\tilde{\Lambda}_{\xi\odot\lambda})$. We have the direct sum decomposition $L(\tilde{\Lambda}_{\xi\odot\lambda})=\bigoplus_{\tilde{\nu}\in \mathbb{N}[\tilde{I}]}L(\tilde{\Lambda}_{\xi\odot\lambda})_{\tilde{\nu}}$, where $L(\tilde{\Lambda}_{\xi\odot\lambda})_{\tilde{\nu}}$ is the $\mathbf{A}$-submodule generated by $\{\pi_{\xi\odot\lambda}(b)\mid b\in \tilde{\mathbf{B}}(\xi\odot\lambda)\cap\tilde{\mathbf{B}}_{\tilde{\nu}}\}$, see \cite[Section 18.1.8]{Lusztig-1993}. 

Let $L(\Lambda_{\xi,\lambda})=\{m\in \Lambda_{\xi,\lambda}\mid (m,m)_{\xi\odot\lambda}\in \mathbf{A}\}$. By Lemma \ref{14.2.2}, Corollary \ref{framed basis} and Theorem \ref{properties of B(Lambda)}, we know that $L(\Lambda_{\xi,\lambda})$ is the $\mathbf{A}$-submodule of $\Lambda_{\xi,\lambda}$ generated by $\tilde{\mathbf{B}}(\Lambda_{\xi,\lambda})$. We have the direct sum decomposition $L(\Lambda_{\xi,\lambda})=\bigoplus_{\nu\in \mathbb{N}[I]}L(\Lambda_{\xi,\lambda})_{\nu+|\theta_{\lambda}|}$, where $L(\Lambda_{\xi,\lambda})_{\nu+|\theta_{\lambda}|}$ is the $\mathbf{A}$-submodule generated by $\{\pi_{\xi\odot\lambda}(b)\mid b\in \tilde{\mathbf{B}}(\xi,\lambda)\cap\tilde{\mathbf{B}}_{\nu+|\theta_{\lambda}|}\}$. 

For any $m\in L(\Lambda_{\xi,\lambda})$ and $i\in I$, by \cite[Proposition 19.1.3]{Lusztig-1993}, we have
\begin{align*}
&(\tilde{F_i}(m),\tilde{F_i}(m))_{\xi\odot\lambda}=(m,\tilde{E}_i\tilde{F}_i(m))_{\xi\odot\lambda}\ \mathrm{mod}\,v^{-1}\mathbf{A}=(m,m)_{\xi\odot\lambda}\ \mathrm{mod}\,v^{-1}\mathbf{A}\in \mathbf{A},\\
&(\tilde{E_i}(m),\tilde{E_i}(m))_{\xi\odot\lambda}=(\tilde{F}_i\tilde{E}_i(m),m)_{\xi\odot\lambda}\ \mathrm{mod}\,v^{-1}\mathbf{A}=(m,m)_{\xi\odot\lambda}\ \mathrm{mod}\,v^{-1}\mathbf{A}\in \mathbf{A},
\end{align*}
and so $\tilde{F_i}(m),\tilde{E_i}(m)\in L(\Lambda_{\xi,\lambda})$. Hence $L(\Lambda_{\xi,\lambda})$ is stable under $\tilde{F_i},\tilde{E_i}$ for any $i\in I$.

\begin{lemma}\label{18.3.8}
For any $\nu\in \mathbb{N}[I]$ and  $i\in I$, we have \\
{\rm{(a)}} $\pi_{\xi\odot\lambda}\tilde{\phi}_i(x)=\tilde{F}_i\pi_{\xi\odot\lambda}(x)\ \mathrm{mod}\,v^{-1}L(\Lambda_{\xi,\lambda})$ for any $x\in \mathcal{L}(\mathbf{f}\theta_{\lambda}\mathbf{f})_{\nu+|\theta_{\lambda}|}$;\\
{\rm{(b)}} $\pi_{\xi\odot\lambda}\tilde{\epsilon}_i(b)=\tilde{E}_i\pi_{\xi\odot\lambda}(b)\ \mathrm{mod}\,v^{-1}L(\Lambda_{\xi,\lambda})$ for any $b\in \tilde{\mathbf{B}}(\xi,\lambda)\cap\tilde{\mathbf{B}}_{\nu+|\theta_{\lambda}|}$, if $\nu_i>0$.
\end{lemma}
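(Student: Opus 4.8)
The plan is to reduce the lemma to \cite[Lemma 18.3.8]{Lusztig-1993}, which gives the analogous crystal compatibility for the projection $\tilde{\mathbf{f}}\to\tilde{\Lambda}_{\xi\odot\lambda}$, and then to pass to the distinguished sub-lattices $\mathcal{L}(\mathbf{f}\theta_{\lambda}\mathbf{f})\subset\mathcal{L}(\tilde{\mathbf{f}})$ and $L(\Lambda_{\xi,\lambda})\subset L(\tilde{\Lambda}_{\xi\odot\lambda})$. First I would record two identifications. (1) For the index $i\in I\subset\tilde{I}$, the Kashiwara operators $\tilde{F}_i,\tilde{E}_i$ of the integrable $\mathbf{U}$-module $\Lambda_{\xi,\lambda}$ coincide with those of the integrable $\tilde{\mathbf{U}}$-module $\tilde{\Lambda}_{\xi\odot\lambda}$, because $\Lambda_{\xi,\lambda}$ is a $\mathbf{U}$-submodule of $\tilde{\Lambda}_{\xi\odot\lambda}$ closed under $E_i$ and $F_i$, so the $i$-string decomposition of any $m\in\Lambda_{\xi,\lambda}$ is the same whether computed in $\Lambda_{\xi,\lambda}$ or in $\tilde{\Lambda}_{\xi\odot\lambda}$; similarly $\tilde{\phi}_i,\tilde{\epsilon}_i$ on $\mathbf{f}\theta_{\lambda}\mathbf{f}$ are the restrictions of those on $\tilde{\mathbf{f}}$. (2) One has $L(\tilde{\Lambda}_{\xi\odot\lambda})\cap\Lambda_{\xi,\lambda}=L(\Lambda_{\xi,\lambda})$ and $v^{-1}L(\tilde{\Lambda}_{\xi\odot\lambda})\cap\Lambda_{\xi,\lambda}=v^{-1}L(\Lambda_{\xi,\lambda})$, since $L(\tilde{\Lambda}_{\xi\odot\lambda})$ is free over $\mathbf{A}$ on the basis $\{\pi_{\xi\odot\lambda}(b)\mid b\in\tilde{\mathbf{B}}(\xi\odot\lambda)\}$ while, using $\tilde{\mathbf{B}}(\xi,\lambda)\subset\tilde{\mathbf{B}}(\xi\odot\lambda)$ and Corollary \ref{framed basis}, $\Lambda_{\xi,\lambda}$ is the span of the sub-basis $\{\pi_{\xi\odot\lambda}(b)\mid b\in\tilde{\mathbf{B}}(\xi,\lambda)\}$.

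Granting these, part (a) is quick. Since $\mathcal{L}(\mathbf{f}\theta_{\lambda}\mathbf{f})_{\nu+|\theta_{\lambda}|}\subset\mathcal{L}(\tilde{\mathbf{f}})$, applying \cite[Lemma 18.3.8]{Lusztig-1993} to $\pi_{\xi\odot\lambda}:\tilde{M}_{\xi\odot\lambda}=\tilde{\mathbf{f}}\to\tilde{\Lambda}_{\xi\odot\lambda}$ with the index $i\in I$ yields $\pi_{\xi\odot\lambda}\tilde{\phi}_i(x)\equiv\tilde{F}_i\pi_{\xi\odot\lambda}(x)$ modulo $v^{-1}L(\tilde{\Lambda}_{\xi\odot\lambda})$ for all such $x$. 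Now $\tilde{\phi}_i$ preserves $\mathcal{L}(\mathbf{f}\theta_{\lambda}\mathbf{f})$ and $\pi_{\xi\odot\lambda}$ maps $\mathcal{L}(\mathbf{f}\theta_{\lambda}\mathbf{f})$ into $L(\Lambda_{\xi,\lambda})$ by Corollary \ref{framed basis}, while $\pi_{\xi\odot\lambda}(x)\in L(\Lambda_{\xi,\lambda})$ and $L(\Lambda_{\xi,\lambda})$ is $\tilde{F}_i$-stable; hence both sides of the congruence lie in $L(\Lambda_{\xi,\lambda})$, and their difference, being in $L(\Lambda_{\xi,\lambda})\cap v^{-1}L(\tilde{\Lambda}_{\xi\odot\lambda})=v^{-1}L(\Lambda_{\xi,\lambda})$, vanishes modulo $v^{-1}L(\Lambda_{\xi,\lambda})$.

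Part (b) runs identically with the $\tilde{\epsilon}_i$-statement of \cite[Lemma 18.3.8]{Lusztig-1993}: for $b\in\tilde{\mathbf{B}}(\xi,\lambda)\subset\tilde{\mathbf{B}}(\xi\odot\lambda)\cap\tilde{\mathbf{B}}_{\nu+|\theta_{\lambda}|}$ with $(\nu+|\theta_{\lambda}|)_i=\nu_i>0$ one gets $\pi_{\xi\odot\lambda}\tilde{\epsilon}_i(b)\equiv\tilde{E}_i\pi_{\xi\odot\lambda}(b)$ modulo $v^{-1}L(\tilde{\Lambda}_{\xi\odot\lambda})$; both sides lie in $L(\Lambda_{\xi,\lambda})$ since $\tilde{\epsilon}_i$ preserves $\mathcal{L}(\mathbf{f}\theta_{\lambda}\mathbf{f})$ and $\tilde{E}_i$ preserves $L(\Lambda_{\xi,\lambda})$, and intersecting with $v^{-1}L(\tilde{\Lambda}_{\xi\odot\lambda})$ gives the claim.

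The reduction itself is routine; the step requiring care is to invoke \cite[Lemma 18.3.8]{Lusztig-1993} in the right generality — the $\tilde{\phi}_i$-part for \emph{all} $x\in\mathcal{L}(\tilde{\mathbf{f}})$ (so that the case $\pi_{\xi\odot\lambda}(x)=0$, where one needs $\pi_{\xi\odot\lambda}\tilde{\phi}_i(x)\equiv0$, is automatically covered) and the $\tilde{\epsilon}_i$-part for all canonical basis elements with the relevant coordinate positive — and to check that its crystal lattice agrees with $L(\tilde{\Lambda}_{\xi\odot\lambda})=\{m\mid(m,m)_{\xi\odot\lambda}\in\mathbf{A}\}$ as used here, which follows from Lemma \ref{14.2.2} applied to the almost orthogonal basis $\tilde{\mathbf{B}}(\tilde{\Lambda}_{\xi\odot\lambda})$. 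Were one to prefer a self-contained proof along Lusztig's original lines, the genuine obstacle would be the vanishing $\pi_{\xi\odot\lambda}\tilde{\phi}_i(b)\equiv0$ for $b\in\tilde{\mathbf{B}}(\mathbf{f}\theta_{\lambda}\mathbf{f})\setminus\tilde{\mathbf{B}}(\xi,\lambda)$; here one uses that $\mathbf{f}\theta_{\lambda}\mathbf{f}\cap\sum_{j\in I}\tilde{\mathbf{f}}\theta_j^{\langle j,\xi\rangle+1}$ is a $\mathbf{U}$-submodule of $\mathbf{f}\theta_{\lambda}\mathbf{f}$ spanned over $\mathbb{Z}[v^{-1}]$ by $\tilde{\mathbf{B}}(\mathbf{f}\theta_{\lambda}\mathbf{f})\setminus\tilde{\mathbf{B}}(\xi,\lambda)$ and whose crystal lattice is stable under $\tilde{\phi}_i,\tilde{\epsilon}_i$, then pins down $\tilde{\phi}_i(b)$ and $\tilde{\epsilon}_i(b)$ modulo $v^{-1}\mathcal{L}(\mathbf{f}\theta_{\lambda}\mathbf{f})$ via Lemma \ref{17.3.7}, and matches these against $\tilde{F}_i,\tilde{E}_i$ on $\pi_{\xi\odot\lambda}(b)$ using Lemma \ref{Ei formula} together with the fact that $F_i$ acts on $\Lambda_{\xi,\lambda}$ as left multiplication by $\theta_i$.
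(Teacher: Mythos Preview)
Your proposal is correct and follows essentially the same approach as the paper: invoke \cite[Theorem 18.3.8]{Lusztig-1993} to get the congruence modulo $v^{-1}L(\tilde{\Lambda}_{\xi\odot\lambda})$, then use the stability of $\mathcal{L}(\mathbf{f}\theta_{\lambda}\mathbf{f})$ under $\tilde{\phi}_i,\tilde{\epsilon}_i$ and of $L(\Lambda_{\xi,\lambda})$ under $\tilde{F}_i,\tilde{E}_i$ to see that both sides lie in $L(\Lambda_{\xi,\lambda})$, whence the congruence descends. Your explicit justification of the intersection $v^{-1}L(\tilde{\Lambda}_{\xi\odot\lambda})\cap L(\Lambda_{\xi,\lambda})=v^{-1}L(\Lambda_{\xi,\lambda})$ via the containment $\tilde{\mathbf{B}}(\xi,\lambda)\subset\tilde{\mathbf{B}}(\xi\odot\lambda)$ is a welcome clarification of a step the paper leaves implicit.
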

\begin{proof}
By \cite[Theorem 18.3.8]{Lusztig-1993}, we have $\pi_{\xi\odot\lambda}\tilde{\phi}_i(x)=\tilde{F}_i\pi_{\xi\odot\lambda}(x)\ \mathrm{mod}\,v^{-1}L(\tilde{\Lambda}_{\xi\odot\lambda})$. Since $L(\Lambda_{\xi,\lambda})$ is stable under $\tilde{F_i}$ and $\mathcal{L}(\mathbf{f}\theta_{\lambda}\mathbf{f})$ is stable under $\tilde{\phi}_i$, we have $\tilde{F}_i\pi_{\xi\odot\lambda}(x)\in L(\Lambda_{\xi,\lambda})$ and $\tilde{\phi}_i(x)\in \mathcal{L}(\mathbf{f}\theta_{\lambda}\mathbf{f}), \pi_{\xi\odot\lambda}\tilde{\phi}_i(x)\in L(\Lambda_{\xi,\lambda})$. Hence $\pi_{\xi\odot\lambda}\tilde{\phi}_i(x)=\tilde{F}_i\pi_{\xi\odot\lambda}(x)\ \mathrm{mod}\,v^{-1}L(\Lambda_{\xi,\lambda})$. The other statements can be proved similarly.
\end{proof}

For any $\nu\in \mathbb{N}[I]$ and $b_0\in \sigma(\mathbf{B}(\lambda))$, let $L(\Lambda_{\xi,\lambda})^{\nu,b_0}$ be the $\mathbf{A}$-submodule of $\Lambda_{\xi,\lambda}$ generated by $\{\pi_{\xi\odot\lambda}\tilde{\phi}_{i_1}...\tilde{\phi}_{i_{\mathrm{tr}\,\nu}}(\theta_{\lambda}b_0)\mid i_1,...,i_{\mathrm{tr}\,\nu}\in I,\sum_{k=1}^{\mathrm{tr}\,\nu} i_k=\nu\}$. By definitions, Corollary \ref{18.1.7} and Lemma \ref{thetalambdaB}, we have $L(\Lambda_{\xi,\lambda})^{\nu,b_0}=\mathbf{A}\otimes_{\mathbb{Z}[v^{-1}]}\pi_{\xi\odot\lambda}(\mathcal{L}(\mathbf{f}\theta_{\lambda}\mathbf{f})^{\nu,b_0})$ and  
\begin{align*}
L(\Lambda_{\xi,\lambda})=\mathbf{A}\otimes_{\mathbb{Z}[v^{-1}]}\pi_{\xi\odot\lambda}(\mathcal{L}(\mathbf{f}\theta_{\lambda}\mathbf{f}))=\sum_{\nu\in \mathbb{N}[I]}\sum_{b_0\in \sigma(\mathbf{B}(\lambda))}L(\Lambda_{\xi,\lambda})^{\nu,b_0}.
\end{align*}
Note that $L(\Lambda_{\xi,\lambda})^{\nu,b_0}\subset L(\Lambda_{\xi,\lambda})_{\nu+|\theta_{\lambda}b_0|}$, and so $L(\Lambda_{\xi,\lambda})_{\nu+|\theta_{\lambda}|}=\sum L(\Lambda_{\xi,\lambda})^{\nu-|b_0|,b_0}$, where the sum is taken over $b_0\in \sigma(\mathbf{B}(\lambda))$ such that $\nu-|b_0|\in \mathbb{N}[I]$. 

For any $\nu\in \mathbb{N}[I]$ and $b_0\in \sigma(\mathbf{B}(\lambda))$, let $L'(\Lambda_{\xi,\lambda})^{\nu,b_0}$ be the $\mathbf{A}$-submodule of $\Lambda_{\xi,\lambda}$ generated by $\{\tilde{F}_{i_1}...\tilde{F}_{i_{\mathrm{tr}\,\nu}}\pi_{\xi\odot\lambda}(\theta_{\lambda}b_0)\mid i_1,...,i_{\mathrm{tr}\,\nu}\in I,\sum_{k=1}^{\mathrm{tr}\,\nu} i_k=\nu\}$, and let 
$$L'(\Lambda_{\xi,\lambda})=\sum_{\nu\in \mathbb{N}[I]}\sum_{b_0\in \sigma(\mathbf{B}(\lambda))}L'(\Lambda_{\xi,\lambda})^{\nu,b_0}.$$
Since $L(\Lambda_{\xi,\lambda})$ is stable under $\tilde{F_i}$ for any $i\in I$, we have $L'(\Lambda_{\xi,\lambda})^{\nu,b_0}\subset L(\Lambda_{\xi,\lambda})$.

\begin{lemma}\label{L=L'}
We have $L(\Lambda_{\xi,\lambda})=L'(\Lambda_{\xi,\lambda})$.
\end{lemma}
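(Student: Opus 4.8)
The plan is to show the two $\mathbf{A}$-submodules $L(\Lambda_{\xi,\lambda})$ and $L'(\Lambda_{\xi,\lambda})$ of $\Lambda_{\xi,\lambda}$ coincide by proving each contains the other. The inclusion $L'(\Lambda_{\xi,\lambda}) \subset L(\Lambda_{\xi,\lambda})$ is already noted in the text, coming from the stability of $L(\Lambda_{\xi,\lambda})$ under the Kashiwara operators $\tilde F_i$ and the fact that each generator $\pi_{\xi\odot\lambda}(\theta_\lambda b_0)$ with $b_0 \in \sigma(\mathbf{B}(\lambda))$ lies in $L(\Lambda_{\xi,\lambda})$ (since $\theta_\lambda b_0 \in \tilde{\mathbf{B}}(\mathbf{f}\theta_\lambda\mathbf{f})$ by Lemma \ref{thetalambdaB}, so $\pi_{\xi\odot\lambda}(\theta_\lambda b_0) \in L(\Lambda_{\xi,\lambda})$ by Corollary \ref{framed basis} and Theorem \ref{properties of B(Lambda)}). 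So the substance is the reverse inclusion $L(\Lambda_{\xi,\lambda}) \subset L'(\Lambda_{\xi,\lambda})$.

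First I would recall the decomposition $L(\Lambda_{\xi,\lambda}) = \sum_{\nu \in \mathbb{N}[I]} \sum_{b_0 \in \sigma(\mathbf{B}(\lambda))} L(\Lambda_{\xi,\lambda})^{\nu,b_0}$ established just above the statement, together with the identity $L(\Lambda_{\xi,\lambda})^{\nu,b_0} = \mathbf{A} \otimes_{\mathbb{Z}[v^{-1}]} \pi_{\xi\odot\lambda}(\mathcal{L}(\mathbf{f}\theta_\lambda\mathbf{f})^{\nu,b_0})$, where $\mathcal{L}(\mathbf{f}\theta_\lambda\mathbf{f})^{\nu,b_0}$ is generated over $\mathbb{Z}[v^{-1}]$ by the iterated applications $\tilde\phi_{i_1}\cdots\tilde\phi_{i_{\mathrm{tr}\,\nu}}(\theta_\lambda b_0)$. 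Thus it suffices to show that for every such word $i_1,\dots,i_n$ in $I$ and every $b_0 \in \sigma(\mathbf{B}(\lambda))$,
$$
\pi_{\xi\odot\lambda}\big(\tilde\phi_{i_1}\cdots\tilde\phi_{i_n}(\theta_\lambda b_0)\big) \in L'(\Lambda_{\xi,\lambda}).
$$
The natural approach is induction on $n = \mathrm{tr}\,\nu$. For $n = 0$ the element is $\pi_{\xi\odot\lambda}(\theta_\lambda b_0)$, which is a generator of $L'(\Lambda_{\xi,\lambda})^{0,b_0}$. For the inductive step, set $y = \tilde\phi_{i_2}\cdots\tilde\phi_{i_n}(\theta_\lambda b_0) \in \mathcal{L}(\mathbf{f}\theta_\lambda\mathbf{f})^{\nu',b_0}$ with $\nu' = \nu - i_1$, so by the inductive hypothesis $\pi_{\xi\odot\lambda}(y) \in L'(\Lambda_{\xi,\lambda})$, in fact in a single $L'(\Lambda_{\xi,\lambda})^{\nu',b_0}$-type piece. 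Now I would invoke Lemma \ref{18.3.8}(a): since $y$ is homogeneous of weight $\nu' + |\theta_\lambda|$ (more precisely $\nu' + |\theta_\lambda b_0|$) and lies in the appropriate graded piece of $\mathcal{L}(\mathbf{f}\theta_\lambda\mathbf{f})$, we have
$$
\pi_{\xi\odot\lambda}\tilde\phi_{i_1}(y) \equiv \tilde F_{i_1}\pi_{\xi\odot\lambda}(y) \pmod{v^{-1}L(\Lambda_{\xi,\lambda})}.
$$
Since $\pi_{\xi\odot\lambda}(y)$ is an $\mathbf{A}$-linear combination of elements $\tilde F_{j_1}\cdots\tilde F_{j_{n-1}}\pi_{\xi\odot\lambda}(\theta_\lambda b_0)$, applying $\tilde F_{i_1}$ keeps us inside $L'(\Lambda_{\xi,\lambda})$, so $\tilde F_{i_1}\pi_{\xi\odot\lambda}(y) \in L'(\Lambda_{\xi,\lambda})$. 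Hence $\pi_{\xi\odot\lambda}\tilde\phi_{i_1}(y) \in L'(\Lambda_{\xi,\lambda}) + v^{-1}L(\Lambda_{\xi,\lambda})$.

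The remaining point — and the step I expect to be the main obstacle — is to upgrade ``$L(\Lambda_{\xi,\lambda}) \subset L'(\Lambda_{\xi,\lambda}) + v^{-1}L(\Lambda_{\xi,\lambda})$'' to an equality via a Nakayama-type argument, exactly as in the proof of Corollary \ref{18.1.7}. The subtlety is that $L(\Lambda_{\xi,\lambda})$ and $L'(\Lambda_{\xi,\lambda})$ are modules over $\mathbf{A} = \mathbb{Q}[[v^{-1}]] \cap \mathbb{Q}(v)$ rather than over $\mathbb{Z}[v^{-1}]$, and one must be careful that $L'(\Lambda_{\xi,\lambda})$ is finitely generated in each weight space (it is, being generated by finitely many Kashiwara monomials since $\Lambda_{\xi,\lambda}$ is integrable and the weight spaces are finite-dimensional) so that the quotient $L(\Lambda_{\xi,\lambda})/L'(\Lambda_{\xi,\lambda})$, being a finitely generated $\mathbf{A}$-module annihilated by $v^{-1}$ after the congruence above shows $L(\Lambda_{\xi,\lambda}) = L'(\Lambda_{\xi,\lambda}) + v^{-1}L(\Lambda_{\xi,\lambda})$, vanishes by Nakayama's lemma ($v^{-1}$ lying in the maximal ideal of the local ring $\mathbf{A}$). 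One should run this weight-space by weight-space to stay within finitely generated modules. Assembling these, $L(\Lambda_{\xi,\lambda}) = L'(\Lambda_{\xi,\lambda})$, as desired.
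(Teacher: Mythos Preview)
Your proposal is correct and follows essentially the same route as the paper: show $L(\Lambda_{\xi,\lambda}) \subset L'(\Lambda_{\xi,\lambda}) + v^{-1}L(\Lambda_{\xi,\lambda})$ by induction on the length of the Kashiwara word (equivalently on $\mathrm{tr}\,\nu$), using Lemma \ref{18.3.8}(a) to pass from $\pi_{\xi\odot\lambda}\tilde\phi_i$ to $\tilde F_i\pi_{\xi\odot\lambda}$ modulo $v^{-1}L(\Lambda_{\xi,\lambda})$, and then conclude by Nakayama's lemma. One small point of presentation: your inductive hypothesis should from the start be the weaker statement ``$\pi_{\xi\odot\lambda}(y) \in L'(\Lambda_{\xi,\lambda}) + v^{-1}L(\Lambda_{\xi,\lambda})$'' rather than ``$\in L'(\Lambda_{\xi,\lambda})$'', since that is all the inductive step produces; the argument then goes through unchanged because both $L'(\Lambda_{\xi,\lambda})$ and $L(\Lambda_{\xi,\lambda})$ are stable under $\tilde F_i$.
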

\begin{proof}
We know that $L'(\Lambda_{\xi,\lambda})$ is a submodule of $L(\Lambda_{\xi,\lambda})$. For any $\nu\in \mathbb{N}[I]$, we use induction on $\mathrm{tr}\,|\nu|\in \mathbb{N}$ to prove that 
$$L(\Lambda_{\xi,\lambda})_{\nu+|\theta_{\lambda}|}\subset L'(\Lambda_{\xi,\lambda})+v^{-1}L(\Lambda_{\xi,\lambda}).$$ 
$\bullet$ If $\mathrm{tr}\,|\nu|=0$, then $L(\Lambda_{\xi,\lambda})_{|\theta_{\lambda}|}$ is the $\mathbf{A}$-submodule of $\Lambda_{\xi,\lambda}$ generated by $\{\pi_{\xi\odot\lambda}(\theta_{\lambda})\}$, and it is clear that $L(\Lambda_{\xi,\lambda})_{|\theta_{\lambda}|}\subset L'(\Lambda_{\xi,\lambda})+v^{-1}L(\Lambda_{\xi,\lambda})$.\\
$\bullet$ If $\mathrm{tr}\,|\nu|>0$, recall that $L(\Lambda_{\xi,\lambda})_{\nu+|\theta_{\lambda}|}=\sum L(\Lambda_{\xi,\lambda})^{\nu-|b_0|,b_0}$, where the sum is taken over $b_0\in \sigma(\mathbf{B}(\lambda))$ such that $\nu-|b_0|\in \mathbb{N}[I]$. For those $b_0$ such that $\nu-|b_0|=0$, it is clear that $L(\Lambda_{\xi,\lambda})^{0,b_0}=L'(\Lambda_{\xi,\lambda})^{0,b_0}\subset L'(\Lambda_{\xi,\lambda})+v^{-1}L(\Lambda_{\xi,\lambda})$. For those $b_0$ such that $\nu-|b_0|\not=0$, by definitions, we have $L(\Lambda_{\xi,\lambda})^{\nu-|b_0|,b_0}=\sum\mathbf{A}\otimes_{\mathbb{Z}[v^{-1}]}\pi_{\xi\odot\lambda}\tilde{\phi}_i(\mathcal{L}(\mathbf{f}\theta_{\lambda}\mathbf{f})^{\nu-i-|b_0|,b_0})$,
where the sum is taken over $i\in I$ such that $\nu_i-|b_0|_i>0$. For any $x\in \mathcal{L}(\mathbf{f}\theta_{\lambda}\mathbf{f})^{\nu-i-|b_0|,b_0}$, since $\pi_{\xi\odot\lambda}(x)\in L(\Lambda_{\xi,\lambda})^{\nu-i-|b_0|,b_0}\subset L(\Lambda_{\xi,\lambda})_{\nu-i+|\theta_{\lambda}|}$,
by the inductive hypothesis, we have $\pi_{\xi\odot\lambda}(x)\in L'(\Lambda_{\xi,\lambda})+v^{-1}L(\Lambda_{\xi,\lambda})$. Since $L'(\Lambda_{\xi,\lambda})$ and $L(\Lambda_{\xi,\lambda})$ are stable under $\tilde{F}_i$, we have $\tilde{F}_i\pi_{\xi\odot\lambda}(x)\in L'(\Lambda_{\xi,\lambda})+v^{-1}L(\Lambda_{\xi,\lambda})$. By Lemma \ref{18.3.8}, we have 
$$\pi_{\xi\odot\lambda}\tilde{\phi}_i(x)=\tilde{F}_i\pi_{\xi\odot\lambda}(x)\ \mathrm{mod}\,v^{-1}L(\Lambda_{\xi,\lambda})\in L'(\Lambda_{\xi,\lambda})+v^{-1}L(\Lambda_{\xi,\lambda}).$$
So $L(\Lambda_{\xi,\lambda})^{\nu-|b_0|,b_0}\subset L'(\Lambda_{\xi,\lambda})+v^{-1}L(\Lambda_{\xi,\lambda})$, and then $ L(\Lambda_{\xi,\lambda})_{\nu+|\theta_{\lambda}|}\subset L'(\Lambda_{\xi,\lambda})+v^{-1}L(\Lambda_{\xi,\lambda})$, as desired. Hence $L(\Lambda_{\xi,\lambda})\subset L'(\Lambda_{\xi,\lambda})+v^{-1}L(\Lambda_{\xi,\lambda})$, and so the $\mathbf{A}$-module $L(\Lambda_{\xi,\lambda})/L'(\Lambda_{\xi,\lambda})$ is annihilated by $v^{-1}$. By Nakayama's lemma, it is zero, and so $L(\Lambda_{\xi,\lambda})=L'(\Lambda_{\xi,\lambda})$.
\end{proof}

Note that $L'(\Lambda_{\xi,\lambda})^{\nu,b_0}\subset L(\Lambda_{\xi,\lambda})_{\nu+|\theta_{\lambda}b_0|}$, and so $L(\Lambda_{\xi,\lambda})_{\nu+|\theta_{\lambda}|}=\sum L'(\Lambda_{\xi,\lambda})^{\nu-|b_0|,b_0}$, where the sum is taken over $b_0\in \sigma(\mathbf{B}(\lambda))$ such that $\nu-|b_0|\in \mathbb{N}[I]$.

Let $L(\Lambda_{\xi}\otimes \Lambda_{\lambda})=\{m\in \Lambda_{\xi}\otimes \Lambda_{\lambda}\mid (m,m)\in \mathbf{A}\}$. By Lemma \ref{14.2.2}, we know that $L(\Lambda_{\xi}\otimes \Lambda_{\lambda})$ is the $\mathbf{A}$-submodule of $\Lambda_{\xi}\otimes \Lambda_{\lambda}$ generated by $\mathbf{B}(\Lambda_{\xi})\otimes \mathbf{B}(\Lambda_{\lambda})$.

\begin{lemma}\label{adjointtensor}
For any $m,m'\in L(\Lambda_{\xi}\otimes \Lambda_{\lambda})$, we have 
$$(\tilde{F}_i(m),m')_{\xi,\lambda}=(m,\tilde{E}_i(m'))_{\xi,\lambda}\ \mathrm{mod}\,v^{-1}\mathbf{A}.$$
As a result, $L(\Lambda_{\xi}\otimes \Lambda_{\lambda})$ is stable under $\tilde{F_i},\tilde{E_i}$ for any $i\in I$.
\end{lemma}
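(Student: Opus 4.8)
The plan is to reduce the asserted congruence to a rank‑one (quantum $\mathfrak{sl}_2$) computation, and then to deduce the stability assertion by transporting structure along the $\mathbf{U}$‑module isomorphism $\varphi$ of Theorem \ref{framed construction of tensor}. The starting point is the shape of the lattice: by Lemma \ref{14.2.2}(a) applied to the almost orthogonal basis $\mathbf{B}(\Lambda_{\xi})\otimes\mathbf{B}(\Lambda_{\lambda})$, the lattice $L(\Lambda_{\xi}\otimes\Lambda_{\lambda})$ is the $\mathbf{A}$‑span of that basis, hence is the direct sum of its intersections with the weight spaces. Now fix $i\in I$ and restrict scalars to the subalgebra $\mathbf{U}_i\subset\mathbf{U}$ generated by $E_i,F_i$ and $\{K_\mu\}$: then $\Lambda_{\xi}\otimes\Lambda_{\lambda}$ is an integrable $\mathbf{U}_i$‑module, the form $(,)_{\xi,\lambda}$ is admissible for the $\mathbf{U}_i$‑action, and the Kashiwara operators $\tilde F_i,\tilde E_i$ depend only on this $\mathbf{U}_i$‑module structure together with the weight grading. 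Decomposing $\Lambda_{\xi}\otimes\Lambda_{\lambda}$ into simple $\mathbf{U}_i$‑submodules, it suffices to check the congruence on each such summand.

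On a simple $\mathbf{U}_i$‑summand the computation is exactly the one in the proof of \cite[Proposition 19.1.3]{Lusztig-1993}: using Lemma \ref{3.4.2} one writes the actions of $\tilde F_i$ and $\tilde E_i$ along an $i$‑string through the divided powers $F_i^{(n)}$, expresses the values of $(,)_{\xi,\lambda}$ on such vectors by quantum binomial coefficients, and verifies that $(\tilde F_i m,m')_{\xi,\lambda}$ and $(m,\tilde E_i m')_{\xi,\lambda}$ agree modulo $v^{-1}\mathbf{A}$. The only inputs used are that $\Lambda_{\xi}\otimes\Lambda_{\lambda}$ is integrable, that $(,)_{\xi,\lambda}$ is admissible, and the fact recorded above that $L(\Lambda_{\xi}\otimes\Lambda_{\lambda})$ is the $\mathbf{A}$‑span of a weight basis which is almost orthogonal; all of these hold here, so the argument of loc.\ cit.\ carries over without change. (Equivalently, one could pull $(,)_{\xi,\lambda}$ back along $\varphi$ to an admissible form on $\Lambda_{\xi,\lambda}$ whose associated lattice is $L(\Lambda_{\xi,\lambda})$, again the $\mathbf{A}$‑span of the almost orthogonal weight basis $\tilde{\mathbf{B}}(\Lambda_{\xi,\lambda})$ by Theorem \ref{framed construction of canonical basis}, and run the same rank‑one computation there.)

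For the ``as a result'' clause I would argue by transport along $\varphi$. Since $\varphi\colon\Lambda_{\xi,\lambda}\to\Lambda_{\xi}\otimes\Lambda_{\lambda}$ is a $\mathbf{U}$‑module isomorphism, it commutes with $\tilde F_i$ and $\tilde E_i$; moreover it sends the $\mathbf{A}$‑basis $\tilde{\mathbf{B}}(\Lambda_{\xi,\lambda})$ of $L(\Lambda_{\xi,\lambda})$ to $\mathbf{B}(\Lambda_{\xi}\otimes\Lambda_{\lambda})$ by Theorem \ref{framed construction of canonical basis}, which has the same $\mathbf{A}$‑span as $\mathbf{B}(\Lambda_{\xi})\otimes\mathbf{B}(\Lambda_{\lambda})$ (the change of basis is unitriangular over $v^{-1}\mathbb{Z}[v^{-1}]$ by Theorem \ref{unique canonical basis element}), namely $L(\Lambda_{\xi}\otimes\Lambda_{\lambda})$. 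Hence $\varphi\bigl(L(\Lambda_{\xi,\lambda})\bigr)=L(\Lambda_{\xi}\otimes\Lambda_{\lambda})$, and since $L(\Lambda_{\xi,\lambda})$ is stable under $\tilde F_i,\tilde E_i$ (shown just above, via \cite[Proposition 19.1.3]{Lusztig-1993}), so is $L(\Lambda_{\xi}\otimes\Lambda_{\lambda})$. The step I expect to be the main obstacle is the rank‑one congruence itself, i.e.\ carrying the bookkeeping in the proof of \cite[Proposition 19.1.3]{Lusztig-1993} over to the present module; this is routine given Lusztig's machinery but is the one place where genuine computation is required.
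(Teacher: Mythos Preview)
Your approach to the congruence—reducing to the rank-one computation underlying \cite[Proposition 19.1.3]{Lusztig-1993}—is essentially the paper's; the paper's entire proof is the one-line citation ``It follows from \cite[Corollary 17.2.4, Proposition 19.1.3]{Lusztig-1993} and definitions.''

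The gap is in your stability argument. Both your transport argument and the parenthetical alternative invoke Theorem \ref{framed construction of canonical basis} to identify $\varphi(L(\Lambda_{\xi,\lambda}))$ with $L(\Lambda_\xi\otimes\Lambda_\lambda)$, but that theorem lies logically \emph{downstream} of the present lemma: its proof uses Lemma \ref{almost orthogonal}, whose proof (given at the end of Section \ref{Kashiwara's operators}) relies on Lemma \ref{(L,L)inA}, which in turn invokes Lemma \ref{adjointtensor}. So your argument is circular. The paper instead takes both the congruence and the stability directly from \cite[Corollary 17.2.4, Proposition 19.1.3]{Lusztig-1993}, which apply to any integrable module equipped with an admissible form; no input from the framed construction is needed or wanted at this point. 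Note that the analogous stability for $L(\Lambda_{\xi,\lambda})$, argued just above Lemma \ref{18.3.8}, works because $\Lambda_{\xi,\lambda}$ sits inside the simple $\tilde{\mathbf U}$-module $\tilde\Lambda_{\xi\odot\lambda}$ and one can apply 19.1.3 there; for $\Lambda_\xi\otimes\Lambda_\lambda$ no such embedding is available, which is why the additional reference 17.2.4 is needed.
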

\begin{proof}
It follows from \cite[Corollary 17.2.4, Proposition 19.1.3]{Lusztig-1993} and definitions.
\end{proof}

\begin{lemma}\label{(L,L)inA}
For any $\nu\in \mathbb{N}[I]$, we have\\
{\rm{(a)}} $\varphi(L(\Lambda_{\xi,\lambda})_{\nu+|\theta_{\lambda}|})\subset L(\Lambda_{\xi}\otimes \Lambda_{\lambda})$.\\
{\rm{(b)}} $(\varphi(L(\Lambda_{\xi,\lambda})_{\nu+|\theta_{\lambda}|}),\varphi(L(\Lambda_{\xi,\lambda})_{\nu+|\theta_{\lambda}|}))_{\xi,\lambda}\subset \mathbf{A}$.
\end{lemma}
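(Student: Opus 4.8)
By Lemma \ref{L=L'} and the remark following it, the $\mathbf{A}$-module $L(\Lambda_{\xi,\lambda})_{\nu+|\theta_{\lambda}|}$ is generated by the elements $\tilde{F}_{i_1}\cdots\tilde{F}_{i_m}\pi_{\xi\odot\lambda}(\theta_{\lambda}b_0)$, where $b_0$ ranges over $\sigma(\mathbf{B}(\lambda))$ with $\nu-|b_0|\in\mathbb{N}[I]$ and $i_1,\dots,i_m\in I$ with $\sum_k i_k=\nu-|b_0|$ (so $m=\mathrm{tr}\,(\nu-|b_0|)$). Since $\varphi$ is $\mathbb{Q}(v)$-linear, for (a) it suffices to show that $\varphi$ carries each such generator into $L(\Lambda_{\xi}\otimes\Lambda_{\lambda})$. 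Part (b) will then be immediate from (a): $L(\Lambda_{\xi}\otimes\Lambda_{\lambda})$ is the $\mathbf{A}$-submodule generated by $\mathbf{B}(\Lambda_{\xi})\otimes\mathbf{B}(\Lambda_{\lambda})$, which is almost orthogonal for $(,)_{\xi,\lambda}$ (see \ref{Canonical basis of tensor product}), so the pairing of any two of its elements lies in $\delta+v^{-1}\mathbb{Z}[v^{-1}]\subset\mathbf{A}$; expanding two arbitrary lattice elements in this basis and using $\mathbf{A}$-bilinearity gives the claim.

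For (a) the plan is as follows. The Kashiwara operators $\tilde{F}_i$ on an integrable $\mathbf{U}$-module are determined by the $\mathbf{U}$-module structure alone, being built from the decomposition into $i$-strings (see \cite[Lemma 16.1.4]{Lusztig-1993}); hence the $\mathbf{U}$-module isomorphism $\varphi:\Lambda_{\xi,\lambda}\rightarrow\Lambda_{\xi}\otimes\Lambda_{\lambda}$ commutes with every $\tilde{F}_i$, so that $\varphi(\tilde{F}_{i_1}\cdots\tilde{F}_{i_m}\pi_{\xi\odot\lambda}(\theta_{\lambda}b_0))=\tilde{F}_{i_1}\cdots\tilde{F}_{i_m}\varphi\pi_{\xi\odot\lambda}(\theta_{\lambda}b_0)$. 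Next, I would apply Theorem \ref{framed construction of tensor} with $x=1$ and $y=b_0$ (so $r(x)=1\otimes1$), which gives $\varphi\pi_{\xi\odot\lambda}(\theta_{\lambda}b_0)=b_0^-\eta_{\xi}\otimes\eta_{\lambda}$.

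Now $b_0\in\sigma(\mathbf{B}(\lambda))\subset\mathbf{B}$, and $\mathbf{B}$ decomposes into $\mathbf{B}(\xi)$ and the basis $\bigcup_{i}\mathbf{B}_{i,\geqslant\langle i,\xi\rangle+1}^{\sigma}$ of $\ker(\mathbf{f}\to\Lambda_{\xi})$; hence $b_0^-\eta_{\xi}$ is either an element of $\mathbf{B}(\Lambda_{\xi})$ or $0$, so in any case it lies in the $\mathbf{A}$-submodule $L(\Lambda_{\xi})$ of $\Lambda_{\xi}$ generated by $\mathbf{B}(\Lambda_{\xi})$. Since $\eta_{\lambda}\in\mathbf{B}(\Lambda_{\lambda})$ and $L(\Lambda_{\xi}\otimes\Lambda_{\lambda})=L(\Lambda_{\xi})\otimes_{\mathbf{A}}L(\Lambda_{\lambda})$, we conclude $b_0^-\eta_{\xi}\otimes\eta_{\lambda}\in L(\Lambda_{\xi}\otimes\Lambda_{\lambda})$. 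Finally, Lemma \ref{adjointtensor} shows $L(\Lambda_{\xi}\otimes\Lambda_{\lambda})$ is stable under each $\tilde{F}_i$, so $\tilde{F}_{i_1}\cdots\tilde{F}_{i_m}(b_0^-\eta_{\xi}\otimes\eta_{\lambda})\in L(\Lambda_{\xi}\otimes\Lambda_{\lambda})$, completing (a), and hence (b).

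The one genuinely delicate point I anticipate is the reduction in the first paragraph. Without the identification $L(\Lambda_{\xi,\lambda})=L'(\Lambda_{\xi,\lambda})$ of Lemma \ref{L=L'}, the natural generators of $L(\Lambda_{\xi,\lambda})_{\nu+|\theta_{\lambda}|}$ are the images under $\pi_{\xi\odot\lambda}$ of the $\tilde{\mathbf{f}}$-side Kashiwara operators $\tilde{\phi}_{i_1}\cdots\tilde{\phi}_{i_m}(\theta_{\lambda}b_0)$, and these do not transform simply under $\varphi$, because $\varphi$ is built from the comultiplication of $\tilde{\mathbf{f}}$ rather than from left multiplication. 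Trading them for the module-side operators $\tilde{F}_i$, which $\varphi$ does intertwine, is what makes the argument go through; everything else is a matter of citing Theorem \ref{framed construction of tensor}, Lemma \ref{adjointtensor}, and the standard description of $L(\Lambda_{\xi}\otimes\Lambda_{\lambda})$.
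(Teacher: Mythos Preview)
Your proof is correct, and for part (a) it is essentially the paper's argument stated directly rather than by induction on $\mathrm{tr}\,\nu$: both use Lemma~\ref{L=L'} to reduce to generators of the form $\tilde{F}_{i_1}\cdots\tilde{F}_{i_m}\pi_{\xi\odot\lambda}(\theta_{\lambda}b_0)$, then invoke that $\varphi$ intertwines Kashiwara operators, compute $\varphi\pi_{\xi\odot\lambda}(\theta_{\lambda}b_0)=b_0^-\eta_{\xi}\otimes\eta_{\lambda}$ via Theorem~\ref{framed construction of tensor}, and apply the $\tilde{F}_i$-stability of $L(\Lambda_{\xi}\otimes\Lambda_{\lambda})$ from Lemma~\ref{adjointtensor}.

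Your treatment of (b), however, is genuinely cleaner than the paper's. You observe that (b) is an immediate consequence of (a): since $L(\Lambda_{\xi}\otimes\Lambda_{\lambda})$ is the $\mathbf{A}$-span of the almost orthogonal basis $\mathbf{B}(\Lambda_{\xi})\otimes\mathbf{B}(\Lambda_{\lambda})$, the form $(,)_{\xi,\lambda}$ takes values in $\mathbf{A}$ on this lattice, and (b) follows by $\mathbf{A}$-bilinearity. The paper instead reproves (b) by a separate induction parallel to that for (a), using Lemma~\ref{adjointtensor} to move $\tilde{F}_i$ across the form modulo $v^{-1}\mathbf{A}$. Your shortcut works and makes the redundancy of the paper's inductive argument for (b) visible; what the paper's approach buys is only self-containment of the (b) argument, at the cost of repeating the structure of (a).
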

\begin{proof}
We use induction on $\mathrm{tr}\,\nu$ to prove the statement.\\
(a) $\bullet$ If $\mathrm{tr}\,|\nu|=0$, then $L(\Lambda_{\xi,\lambda})_{|\theta_{\lambda}|}$ is the $\mathbf{A}$-submodule of $\Lambda_{\xi,\lambda}$ generated by $\{\pi_{\xi\odot\lambda}(\theta_{\lambda})\}$. By Theorem \ref{framed construction of tensor}, we have $\varphi\pi_{\xi\odot\lambda}(\theta_{\lambda})=\eta_{\xi}\otimes \eta_{\lambda}\in L(\Lambda_{\xi}\otimes \Lambda_{\lambda})$.\\
$\bullet$ If $\mathrm{tr}\,|\nu|>0$, recall that $L(\Lambda_{\xi,\lambda})_{\nu+|\theta_{\lambda}|}=\sum L'(\Lambda_{\xi,\lambda})^{\nu-|b_0|,b_0}$, where the sum is taken over $b_0\in \sigma(\mathbf{B}(\lambda))$ such that $\nu-|b_0|\in \mathbb{N}[I]$. For those $\nu-|b_0|=0$, by Theorem \ref{framed construction of tensor}, we have $\varphi\pi_{\xi\odot\lambda}(\theta_{\lambda}b_0)=b_0^-\eta_{\xi}\otimes \eta_{\lambda}\in L(\Lambda_{\xi}\otimes \Lambda_{\lambda})$, and so $\varphi(L'(\Lambda_{\xi,\lambda})^{0,b_0})\subset L(\Lambda_{\xi}\otimes \Lambda_{\lambda})$. For those $b_0$ such that $\nu-|b_0|\not=0$, by definitions, we have $L'(\Lambda_{\xi,\lambda})^{\nu-|b_0|,b_0}=\sum \tilde{F}_i(L'(\Lambda_{\xi,\lambda})^{\nu-i-|b_0|,b_0})$, where the sum is taken over $i\in I$ such that $\nu_i-|b_0|_i>0$. For such $i$, notice that $L'(\Lambda_{\xi,\lambda})^{\nu-i-|b_0|,b_0}\subset L(\Lambda_{\xi,\lambda})_{\nu-i+|\theta_{\lambda}|}$, by the inductive hypothesis and Lemma \ref{adjointtensor}, we have 
$$\varphi(L'(\Lambda_{\xi,\lambda})^{\nu-i-|b_0|,b_0})\subset L(\Lambda_{\xi}\otimes \Lambda_{\lambda}),\tilde{F}_i\varphi(L'(\Lambda_{\xi,\lambda})^{\nu-i-|b_0|,b_0})\subset L(\Lambda_{\xi}\otimes \Lambda_{\lambda}).$$ 
By Theorem \ref{framed construction of tensor}, $\varphi$ is a $\mathbf{U}$-module isomorphism, and so it commutes with the Kashiwara's operators on $\Lambda_{\xi,\lambda},\Lambda_{\xi}\otimes \Lambda_{\lambda}$. Hence we have 
$$\varphi(\tilde{F}_i(L'(\Lambda_{\xi,\lambda})^{\nu-i-|b_0|,b_0}))=\tilde{F}_i\varphi(L'(\Lambda_{\xi,\lambda})^{\nu-i-|b_0|,b_0})\subset L(\Lambda_{\xi}\otimes \Lambda_{\lambda}),$$
and so $\varphi(L(\Lambda_{\xi,\lambda})_{\nu+|\theta_{\lambda}|})\subset L(\Lambda_{\xi}\otimes \Lambda_{\lambda})$, as desired.

(b) $\bullet$ If $\mathrm{tr}\,|\nu|=0$, then $L(\Lambda_{\xi,\lambda})_{|\theta_{\lambda}|}$ is the $\mathbf{A}$-submodule of $\Lambda_{\xi,\lambda}$ generated by $\{\pi_{\xi\odot\lambda}(\theta_{\lambda})\}$. By Theorem \ref{framed construction of tensor} and definitions, we have
$$(\varphi\pi_{\xi\odot\lambda}(\theta_{\lambda}),\varphi\pi_{\xi\odot\lambda}(\theta_{\lambda}))_{\xi,\lambda}=(\eta_{\xi}\otimes \eta_{\lambda},\eta_{\xi}\otimes \eta_{\lambda})_{\xi,\lambda}=(\eta_{\xi},\eta_{\xi})_\xi(\eta_{\lambda},\eta_{\lambda})_{\lambda}=1\in \mathbf{A}.$$
$\bullet$ If $\mathrm{tr}\,|\nu|>0$, recall that $L(\Lambda_{\xi,\lambda})_{\nu+|\theta_{\lambda}|}=\sum L'(\Lambda_{\xi,\lambda})^{\nu-|b_0|,b_0}$, where the sum is taken over $b_0\in \sigma(\mathbf{B}(\lambda))$ such that $\nu-|b_0|\in \mathbb{N}[I]$. For those $b_0,b'_0$ such that $\nu-|b_0|=\nu-|b'_0|=0$, by Theorem \ref{framed construction of tensor} and \cite[Proposition 19.1.3]{Lusztig-1993}, we have 
$$(\varphi\pi_{\xi\odot\lambda}(\theta_{\lambda}b_0),\varphi\pi_{\xi\odot\lambda}(\theta_{\lambda}b'_0))_{\xi,\lambda}=(b_0^-\eta_{\xi}\otimes \eta_{\lambda},{b'_0}^-\eta_{\xi}\otimes \eta_{\lambda})_{\xi,\lambda}=(b_0^-\eta_{\xi},{b'_0}^-\eta_{\xi})_\xi(\eta_{\lambda},\eta_{\lambda})_{\lambda}\in \mathbf{A},$$ 
and so $(\varphi(L'(\Lambda_{\xi,\lambda})^{0,b_0}),\varphi(L'(\Lambda_{\xi,\lambda})^{0,b'_0}))_{\xi,\lambda}\in \mathbf{A}$. It remains to prove that $$(\varphi(L'(\Lambda_{\xi,\lambda})^{\nu-|b_0|,b_0}),\varphi(L'(\Lambda_{\xi,\lambda})^{\nu-|b'_0|,b'_0}))_{\xi,\lambda}\subset \mathbf{A}$$ 
for those $b_0,b'_0$ such that $\nu-|b_0|\not=0$ or $\nu-|b'_0|\not=0$. By the symmetry, it suffices to prove the first case. By definitions, we have $L'(\Lambda_{\xi,\lambda})^{\nu-|b_0|,b_0}=\sum \tilde{F}_i(L'(\Lambda_{\xi,\lambda})^{\nu-i-|b_0|,b_0})$, where the sum is taken over $i\in I$ such that $\nu_i-|b_0|_i>0$. For such $i$, by Theorem \ref{framed construction of tensor}, $\varphi$ is a $\mathbf{U}$-module isomorphism, and so it commutes with the Kashiwara's operators on $\Lambda_{\xi,\lambda},\Lambda_{\xi}\otimes \Lambda_{\lambda}$. By (a) and Lemma \ref{adjointtensor}, we have 
\begin{align*}
&(\varphi\tilde{F}_i(L'(\Lambda_{\xi,\lambda})^{\nu-i-|b_0|,b_0}),\varphi(L'(\Lambda_{\xi,\lambda})^{\nu-|b'_0|,b'_0}))_{\xi,\lambda}\\
=&(\tilde{F}_i\varphi(L'(\Lambda_{\xi,\lambda})^{\nu-i-|b_0|,b_0}),\varphi(L'(\Lambda_{\xi,\lambda})^{\nu-|b'_0|,b'_0}))_{\xi,\lambda}\\
=&(\varphi(L'(\Lambda_{\xi,\lambda})^{\nu-i-|b_0|,b_0}),\tilde{E}_i\varphi(L'(\Lambda_{\xi,\lambda})^{\nu-|b'_0|,b'_0}))_{\xi,\lambda}\ \mathrm{mod}\,v^{-1}\mathbf{A}\\
=&(\varphi(L'(\Lambda_{\xi,\lambda})^{\nu-i-|b_0|,b_0}),\varphi\tilde{E}_i(L'(\Lambda_{\xi,\lambda})^{\nu-|b'_0|,b'_0}))_{\xi,\lambda}\ \mathrm{mod}\,v^{-1}\mathbf{A}.
\end{align*}
Notice that $L'(\Lambda_{\xi,\lambda})^{\nu-i-|b_0|,b_0}$ and $\tilde{E}_i(L'(\Lambda_{\xi,\lambda})^{\nu-|b'_0|,b'_0})\subset L(\Lambda_{\xi,\lambda})_{\nu-i+|\theta_{\lambda}|}$, by the inductive hypothesis, we have $(\varphi(L'(\Lambda_{\xi,\lambda})^{\nu-i-|b_0|,b_0}),\varphi\tilde{E}_i(L'(\Lambda_{\xi,\lambda})^{\nu-|b'_0|,b'_0}))_{\xi,\lambda}\subset \mathbf{A}$. Hence we have 
$(\varphi\tilde{F}_i(L'(\Lambda_{\xi,\lambda})^{\nu-i-|b_0|,b_0}),\varphi(L'(\Lambda_{\xi,\lambda})^{\nu-|b'_0|,b'_0}))_{\xi,\lambda}\subset \mathbf{A}$,
and so $$(\varphi(L'(\Lambda_{\xi,\lambda})^{\nu-|b_0|,b_0}),\varphi(L'(\Lambda_{\xi,\lambda})^{\nu-|b'_0|,b'_0}))_{\xi,\lambda}\subset \mathbf{A},$$ 
as desired.
\end{proof}

\begin{proof}[Proof of Lemma {\rm{\ref{almost orthonormal}}}]
For any $b\in \tilde{\mathbf{B}}(\xi,\lambda)\subset \tilde{\mathbf{B}}(\mathbf{f}\theta_{\lambda}\mathbf{f})$, by Corollary \ref{18.1.7} and Lemma \ref{thetalambdaB}, there exist $b_0\in \sigma(\mathbf{B}(\lambda))$ and $i_1,...i_n\in I$ such that $b=\tilde{\phi}_{i_1}...\tilde{\phi}_{i_n}(\theta_{\lambda}b_0)\ \mathrm{mod}\, v^{-1}\mathcal{L}(\mathbf{\mathbf{f}\theta_{\lambda}\mathbf{f}})$.
By Lemma \ref{18.3.8}, we have $\pi_{\xi\odot\lambda}(b)=\tilde{F}_{i_1}...\tilde{F}_{i_n}\pi_{\xi\odot\lambda}(\theta_{\lambda}b_0)\ \mathrm{mod}\ v^{-1}L(\Lambda_{\xi,\lambda})$. By Theorem \ref{framed construction of tensor}, $\varphi$ is a $\mathbf{U}$-module isomorphism, and so it commutes with the Kashiwara's operators on $\Lambda_{\xi,\lambda},\Lambda_{\xi}\otimes \Lambda_{\lambda}$. By Lemma \ref{(L,L)inA}, we have 
\begin{align*}
(\varphi\pi_{\xi\odot\lambda}(b),\varphi\pi_{\xi\odot\lambda}(b))_{\xi,\lambda}=&(\varphi\tilde{F}_{i_1}...\tilde{F}_{i_n}\pi_{\xi\odot\lambda}(\theta_{\lambda}b_0),\varphi\tilde{F}_{i_1}...\tilde{F}_{i_n}\pi_{\xi\odot\lambda}(\theta_{\lambda}b_0))_{\xi,\lambda}\ \mathrm{mod}\,v^{-1}\mathbf{A}\\
=&(\tilde{F}_{i_1}...\tilde{F}_{i_n}\varphi\pi_{\xi\odot\lambda}(\theta_{\lambda}b_0),\tilde{F}_{i_1}...\tilde{F}_{i_n}\varphi\pi_{\xi\odot\lambda}(\theta_{\lambda}b_0))_{\xi,\lambda}\ \mathrm{mod}\,v^{-1}\mathbf{A}.
\end{align*}
For any $m\in L(\Lambda_{\xi,\lambda})$ and $i\in I$, by Lemma \ref{L=L'} and \ref{adjointtensor}, we have 
$$(\tilde{F}_i\varphi(m),\tilde{F}_i\varphi(m))_{\xi,\lambda}=(\varphi(m),\tilde{E}_i\tilde{F}_i\varphi(m))_{\xi,\lambda}\ \mathrm{mod}\,v^{-1}\mathbf{A}=(\varphi(m),\varphi(m))_{\xi,\lambda}\ \mathrm{mod}\,v^{-1}\mathbf{A}.$$
By Theorem \ref{framed construction of tensor} and \ref{properties of B(Lambda)}, we have
\begin{align*}
(\varphi\pi_{\xi\odot\lambda}(b),\varphi\pi_{\xi\odot\lambda}(b))_{\xi,\lambda}=&(\tilde{F}_{i_1}...\tilde{F}_{i_n}\varphi\pi_{\xi\odot\lambda}(\theta_{\lambda}b_0),\tilde{F}_{i_1}...\tilde{F}_{i_n}\varphi\pi_{\xi\odot\lambda}(\theta_{\lambda}b_0))_{\xi,\lambda}\ \mathrm{mod}\, v^{-1}\mathbf{A}\\
=&(\varphi\pi_{\xi\odot\lambda}(\theta_{\lambda}b_0),\varphi\pi_{\xi\odot\lambda}(\theta_{\lambda}b_0))_{\xi,\lambda}\ \mathrm{mod}\, v^{-1}\mathbf{A}\\
=&(b_0^-\eta_{\xi}\otimes \eta_{\lambda},b_0^-\eta_{\xi}\otimes \eta_{\lambda})_{\xi,\lambda}\ \mathrm{mod}\, v^{-1}\mathbf{A}\\
=&(b_0^-\eta_{\xi},b_0^-\eta_{\xi})_{\xi}(\eta_{\lambda},\eta_{\lambda})_{\lambda}\ \mathrm{mod}\, v^{-1}\mathbf{A}\in 1+v^{-1}\mathbf{A},
\end{align*}
as desired.
\end{proof}

\bibliography{mybibfile}

\end{document}